\documentclass[reqno,centertags,11pt,twoside]{article}
\usepackage[margin=1in]{geometry}
\usepackage{amsfonts,amsmath,amssymb,amsthm}
\usepackage{mathtools,thmtools,thm-restate}
\usepackage{tikz,graphicx,caption,subcaption}

\usepackage{newtxtext}
\usepackage[varvw]{newtxmath}
\DeclareMathAlphabet{\mathbb}{U}{msb}{m}{n}
\usepackage[english]{babel}
\usepackage{accents}

\usepackage{fancyhdr}
\fancypagestyle{plain}{\fancyhf{}}
\DeclarePairedDelimiter{\paren}{(}{)}
\DeclarePairedDelimiter{\brac}{[}{]}
\DeclarePairedDelimiter{\curly}{ \{ }{ \} }
\DeclarePairedDelimiter{\abs}{\lvert}{\rvert}
\DeclarePairedDelimiter{\ceil}{\lceil}{\rceil}

\newcommand{\R}{\mathbb{R}}
\newcommand{\T}{\mathbb{T}}

\DeclareMathOperator{\sgn}{sgn}
\newcommand{\eps}{\varepsilon}
\renewcommand{\phi}{\varphi}
\renewcommand{\kappa}{\varkappa}
\newcommand{\pre}{^{-1}}
\newcommand{\ov}[1]{\overline{#1}}
\newcommand{\wt}[1]{\widetilde{#1}}
\DeclareMathOperator{\supp}{supp}

\let\div\relax
\DeclareMathOperator{\div}{div}
\newcommand{\grad}{\nabla}
\newcommand{\lap}{\Delta}

\DeclarePairedDelimiter{\nm}{\|}{\|}
\DeclarePairedDelimiter{\gen}{\langle}{\rangle}
\newcommand*{\dH}{\accentset{\mbox{\large\bfseries .}}{H}}

\usepackage{cite}
\usepackage[hypertexnames = false]{hyperref}
\usepackage{cleveref,amsrefs}

\usepackage[shortlabels]{enumitem}
\numberwithin{equation}{section}
\newcommand\numberthis{\addtocounter{equation}{1}\tag{\theequation}}
\theoremstyle{plain}

\newtheorem{theorem}{Theorem}[section]
\crefname{theorem}{theorem}{theorems}
\newtheorem{lemma}[theorem]{Lemma}
\crefname{lemma}{lemma}{lemmas}
\newtheorem{proposition}[theorem]{Proposition}
\crefname{proposition}{proposition}{propositions}
\newtheorem{corollary}[theorem]{Corollary}
\crefname{corollary}{corollary}{corollaries}
\newtheorem{definition}[theorem]{Definition}
\crefname{definition}{definition}{definitions}
\newtheorem{remark}[theorem]{Remark}
\crefname{remark}{remark}{remarks}
\newtheorem{hypotheses}[theorem]{Hypotheses}
\crefname{hypotheses}{hypotheses}{hypotheses}

\newtheorem{assumption}{Assumption}
\crefname{assumption}{assumption}{assumptions}
\newtheorem{normalization}{Normalization}
\crefname{normalization}{normalization}{normalizations}

\begin{document}
\title{\bfseries \large \MakeUppercase{A Nash stratification inequality and global regularity for a chemotaxis-fluid system on general 2D domains}}
\author{\normalsize \MakeUppercase{Alexander Kiselev and Naji A. Sarsam}}
\date{}
\maketitle

\begin{abstract}
Incompressible fluid advection has been shown to facilitate singularity suppression in various differential equations, often by mixing-enhanced diffusion or by dimension-reduction effects. To aid with the study of such scenarios, we prove a refinement of the classical Nash inequality,
\[
\|f-f_M\|_{L^2}^2 \lesssim \|f-f_M\|_{L^1}^{8/7}\|\nabla f\|_{L^2}^{6/7} + \|\partial_1 f\|_{\accentset{\mbox{\large\bfseries .}}{H}_0^{-1}}^{2\vartheta} \|f-f_M\|_{L^1}^{1-\vartheta}\|\nabla f\|_{L^2}^{1-\vartheta},
\]
for $f \in H^1$ with mean $f_M$ over a smooth bounded planar domain under the main constraint of having connected horizontal cross-sections. The first term on the right-hand side follows the classical Nash scaling for a formal dimension of $3/2$. The second term introduces a mixing norm that measures how far $f$ is from being stratified. The proof provides an explicit exponent $0 < \vartheta \ll 1$.

As an application, we study the 2D parabolic-elliptic Patlak--Keller--Segel (PKS) chemotaxis model over the aforementioned large class of bounded domains. This aggregation-diffusion equation is well-known to produce finite-time singularity formation for large-mass data in finite domains. Using the above Nash stratification inequality, we prove that the 2D PKS equation becomes globally regular when actively coupled via buoyancy to a fluid obeying Darcy's law for incompressible porous media flows. This result holds for arbitrarily large $C^\infty$ initial data, far from perturbative regimes, and for arbitrarily weak coupling strength. Moreover, the spatial domain can have ``bottle neck'' regions and boundary segments of large curvature. The argument conceptualizes and generalizes recent work by Hu, Yao, and the first author on the analogous result for the periodic channel.
\end{abstract}

\begin{small}
\quotation
\noindent\textbf{Mathematics Subject Classifications (2020).}
76B03; 35A23; 76B70; 35Q35; 35Q92.\vspace{8pt}

\noindent\textbf{Keywords.} Nash inequality; stratification; blow-up suppression; global well-posedness; Patlak--Keller--Segel; incompressible porous media.\vspace{8pt}

\noindent\textbf{AK.} Department of Mathematics, Duke University, Durham, NC 27708, USA.\\
\textbf{Email.} \href{mailto:alexander.kiselev@duke.edu}{\tt{alexander.kiselev@duke.edu}}\vspace{8pt}

\noindent\textbf{NAS.} Department of Mathematics, Duke University, Durham, NC 27708, USA.\\
\textbf{Email.} \href{mailto:naji.sarsam@duke.edu}{\tt{naji.sarsam@duke.edu}}
\endquotation
\end{small}
\vfill

\newpage
\tableofcontents
\vfill
\newpage

\section{Introduction}
\label{sec:Intro}

For functions $f \in H^1(\Omega)$ on an appropriate bounded domain $\Omega \subseteq \R^d$, the classical Nash inequality states that
\begin{equation}\label{eq:secIntro_Nash}
\nm{f-f_M}_2^{1+\frac{2}{d}} \le C_{\mathrm{N},d} \nm{f-f_M}_1^{\frac{2}{d}}\nm{\grad f}_2,
\end{equation}
where $f_M$ denotes the mean of $f$ over $\Omega$ and $C_{\mathrm{N},d} := C_{\mathrm{N},d}(\Omega) > 0$ denotes the associated universal constant. The analogue of (\ref{eq:secIntro_Nash}) for full Euclidean space $\R^d$, with $f-f_M$ replaced by $f$, was first established by Nash \cites{N58} via a simple Fourier transform argument. In that celebrated work, Nash was interested in proving the H\"{o}lder continuity of solutions to general coefficient parabolic equations. He used his name-sake inequality to establish an a-priori $L^2$ bound on fundamental solutions to such equations, which he then upgraded to an important $L^\infty$ estimate. The Nash inequality (\ref{eq:secIntro_Nash}) has since become an indispensable tool in the derivation of such a-priori estimates for many partial differential equations, with its generalization being the Gagliardo--Nirenberg--Sobolev interpolation inequality. The inequality may be particularly helpful for physical equations where the $L^1$ norm of a solution, representing total mass, is conserved, while the $L^2$ norm may grow. However, the utility of (\ref{eq:secIntro_Nash}) in such cases depends on the criticality of the equation being studied with respect to the scaling of the inequality.

An illustrative example is provided by the parabolic-elliptic Patlak--Keller--Segel (PKS) equation, for which the inequality (\ref{eq:secIntro_Nash}) barely fails to establish the global regularity of smooth solutions in two dimensions. In turn, there exist large-mass smooth solutions that can develop singularities in finite time, while all small-mass smooth solutions are globally regular. This phenomenon has garnered significant attention in the recent literature, which we review below. It serves as the motivating example for this work, as we provide an anisotropic refinement of the classical Nash inequality (\ref{eq:secIntro_Nash}) in the case of $d =2$. This refinement leads to the proof of global regularity for a system where fluid advection is coupled to the PKS equation via a buoyancy/gravitational force.

\subsection{Previous literature on the PKS equation}
\label{subsec:BackChemo}

The Patlak--Keller--Segel (PKS) equation is a family of aggregation-diffusion equations that model the biological phenomenon of chemotaxis. Therein, a scalar density $\rho(x,t)$, representing a bacteria population for example, evolves under diffusion as well as transport up the gradient of a scalar $c(x,t)$ that is coupled to $\rho(x,t)$. The density $c(x,t)$ represents a chemical concentration produced by the bacteria and to which the bacteria are attracted \cites{KS71,P53}. The literature on this subject is quite expansive and so we may only provide a representative summary of works in what follows.

The general form of the PKS equation on an open bounded connected domain $\Omega \subseteq \R^d$ is given by
\begin{equation}\label{eq:secIntro_PKSGeneral}
\begin{cases}
\partial_t \rho - \lap \rho + \div(\rho \grad c) = 0, \\
\partial_n \rho |_{\partial \Omega} = 0, \quad \partial_n c |_{\partial \Omega} = 0, \\
\rho(\cdot, 0) = \rho_0 \ge 0,
\end{cases}
\end{equation}
which is also supplied with a second equation, of reaction-diffusion type, relating the density $c(x,t)$ to $\rho(x,t)$ in a way specific to the biological context being modeled. There are two important physical considerations that are often ensured. First, any solution $\rho(x,t)$ should remain nonnegative for all times of existence. Second, the $L^1$ norm of $\rho(x,t)$, corresponding to the total mass of bacteria, should be finite and constant for all times (unless a specific growth or decay term is added to the equation). To that end, the initial condition $\rho_0(x)$ in (\ref{eq:secIntro_PKSGeneral}) is assumed nonnegative and the Neumann boundary conditions $\partial_n \rho |_{\partial \Omega} = \partial_n c |_{\partial \Omega} = 0$ are imposed. These considerations motivate studying the criticality of the PKS equation (\ref{eq:secIntro_PKSGeneral}) with respect to the $L^\infty_t L^1_x$ norm of $\rho(x,t)$. To do so, one must first specify the equation determining $c(x,t)$ from $\rho(x,t)$.

The possibly most studied version of the PKS equation is the parabolic-elliptic form where
\begin{equation}\label{eq:secIntro_PKSeqC}
-\lap c = \rho-\rho_M,
\quad
\rho_M := \frac{1}{|\Omega|}\int_\Omega \rho(x,t)\, dt,
\end{equation}
with $\rho_M$ denoting the spatial mean of the density. Equation (\ref{eq:secIntro_PKSeqC}) was introduced by \cites{N73,JL92} in order to study the simplified physical scenario where the chemical $c(x,t)$ is produced and diffuses on a much faster time scale than the other important processes represented by (\ref{eq:secIntro_PKSGeneral}). The parabolic-elliptic PKS system (\ref{eq:secIntro_PKSGeneral}, \ref{eq:secIntro_PKSeqC}) indeed preserves the nonnegativity and total mass of a classical solution for all times of existence; see \Cref{subsec:PKSIPMPreliminaries}. Moreover, given any nonnegative solution $\rho(x,t)$ of (\ref{eq:secIntro_PKSGeneral}, \ref{eq:secIntro_PKSeqC}), it holds that
\begin{equation}\label{eq:secIntro_PKSscaling}
\rho_\mu(x,t) := \frac{1}{\mu^2} \rho\paren*{\frac{x}{\mu},\ \frac{t}{\mu^2}}
\end{equation}
is also formally a solution for any scalar $\mu > 0$ in the $\R^d$ setting.

The scaling (\ref{eq:secIntro_PKSscaling}) is $L^\infty_t L^1_x$-subcritical when $d = 1$; hence the conservation of mass leads to the global regularity of smooth solutions to (\ref{eq:secIntro_PKSGeneral}, \ref{eq:secIntro_PKSeqC}) in one dimension \cite{N95}. The opposite situation occurs in dimension $d = 3$ where the scaling (\ref{eq:secIntro_PKSscaling}) is instead $L^\infty_t L^1_x$-supercritical and $L^\infty_t L^{3/2}_x$-critical. Consequently, solutions with arbitrarily small mass are known to blow-up in finite time while solutions initially small in $L^{3/2}$ do exist globally in time \cites{CGMN23,CPZ04,HMV97,HMV98}. In the remaining case of dimension $d =2$, the scaling (\ref{eq:secIntro_PKSscaling}) is $L^\infty_t L^1_x$-critical and singularity formation is possible. Indeed, on bounded planar domains, \cites{JL92,N95,N01} were among the first to show that all smooth initial data with sufficiently small mass lead to globally regular solutions while there do exist smooth initial data with large mass that develop a finite-time singularity. Moreover, \cites{HV96a,HV96b} gave examples of such initial data which specifically converge to a Dirac delta measure in finite time. Since these works, the study of the two-dimensional parabolic-elliptic PKS system (\ref{eq:secIntro_PKSGeneral}, \ref{eq:secIntro_PKSeqC}) has received considerable study by the literature with a very notable achievement including the establishment of a sharp mass threshold for the setting of the full plane, as comprehensively summarized by the detailed surveys \cites{H03,H04} and the text \cite{P07}. We also note that ``chemotactic collapse'' to a two-dimensional Dirac mass is now considered to be the ``generic'' way that singularities may form under (\ref{eq:secIntro_PKSGeneral}, \ref{eq:secIntro_PKSeqC}) --- again see \cites{H03,H04,P07} --- and we point to \cites{CTMN22a,CTMN22b,RS14} as a small sample of the recent state of the art.

The possibility of blow-up in two dimensions is reflected by the classical Nash inequality (\ref{eq:secIntro_Nash}). Indeed, any nonnegative solution $\rho(x, t)$ to (\ref{eq:secIntro_PKSGeneral}, \ref{eq:secIntro_PKSeqC}) in two dimensions obeys the following easily proven a-priori estimate:
\begin{equation}\label{eq:secIntro_AprioriVariance}
\frac{d}{dt}\nm{\rho - \rho_M}_2^2 + \nm{\grad \rho}_2^2
\le C\nm{\rho-\rho_M}_2^4,
\end{equation}
for times $t$ where the variance $\nm{\rho(\cdot, t)-\rho_M}_2$ is sufficiently large; see \Cref{subsec:Variance}. Here, $C = C(\Omega)$ denotes a universal constant depending only on the domain. Combining this estimate with (\ref{eq:secIntro_Nash}) gives
\[
\frac{d}{dt}\nm{\rho - \rho_M}_2^2
\le
\paren*{4C_{\mathrm{N},2}^2C\nm{\rho_0}_1^2-1}\nm{\grad \rho}_2^2.
\]
Thus, if the initial mass $\nm{\rho_0}_1$ is sufficiently small, we obtain boundedness in time of the solution's variance, which is a subcritical quantity in two dimensions with respect to the scaling (\ref{eq:secIntro_PKSscaling}). However, we also see that if the initial mass is sufficiently large, we obtain no such control. So, the scaling of the classical Nash inequality (\ref{eq:secIntro_Nash}) in two-dimensions appears just barely too weak to ensure global regularity of the PKS system (\ref{eq:secIntro_PKSGeneral}, \ref{eq:secIntro_PKSeqC}). One quickly observes that, if the exponent on the $L^1$ norm were at all larger than the exponent on the $\dH^1$ norm in (\ref{eq:secIntro_Nash}) in the two-dimensional case, then the solution's variance would have to be bounded for all time and in turn global regularity would hold. This is an important observation for our work, as we will see below in \Cref{thm:secIntro_NashTotal,thm:secIntro_GWP}.

\subsection{Previous literature on PKS-fluid systems}
\label{subsec:BackChemoFluids}

Since singularity formation in the two-dimensional PKS system is relatively well understood, the system (\ref{eq:secIntro_PKSGeneral}, \ref{eq:secIntro_PKSeqC}) provides a perfect playground for studying various singularity suppression mechanisms. One such mechanism, that is of considerable interest biologically as well as theoretically, is when there exists a background incompressible fluid within which the bacteria reside. To model this, a term of the form $A u \cdot \grad \rho$ can be added to the left-hand side of the equation for $\rho$ in (\ref{eq:secIntro_PKSGeneral}). Here, $u(x,t)$ represents the fluid velocity field and $A > 0$ is a constant that may be interpreted as representing the strength of fluid advection.

The works \cites{BH17,HT19,KX16,IXZ21} treat the system (\ref{eq:secIntro_PKSGeneral}, \ref{eq:secIntro_PKSeqC}) in two dimensions with the addition of passive fluid advection $A u\cdot \grad \rho$. This means that the fluid velocity is a-priori fixed, with no dependence on $\rho$. Each of \cites{BH17,HT19,KX16,IXZ21} identifies a class of fluid velocities $u(x,t)$ for which the following result holds: given any regular initial data $\rho_0$ (with some results requiring a mass smallness condition), there exists a sufficiently large coupling constant $A > 0$ depending on $\rho_0$ and $u$ for which the corresponding solution $\rho(x,t)$ exists in a classical strong sense for all time. We summarize the types of fluid flows considered; see the recent survey \cite{FMN25} for a comprehensive overview of such results for various nonlinear equations including the PKS system.

The chronological first result \cite{KX16} considers the mixing or ``relaxation enhancing'' fluid velocities constructed in \cites{ACM19,CKRZ08,YZ17}. The fluid mixing causes the solution $\rho(x,t)$ to have large gradients, leading to an enhanced dissipation effect that allows diffusion to triumph over chemotactic aggregation. The initial data $\rho_0$ can be an arbitrary sufficiently regular function, and the global regularity holds for sufficiently strong flow: $A \geq A_0(\rho_0).$ The same result also holds in three dimensions. The work \cite{BH17} instead considers regular shear flows $u(x,t)$ which effectively lower the dimension of the dynamics by one for very large $A.$ Thus, the two-dimensional setting requires no mass smallness condition on $\rho_0$ while the three-dimensional setting develops a critical mass threshold. We also note that \cite{H18} extends this aforementioned shearing regularization mechanism to the parabolic-parabolic PKS system, where the equation (\ref{eq:secIntro_PKSeqC}) is replaced by a parabolic advection-diffusion equation. Another type of flow is considered in \cite{HT19}: the specific time-stationary hyperbolic flow $u(x_1,x_2) := (-x_1,x_2)$. The authors show that even this simple fluid flow induces a ``fast splitting scenario'' which causes the critical mass threshold for blow-up in two-dimensions to double from the original $8\pi$ to $16 \pi$, as long as $A$ is very large. The follow-up work \cite{HTZ22} identifies this fast splitting scenario in a general class of parabolic equations with chemotaxis-type and ignition-type nonlinearities. Lastly, the paper \cite{IXZ21} derives a general ``dissipation time'' criterion for suppression of finite-time blow up that is applicable more broadly, and applies it to analysis of the PKS system with some cellular flows --- obtaining global regularity if $A$ exceeds some threshold that depends on the initial data.

A more delicate problem is presented by situations where,
instead of advection by a passive fluid, the fluid velocity $u(x,t)$ is active, that is, depends on the solution $\rho(x,t)$. Thus, $u(x,t)$ will be specified to solve a particular fluid equation with the addition of a forcing/coupling term that involves $\rho(x,t)$. Typically, this force represents buoyancy/gravity, with a coupling constant that we denote by $g \neq 0$. Given the nonlinear and nonlocal nature of fluid equations, such actively coupled PKS-fluid systems are much more challenging to study than the passive counterparts. Consequently, there are many works on many variations of these systems, and so we focus here on those that address the question of global regularity for classical solutions (the precise regularity of which varies from work to work).

The global regularity of small initial data is treated by \cites{CKL14,DLM10,FLM10,L12} for many different PKS-fluid systems. The need for small initial data is underscored by the recent work \cite{LT25}, which provides an example of finite-time blow-up in three dimensions for the specific system of (\ref{eq:secIntro_PKSGeneral}, \ref{eq:secIntro_PKSeqC}) coupled to the Navier--Stokes equations via buoyancy. Also treating the same PKS--Navier--Stokes system, \cites{H23,ZZZ21} demonstrate the global regularity of large solutions which are instead ``small'' in the sense of being sufficiently close perturbations of the Couette shear flow; \cite{LXX25} does the same for perturbations about the Poiseuille flow. The shear flow for these three results is also required to have sufficiently large amplitude $A$, thus providing a dimension reduction mechanism via shear-induced enhanced dissipation and shear flow stability. On the other hand, \cites{H24,HK24} establish global regularity of arbitrary size classical solutions to (\ref{eq:secIntro_PKSGeneral}, \ref{eq:secIntro_PKSeqC}) when actively coupled to the Navier--Stokes flow (with Neumann boundary conditions) and Stokes--Boussinesq flow (with Dirichlet boundary conditions) via buoyancy/gravity, respectively. However, the gravitational constant $g$ is required to be sufficiently large depending on the initial data.

We should note that many of these aforementioned papers on actively coupled PKS-fluid systems do not only focus on global regularity for classical solutions. They also treat other questions such as stability, long-time behavior, and the existence of weak solutions. Indeed, there is a very large literature on weak solutions to such systems, which we unfortunately cannot summarize here. We instead only point to the following two influential works \cites{LL11,L10} as a small sample, as they were among the first to treat the local and global existence of appropriate weak solutions for PKS-fluid systems.

There are fewer works treating the question of global regularity for actively coupled PKS-fluid systems away from the cases of small initial data, perturbative data, or sufficiently large $A$, $g$ regimes. These include the interesting results of \cites{TW16,W12,W21} which establish global regularity of arbitrary classical solutions to various PKS-fluid systems, although both the fluid equations and the chemotaxis equations being considered may not allow singularity formation when not coupled together. The first example where the uncoupled chemotaxis and fluid equations may form singularities was recently provided by Hu, Yao, and the first author \cite{HKY25}. This system is one where $\rho(x,t)$ solves (\ref{eq:secIntro_PKSGeneral}, \ref{eq:secIntro_PKSeqC}) with the addition of advection by a fluid solving the incompressible porous media (IPM) equation. We will focus on this system for the remainder of this article.

\subsection{The PKS-IPM system}
\label{subsec:BackChemoIPM}

In this article, we consider the PKS-IPM system set on an open bounded connected domain $\Omega \subseteq \R^2$:
\begin{equation}\label{eq:secIntro_PKSIPM}
\begin{cases}
\partial_t \rho + u \cdot \grad \rho - \lap \rho + \div(\rho \grad c) = 0, \\
-\lap c = \rho - \rho_M \\
u = -\grad p - (0, g\rho), \quad \div u = 0, \\
\partial_n \rho |_{\partial \Omega} = 0, \quad \partial_n c |_{\partial \Omega} = 0, \quad u \cdot n|_{\partial \Omega} = 0, \\
\rho(\cdot, 0) = \rho_0 \ge 0.
\end{cases}
\end{equation}
Here, $g \neq 0$ is the gravitational/buoyancy constant; $n(x)$ denotes the outer unit normal to $x \in \partial \Omega$; and $p(x,t)$ is the scalar pressure which ensures that the fluid velocity $u(x,t)$ must always remain incompressible. We note that for $g > 0$, the coupling force can be interpreted as the fluid being weighed down by gravity. Meanwhile, the choice of $g < 0$ can represent an upward force felt by the bacteria density due to the bacteria's buoyancy in the fluid. Throughout this article, the sign of $g$ will not affect any of the arguments. We also note that the third line of (\ref{eq:secIntro_PKSIPM}) is often called Darcy's law. Importantly, it is equivalent to the following Biot--Savart law:
\begin{equation}\label{eq:secIntro_BiotSavart}
u = g\grad^\perp(-\lap_D)\pre \partial_1 \rho,
\end{equation}
where $(-\lap_D)\pre$ denotes the inverse of the Dirichlet Laplacian; see \Cref{subsec:PKSIPMPreliminaries}. This Biot--Savart law is crucial for understanding the dynamics of the PKS-IPM system. Its first consequence is that we need only only require an initial condition on the density $\rho$ for (\ref{eq:secIntro_PKSIPM}) to be locally well-posed; see \Cref{subsec:Variance}.

The PKS-IPM system retains many of the original properties of the PKS equation (\ref{eq:secIntro_PKSGeneral}, \ref{eq:secIntro_PKSeqC}) without fluid advection. Indeed, for classical solutions, (\ref{eq:secIntro_PKSIPM}) preserves the nonnegativity and total mass for all times of existence. Moreover, the same a-priori variance estimate (\ref{eq:secIntro_AprioriVariance}) holds due to the incompressibility of the fluid velocity. However, unlike the classical the PKS equation (\ref{eq:secIntro_PKSGeneral}, \ref{eq:secIntro_PKSeqC}), the PKS-IPM system (\ref{eq:secIntro_PKSIPM}) is actually $L^\infty_t L^2_x$ critical. To that end, one can show that if a smooth $C^\infty$ solution to (\ref{eq:secIntro_PKSIPM}) blows-up at a finite time $0 < T_* < \infty$, then the variance $\nm{\rho(\cdot, t) - \rho_M}_2^2$ must tend to infinity at time $T_*$. Nonetheless, the classical Nash inequality (\ref{eq:secIntro_Nash}) for $d = 2$ along with the a-priori variance estimate still fall short of establishing global regularity of (\ref{eq:secIntro_PKSIPM}) for the same reason as discussed in \Cref{subsec:BackChemo}. All of these aforementioned properties are verified in \Cref{subsec:PKSIPMPreliminaries,subsec:Variance}.

Working specifically on the periodic strip $\T \times (0, \pi)$, the article \cite{HKY25} established global well-posedness of nonnegative $C^\infty$ solutions to (\ref{eq:secIntro_PKSIPM}) for any fixed constant $g \neq 0$, regardless of the choice of initial data $\rho_0$. Here, $\T := [-\pi, \pi)$ denotes the flat circle representing periodic boundary conditions in the first spatial variable $x_1$. This was the first result showing global regularity for a chemotaxis-fluid system far away from any smallness or perturbative regimes, where both the chemotaxis equation (the PKS system (\ref{eq:secIntro_PKSGeneral}, \ref{eq:secIntro_PKSeqC}); see \Cref{subsec:BackChemo}) and the fluid equation (the IPM equation; see \cites{KY23,CM24,D25}) admit singular behavior of solutions. Indeed, \cite{HKY25} shows that, on the periodic strip, the solution's variance must remain bounded for all time, giving global regularity due to $L^\infty_t L^2_x$ criticality. Therefore, the addition of fluid advection obeying Darcy's law completely regularizes the original PKS system (\ref{eq:secIntro_PKSGeneral}, \ref{eq:secIntro_PKSeqC}) on $\T \times (0,\pi)$.

Let us provide some loose intuition as to why the solution's variance must remain bounded, regardless of the magnitude of $g \neq 0$. Consider the case where a smooth solution $\rho$ to (\ref{eq:secIntro_PKSIPM}) begins to evolve towards a finite-time singularity, due to the chemotactic term $\div(\rho \grad c)$. The works \cites{CTMN22a,CTMN22b,HV96a,HV96b,RS14} on the original parabolic-elliptic PKS equation (\ref{eq:secIntro_PKSGeneral}, \ref{eq:secIntro_PKSeqC}) then suggest that the solution's mass should be large and concentrating like an approximation of a two-dimensional Dirac delta singularity, as depicted in \Cref{fig:secIntro_PKSIPMStrip1}. Once sufficiently concentrated, the force of gravity --- as seen in the Darcian Biot--Savart law (\ref{eq:secIntro_BiotSavart}) --- causes the fluid velocity to stretch the density concentration, as shown in \Cref{fig:secIntro_PKSIPMStrip2}. The stretched concentration then reaches the boundary, along which the fluid must move tangentially. \Cref{fig:secIntro_PKSIPMStrip3} shows that the fluid near the boundary spreads out and redistributes the density $\rho$, in a nonlinear manner. The expected effect is to make the density ``nearly stratified", or ``nearly one-dimensional", at least where it is large. Since the solutions of the PKS equation do not blow up in one dimension \cite{N95}, this can lead to global regularity.

\tikzset{every picture/.style={line width=0.75pt}}
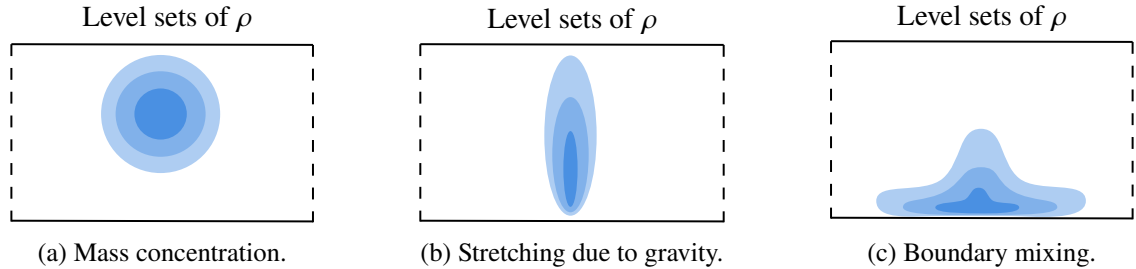
\begin{figure}[!ht]%
\centering
\begin{subfigure}{.33\textwidth}%
\captionsetup{width=0.9\textwidth}
\centering
\begin{tikzpicture}[x=0.75pt,y=0.75pt,yscale=-0.9,xscale=0.9]
\draw    (91.76,149.79) -- (260.76,150.19) ;
\draw  [dash pattern={on 4.5pt off 4.5pt}]  (91.76,50.79) -- (91.76,150.27) ;
\draw    (91.76,50.79) -- (260.76,51.19) ;
\draw  [dash pattern={on 4.5pt off 4.5pt}]  (260.76,51.19) -- (260.76,150.67) ;
\draw  [draw opacity=0][fill={rgb, 255:red, 74; green, 144; blue, 226 }  ,fill opacity=0.45 ] (142.01,89.96) .. controls (142.01,71.67) and (156.95,56.84) .. (175.38,56.84) .. controls (193.82,56.84) and (208.76,71.67) .. (208.76,89.96) .. controls (208.76,108.24) and (193.82,123.07) .. (175.38,123.07) .. controls (156.95,123.07) and (142.01,108.24) .. (142.01,89.96) -- cycle ;
\draw  [draw opacity=0][fill={rgb, 255:red, 74; green, 144; blue, 226 }  ,fill opacity=0.45 ] (150.16,89.96) .. controls (150.16,76.71) and (161.46,65.98) .. (175.38,65.98) .. controls (189.31,65.98) and (200.6,76.71) .. (200.6,89.96) .. controls (200.6,103.2) and (189.31,113.94) .. (175.38,113.94) .. controls (161.46,113.94) and (150.16,103.2) .. (150.16,89.96) -- cycle ;
\draw  [draw opacity=0][fill={rgb, 255:red, 74; green, 144; blue, 226 }  ,fill opacity=1 ] (160.71,89.96) .. controls (160.71,82.04) and (167.28,75.63) .. (175.38,75.63) .. controls (183.49,75.63) and (190.06,82.04) .. (190.06,89.96) .. controls (190.06,97.87) and (183.49,104.28) .. (175.38,104.28) .. controls (167.28,104.28) and (160.71,97.87) .. (160.71,89.96) -- cycle ;
\draw (130,28) node [anchor=north west][inner sep=0.75pt]   [align=left] {Level sets of $\rho$};
\end{tikzpicture}
\caption{Mass concentration.}
\label{fig:secIntro_PKSIPMStrip1}
\end{subfigure}%
\begin{subfigure}{.33\textwidth}%
\captionsetup{width=0.9\textwidth}
\centering
\begin{tikzpicture}[x=0.75pt,y=0.75pt,yscale=-0.9,xscale=0.9]
\draw    (80.76,148.79) -- (250.76,149.19) ;
\draw  [dash pattern={on 4.5pt off 4.5pt}]  (80.76,49.79) -- (80.76,149.27) ;
\draw    (80.76,49.79) -- (250.76,50.19) ;
\draw  [dash pattern={on 4.5pt off 4.5pt}]  (250.76,50.19) -- (250.76,149.67) ;
\draw  [draw opacity=0][fill={rgb, 255:red, 74; green, 144; blue, 226 }  ,fill opacity=0.45 ] (164.91,56.11) .. controls (173.01,56.11) and (179.57,76.23) .. (179.57,101.05) .. controls (179.57,125.87) and (173.01,145.99) .. (164.91,145.99) .. controls (156.81,145.99) and (150.25,125.87) .. (150.25,101.05) .. controls (150.25,76.23) and (156.81,56.11) .. (164.91,56.11) -- cycle ;
\draw  [draw opacity=0][fill={rgb, 255:red, 74; green, 144; blue, 226 }  ,fill opacity=0.45 ] (164.91,79.47) .. controls (170.54,79.47) and (175.11,93.93) .. (175.11,111.77) .. controls (175.11,129.61) and (170.54,144.07) .. (164.91,144.07) .. controls (159.28,144.07) and (154.71,129.61) .. (154.71,111.77) .. controls (154.71,93.93) and (159.28,79.47) .. (164.91,79.47) -- cycle ;
\draw  [draw opacity=0][fill={rgb, 255:red, 74; green, 144; blue, 226 }  ,fill opacity=1 ] (164.91,98.52) .. controls (167.11,98.52) and (168.89,108.03) .. (168.89,119.77) .. controls (168.89,131.51) and (167.11,141.02) .. (164.91,141.02) .. controls (162.71,141.02) and (160.93,131.51) .. (160.93,119.77) .. controls (160.93,108.03) and (162.71,98.52) .. (164.91,98.52) -- cycle ;
\draw (116.71,28) node [anchor=north west][inner sep=0.75pt]   [align=left] {Level sets of $\rho $};
\end{tikzpicture}
\caption{Stretching due to gravity.}
\label{fig:secIntro_PKSIPMStrip2}
\end{subfigure}%
\begin{subfigure}{.33\textwidth}%
\captionsetup{width=0.9\textwidth}
\centering
\begin{tikzpicture}[x=0.75pt,y=0.75pt,yscale=-0.9,xscale=0.9]
\draw    (79.76,148.79) -- (248.76,149.19) ;
\draw  [dash pattern={on 4.5pt off 4.5pt}]  (79.76,49.79) -- (79.76,149.27) ;
\draw    (79.76,49.79) -- (248.76,50.19) ;
\draw  [dash pattern={on 4.5pt off 4.5pt}]  (248.76,50.19) -- (248.76,149.67) ;
\draw  [draw opacity=0][fill={rgb, 255:red, 74; green, 144; blue, 226 }  ,fill opacity=0.45 ] (142.48,126.66) .. controls (153.16,117.3) and (151.02,98.84) .. (163.84,98.91) .. controls (176.65,98.98) and (174.52,118.15) .. (185.19,126.66) .. controls (195.87,135.17) and (222.57,127.37) .. (222.57,139.36) .. controls (222.57,151.34) and (191.6,147.87) .. (180.92,147.87) .. controls (170.24,147.87) and (151.02,147.87) .. (138.21,147.87) .. controls (125.4,147.87) and (105.11,150.49) .. (105.11,139.36) .. controls (105.11,128.23) and (131.8,136.02) .. (142.48,126.66) -- cycle ;
\draw  [draw opacity=0][fill={rgb, 255:red, 74; green, 144; blue, 226 }  ,fill opacity=0.45 ] (143.55,134.08) .. controls (154.23,128.12) and (153.16,120.46) .. (163.84,120.46) .. controls (174.52,120.46) and (171.31,128.12) .. (181.99,134.08) .. controls (192.67,140.04) and (206.55,137.83) .. (206.55,142.93) .. controls (206.55,148.04) and (172.38,147.19) .. (162.77,147.19) .. controls (153.16,147.19) and (120.06,148.18) .. (120.06,142.93) .. controls (120.06,137.69) and (132.87,140.04) .. (143.55,134.08) -- cycle ;
\draw  [draw opacity=0][fill={rgb, 255:red, 74; green, 144; blue, 226 }  ,fill opacity=1 ] (155.3,138.24) .. controls (158.5,135.68) and (158.5,131.8) .. (162.77,131.97) .. controls (167.04,132.14) and (167.04,136.84) .. (170.24,139.09) .. controls (173.45,141.33) and (185.19,139.67) .. (185.19,143.07) .. controls (185.19,146.48) and (169.18,146.44) .. (162.77,146.34) .. controls (156.36,146.24) and (138.21,146.48) .. (138.21,143.07) .. controls (138.21,139.67) and (152.09,140.79) .. (155.3,138.24) -- cycle ;
\draw (115.24,28) node [anchor=north west][inner sep=0.75pt]   [align=left] {Level sets of $\rho $};
\end{tikzpicture}
\caption{Boundary mixing.}
\label{fig:secIntro_PKSIPMStrip3}
\end{subfigure}%
\caption{Cartoon of how regularization of the solution to (\ref{eq:secIntro_PKSIPM}) occurs on the periodic channel $\T \times (0, \pi)$ whenever the mass begins to concentrate in the form of an approximate two-dimensional Dirac delta.}
\label{fig:secIntro_PKSIPMStrip}
\end{figure}

Of course, this intuitive argument is far from the proof, and it appears to be quite difficult to use it in a rigorous way. The proof in \cite{HKY25} does not directly follow the above intuition; it is difficult to control the interaction of nonlinear aggregation and transport. The key takeaway is that the horizontal mixing phenomenon is crucial for the global regularity to hold without requiring small initial data $\rho_0$ or large gravity $g$. This leads us to two remarks. First, the nonlinear combination of gravitational stretching and horizontal mixing highly suggests that the solution to (\ref{eq:secIntro_PKSIPM}) on the periodic strip likely does not have any simple limiting dynamics and may perhaps be turbulent. Second, the work of \cites{H24} considers a system very closely related to (\ref{eq:secIntro_PKSIPM}) except for the velocity $u$ being formally more regular than $\rho$ in both variables $x_1$, $x_2$. Correspondingly, \cites{H24} establishes global regularity in the case of sufficiently large coupling constant $g$. This can be considered as further evidence of the importance for the fluid velocity $u$ in the PKS-IPM system to have the same formal regularity as $\rho$ in $x_1$ direction to be effective in arresting blow up. However, it should be noted that it remains an open problem as to whether the system considered by \cites{H24} indeed admits singularity formation in the regime of very small $g$.

We emphasize that all of the above discussion on \cite{HKY25} pertains to the specific domain of the periodic strip $\T \times (0, \pi)$. The original goal of this work is to extend the global regularity result of (\ref{eq:secIntro_PKSIPM}) to a much larger class of general bounded domains $\Omega \subseteq \R^2$, with an example domain depicted by \Cref{fig:secIntro_PKSIPMGeneralBoundary}. Note that the top and bottom portions of the boundary are not flat but instead may exhibit very large curvature. Thus, if we consider the same thought experiment as we did in \Cref{fig:secIntro_PKSIPMStrip} for the periodic strip, it is not a-priori clear at all if there is enough room for the boundary fluid effects to spread out a large concentration of the solution $\rho$. Moreover, the example domain in \Cref{fig:secIntro_PKSIPMGeneralBoundary} also contains multiple ``bottle necks'', meaning horizontal cross sections of very short length. In such regions, one may similarly fear that horizontal mixing may not succeed in suppressing singularity formation.

\tikzset{every picture/.style={line width=0.75pt}}
\begin{figure}[!ht]%
\centering
\begin{tikzpicture}[x=0.75pt,y=0.75pt,yscale=-1,xscale=1]
\draw   (283.14,5.57) .. controls (316.64,5.66) and (335.79,17.55) .. (335.79,28.22) .. controls (335.79,38.89) and (286.99,34.87) .. (286.99,47.28) .. controls (286.99,59.69) and (360.19,62) .. (360.19,81.21) .. controls (360.19,100.42) and (317.81,92) .. (307.54,103.89) .. controls (297.26,115.78) and (349.92,126.43) .. (349.92,144.57) .. controls (349.92,162.71) and (307.54,159.77) .. (295.98,178.98) .. controls (284.42,198.19) and (292.13,229.92) .. (283.14,230.23) .. controls (274.15,230.53) and (285.71,197.12) .. (270.3,178.98) .. controls (254.89,160.84) and (215.22,162.64) .. (215.22,145.57) .. controls (215.22,128.5) and (269.01,115.08) .. (258.74,102.55) .. controls (248.46,90.02) and (206.09,98.86) .. (206.09,80.72) .. controls (206.09,62.58) and (279.29,60.08) .. (279.29,47.28) .. controls (279.29,34.47) and (231.77,37.83) .. (231.77,27.16) .. controls (231.77,16.49) and (249.64,5.48) .. (283.14,5.57) -- cycle ;
\draw  [draw opacity=0][fill={rgb, 255:red, 74; green, 144; blue, 226 }  ,fill opacity=0.45 ] (260.02,33.41) .. controls (260.02,31.64) and (261.46,30.21) .. (263.22,30.21) -- (304.34,30.21) .. controls (306.1,30.21) and (307.54,31.64) .. (307.54,33.41) -- (307.54,33.41) .. controls (307.54,35.17) and (306.1,36.61) .. (304.34,36.61) -- (263.22,36.61) .. controls (261.46,36.61) and (260.02,35.17) .. (260.02,33.41) -- cycle ;
\draw  [draw opacity=0][fill={rgb, 255:red, 74; green, 144; blue, 226 }  ,fill opacity=0.45 ] (265.6,33.77) .. controls (265.6,32.98) and (266.24,32.34) .. (267.03,32.34) -- (302.69,32.34) .. controls (303.48,32.34) and (304.13,32.98) .. (304.13,33.77) -- (304.13,33.77) .. controls (304.13,34.56) and (303.48,35.21) .. (302.69,35.21) -- (267.03,35.21) .. controls (266.24,35.21) and (265.6,34.56) .. (265.6,33.77) -- cycle ;
\draw  [draw opacity=0][fill={rgb, 255:red, 74; green, 144; blue, 226 }  ,fill opacity=1 ] (274.59,33.77) .. controls (274.59,33.21) and (275.05,32.75) .. (275.61,32.75) -- (294.12,32.75) .. controls (294.68,32.75) and (295.14,33.21) .. (295.14,33.77) -- (295.14,33.77) .. controls (295.14,34.33) and (294.68,34.79) .. (294.12,34.79) -- (275.61,34.79) .. controls (275.05,34.79) and (274.59,34.33) .. (274.59,33.77) -- cycle ;
\draw  [draw opacity=0][fill={rgb, 255:red, 74; green, 144; blue, 226 }  ,fill opacity=0.45 ] (208.24,82.49) .. controls (209.89,78.82) and (221.81,77.35) .. (234.87,79.21) .. controls (247.94,81.08) and (257.19,85.56) .. (255.54,89.23) .. controls (253.89,92.9) and (241.97,94.37) .. (228.91,92.51) .. controls (215.85,90.65) and (206.59,86.16) .. (208.24,82.49) -- cycle ;
\draw  [draw opacity=0][fill={rgb, 255:red, 74; green, 144; blue, 226 }  ,fill opacity=0.45 ] (210.17,83.31) .. controls (209.89,80.89) and (217.67,80.07) .. (227.54,81.48) .. controls (237.41,82.88) and (245.64,85.99) .. (245.91,88.41) .. controls (246.18,90.83) and (238.4,91.65) .. (228.53,90.24) .. controls (218.66,88.84) and (210.44,85.73) .. (210.17,83.31) -- cycle ;
\draw  [draw opacity=0][fill={rgb, 255:red, 74; green, 144; blue, 226 }  ,fill opacity=1 ] (216.72,84.77) .. controls (216.56,83.38) and (219.83,82.74) .. (224.02,83.34) .. controls (228.21,83.94) and (231.73,85.54) .. (231.89,86.93) .. controls (232.05,88.31) and (228.78,88.95) .. (224.59,88.35) .. controls (220.4,87.76) and (216.87,86.15) .. (216.72,84.77) -- cycle ;
\draw  [draw opacity=0][fill={rgb, 255:red, 74; green, 144; blue, 226 }  ,fill opacity=0.45 ] (283.14,150.44) .. controls (292.13,150.44) and (315.24,150.44) .. (298.55,172.85) .. controls (281.85,195.26) and (289.56,228.34) .. (283.14,228.09) .. controls (276.72,227.85) and (286.99,197.39) .. (269.01,174.99) .. controls (251.03,152.58) and (274.15,150.44) .. (283.14,150.44) -- cycle ;
\draw  [draw opacity=0][fill={rgb, 255:red, 74; green, 144; blue, 226 }  ,fill opacity=0.45 ] (283.76,168.47) .. controls (292.75,168.47) and (290.76,176.47) .. (287.76,187.47) .. controls (284.76,198.47) and (286.99,227.27) .. (283.14,227.03) .. controls (279.29,226.78) and (283.14,197.67) .. (279.29,188.86) .. controls (275.43,180.05) and (274.77,168.47) .. (283.76,168.47) -- cycle ;
\draw  [draw opacity=0][fill={rgb, 255:red, 74; green, 144; blue, 226 }  ,fill opacity=1 ] (283.45,200.32) .. controls (284.16,200.33) and (284.66,206.07) .. (284.58,213.15) .. controls (284.49,220.23) and (283.85,225.97) .. (283.14,225.96) .. controls (282.43,225.95) and (281.92,220.21) .. (282.01,213.13) .. controls (282.1,206.05) and (282.74,200.32) .. (283.45,200.32) -- cycle ;
\draw (304.33,184.29) node [anchor=north west][inner sep=0.75pt]   [align=left] {Level sets of $\rho$};
\end{tikzpicture}
\caption{Potential scenarios of chemotactic singularity formation in a domain with non-flat boundary geometry.}
\label{fig:secIntro_PKSIPMGeneralBoundary}
\end{figure}
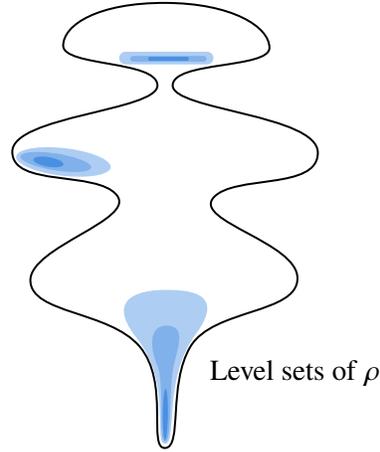

To address these difficulties, we establish a general anisotropic refinement of the classical Nash inequality (\ref{eq:secIntro_Nash}) for a large class of planar bounded domains.  This inequality provides extra control whenever the density is well-mixed in the horizontal direction or, in other words, close to being a stratified function. Furthermore, we can understand the stratification of a solution to the PKS-IPM system (\ref{eq:secIntro_PKSIPM}) via the introduction of a potential energy, see \Cref{subsec:Potential}, whose derivative relates the measurement of stratification to the solution's variance and $\dH^1$ norm. Altogether, we may combine the Nash stratification inequality, the potential energy estimates, and the a-priori variance estimate (\ref{eq:secIntro_AprioriVariance}) in a similar, but more subtle, manner to the discussion above in \Cref{subsec:BackChemo}. Doing so, we obtain uniform in time boundedness of the solution's variance and hence global well-posedness. This approach replaces and conceptualizes many arguments in \cite{HKY25} that heavily rely on the flatness of the periodic strip boundary including Fourier side arguments.

\subsection{Main results and organization}
\label{subsec:Results}

We begin in \Cref{sec:Setting} with a rigorous definition of the large class of two-dimensional domains that we treat in this article. The assumptions are chosen to be compatible with the important phenomena of stretching by gravity and horizontal mixing. Briefly summarized here, we take $\Omega$ to be an open, bounded, simply-connected subset of $\R^2$ with smooth boundary. Most importantly, we further impose that each horizontal cross-section of $\Omega$ be a connected interval. We do not believe that this assumptions is essential; however, treating more general cases introduces technicalities that we decided not to pursue here. We also focus on the harder case where the length of the horizontal cross-sections of $\Omega$ go to zero near the top and bottom of the domain, as in the case of the unit disk. If this does not occur, then the top and bottom portions of the domain boundary are flat which leads to significant simplifications; see \Cref{rem:secIntro_DomainAssumptions}.

On such a domain $\Omega$, we also provide in \Cref{sec:Setting} an explicit construction of the orthogonal projection, in an $L^2$ sense, of a smooth function $f \in C^\infty(\ov{\Omega})$ onto the space of ``stratified functions'': those which depend only on the vertical variable $x_2$. We denote the projection of $f$ by $\ov{f}(x_2)$ and we let $\wt{f}(x_1, x_2) := f(x_1, x_2) - \ov{f}(x_2)$. The following orthogonality relation holds:
\[
\nm{f-f_M}_2^2 = \nm{\ov{f}-f_M}_2^2 + \nm{\wt{f}}_2^2,
\]
where we note that $f_M$ is the mean of $f$ over the domain $\Omega$. In turn, one may consider the proportions
\[
0 \le \frac{\nm{\ov{f}-f_M}_2^2}{\nm{f-f_M}_2^2} \le 1,
\quad
0 \le \frac{\nm{\wt{f}}_2^2}{\nm{f-f_M}_2^2} \le 1,
\]
which sum to one. If the left-hand quantity is close to one, the function $f(x_1,x_2)$ is ``highly stratified''. If instead the right-hand quantity is close to one, the function $f(x_1, x_2)$ is ``highly unstratified''. This gives a rigorous method for discussing ``how stratified'' a smooth function $f \in C^\infty(\Omega)$ is. Lastly, \Cref{sec:Setting} also contains a summary of the function spaces and notation used throughout the entirety of this article.

The above framework suggests that, for functions which are highly stratified, one may expect an improvement in the scaling of the classical Nash inequality (\ref{eq:secIntro_Nash}). Indeed, in \Cref{sec:Nash}, we prove \Cref{thm:secNash_NashOnStratified} which provides control of $\nm{\ov{f}-f_M}_2$ by $\nm{f-f_M}_1$ and $\nm{\grad f}_2$ with a scaling corresponding to the formal choice of $d = 3/2$. However, it is unreasonable to expect an improved scaling for functions that are far from being stratified. Instead, we show in \Cref{thm:secNash_NashOnUnstratified} that $\nm{\wt{f}}_2$ obeys a Nash-type inequality with control by $\nm{f-f_M}_1$, $\nm{\grad f}_2$, and also the mixing norm $\dH_0\pre$ of $\partial_1 f$. Combining these Nash inequalities on $\ov{f}$ and $\wt{f}$, we are able to prove our first main theorem.

\begin{restatable}{theorem}{NashTotal}
\label{thm:secIntro_NashTotal}
Fix a domain $\Omega \subseteq \R^2$ satisfying \Cref{as:secSett_1,as:secSett_2,as:secSett_3}, as stated in \Cref{subsec:Domain}, and fix $f \in C^\infty(\ov{\Omega})$. Then,
\[
\nm{f-f_M}_2^2
\le
C_0 \nm{f-f_M}_1^{\frac{8}{7}}\nm{\grad f}_2^{\frac{6}{7}}
+ C_0 \nm{\partial_1 f}_{\dH_0\pre}^{\frac{8}{987}} \nm{f-f_M}_1^{\frac{416}{987}}\nm{f-f_M}_2^{\frac{54}{47}}\nm{\grad f}_2^{\frac{416}{987}},
\]
where $C_0 = C_0(\Omega) > 0$ is a universal constant. It immediately follows that
\[
\nm{f-f_M}_2^2
\le
C_0 \nm{f-f_M}_1^{\frac{8}{7}}\nm{\grad f}_2^{\frac{6}{7}}
+ C_1 \nm{\partial_1 f}_{\dH_0\pre}^{\frac{8}{987}} \nm{f-f_M}_1^{\frac{983}{987}}\nm{\grad f}_2^{\frac{983}{987}},
\]
by the classical Nash inequality (\ref{eq:secIntro_Nash}), where $C_1 = C_1(\Omega) > 0$.
\end{restatable}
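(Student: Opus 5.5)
The plan is to split $f-f_M$ into its stratified and unstratified parts using the orthogonal projection constructed in \Cref{sec:Setting}, and to bound each part with its own Nash-type inequality. Orthogonality gives
\[
\nm{f-f_M}_2^2 = \nm{\ov{f}-f_M}_2^2 + \nm{\wt{f}}_2^2 .
\]

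For the stratified term $\nm{\ov{f}-f_M}_2^2$ I will invoke \Cref{thm:secNash_NashOnStratified}. It controls $\nm{\ov{f}-f_M}_2$ by $\nm{f-f_M}_1$ and $\nm{\grad f}_2$ with the Nash scaling of formal dimension $d=3/2$. With $d=3/2$ the Nash scaling reads $\nm{\cdot}_2^{1+4/3}\lesssim \nm{\cdot}_1^{4/3}\nm{\grad\cdot}_2$. Raising this to the power $6/7$ gives exactly
\[
\nm{\ov f-f_M}_2^2\lesssim \nm{f-f_M}_1^{8/7}\nm{\grad f}_2^{6/7},
\]
which is the first term. Two points need checking. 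First, the statement of that theorem has to be phrased with $\nm{f-f_M}_1$ and $\nm{\grad f}_2$ on the right, rather than the corresponding norms of $\ov f$. Second, if it is phrased with the norms of $\ov f$, I need a contraction property: $\nm{\ov f - f_M}_1\lesssim \nm{f-f_M}_1$ holds by Jensen on each connected cross-section, since $\ov f$ is a weighted average over horizontal slices. Also $\nm{\partial_2\ov f}_2\lesssim\nm{\grad f}_2$, but this may fail near the top and bottom where cross-sections shrink. That degeneracy is presumably where the $3/2$ dimension comes from, and I would take it as already handled inside \Cref{thm:secNash_NashOnStratified}.

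For the unstratified term $\nm{\wt f}_2^2$ I will invoke \Cref{thm:secNash_NashOnUnstratified}. It bounds $\nm{\wt f}_2$ by the mixing norm $\nm{\partial_1 f}_{\dH_0\pre}$ together with $\nm{f-f_M}_1$, $\nm{\grad f}_2$, and possibly $\nm{f-f_M}_2$. The heuristic behind it is an interpolation on each slice. A function with zero horizontal mean has $L^2$ norm controlled by an interpolation between its horizontal antiderivative and its gradient, and the antiderivative is essentially measured by $\nm{\partial_1 f}_{\dH_0\pre}$. Localizing away from the thin top and bottom regions, where a crude Nash bound is used instead, costs powers, and optimizing the cutoff height produces the odd exponents. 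The step I expect to be the main obstacle is bookkeeping rather than new analysis: putting the bound of \Cref{thm:secNash_NashOnUnstratified} into exactly the homogeneous form
\[
C_0\nm{\partial_1 f}_{\dH_0\pre}^{8/987}\nm{f-f_M}_1^{416/987}\nm{f-f_M}_2^{54/47}\nm{\grad f}_2^{416/987},
\]
by applying Young's inequality or plain monotonicity ($\nm{\wt f}_2\le\nm{f-f_M}_2$) to absorb any leftover factors. A consistency check is to verify that the total homogeneity matches: $8/987+416/987+1134/987+416/987=1974/987=2$.

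For the second inequality I substitute the classical Nash inequality (\ref{eq:secIntro_Nash}) with $d=2$, namely $\nm{f-f_M}_2\le C_{\mathrm N,2}^{1/2}\nm{f-f_M}_1^{1/2}\nm{\grad f}_2^{1/2}$. Raising it to the power $54/47=1134/987$ and inserting it into the second term adds $567/987$ to both the $L^1$ and gradient exponents. This gives $416/987+567/987=983/987$ for each, as claimed, with $C_1=C_0C_{\mathrm N,2}^{567/987}$.
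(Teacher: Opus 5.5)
Your overall architecture is the paper's: orthogonal splitting, the two component theorems, then classical Nash for the second inequality. Your derivation of the second inequality, with $C_1=C_0C_{\mathrm{N},2}^{27/47}$, is correct.

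The gap is in the step that is the actual content of this proof: turning the two component bounds into the stated exponents. This cannot be done term by term.

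\Cref{thm:secNash_NashOnStratified} is not a pure $d=3/2$ Nash inequality for $\ov{f}$. It reads $\nm{\ov{f}-f_M}_2^{8/3}\le C_6\nm{f-f_M}_2^{1/3}\nm{f-f_M}_1^{4/3}\nm{\grad f}_2$, that is, $\nm{\ov{f}-f_M}_2^2\le C_6^{3/4}\nm{f-f_M}_2^{1/4}\nm{f-f_M}_1\nm{\grad f}_2^{3/4}$. You cannot drop the factor $\nm{f-f_M}_2^{1/4}$ on the $\ov{f}$ piece alone, because $\nm{f-f_M}_2$ may be much larger than $\nm{\ov{f}-f_M}_2$. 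Removing it with the classical Nash inequality only gives $\nm{f-f_M}_1^{9/8}\nm{\grad f}_2^{7/8}$, not the exponents $8/7$ and $6/7$.

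Similarly, \Cref{thm:secNash_NashOnUnstratified} raised to the power $52/141$ gives $\nm{\wt{f}}_2^2\le C_{11}^{52/141}\nm{\partial_1 f}_{\dH_0\pre}^{1/141}\nm{f-f_M}_1^{52/141}\nm{f-f_M}_2^{59/47}\nm{\grad f}_2^{52/141}$. The target exponent $8/987$ on the mixing norm is strictly larger than $1/141=7/987$. Neither $\nm{\wt{f}}_2\le\nm{f-f_M}_2$ nor Young's inequality can raise the exponent of $\nm{\partial_1 f}_{\dH_0\pre}$, since only an upper bound ($\lesssim\nm{\wt{f}}_2$) is available for it. So your ``bookkeeping'' step cannot produce the second term.

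The missing idea is to combine first and rearrange afterwards. Add the two bounds, so that by orthogonality the left-hand side is the full variance $\nm{f-f_M}_2^2$. Then divide by $\nm{f-f_M}_2^{1/4}$ (the case $\nm{f-f_M}_2=0$ is trivial), which absorbs the extra factor from the stratified estimate:
\[
\nm{f-f_M}_2^{7/4}\le C_6^{3/4}\nm{f-f_M}_1\nm{\grad f}_2^{3/4}+C_{11}^{52/141}\nm{\partial_1 f}_{\dH_0\pre}^{1/141}\nm{f-f_M}_1^{52/141}\nm{f-f_M}_2^{189/188}\nm{\grad f}_2^{52/141}.
\]
Now raise both sides to the power $8/7$. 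This single global power produces all the stated exponents at once. The first term gets $8/7$ and $6/7$. The second gets $8/987$ on the mixing norm, $416/987$ on the $L^1$ and gradient norms, and $\tfrac{189}{188}\cdot\tfrac{8}{7}=\tfrac{54}{47}$ on the $L^2$ norm. In particular, the $d=3/2$ scaling is a property of the total variance after this combination, not of $\ov{f}$ by itself. That is exactly why \Cref{rem:secIntro_NashStratInterp1} carries a function-dependent coefficient.
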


\Cref{thm:secIntro_NashTotal} provides the desired quantitative improvement of the two-dimensional classical Nash inequality (\ref{eq:secIntro_Nash}). We conclude \Cref{sec:Nash} with a discussion on the domain assumptions, on generalizing the inequality, and on the sharpness of the inequality; see \Cref{rem:secIntro_DomainAssumptions}.

In \Cref{sec:GWP}, we begin with a review of the easily verified properties of solutions to the PKS-IPM system (\ref{eq:secIntro_PKSIPM}). In \Cref{subsec:Variance}, we prove the a-priori variance inequality (\ref{eq:secIntro_AprioriVariance}), see \Cref{cor:secGWP_AprioriVarianceIneq}, and that the variance $\nm{\rho(\cdot, t) - \rho_M}_2^2$ is a control quantity for global regularity; see \Cref{thm:secGWP_LWP}. We then introduce the potential energy of a solution in \Cref{subsec:Potential}. It is easily observed that the potential energy is bounded and that its time derivative relates the $\dH_0\pre$ norm of $\rho$, the $\dH^1$ norm of $\rho$, and the variance. Collecting the Nash stratification inequality \Cref{thm:secIntro_NashTotal}, the a-priori variance inequality (\ref{eq:secIntro_AprioriVariance}), and the potential energy estimates provides a system of differential inequalities that, with a short ODE style argument, leads to our second main result.

\begin{restatable}{theorem}{GWP}
\label{thm:secIntro_GWP}
Fix a domain $\Omega \subseteq \R^2$ satisfying \Cref{as:secSett_1,as:secSett_2,as:secSett_3}, as stated in \Cref{subsec:Domain}, and fix any choice of gravitational constant $g \neq 0$. Then, for any nonnegative initial data $\rho_0 \in C^\infty(\ov{\Omega})$ with $\partial_n \rho|_{\partial \Omega} = 0$, the corresponding unique solution $\rho(x,t)$ to (\ref{eq:secIntro_PKSIPM}) is nonnegative and exists for all time in the space $C^\infty(\ov{\Omega} \times [0,\infty))$. Furthermore, the variance supremum satisfies
\[
\sup_{0 \le t < \infty} \nm{\rho(\cdot, t) - \rho_M}_2^2
\le C_2 \max\curly*{\nm{\rho_0 - \rho_M}_2^2,\, C_3},
\]
where $C_2, C_3 > 0$ are finite constants depending only on $\Omega$, $|g|$, and $\rho_M$.
\end{restatable}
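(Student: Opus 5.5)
The plan is to combine three ingredients: the local theory with the variance as blow-up criterion (\Cref{thm:secGWP_LWP}), the a-priori variance inequality (\ref{eq:secIntro_AprioriVariance}) from \Cref{cor:secGWP_AprioriVarianceIneq}, and \Cref{thm:secIntro_NashTotal}. The first reduces the theorem to a uniform-in-time bound on $Y(t):=\nm{\rho(\cdot,t)-\rho_M}_2^2$. Nonnegativity and mass conservation give $\nm{\rho-\rho_M}_1\le 2|\Omega|\rho_M$, so every $L^1$ factor in the Nash inequality is a constant depending only on $\Omega$ and $\rho_M$. Write $D(t):=\nm{\grad\rho}_2^2$ and $m(t):=\nm{\partial_1\rho}_{\dH_0\pre}$.

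\textbf{Step 1 (potential energy).} Consider $P(t)=\int_\Omega x_2\,\rho\,dx$, which satisfies $|P|\le C\rho_M$ because $\Omega$ is bounded and $\rho\ge 0$. Differentiating along (\ref{eq:secIntro_PKSIPM}) gives four contributions:
\begin{itemize}
\item[(a)] The transport term gives $\int\rho u_2$. By the Biot--Savart law (\ref{eq:secIntro_BiotSavart}), $u_2=g\,\partial_1(-\lap_D)\pre\partial_1\rho$, so this term equals $-g\,m^2$ up to the sign convention; it has a definite sign.
\item[(b)] The diffusion term reduces, using $\partial_n\rho=0$, to a boundary integral $-\int_{\partial\Omega}\rho\,n_2$. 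A trace inequality bounds it by $C(1+D^{1/4})$, or by $C(1+Y^{1/2}+D^{1/2})$.
\item[(c)] The chemotaxis term is $-\int\rho\,\partial_2 c$. It is bounded by $C(1+Y)$ via elliptic estimates for $c$.
\end{itemize}
Integrating in time on any $[t_0,t_1]$ yields
\[
|g|\int_{t_0}^{t_1} m^2\,dt \le C + C\int_{t_0}^{t_1}\paren*{1+Y+D^{1/2}}dt.
\]

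\textbf{Step 2 (pointwise dichotomy).} I use the first, sharper form of \Cref{thm:secIntro_NashTotal}. At each time, either the classical-type term dominates, or the mixing term does.
\begin{itemize}
\item[(i)] If the first term dominates, then $Y\le CD^{3/7}$, so $D\ge cY^{7/3}$. Then (\ref{eq:secIntro_AprioriVariance}) gives $Y'\le CY^2-cY^{7/3}<0$ once $Y$ is large.
\item[(ii)] If the second term dominates, then $Y^{20/47}\le C\,m^{8/987}D^{208/987}$, so $D\ge c\,Y^{2+\delta}m^{-1/26}$ with $\delta=\tfrac{19740}{9776}-2>0$. Hence $Y'\le 0$ unless $m\ge cY^{26\delta}$.
\end{itemize}
So growth of a large variance requires a large mixing norm, with the threshold a positive power of $Y$.

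\textbf{Step 3 (ODE barrier argument; the main obstacle).} Fix a large threshold $K$, to be chosen depending only on $\Omega$, $|g|$ and $\rho_M$. I argue by contradiction on an interval $[t_0,t_1]$ along which $Y$ rises from $K$ to $2K$ while staying in $[K,2K]$.
\begin{itemize}
\item Since $Y'\le CY^2$, the interval has length $\tau\ge c/K$.
\item The total increase $K$ is accumulated only on the "bad" set where $m\gtrsim K^{26\delta}$. There $Y'\le CK^2$, so the bad set has measure at least $c/K$. Hence $\int m^2\,dt\ge cK^{52\delta-1}$.
\item Integrating (\ref{eq:secIntro_AprioriVariance}) gives $\int D\,dt\le K+CK^2\tau$. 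Feeding this into Step 1 bounds $\int m^2\,dt$ by $C(1+K\tau+\tau^{1/2}(K+K^2\tau)^{1/2})$.
\end{itemize}
Closing this comparison is the delicate point, because the exponent $\delta$ is tiny. To make it work I would not use the crude bound $Y'\le CY^2$ on the good set. Instead I would localize to the last time interval before $Y$ first reaches $2K$, of length $\sim 1/K$, and restrict to the bad set where $D$ is comparable to $Y^2$. The aim is to bound the $D^{1/2}$ and $Y$ contributions in Step 1 by $O(K^{1/2})$ and $O(1)$ there, so that the lower bound $K^{52\delta-1}$ must be compared against an $O(1)$ upper bound. If this still fails, the fallback is to run a weighted functional such as $Y^{-\gamma}+\eps P$, tuning $\gamma$ so that the $m^2$ gain from $P'$ offsets exactly the regime of case (ii).

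\textbf{Conclusion.} The contradiction shows that $Y$ can never exceed $\max\{2Y(0),2K\}$. This gives the stated bound with $C_2=2$ and $C_3=K$, and \Cref{thm:secGWP_LWP} then upgrades the variance bound to global $C^\infty$ regularity.
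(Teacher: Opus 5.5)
There is a genuine gap. Step 3 is not closed, and with your Step 1 estimates it cannot be closed.

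\textbf{Criticality of a single doubling.} Your exponent is exactly $\delta = 19740/9776 - 2 = 1/52$. So the threshold in Step 2(ii) is $m \gtrsim Y^{1/2}$, and your lower bound $\int m^2\,dt \gtrsim K^{52\delta-1}$ is $K^0$. A single doubling $K \to 2K$ therefore costs only an $O(1)$ amount of $\int m^2\,dt$, independent of $K$. Meanwhile the potential-energy budget already contains $h|\Omega|\rho_M/|g|$, which is large for weak coupling. This is the scale-criticality of the problem, not a loose constant. No argument confined to one dyadic excursion can give a contradiction, and $C_2 = 2$ should not be expected: the paper's $C_2 = e^{B_*}$ grows as $|g| \to 0$. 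The contradiction has to come from forcing $\log Y$ to rise by a large amount $B$. That requires a bound on $\int m^2\,dt$ that does not grow with the uncontrolled time spent in the excursion.

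\textbf{Your Step 1 bounds are critical.} By classical Nash, $Y \lesssim D^{1/2}$, so both $C(1+Y)$ for chemotaxis and $C(1+D^{1/2})$ for diffusion are only controlled by $C(1+D^{1/2})$. But $\int D^{1/2}\,dt$ is exactly the currency in which each rise of $\log Y$ is paid. With a coefficient $\sim 1/|g|$, these terms can never be absorbed.

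Your alternative $C(1+D^{1/4})$ is false. A nonnegative Neumann boundary layer of mass $M$ and width $\eps$ along an arc where $|n_2| \ge 1/2$ has $\int_{\partial\Omega}\rho\,|n_2| \sim M/\eps$, but $D^{1/4} \sim M^{1/2}\eps^{-3/4}$; the sharp scaling is $D^{1/3}$.

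The paper instead proves strictly subcritical bounds. For diffusion it gets $C\rho_M^{1/4}\nm{\grad\rho}_2^{3/4}$ (\Cref{lem:secGWP_DiffusionEst}, via a Besov trace embedding and the Brezis--Mironescu inequality). For chemotaxis it gets $C(\rho_M^{2/3}\nm{\rho-\rho_M}_2^{4/3} + \rho_M^{5/3}\nm{\rho-\rho_M}_2^{1/3})$ (\Cref{lem:secGWP_ChemoEst}, via $W^{1,6/5} \hookrightarrow L^2$ and $L^{6/5}$ elliptic estimates).

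\textbf{Missing accounting over the whole excursion.} You only charge the set where $Y' > 0$, but the dissipation during times when $Y$ does not grow must also be paid for. The paper's route (\Cref{prop:secGWP_GeneralArgument}) works on an interval where the variance rises from $e^A$ to $e^{A+B}$:
\begin{itemize}
\item Write $\frac{d}{dt}\log Y \le CY - D/Y \le CY - cD^{1/2}$, using classical Nash.
\item Insert the second form of \Cref{thm:secIntro_NashTotal}, $Y \le a_2 D^{3/7} + a_3 m^{8/987} D^{983/1974}$, and absorb the $D^{3/7}$ term for $A$ large.
\item Integrate. The exponents $q = 4/987$ on $m^2$ and $(1-q)/2$ on $D$ are exactly H\"older-conjugate against $D^{1/2}$, so $B > 0$ forces $\int D^{1/2}\,dt \lesssim \int m^2\,dt$ over the entire interval.
\item The subcritical budget then gives $\int m^2\,dt \le a_5(1 + \int D^{3/8}\,dt + \int Y^{2/3}\,dt) \le a_5 + \frac14\int D^{1/2}\,dt$ once $Y \ge e^A$. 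This uses $D^\theta \le D^{1/2}(Y/a_1)^{2\theta-1}$ for $\theta < 1/2$. Hence $\int D^{1/2}\,dt = O(1)$.
\item Integrating $(\log Y)' \le CY$ gives $B \le C\int Y\,dt \le C'\int D^{1/2}\,dt$, so taking $B$ large gives the contradiction.
\end{itemize}

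Your pointwise dichotomy could be run in the same spirit. For $Y$ large and a small fixed $\mu$, times with $m \le \mu Y^{1/2}$ satisfy $(\log Y)' \le -Y$, which charges lingering. But this only works with the logarithmic, many-doublings bookkeeping and the subcritical budget. The fallback functional $Y^{-\gamma} + \eps P$ is not specified enough to supply either.
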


Thus, the global regularity result of \cite{HKY25} for the periodic strip indeed generalizes to a much larger class of bounded domains, including the disk. This reveals the PKS-IPM system (\ref{eq:secIntro_PKSIPM}) to be an example where regularization by fluid flow is quite a robust phenomenon, despite the domain $\Omega$ being allowed to have boundary regions of very large curvature or horizontal cross-sections with thin width. The study of fluid advection as a regularization mechanism relates to many active areas of research. For now, we conclude this section with some remarks on the main theorems.

\begin{remark}
We emphasize that $f$ does not need to satisfy any sign or boundary conditions for \Cref{thm:secIntro_NashTotal} to hold true, nor need it be a solution of a particular differential equation. Also, establishing an analogue of this inequality in dimension $d \ge 3$ remains an open and interesting question.
\end{remark}

\begin{remark}\label{rem:secIntro_DomainAssumptions}
In the above, we have focused on domains with connected cross-sections with length going to zero when approaching the top and bottom of the domain. Recall that these assumptions will be more precisely stated in \Cref{sec:Setting}. The cases where the top or bottom domain is flat follows more easily, which we discuss in more detail in \Cref{subsubsec:NashDiscussion}. There, we also remark upon other ways of extending the assumptions in \Cref{sec:Setting} --- many of which are imposed simply for the nicer presentation of arguments. Moreover, we do not claim sharpness of the constants nor exponents in the inequalities given by \Cref{thm:secIntro_NashTotal}. We also discuss such questions in \Cref{subsubsec:NashDiscussion}.
\end{remark}

\begin{remark}
The unit disk is an important example of a domain satisfying \Cref{as:secSett_1,as:secSett_2,as:secSett_3}, stated in \Cref{subsec:Domain}. We note that many finite-time blow-up solutions of the parabolic-elliptic PKS equation (\ref{eq:secIntro_PKSGeneral},\ref{eq:secIntro_PKSeqC}) have been constructed on the disk; see for example \cite{HV96b}. Moreover, these singularity formation scenarios can occur on more general domains as well \cites{JL92,N95,N01}. Therefore, \Cref{thm:secIntro_GWP} indeed offers an example of singularity suppression due to fluid advection.
\end{remark}

\begin{remark}\label{rem:secIntro_OtherContexts}
The $\dH_0\pre$ seminorm of $\partial_1 f$ naturally appears when studying the potential energy associated to a solution of the incompressible porous media equation, as first noticed in the work \cite{KY23} on small scale creation for the IPM equation. This led the authors of \cite{HKY25} on the PKS-IPM system for the periodic strip to consider potential energy as well, as a key object in the analysis. We do the same, see \Cref{subsec:Potential}, in order to establish the boundedness of the solution's variance in \Cref{thm:secIntro_GWP}. Many other very recent works have also leveraged similar potential energy structures, including \cites{BJPW25,KPY25,P25} for the IPM and Boussinesq equations. We expect then that \Cref{thm:secIntro_NashTotal} (along with the inequalities given by \Cref{thm:secNash_NashOnStratified} for $\ov{f}$ and \Cref{thm:secNash_NashOnUnstratified} for $\wt{f}$) may turn out to be useful when working in those settings.
\end{remark}

\begin{remark}
\Cref{thm:secIntro_GWP} is the first result, to the authors' knowledge, that provides a robust example of singularity suppression by active fluid advection in regards to boundary geometry of the domain. Indeed, we emphasize that the initial data to (\ref{eq:secIntro_PKSIPM}) may be arbitrarily large; the coupling constant $g \neq 0$ may be arbitrarily small; the limiting dynamics are not suspected to be simple; and the domain may contain arbitrarily thin cross sections or boundary regions of arbitrarily large curvature. Interestingly, the usage of \Cref{thm:secIntro_NashTotal} reduces the proof of \Cref{thm:secIntro_GWP} to a system of four integro-differential inequalities that can be handled with quite general ODE arguments: see \Cref{prop:secGWP_GeneralArgument}.
\end{remark}

\section{Setting and preliminaries}
\label{sec:Setting}

The goal for \Cref{sec:Setting} is to provide a framework for decomposing any smooth function $f(x_1,x_2)$ into the sum of a stratified function $\ov{f}(x_2)$ and a function $\wt{f}(x_1,x_2)$ that is mean-zero on each horizontal cross-section of the domain. To do so, we require some assumptions on the domain $\Omega$ that are explained in detail in \Cref{subsec:Domain}. Then, \Cref{subsec:FuncDecomp} provides the formal construction of $\ov{f}$ and $\wt{f}$ along with some basic properties that will be used repeatedly throughout the remainder of this work. Lastly, the commonly used function spaces along with other important notation are summarized in  \Cref{subsec:Notation}, for the reader's ease of reference.

\subsection{Domain assumptions and normalizations}
\label{subsec:Domain}

The theorems listed in \Cref{subsec:Results} hold true for a domain $\Omega \subseteq \R^2$ satisfying the following assumptions.

\begin{assumption}\label{as:secSett_1}
$\Omega$ is a nonempty, open, bounded, and connected subset of $\R^2$ with smooth boundary $\partial \Omega$. We denote the lower-most height and upper-most height of the domain $\Omega$ by
\[
h_L := \inf\curly{x_2\ |\ x \in \Omega},
\quad
h_U := \sup\curly{x_2\ |\ x \in \Omega},
\]
where, of course, $-\infty < h_L < h_U < \infty$.
\end{assumption}

\begin{assumption}\label{as:secSett_2}
Given $\Omega$ satisfying \Cref{as:secSett_1}, there further exist smooth charts, denoted $F_\ell, F_r \in C^\infty((h_L, h_U)) \cap C([h_L,h_U])$, that partition the boundary $\partial \Omega$ into left and right sides. Rigorously, we mean that
\[
\Omega = \curly*{x \in \R^2\ |\ h_L < x_2 < h_U,\ F_\ell(x_2) < x_1 < F_r(x_2)},
\]
and also that
\[
a := F_\ell(h_L) = F_r(h_L),
\quad
b := F_\ell(h_U) = F_r(h_U),
\quad
F_\ell(x_2) < F_r(x_2),
\]
for all $h_L < x_2 < h_U$. Note that the derivatives $F_\ell'(x_2)$ and $F_r'(x_2)$ are finite for all $h_L < x_2 < h_U$ while the derivatives $F_\ell'(h_L)$, $F_r'(h_L)$, $F_\ell'(h_U)$, $F_r'(h_U)$ must all be infinite.
\end{assumption}

\begin{definition}
Given $\Omega$ satisfying \Cref{as:secSett_1,as:secSett_2}, we denote the horizontal cross-section of $\Omega$ at height $h_L < x_2 < h_U$ by
\[
I_{x_2} := \{x_1 \in \R\ |\ F_\ell(x_2) < x_1 < F_r(x_2) \}.
\]
Note $I_{x_2}$ is a connected interval of length $|I_{x_2}| = F_r(x_2)-F_\ell(x_2)$, which is smoothly varying as $x_2$ varies in $(h_L,h_U).$
\end{definition}

\begin{definition}
Suppose $\Omega \subseteq \R^2$ satisfies \Cref{as:secSett_1,as:secSett_2}. Fix any $0 < \eps < (h_U-h_L)/2$. We define
\[
L_\eps := \curly{x \in \Omega\ |\ x_2 \in (h_L,h_L+\eps)},
~
U_\eps := \curly{x \in \Omega\ |\ x_2 \in (h_U-\eps, h_U)},
~
\Omega_\eps := \Omega \setminus \paren*{L_\eps \cup U_\eps}.
\]
Thus, $L_\eps$ and $U_\eps$ are the lower and upper $\eps$-caps of $\Omega$ while $\Omega_\eps$ is the bulk of the domain.
\end{definition}

\begin{assumption}\label{as:secSett_3}
Given $\Omega$ satisfying \Cref{as:secSett_1,as:secSett_2}, there further exists $0 < \eps_* < (h_U-h_L)/2$ sufficiently small so that the smooth chart $x_2 = F_L(x_1) \in C^\infty$ for the lower $\eps_*$-cap boundary $\partial L_{\eps_*} \cap \partial \Omega$ admits the Taylor expansion
\[
F_L(x_1) = h_L + K_L (x_1-a)^{n_L} + O((x_1-a)^{n_L+1}),
\]
for an even integer $n_L \ge 2$ and some constant $K_L > 0$. Similarly, the smooth chart $x_2 = F_U(x_1) \in C^\infty$ for the upper $\eps_*$-cap boundary $\partial U_{\eps_*} \cap \partial \Omega$ admits the Taylor expansion
\[
F_U(x_1) = h_U-K_U (x_1-b)^{n_U} + O((x_1-b)^{n_U+1}),
\]
for an even integer $n_U \ge 2$ and some constant $K_U > 0$.
\end{assumption}

Every result in \Cref{sec:Nash} will hold for $\Omega \subseteq \R^2$ satisfying \Cref{as:secSett_1,as:secSett_2,as:secSett_3}. However, we will usually also impose the following further normalizations on $\Omega$, without loss of generality, in order to simplify proofs.

\begin{normalization}\label{norm:secSett_h}
Suppose $\Omega$ satisfies \Cref{as:secSett_1,as:secSett_2,as:secSett_3}. By translation of $\Omega$, we may set $h_L = 0$ and $a = 0$. We also relabel $h := h_U > 0$. This will be useful as it gives
\[
\int_\Omega f(x)\, dx = \int_0^h \int_{I_{x_2}} f(x)\, dx_1\, dx_2
\]
for any reasonable function $f$ on $\Omega$, among many other similar integral decompositions.
\end{normalization}

\begin{normalization}\label{norm:secSett_eps}
Given $\Omega$ satisfying \Cref{as:secSett_1,as:secSett_2,as:secSett_3} and \Cref{norm:secSett_h}, we may further shrink $\eps_*$ so that $0 < \eps_* < h/4$ and moreover we have that
\[
|I_{\eps_*}| = \min\curly*{ |I_{x_2}|\ |\  \eps_* \le x_2 \le 3h/4},
\quad
|I_{h-\eps_*}| = \min\curly*{ |I_{x_2}|\ |\ h/4 \le x_2 \le h-\eps_*}.
\]
We again shrink $\eps_*$ to ensure $0 < \eps_* < 1$, so that $\eps_*^{1/n_U},\ \eps_*^{1/n_L} \ge \eps_*^{1/2}$. For a final time, we shrink $\eps_*$ so that the map $x_2 \mapsto |I_{x_2}|$ is increasing on $(0,\eps_*)$ and decreasing on $(h-\eps_*, h)$. This is possible due to Assumption \ref{as:secSett_3}. In particular, there exists a finite constant $K_* = K_*(\Omega) > 0$ (that can be expressed in terms of $n_L$, $K_L$, $n_U$, $K_U$) such that
\begin{equation}\label{eq:secSett_DecayOfIx2}
|I_{x_2}| = F_r(x_2) - F_\ell(x_2) \ge K_*\eps^{1/2},
\end{equation}
for any $0 < \eps \le \eps_*$ and any $\eps < x_2 < h-\eps$.
\end{normalization}

Lastly, we define some further constants that will be useful for estimates across \Cref{sec:Nash}.

\begin{definition}\label{def:secSett_K*R*}
Given $\Omega$ satisfying \Cref{as:secSett_1,as:secSett_2,as:secSett_3} as well as \Cref{norm:secSett_h,norm:secSett_eps}, we define two further constants associated to the bulk
\[ \Omega_{\eps_*/4} = \left\{ x=(x_1,x_2) \in \Omega\ \left|\ \frac{\eps_*}{4} \leq x_2 \leq h - \frac{\eps_*}{4} \right. \right\}. \]
We write
\begin{equation}\label{aux0307a}
R_*
:=
\frac{
\max \curly*{|I_{x_2}|\ |\ x=(x_1,x_2) \in \Omega_{\eps_*/4}}
}
{
\min\curly*{ |I_{x_2}|\ |\ x=(x_1,x_2) \in \Omega_{\eps_*/4}}
}
\end{equation}
to denote the ratio of the longest and shortest horizontal cross-sections of $\Omega_{\eps_*/4}$. We also write
\begin{equation}\label{aux128a}
M_* :=
\max\curly{
1,\ |F_r'(x_2)|,\ |F_\ell'(x_2)|\ |\
x=(x_1,x_2) \in \Omega_{\eps_*/4}
}
\end{equation}
to denote the maximal derivative of the smooth charts for the bulk boundary $\partial \Omega_{\eps_*/4} \cap \partial \Omega$. Note that it holds $1 \le R_* = R_*(\Omega) < \infty$ and also $1 \le M_* = M_*(\Omega) < \infty$.
\end{definition}

\subsection{Function decomposition into stratified and unstratified components}
\label{subsec:FuncDecomp}

We work under the following hypotheses in \Cref{subsec:FuncDecomp}.

\begin{hypotheses}
\label{hyp:subsecFuncDecomp}
We fix a domain $\Omega \subseteq \R^2$ satisfying \Cref{as:secSett_1,as:secSett_2,as:secSett_3} as well as \Cref{norm:secSett_h,norm:secSett_eps} and fix a smooth function $f \in C^\infty(\ov{\Omega})$. We use the notation defined in \Cref{subsec:Domain}.
\end{hypotheses}

We are interested in providing a notion for ``how stratified'' the function $f(x_1,x_2)$, of two variables, may be. To do so, we should associate to $f(x_1,x_2)$ another function $\ov{f}(x_2)$ that is defined on $\Omega$ but depends only on the second variable. A natural way of doing so is to average out the first variable as follows.

\begin{definition}
Under \Cref{hyp:subsecFuncDecomp}, we define the stratified component of $f \in C^\infty(\ov{\Omega})$ to be the (single-variable) function $\ov{f} \in C^\infty((0,h))$ where
\[
\ov{f}(x_2) := \frac{1}{|I_{x_2}|}\int_{I_{x_2}}f(x_1,x_2)\, dx_1,
\]
for all $0 < x_2 < h$. Note that $|I_{x_2}| > 0$ for any $0 < x_2 < h$ by \Cref{as:secSett_2}. Also note that, generally, $\ov{f} \notin C^\infty(\ov{\Omega})$ as it can have non-smooth behavior near $x_2=0$ and $x_2=h$.
\end{definition}

The proofs of the following and a few subsequent lemmas are immediate and will be omitted.

\begin{lemma}
Under \Cref{hyp:subsecFuncDecomp}, the following hold true.
\begin{enumerate}[label={(\alph*)}]
\item If $f(x_1, x_2) = f(x_2)$ originally depends only on the second variable, then $\ov{f} = f$. In particular, $\ov{c} = c$ for any constant $c \in \R$.
\item $\ov{f} \in L^\infty(\Omega) \subseteq L^2(\Omega)$ with $|\ov{f}(x_2)| \le \nm{f}_\infty$ for all $0 < x_2 < h$.
\item For any $0 < x_2 < h$,
\[
\int_{I_{x_2}} \ov{f}(x_2)\, dx_1 = \int_{I_{x_2}} f(x_1,x_2)\, dx_1,
\quad
\int_\Omega \ov{f}(x_2)\, dx = \int_\Omega f(x_1,x_2)\, dx.
\]
We note however that the equality
\[
\int_{0}^{h} \ov{f}(x_2)\, dx_2 = \int_\Omega f(x_1,x_2)\, dx
\]
does not necessarily hold true, which is a reason why we typically treat $\ov{f}$ as a function defined on $\Omega$.
\item The map $f \mapsto \ov{f}$ from $C^\infty(\ov{\Omega})$ into $L^2(\Omega)$, where both function spaces are endowed with the standard $L^2$ inner product, is a continuous and orthogonal linear projection.
\end{enumerate}
\end{lemma}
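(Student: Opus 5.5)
Each of the four items follows by writing integrals over $\Omega$ as iterated integrals via \Cref{norm:secSett_h}, $\int_\Omega F\,dx = \int_0^h\int_{I_{x_2}} F\,dx_1\,dx_2$, and using that $\ov{f}(x_2)$ is constant in $x_1$ on each cross-section $I_{x_2}$. The order I would follow is (a), (b), (c), then (d), since (d) is built from the first three.

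For (a): if $f=f(x_2)$, then $\ov{f}(x_2)=|I_{x_2}|^{-1}\,|I_{x_2}|\,f(x_2)=f(x_2)$, and constants are a special case. For (b): for each $0<x_2<h$ we have $|\ov{f}(x_2)|\le |I_{x_2}|^{-1}\int_{I_{x_2}}|f|\,dx_1\le \nm{f}_\infty$. Since $\Omega$ is bounded, $L^\infty(\Omega)\subseteq L^2(\Omega)$. Measurability of $\ov f$ is clear because it is continuous on $(0,h)$: $F_\ell$ and $F_r$ are smooth there and $|I_{x_2}|>0$. For (c): the first identity is the definition multiplied by $|I_{x_2}|$, and integrating it in $x_2$ over $(0,h)$ gives the second. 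For the caveat, note that $\int_0^h \ov{f}\,dx_2$ weights each cross-section by $1$ rather than by $|I_{x_2}|$. A single counterexample therefore suffices, for instance $f=x_2$ on the unit disk shifted so that $h_L=a=0$.

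For (d), linearity is immediate. To show $P:f\mapsto\ov{f}$ is an orthogonal projection, I would prove two things.

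First, $P$ is idempotent. Since $\ov f$ depends only on $x_2$, part (a) applied to $\ov f$ gives $\ov{\ov f}=\ov f$. Strictly, (a) is stated for smooth $f$, but the same one-line computation applies verbatim to any bounded function of $x_2$ alone.

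Second, $P$ is self-adjoint, which is the key identity. I would prove $\langle \ov f, g\rangle_{L^2(\Omega)}=\langle \ov f,\ov g\rangle_{L^2(\Omega)}$ for all $f,g\in C^\infty(\ov\Omega)$ via
\[
\int_\Omega \ov f\, g\,dx=\int_0^h \ov f(x_2)\int_{I_{x_2}} g\,dx_1\,dx_2=\int_0^h \ov f(x_2)\int_{I_{x_2}} \ov g\,dx_1\,dx_2=\int_\Omega\ov f\,\ov g\,dx,
\]
where the middle equality is (c). The right-hand side is symmetric in $f$ and $g$, so $\langle \ov f,g\rangle=\langle f,\ov g\rangle$. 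Setting $g=\wt f=f-\ov f$ then gives $\langle \ov f,\wt f\rangle=0$, which is the Pythagorean relation quoted in the introduction.

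Continuity follows from taking $g=f$: then $\nm{\ov f}_2^2=\langle \ov f,f\rangle\le\nm{\ov f}_2\nm f_2$, so $\nm{\ov f}_2\le\nm f_2$. Hence $P$ extends to a norm-one orthogonal projection on $L^2(\Omega)$.

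I do not expect a real obstacle in any of these items. The only points needing care are that $\ov f$ is defined only on the open range $(0,h)$, which is harmless because $\{x_2=0\}\cup\{x_2=h\}$ has measure zero, and that Fubini applies, which it does since $f$ is bounded on the bounded set $\Omega$.
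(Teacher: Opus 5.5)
Your proposal is correct. The paper omits this proof as immediate, and your direct cross-sectional Fubini computation is the intended argument, including the self-adjointness identity $\langle \ov{f}, g\rangle = \langle \ov{f}, \ov{g}\rangle$ obtained from part (c) and the resulting bound $\|\ov{f}\|_2 \le \|f\|_2$. You were also right to flag that $\ov{f}\notin C^\infty(\ov{\Omega})$ in general, so idempotency and the projection property really concern the extension of the map to $L^2(\Omega)$.
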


The fact that $f \mapsto \ov{f}$ is an orthogonal projection gives credence to the interpretation of $\ov{f}(x_2)$ being the ``closest'' stratified function to $f(x_1,x_2)$. Moreover, it encourages the following definition and observations.

\begin{definition}
Under \Cref{hyp:subsecFuncDecomp}, we define the unstratified component of $f$ to be the function $\wt{f} \in C^\infty(\Omega)$ where
\[
\wt{f}(x_1,x_2) := f(x_1,x_2) - \ov{f}(x_2)
\]
for any $(x_1, x_2) \in \Omega$.
\end{definition}

\begin{lemma}
Under \Cref{hyp:subsecFuncDecomp}, the following hold true.
\begin{enumerate}[label={(\alph*)}]
\item If $f(x_1, x_2) = f(x_2)$ originally depends only on the second variable, then $\wt{f} = 0$. In particular, $\wt{c} = 0$ for any constant $c \in \R$.
\item $\wt{f} \in L^\infty(\Omega) \subseteq L^2(\Omega)$ with $|\wt{f}(x_2)| \le |f(x_1,x_2)| + |\ov{f}(x_2)| \le 2\nm{f}_\infty$ for all $x \in \Omega$.
\item For any $0 < x_2 < h$,
\[
\int_{I_{x_2}} \wt{f}(x_2)\, dx_1 = 0,
\quad
\int_\Omega \wt{f}(x_2)\, dx = 0.
\]
\item The map $f \mapsto \wt{f}$ from $C^\infty(\ov{\Omega})$ into $L^2(\Omega)$, where both function spaces are endowed with the standard $L^2$ inner product, is also a continuous and orthogonal linear projection.
\end{enumerate}
\end{lemma}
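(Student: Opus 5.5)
Every part follows from the preceding lemma for $\ov{f}$ together with the identity $\wt{f} = f - \ov{f}$. Write $P f := \ov{f}$, so that $f \mapsto \wt{f}$ is the map $I - P$. Then each property of $P$ transfers to $I-P$ by subtraction.

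For (a), if $f$ depends only on $x_2$, then part (a) of the previous lemma gives $\ov{f} = f$, hence $\wt{f} = 0$; the case of a constant $c$ is a special case. For (b), the pointwise triangle inequality gives $|\wt{f}(x)| \le |f(x)| + |\ov{f}(x_2)|$. Combined with $|\ov{f}(x_2)| \le \nm{f}_\infty$ from part (b) of the previous lemma, this yields the bound $2\nm{f}_\infty$. Boundedness on the bounded set $\Omega$ gives $L^\infty \subseteq L^2$. For (c), subtract the two identities in part (c) of the previous lemma: $\int_{I_{x_2}} \wt{f}\, dx_1 = \int_{I_{x_2}} f\, dx_1 - \int_{I_{x_2}} \ov{f}\, dx_1 = 0$. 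The integral over $\Omega$ then vanishes by the Fubini decomposition of \Cref{norm:secSett_h}.

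For (d), I would check the three ingredients directly rather than quote abstract facts, since $P$ maps $C^\infty(\ov{\Omega})$ into the larger space $L^2(\Omega)$.
\begin{itemize}
\item Continuity: Cauchy--Schwarz on each cross-section gives $|\ov{f}(x_2)|^2 |I_{x_2}| \le \int_{I_{x_2}} |f|^2\, dx_1$. Integrating in $x_2$ gives $\nm{\ov{f}}_2 \le \nm{f}_2$, and therefore $\nm{\wt{f}}_2 \le 2\nm{f}_2$. This also shows that $P$, and hence $I - P$, extends to all of $L^2(\Omega)$.
\item Idempotence: since $\ov{f}$ depends only on $x_2$, part (a) of the previous lemma gives $\ov{\ov{f}} = \ov{f}$. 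Hence $\ov{\wt{f}} = 0$, and so $\wt{\wt{f}} = \wt{f}$.
\item Self-adjointness: by Fubini,
\[
\gen{\ov{f}, g} = \int_0^h \ov{f}(x_2) \int_{I_{x_2}} g\, dx_1\, dx_2 = \int_0^h |I_{x_2}|\, \ov{f}(x_2)\, \ov{g}(x_2)\, dx_2,
\]
which is symmetric in $f$ and $g$. Thus $P$ is self-adjoint, and so is $I-P$. As a consequence, $\gen{\wt{f}, \ov{g}} = 0$, which gives the orthogonal decomposition $\nm{f-f_M}_2^2 = \nm{\ov{f}-f_M}_2^2 + \nm{\wt{f}}_2^2$ quoted in the introduction.
\end{itemize}

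There is no real obstacle. The only point needing care is that $\ov{f}$ may fail to be smooth at the caps $x_2 \to 0$ and $x_2 \to h$, where $|I_{x_2}| \to 0$. Since all the manipulations above use only integrability of bounded functions and Fubini on $\Omega$, this does not matter. Smoothness of $\wt{f}$ in the interior of $\Omega$ follows because $F_\ell, F_r$ are smooth on $(0,h)$ and $|I_{x_2}| > 0$ there.
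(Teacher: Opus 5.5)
Your proposal is correct and is the routine verification the paper intends when it declares these proofs immediate and omits them: everything reduces to the lemma for $\ov{f}$ via $\wt{f} = f - \ov{f}$. In part (d), your extension of $f \mapsto \ov{f}$ to $L^2(\Omega)$ is the right way to make idempotence meaningful, since $\ov{f} \notin C^\infty(\ov{\Omega})$. Strictly speaking, the identity $\ov{\ov{f}} = \ov{f}$ then follows directly from the defining formula, not from the smooth-case lemma you cite.
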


We have an $L^2$-orthogonal decomposition $f(x_1,x_2) = \ov{f}(x_2) + \wt{f}(x_1,x_2)$, where $\ov{f}$ is stratified and $\wt{f}$ can be considered ``unstratified''. Here, a function being unstratified should be understood as that function having zero-mean over each horizontal cross-section of $\Omega$. The orthogonality of this decomposition is crucial for the remainder of this article, with it implying the following fact.

\begin{definition}
Under \Cref{hyp:subsecFuncDecomp}, we write
\[
f_M := \frac{1}{|\Omega|}\int_\Omega f(x)\, dx
\]
to denote the mean of $f$ over $\Omega$.
\end{definition}

\begin{lemma}
Under \Cref{hyp:subsecFuncDecomp}, the following $L^2(I_{x_2})$-orthogonal decomposition holds
\[
\nm{f(\cdot,x_2)-f_M}_{L^2(I_{x_2})}^2 = |I_{x_2}||\ov{f}(x_2)-f_M|^2 + \nm{\wt{f}(\cdot,x_2)}_{L^2(I_{x_2})}^2
\]
for any $0 < x_2 < h$. It follows that
\[
\nm{f-f_M}_{L^2(\Omega')}^2 = \nm{\ov{f}-f_M}_{L^2(\Omega')}^2+\nm{\wt{f}}_{L^2(\Omega')}^2,
\]
where $\Omega'$ may be set equal to the full domain $\Omega$, the $\eps$-caps $L_\eps$, $U_\eps$, or the $\eps$-bulk $\Omega_\eps$, for any $0 < \eps < \eps_*$.
\end{lemma}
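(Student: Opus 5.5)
Fix $0<x_2<h$ and expand the square $(f-f_M)^2$ on $I_{x_2}$ using the pointwise splitting
\[
f(x_1,x_2)-f_M=\bigl(\ov{f}(x_2)-f_M\bigr)+\wt{f}(x_1,x_2).
\]
Integrating in $x_1$ over $I_{x_2}$ gives three contributions:
\[
\int_{I_{x_2}}(\ov{f}(x_2)-f_M)^2\,dx_1+2(\ov{f}(x_2)-f_M)\int_{I_{x_2}}\wt{f}(x_1,x_2)\,dx_1+\int_{I_{x_2}}\wt{f}(x_1,x_2)^2\,dx_1 .
\]
The first term equals $|I_{x_2}||\ov{f}(x_2)-f_M|^2$, since the integrand does not depend on $x_1$. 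The cross term vanishes by part (c) of the preceding lemma on $\wt{f}$: $\wt f$ has zero mean on each cross-section, which holds by the very definition of $\ov{f}$ as the cross-sectional average. The third term is $\|\wt{f}(\cdot,x_2)\|_{L^2(I_{x_2})}^2$. This proves the slice identity.

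To pass to $\Omega'$, observe that each of $\Omega$, $L_\eps$, $U_\eps$ and $\Omega_\eps$ is exactly the union of the full slices $I_{x_2}$ over $x_2$ in an interval: $(0,h)$, $(0,\eps)$, $(h-\eps,h)$ and $[\eps,h-\eps]$, respectively. By Normalization~\ref{norm:secSett_h} we can write $\int_{\Omega'}=\int_{J}\int_{I_{x_2}}dx_1\,dx_2$ for the corresponding interval $J$. Integrating the slice identity over $J$ then gives the claim, because
\[
\int_J |I_{x_2}||\ov f-f_M|^2\,dx_2=\|\ov f-f_M\|_{L^2(\Omega')}^2 .
\]

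The only point needing care is integrability, since $\ov{f}$ may fail to be smooth near $x_2=0$ and $x_2=h$. This is harmless: $|\ov{f}|\le\|f\|_\infty$ and $|\wt f|\le 2\|f\|_\infty$ by the earlier lemmas, and $f$ is continuous on $\ov\Omega$. All integrands are therefore bounded and measurable on a bounded domain, and Fubini/Tonelli applies. No real obstacle is expected beyond this.
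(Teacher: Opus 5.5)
Your proof is correct and is the argument the paper intends; the paper omits the proof as immediate. You expand the square via $f-f_M=(\ov{f}(x_2)-f_M)+\wt{f}$, kill the cross term using the zero cross-sectional mean of $\wt{f}$, and integrate the slice identity over the appropriate interval of heights by Fubini, with your boundedness remark adequately covering integrability near $x_2=0,h$.
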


This motivates, for a non-constant function $f$, the quantities
\[
0 \le \frac{\nm{\ov{f}-f_M}_2^2}{\nm{f-f_M}_2^2} \le 1,
\quad
0 \le \frac{\nm{\wt{f}}_2^2}{\nm{f-f_M}_2^2} \le 1,
\]
as methods of measuring the ``proportion'' of $f$ that is stratified and unstratified. Indeed, these proportions lie between $0$ and $1$ and always add up to $1$. Furthermore, these quantities are invariant under the transformation $f \mapsto c_1 f + c_2$ for constants $c_1,c_2 \in \R$. Thus, we can think of $f$ being ``highly stratified'' when $\nm{\ov{f}-f_M}_2^2/\nm{f-f_M}_2^2$ is large and $\nm{\wt{f}}_2^2/\nm{f-f_M}_2^2$ is small. Analogously, $f$ is ``highly unstratified'' when $\nm{\ov{f}-f_M}_2^2/\nm{f-f_M}_2^2$ is small and $\nm{\wt{f}}_2^2/\nm{f-f_M}_2^2$ is large.

We conclude by recording here some further useful observations concerning averages of $f$, $\ov{f}$, and $\wt{f}$.

\begin{lemma}
Under \Cref{hyp:subsecFuncDecomp}, the following hold true.
\begin{enumerate}[label={(\alph*)}]
\item In general, $\nm{f_M}_1 = |\Omega||f_M| \le \nm{f}_1$ and $\nm{\ov{f}}_1 \le \nm{f}_1$. If $f \ge 0$ is originally nonnegative, then $\nm{f_M}_1 = \nm{f}_1$ and $\nm{\ov{f}}_1 = \nm{f}_1$.
\item It holds $\nm{\wt{f}}_1 = \nm{f-\ov{f}}_1 \le 2\nm{f}_1$ and hence $\nm{\wt{f}}_1 \le 2\nm{f-f_M}_1$, since $\wt{c} = 0$ for any constant $c \in \R$.
\end{enumerate}
\end{lemma}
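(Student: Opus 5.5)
The plan is a direct computation using only the definitions of $f_M$, $\ov{f}$, $\wt{f}$, the fiberwise integral decomposition of \Cref{norm:secSett_h}, and the earlier lemmas on $\ov{f}$ and $\wt{f}$. No interpolation or domain geometry beyond \Cref{as:secSett_2} is needed.

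For (a), I would first note that $f_M$ is constant, so
\[
\nm{f_M}_1 = |\Omega||f_M| = \abs*{\int_\Omega f\,dx} \le \int_\Omega |f|\,dx = \nm{f}_1 .
\]
For $\ov{f}$, I would work one cross-section at a time. For each $0<x_2<h$,
\[
\int_{I_{x_2}}|\ov{f}(x_2)|\,dx_1 = |I_{x_2}||\ov{f}(x_2)| = \abs*{\int_{I_{x_2}} f(x_1,x_2)\,dx_1} \le \int_{I_{x_2}}|f(x_1,x_2)|\,dx_1 .
\]
Integrating in $x_2$ over $(0,h)$, using $\int_\Omega = \int_0^h\int_{I_{x_2}}$ from \Cref{norm:secSett_h}, gives $\nm{\ov{f}}_1\le\nm{f}_1$. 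If $f\ge0$, then $f_M\ge0$ and $\ov{f}\ge0$, so both triangle inequalities become equalities. Hence $\nm{f_M}_1 = \int_\Omega f = \nm{f}_1$ and $\nm{\ov{f}}_1 = \int_\Omega \ov{f} = \int_\Omega f = \nm{f}_1$, where the middle identity is part (c) of the first lemma on $\ov{f}$.

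For (b), I would write $\wt{f} = f-\ov{f}$ and apply the triangle inequality together with (a):
\[
\nm{\wt{f}}_1 \le \nm{f}_1 + \nm{\ov{f}}_1 \le 2\nm{f}_1 .
\]
To get the second claim, I would use linearity of $f\mapsto\wt{f}$ and the fact that $\wt{c}=0$ for constants (part (a) of the lemma on $\wt{f}$). Applying the first bound to the smooth function $f-f_M$ then gives
\[
\nm{\wt{f}}_1 = \nm{\widetilde{(f-f_M)}}_1 \le 2\nm{f-f_M}_1 .
\]

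The only point needing care is that $\ov{f}$ may be non-smooth near $x_2=0$ and $x_2=h$. This causes no difficulty, because all integrals here are of functions bounded by $\nm{f}_\infty$ (part (b) of the lemma on $\ov{f}$), so Fubini applies without issue. I therefore expect no genuine obstacle.
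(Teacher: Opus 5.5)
Your proposal is correct and is exactly the direct computation the paper has in mind; the paper omits this proof as immediate. The ingredients are the fiberwise triangle inequality integrated via $\int_\Omega=\int_0^h\int_{I_{x_2}}$, with equality when $f\ge 0$, and then for (b) the triangle inequality combined with linearity of $f\mapsto\wt{f}$ and $\wt{c}=0$, applied to $f-f_M$.
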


We note that the proofs of the lemmas in these sections are all immediate. Moreover, as these observations are used so frequently throughout the remainder of this work, we will not explicitly cite them.

\subsection{Function spaces and notation}
\label{subsec:Notation}

In this article, we work with a spatial domain $\Omega \subseteq \R^2$ that is always open, bounded, and simply connected with closure and boundary denoted by $\ov{\Omega}$ and $\partial \Omega$, respectively. The Cartesian coordinates of a point $x \in \Omega$ are denoted by $(x_1, x_2)$, with partial derivatives in those directions denoted by $\partial_1, \partial_2$. The outward pointing unit normal at a point $x \in \partial \Omega$ is denoted by $n(x)$, with the normal derivative at that point denoted by $\partial_n$. The perpendicular gradient is $\grad^\perp := (-\partial_2, \partial_1)$. The Laplacian on $\Omega$ is written as $\lap := \partial_1^2 + \partial_2^2$ with Dirichlet inverse $(-\lap_D)\pre$ and Neumann inverse $(-\lap_N)\pre$. The two-dimensional Lebesgue measure of any subset $A$ of $\Omega$ is denoted $|A|$, while the one-dimensional analogue for or any subset $S$ of heights $[h_L, h_U]$ is also denoted as $|S|$.

We now list the function spaces and corresponding (semi-)norms that are frequently used throughout the article. We drop $\Omega$ in the notation of any (semi-)norms. Whenever working with functions defined over anything other than $\Omega$, such as $I_{x_2}$, $U_\eps$, $L_\eps$, or $\Omega_\eps$, we will write the relevant subset in the notation for the (semi-)norms. The space of infinitely differentiable functions which extend to the boundary is written as $C^\infty(\ov{\Omega})$. We write $L^p(\Omega)$ with norm $\nm{f}_p$ to denote the standard $p$-Lebesgue space for any $1 \le p \le \infty$, with the $L^2(\Omega)$ inner product written as $\gen{f, g}$. The standard $L^2(\Omega)$-based Sobolev space of once weakly differentiable functions is denoted by $H^1(\Omega)$. If only the weak derivative $f$ is known to be in $L^2(\Omega)$, and not necessarily the function itself, we write $f \in \dH^1(\Omega)$ with the corresponding homogeneous seminorm being given by $\nm{f}_{\dH^1} := \nm{\grad f}_2$. If $f \in \dH^1(\Omega)$ has vanishing trace, we instead define the Laplacian-based seminorm
\[
\nm{f}_{\dH_0^1}^2 := \int_\Omega f(x) (-\lap) f(x)\, dx = \nm{(-\lap_D)^{1/2}f}_2^2.
\]
In turn, we may define the negative Sobolev norm:
\[
\nm{f}_{\dH_0\pre}^2 := \int_\Omega f(x) (-\lap_D)\pre f(x)\, dx = \nm{(-\lap_D)^{-1/2}f}_2^2
\]
for any $f \in L^2(\Omega)$. This negative Sobolev norm satisfies the following important variational or dual formulation.

\begin{lemma}
We have
\begin{equation}\label{eq:secSett_HMixVariational}
\nm{f}_{\dH_0\pre} = \sup_{\psi \in \dH_0^1\setminus\{0\}} \frac{|\gen{f,\psi}|}{\nm{\psi}_{\dH_0^1}},
\end{equation}
for any $f \in C^\infty(\ov{\Omega})$.
\end{lemma}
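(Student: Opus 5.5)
The plan is to prove the two inequalities separately: the Cauchy--Schwarz bound \(|\gen{f,\psi}| \le \nm{f}_{\dH_0\pre}\nm{\psi}_{\dH_0^1}\), and then attainment of the supremum by the explicit test function \(\psi_f := (-\lap_D)\pre f\). Throughout we work with the Dirichlet Laplacian on \(\Omega\). Since \(\Omega\) is bounded with smooth boundary, \(-\lap_D\) is a positive self-adjoint operator with compact inverse on \(L^2(\Omega)\). Its form domain is \(\dH_0^1\), and there the seminorm \(\nm{\psi}_{\dH_0^1} = \nm{\grad \psi}_2\) is a genuine norm by the Poincaré inequality. This is what makes the fractional powers \((-\lap_D)^{\pm 1/2}\) well defined, so that \(\nm{f}_{\dH_0\pre} = \nm{(-\lap_D)^{-1/2}f}_2\) is finite for \(f \in L^2\).

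For the upper bound, fix \(\psi \in \dH_0^1\setminus\{0\}\). We write
\[
\gen{f,\psi} = \gen{(-\lap_D)^{-1/2}f, (-\lap_D)^{1/2}\psi},
\]
which is justified by self-adjointness. One way to see it is to expand both \(f\) and \(\psi\) in the \(L^2\)-orthonormal Dirichlet eigenbasis \(\{e_k\}\) with eigenvalues \(\lambda_k>0\); then \(\gen{f,\psi} = \sum_k \hat f_k \hat\psi_k = \sum_k (\lambda_k^{-1/2}\hat f_k)(\lambda_k^{1/2}\hat\psi_k)\). Cauchy--Schwarz then gives \(|\gen{f,\psi}| \le \nm{f}_{\dH_0\pre}\nm{\psi}_{\dH_0^1}\). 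Here we use that \(\sum_k \lambda_k|\hat\psi_k|^2 = \nm{\grad\psi}_2^2\) for \(\psi\in \dH_0^1\), which is the form identity \(\int \psi(-\lap)\psi = \int|\grad\psi|^2\) extended by density from \(C_c^\infty\). So the supremum is at most \(\nm{f}_{\dH_0\pre}\).

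For the lower bound, note that if \(f = 0\) there is nothing to prove. Otherwise let \(\psi_f = (-\lap_D)\pre f\). By elliptic regularity \(\psi_f\) is smooth up to the boundary with zero trace, so \(\psi_f \in \dH_0^1\setminus\{0\}\). Two computations finish the argument. First,
\[
\gen{f,\psi_f} = \nm{f}_{\dH_0\pre}^2 .
\]
Second, integrating by parts with the vanishing boundary trace,
\[
\nm{\psi_f}_{\dH_0^1}^2 = \int_\Omega \psi_f(-\lap\psi_f) = \gen{\psi_f, f} = \nm{f}_{\dH_0\pre}^2 .
\]
The ratio \(|\gen{f,\psi_f}|/\nm{\psi_f}_{\dH_0^1}\) therefore equals \(\nm{f}_{\dH_0\pre}\), so the supremum is attained.

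The only step requiring care is the functional-analytic bookkeeping: we must identify the form domain of \(-\lap_D\) with \(\dH_0^1\) equipped with the gradient norm, and check that the identity used in Cauchy--Schwarz holds for every \(\psi\) in that space, not only for smooth ones. This is standard via density of \(C_c^\infty(\Omega)\) together with the spectral theorem.
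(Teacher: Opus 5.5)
Your proof is correct, and it is the standard duality argument the paper has in mind when it omits the proof as straightforward: Cauchy--Schwarz gives the upper bound, and the supremum is attained at $\psi_f = (-\lap_D)\pre f$. You can also skip the eigenbasis expansion, since for every $\psi \in \dH_0^1$ integration by parts gives $\gen{f,\psi} = \gen{\grad \psi_f, \grad \psi}$ with $\nm{\grad\psi_f}_2 = \nm{f}_{\dH_0\pre}$, so the upper bound follows from Cauchy--Schwarz in $L^2$ alone.
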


The proof is straightforward and will be omitted.

All of the results in \Cref{sec:Nash} and some results in \Cref{sec:GWP} hold for general functions we denote by $f \in C^\infty(\ov{\Omega})$. For such $f$, we often write
\[
\Gamma := \nm{f-f_M}_2^2,
\quad
\Lambda := \Gamma^{-3/2} \sup_{x_2 \in [0,h]} \nm{\wt{f}(\cdot,x_2)}_{L^2(I_{x_2})}^2.
\]
We use $0 < \kappa \le 1$ to denote a constant lower-bounding either the stratified proportion $\nm{\ov{f}-f_M}_2^2\Gamma\pre$ or the unstratified proportion $\nm{\wt{f}}_2^2\Gamma\pre$ and we use $\lambda$ to denote a parameter such that either $\lambda < \Lambda$, or $\Lambda < \lambda\pre$, or $\lambda \le \Lambda \le \lambda\pre$. When proving \Cref{thm:secNash_NashOnUnstratified}, we will use parameters $\alpha$, $\beta$, $\sigma$, $\eta$ depending on $\kappa$, $\lambda$, $\Lambda$ whose definitions can be found in \Cref{subsubsec:NashUnstratHardCasesPart1} and again in \Cref{subsubsec:NashUnstratHardCasesPart2}.

Any solution to the PKS-IPM equation (\ref{eq:secIntro_PKSIPM}) is denoted by $\rho$, with corresponding chemical concentration $c$ and fluid velocity $u$ with internal pressure $p$. The constants arising from the $d$-dimensional classical Nash inequality (\ref{eq:secIntro_Nash}) are denoted $C_{\mathrm{N},d}$. Throughout \Cref{sec:Nash,sec:GWP}, we often label universal constants $C_0$, $C_1$, $C_2$, and so on, which may only depend on the domain $\Omega$ and may change from line to line. Sometimes we suppress constants, writing $A \lesssim B$ to mean there exists a universal constant $C > 0$ such that $A \le CB$. If that implicit universal constant depends on some parameter(s) $s$, we instead write $A \lesssim_s B$.

\section{Anisotropic Nash inequalities that quantify stratification}
\label{sec:Nash}

Recall that the dynamics of a solution to the PKS-IPM system (\ref{eq:secIntro_PKSIPM}) have a preferential direction: the vertical, due to the force of gravity. This motivates the goal for \Cref{sec:Nash}, as we now prove two anisotropic refinements of the two-dimensional classical Nash inequality (\ref{eq:secIntro_Nash}) that are designed to better capture ``how vertically stratified'' a function $f \in C^\infty(\ov{\Omega})$ is. \Cref{subsec:StratNash} provides a refinement for the stratified component $\ov{f}$ while \Cref{subsec:UnstratNash} provides a refinement for the unstratified component $\wt{f}$. Finally, in \Cref{subsec:TotalNash}, we combine the inequalities on $\ov{f}$ and $\wt{f}$ to obtain \Cref{thm:secIntro_NashTotal} and conclude with a discussion of the generality and sharpness of the results. In particular, we comment on how the proofs below can be simplified when the top and bottom portions of $\partial\Omega$ are both flat. Throughout this section, we frequently work under the following hypotheses.

\begin{hypotheses}
\label{hyp:secNash}
We fix a domain $\Omega \subseteq \R^2$ satisfying \Cref{as:secSett_1,as:secSett_2,as:secSett_3} as well as \Cref{norm:secSett_h,norm:secSett_eps}. We also fix a function $f \in C^\infty(\ov{\Omega})$ for which we write
\begin{equation}\label{aux129a}
\Gamma := \nm{f-f_M}_2^2, \qquad \Lambda := \Gamma^{-3/2} \sup_{x_2 \in [0,h]} \nm{\wt{f}(\cdot,x_2)}_{L^2(I_{x_2})}^2.
\end{equation}
We use the notation defined in \Cref{sec:Setting}.
\end{hypotheses}

\subsection{Nash inequality on the stratified component}
\label{subsec:StratNash}

We now work to establish the following inequality:
\[
\nm{\ov{f}-f_M}_2^{8/3} \lesssim_\Omega \nm{f-f_M}_2^{1/3} \nm{f-f_M}_1^{4/3} \nm{\grad f}_2,
\]
which is precisely stated below in \Cref{thm:secNash_NashOnStratified}. This inequality can be understood as an improvement of the scaling for the two-dimensional classical Nash inequality (\ref{eq:secIntro_Nash}) with a function-dependent coefficient that worsens when $f$ is far from being stratified; see \Cref{rem:secIntro_NashStratInterp1,rem:secIntro_NashStratInterp2}.

\subsubsection{Proof of theorem and remarks}
\label{subsubsec:StratNashProof}

Recall the orthogonal decomposition
\[
\nm{f-f_M}_2^2 = \nm{\ov{f}-f_M}_2^2 + \nm{\wt{f}}_2^2.
\]
This motivates studying the concentration of the quantity $\nm{\ov{f}-f_M}_2^2$ within the domain. That is, an a-priori bound of the form
\[
\nm{\ov{f}-f_M}_{L^2(\Omega')}^2 \gtrsim \nm{f-f_M}_2^2
\]
for some subset $\Omega' \subseteq \Omega$ gives quantitative information on ``how stratified'' $f$ is in the region $\Omega'$. One can exploit such information to prove a Nash-type inequality. We do so by considering first the concentration of $\nm{\ov{f}-f_M}_2^2$ to the upper or lower caps of the domain.

\begin{restatable}{proposition}{StratCaps}%
\label{prop:secNash_StratCaps}%
Under \Cref{hyp:secNash}, suppose that
\[
\nm{\ov{f}-f_M}_{L^2(L_\eps)}^2 \ge \kappa \Gamma/4,
\]
for some fixed $0 < \kappa \le 1$ and $0 < \eps < \kappa \eps_*/32$. Then, we have
\[
\kappa \Gamma^{1/2} \eps^{-1} \le C_4\nm{\partial_2 f}_2,
\]
for some universal constant $C_4 > 0$, independent even from $\Omega$. The same result holds (with the same universal constant $C_4$) upon replacing the lower $\eps$-cap $L_\eps$ by the upper $\eps$-cap $U_\eps$.
\end{restatable}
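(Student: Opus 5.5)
The plan is a one-dimensional Poincaré-type comparison in the vertical direction inside the lower cap. If $\ov{f}-f_M$ carries a fixed fraction of $\Gamma$ in the very thin cap $L_\eps$, then either $f$ is equally large on the slightly higher slab $\{M<x_2<2M\}$, which is impossible once $M\gg\eps/\kappa$ because the total variance is only $\Gamma$, or $\partial_2 f$ must be large.

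First, a geometric fact. By \Cref{as:secSett_3} and the last shrinking of $\eps_*$ in \Cref{norm:secSett_eps}, the lower $\eps_*$-cap is $\{x : F_L(x_1)<x_2<\eps_*\}$. Hence the cross-sections are nested: $I_{x_2}\subseteq I_{y_2}$ whenever $0<x_2<y_2<\eps_*$. This lets us connect any point $(x_1,x_2)\in L_\eps$ to $(x_1,y_2)$ by a vertical segment inside $\Omega$.

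Fix $x_2\in(0,\eps)$ and $y_2\in(x_2,\eps_*)$. For $x_1\in I_{x_2}$ write $f(x_1,x_2)=f(x_1,y_2)-\int_{x_2}^{y_2}\partial_2 f(x_1,s)\,ds$ and average over $x_1\in I_{x_2}$. This gives
\[
|\ov{f}(x_2)-f_M|\le \frac{1}{|I_{x_2}|}\int_{I_{x_2}}|f(x_1,y_2)-f_M|\,dx_1+\frac{1}{|I_{x_2}|}\int_{I_{x_2}}\int_0^{y_2}|\partial_2 f(x_1,s)|\,ds\,dx_1 .
\]
Square, use Cauchy--Schwarz in both terms, and multiply by $|I_{x_2}|$. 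By nestedness this yields
\[
|I_{x_2}||\ov{f}(x_2)-f_M|^2\le 2\nm{f(\cdot,y_2)-f_M}_{L^2(I_{y_2})}^2+2y_2\nm{\partial_2 f}_2^2 .
\]
Integrate in $x_2\in(0,\eps)$. Then average in $y_2$ over $(M,2M)$, where I take $M:=16\eps/\kappa$; the hypothesis $\eps<\kappa\eps_*/32$ guarantees $2M<\eps_*$. This produces
\[
\frac{\kappa\Gamma}{4}\le\nm{\ov{f}-f_M}_{L^2(L_\eps)}^2\le \frac{2\eps}{M}\Gamma+4M\eps\nm{\partial_2 f}_2^2=\frac{\kappa\Gamma}{8}+\frac{64\eps^2}{\kappa}\nm{\partial_2 f}_2^2 .
\]
Absorbing the first term on the right gives $\kappa^2\Gamma\le 512\,\eps^2\nm{\partial_2 f}_2^2$, which is the claim with $C_4=\sqrt{512}$. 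This constant is visibly independent of $\Omega$.

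The upper cap $U_\eps$ is handled identically by integrating downward from $x_2\in(h-\eps,h)$ to $y_2\in(h-2M,h-M)$, again using nestedness of the sections near the top.

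The only real point of care is the choice of scale $M$. It must be large compared to $\eps/\kappa$ so the term from the averaged variance is absorbed. It must also be comparable to $\eps/\kappa$ so the gradient term yields $\eps^2$ and not merely $\eps\,\eps_*$. Finally it must satisfy $2M<\eps_*$ so the vertical segments stay inside the region where sections are nested. The hypothesis $\eps<\kappa\eps_*/32$ is exactly what makes all three requirements compatible.
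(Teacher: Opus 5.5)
Your proof is correct and is essentially the paper's argument: a vertical fundamental-theorem-of-calculus comparison between the cap $L_\eps$ and a slab of heights of size $\sim \eps/\kappa$, where the section variance must be small on average, using nestedness of the sections $I_{x_2}$ below $\eps_*$. The differences are minor. You average over $x_2\in(0,\eps)$ and $y_2\in(M,2M)$ and work with $\ov{f}$ directly via Cauchy--Schwarz on the cross-sectional average. The paper instead passes to $f-f_M$ by orthogonality, pigeonholes a single pair of heights $x_2'\in(0,\eps)$ and $x_2''\in(\eps,16\eps/\kappa)$, and gets $C_4=16/(2-\sqrt{2})$ in place of your $\sqrt{512}$.

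One cosmetic fix: keep the inner integral as $\int_{x_2}^{y_2}$ rather than $\int_0^{y_2}$. For $x_1\in I_{x_2}$ and $s<x_2$, the point $(x_1,s)$ need not lie in $\Omega$. The bound $2y_2\nm{\partial_2 f}_2^2$ is unaffected.
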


If instead $\nm{\ov{f}-f_M}_2^2$ is concentrated in the bulk of the domain, the following inequality is true.

\begin{restatable}{proposition}{StratBulk}
\label{prop:secNash_StratBulk}
Under \Cref{hyp:secNash}, suppose that
\[
\kappa\pre \Gamma\pre \eps^{-1/2} \nm{f-f_M}_1^2 \le \frac{hK_*}{32 R_*}, \qquad \nm{\ov{f}-f_M}_{L^2(\Omega_\eps)}^2 \ge \kappa\Gamma/2,
\]
for some fixed $0 < \kappa \le 1$ and $0 < \eps < \eps_*$. Then
\[
\kappa^{3/2} \Gamma^{3/2} \eps^{1/2} \le C_5\nm{f-f_M}_1^2\nm{\grad f}_2,
\]
for some universal constant $C_5 = C_5(\Omega) > 0$.
\end{restatable}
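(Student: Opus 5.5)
The plan is to reduce everything to the one-variable function $g(x_2) := \ov{f}(x_2) - f_M$ on heights $(\eps, h-\eps)$, and to play a Chebyshev-type smallness of $g$ against a large value of $g$. Write $m := \nm{f-f_M}_1$. Three facts are available:
\[
\int_\eps^{h-\eps} |I_{x_2}|\, g(x_2)^2\, dx_2 \ge \frac{\kappa\Gamma}{2},
\qquad
\int_0^h |I_{x_2}|\,|g(x_2)|\, dx_2 \le m,
\qquad
|I_{x_2}| \ge K_*\eps^{1/2} \ \text{ on } (\eps,h-\eps).
\]
The last one is (\ref{eq:secSett_DecayOfIx2}). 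From the first two we get $\kappa\Gamma/2 \le m \sup_{(\eps,h-\eps)}|g|$. Hence there is a height $x_0 \in (\eps,h-\eps)$ with $|g(x_0)| \ge \kappa\Gamma/(2m)$.

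\textbf{Step 1: the set where $g$ is large is short.} Set $s := \kappa\Gamma/(4m)$ and $B := \{x_2 \in (\eps,h-\eps) : |g(x_2)| > s\}$. By Chebyshev and the lower bound on $|I_{x_2}|$,
\[
|B| \le \frac{m}{s K_*\eps^{1/2}} = \frac{4m^2}{\kappa\Gamma K_*\eps^{1/2}} =: \ell .
\]
The first hypothesis of the proposition says precisely that $\ell \le h/(8R_*)$. So $B$ occupies only a small fraction of the height range. Within distance $2\ell$ of $x_0$, on the side of $x_0$ pointing toward the middle height $h/2$, there is then a good height $y$ with $|g(y)| \le s$. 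This gives
\[
|g(x_0)-g(y)| \ge \frac{\kappa\Gamma}{4m}, \qquad |x_0-y| \le 2\ell .
\]

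\textbf{Step 2: control the oscillation of $g$ by $\nm{\grad f}_2$.} Rescale each cross-section affinely: $\phi(\tau,x_2) := f(F_\ell(x_2)+\tau|I_{x_2}|,\,x_2)$ for $\tau\in(0,1)$, so that $\ov{f}(x_2) = \int_0^1 \phi\, d\tau$. Differentiating,
\[
g'(x_2) = \frac{1}{|I_{x_2}|}\int_{I_{x_2}} \Big(\partial_2 f + \big(F_\ell' + \tau(F_r'-F_\ell')\big)\partial_1 f\Big)\, dx_1 .
\]
On heights where the chart slopes are bounded by $M_*$, Cauchy--Schwarz gives $|g'(x_2)| \lesssim M_*|I_{x_2}|^{-1/2}\nm{\grad f}_{L^2(I_{x_2})}$. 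Integrating from $y$ to $x_0$ and applying Cauchy--Schwarz in $x_2$, then using $|I_{x_2}| \ge K_*\eps^{1/2}$, yields
\[
|g(x_0)-g(y)|^2 \lesssim \frac{M_*^2\,\ell}{K_*\eps^{1/2}}\,\nm{\grad f}_2^2 .
\]
Inserting Step 1 gives $\kappa^2\Gamma^2 m^{-2} \lesssim m^2\kappa^{-1}\Gamma^{-1}\eps^{-1}\nm{\grad f}_2^2$. That is $\kappa^3\Gamma^3\eps \lesssim m^4\nm{\grad f}_2^2$, which is the square of the claimed inequality. The scaling therefore closes exactly, and this is the reason for the $\eps^{1/2}$ and the $R_*$, $K_*$ in the hypothesis.

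\textbf{Main obstacle.} The difficulty is that the segment $[y,x_0]$ may enter the cap zones $(\eps,\eps_*/4)$ or $(h-\eps_*/4,h-\eps)$. There $|F_\ell'|$ and $|F_r'|$ are unbounded, so $M_*$ is unavailable. My first attempt is to exploit the freedom in Step 1. Since $|B| \le h/(8R_*)$ is small, whenever $x_0$ lies in a cap zone I look for $y$ in the direction of increasing $|I_{x_2}|$, using the monotonicity arranged in \Cref{norm:secSett_eps}. There I would compare $f(\cdot,x_0)$ and $f(\cdot,y)$ along purely vertical segments over the shorter cross-section $I_{x_0}$, which sits in the region swept between the two heights when $|I|$ is monotone and the cap is convex-like. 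This avoids $\partial_1 f$ altogether. The mismatch between averaging over $I_{x_0}$ and over $I_y$ would then be absorbed using the ratio bound $R_*$ and the fact that $g(y)$ is small. If $x_0$ itself lies in the bulk $\Omega_{\eps_*/4}$, I choose $y$ there as well, which is possible since $2\ell \le h/(4R_*)$ is much smaller than the bulk height. In that case the $M_*$ bound of Step 2 applies directly.
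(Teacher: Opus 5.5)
Your bulk case is fine, and it is a slightly different route from the paper's \Cref{lem:secNash_Sit2StratBulk}. Differentiating the \emph{signed} average $g=\ov{f}-f_M$ along the affine foliation compares $g(x_0)$ and $g(y)$ exactly, so no Jacobian factor appears and the estimate itself needs only $M_*$ and $K_*$. The paper instead works with $\int_{I_{x_2}}\abs{f-f_M}\,dx_1$ along the same curves and pays a factor $R_*$.

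The genuine gap is the cap case $x_0\in(\eps,\eps_*/4)$, which is exactly what \Cref{lem:secNash_Sit1StratBulk} handles. Your proposed fix fails there for three reasons.

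(i) The condition $\abs{g(y)}\le s$ controls only the mean of $f(\cdot,y)-f_M$ over the whole section $I_y$. Vertical segments over $I_{x_0}$ compare $f(\cdot,x_0)$ with the mean of $f(\cdot,y)-f_M$ over the subinterval $I_{x_0}\subset I_y$, and that need not be small. Take $f(\cdot,y)-f_M$ equal to a large constant on $I_{x_0}$ and a compensating negative constant on $I_y\setminus I_{x_0}$; then $g(y)=0$.

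(ii) $R_*$ only compares lengths of sections inside $\Omega_{\eps_*/4}$. In the cap, $\abs{I_y}/\abs{I_{x_0}}$ is unbounded as $\eps\to0$, so $R_*$ cannot absorb any mismatch there.

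(iii) The height $y$ can lie up to $2\ell$ above $x_0$, and $2\ell$ (bounded only by $h/(4R_*)$) can be much larger than $\eps_*$. Above height $\eps_*$ the sections need not be nested, so purely vertical segments over $I_{x_0}$ may leave $\Omega$.

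The missing idea is to make the cross-sectional $L^1$ norm small rather than the signed mean, and to transport $I_{x_0}$ without stretching. With your $m=\nm{f-f_M}_1$, set $\phi(x_2):=\nm{f(\cdot,x_2)-f_M}_{L^1(I_{x_2})}$, so that $\int_0^h\phi\,dx_2=m$.

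Apply Chebyshev with threshold $\tfrac12\abs{I_{x_0}}\abs{g(x_0)}\ge\kappa\Gamma\abs{I_{x_0}}/(4m)$. The exceptional set has measure at most $\ell':=4m^2/(\kappa\Gamma\abs{I_{x_0}})\le\ell$. Hence some $y\in(x_0,x_0+2\ell')$ satisfies $\phi(y)\le\tfrac12\abs{I_{x_0}}\abs{g(x_0)}$.

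By \Cref{norm:secSett_eps}, $\abs{I_{x_0}}\le\abs{I_{x_2}}$ for every $x_2\in[x_0,3h/4]$. So there are horizontal translates $I_{x_0}+c(x_2)\subseteq I_{x_2}$ with $c(x_0)=0$. The paper uses centered translates; alternatively, take $c\equiv0$ while the sections are nested and then a translate with $\abs{c'}\le M_*$, which keeps the slope bounded. The map $(x_1,x_2)\mapsto(x_1+c(x_2),x_2)$ is a shear with unit Jacobian.

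Restricting $f(\cdot,y)-f_M$ to the subinterval $I_{x_0}+c(y)\subseteq I_y$ can only decrease its $L^1$ norm. The reverse triangle inequality therefore gives
\[
\abs{I_{x_0}}\abs{g(x_0)}\le\int_{I_{x_0}}\abs{f(x_1,x_0)-f_M}\,dx_1\le\phi(y)+\int_{I_{x_0}}\abs{f(x_1+c(y),y)-f(x_1,x_0)}\,dx_1,
\]
with no length ratio appearing. The fundamental theorem of calculus along the curves and Cauchy--Schwarz then give $\kappa^3\Gamma^3\abs{I_{x_0}}^2\lesssim m^4\nm{\grad f}_2^2$. Finally, $\abs{I_{x_0}}\ge K_*\eps^{1/2}$ closes the cap case with the same scaling as your bulk argument.
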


We reserve the proofs of \Cref{prop:secNash_StratCaps} and \Cref{prop:secNash_StratBulk} to \Cref{subsubsec:NashStratProps}. For now, we leverage these propositions by choosing $\eps > 0$ so that the two inequalities match to give the following desired result.

\begin{theorem}
\label{thm:secNash_NashOnStratified}
Assume \Cref{hyp:secNash}. Then
\begin{equation}\label{aux129b}
\nm{\ov{f}-f_M}_2^{8/3} \le C_6 \nm{f-f_M}_2^{1/3} \nm{f-f_M}_1^{4/3} \nm{\grad f}_2,
\end{equation}
where $C_6 = C_6(\Omega) > 0$ is a universal constant.
\end{theorem}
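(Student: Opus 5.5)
We take $\kappa := \nm{\ov{f}-f_M}_2^2\,\Gamma^{-1}\in(0,1]$ to be the exact stratified proportion, and write $m := \nm{f-f_M}_1$. We may assume $f$ is non-constant and $\kappa>0$; otherwise the inequality is trivial. In this notation the claim (\ref{aux129b}) reads
\[
\kappa^{4/3}\Gamma^{4/3} \le C_6\,\Gamma^{1/6} m^{4/3}\nm{\grad f}_2,
\qquad\text{equivalently}\qquad
\kappa^{4/3}\Gamma^{7/6}m^{-4/3}\lesssim \nm{\grad f}_2 .
\]
The plan is to pick a single $\eps$ and apply either \Cref{prop:secNash_StratCaps} or \Cref{prop:secNash_StratBulk}. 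For any $\eps$, the orthogonal splitting over $L_\eps$, $U_\eps$ and $\Omega_\eps$ gives
\[
\nm{\ov{f}-f_M}_{L^2(L_\eps)}^2+\nm{\ov{f}-f_M}_{L^2(U_\eps)}^2+\nm{\ov{f}-f_M}_{L^2(\Omega_\eps)}^2=\kappa\Gamma .
\]
Hence either one cap carries at least $\kappa\Gamma/4$ or the bulk carries at least $\kappa\Gamma/2$.

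\textbf{Choice of $\eps$.} Equate the two lower bounds on $\nm{\grad f}_2$, namely $\kappa\Gamma^{1/2}\eps^{-1}$ from the caps and $\kappa^{3/2}\Gamma^{3/2}\eps^{1/2}m^{-2}$ from the bulk. This gives $\eps^{3/2}=m^2\kappa^{-1/2}\Gamma^{-1}$, so we set
\[
\eps := c\, m^{4/3}\kappa^{-1/3}\Gamma^{-2/3},
\]
where $c=c(\Omega)$ is a constant to be fixed. Substituting this $\eps$ into either proposition yields $\nm{\grad f}_2\gtrsim_{c}\kappa^{4/3}\Gamma^{7/6}m^{-4/3}$, which is exactly the target. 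This choice is valid as long as three conditions hold:
\begin{itemize}
\item[(i)] $\eps<\kappa\eps_*/32$, which is needed for \Cref{prop:secNash_StratCaps} and also implies $\eps<\eps_*$;
\item[(ii)] $\kappa^{-1}\Gamma^{-1}\eps^{-1/2}m^2\le hK_*/(32R_*)$, which is needed for \Cref{prop:secNash_StratBulk};
\item[(iii)] if a cap carries the mass, then applying \Cref{prop:secNash_StratCaps} needs only (i).
\end{itemize}
For our $\eps$, the quantity in (ii) equals $c^{-1/2}\kappa^{-5/6}\Gamma^{-2/3}m^{4/3}$, and condition (i) reads $c\,m^{4/3}\le \kappa^{4/3}\Gamma^{2/3}\eps_*/32$.

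\textbf{Degenerate regime.} Suppose (i) or (ii) fails. Then in either case $m\gtrsim_{\Omega,c}\kappa^{5/8}\Gamma^{1/2}\ge\kappa^{4}\Gamma^{1/2}$, using $\kappa\le1$ (failure of (i) even gives $m\gtrsim\kappa\Gamma^{1/2}$). Here we fall back on the classical two-dimensional Nash inequality (\ref{eq:secIntro_Nash}), which states $\Gamma\le C_{\mathrm{N},2}\,m\nm{\grad f}_2$. This gives
\[
\kappa^{4/3}\Gamma^{7/6}=\Gamma\cdot\bigl(\kappa^{4}\Gamma^{1/2}\bigr)^{1/3}\lesssim m\nm{\grad f}_2\, m^{1/3},
\]
which is again the claim. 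The order of choices is as follows: fix $c$ (say $c=1$), identify the threshold constants coming from (i) and (ii), and check that the two alternatives cover all cases. Either the nondegenerate bounds hold and one of the propositions applies, or the lower bound on $m$ holds and classical Nash applies.

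The only delicate step is the exponent bookkeeping. One must check that both failure conditions translate into a lower bound $m\gtrsim\kappa^{p}\Gamma^{1/2}$ with $p\le 4$, so that $\kappa\le1$ absorbs the discrepancy. The computation above shows that this works, with $p=1$ and $p=5/8$. Everything else is a direct substitution into the two propositions, whose proofs are deferred to \Cref{subsubsec:NashStratProps}.
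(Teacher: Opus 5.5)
Your proposal is correct and follows essentially the paper's proof. The paper likewise sets $\kappa=\nm{\ov{f}-f_M}_2^2\Gamma^{-1}$ and $\eps=\kappa^{-1/3}\Gamma^{-2/3}\nm{f-f_M}_1^{4/3}$ by balancing \Cref{prop:secNash_StratCaps} against \Cref{prop:secNash_StratBulk}, and uses the classical Nash inequality when the smallness conditions on $\eps$ fail. The only difference is cosmetic: you check the bulk hypothesis directly, which gives the threshold $\nm{f-f_M}_1\gtrsim\kappa^{5/8}\Gamma^{1/2}$, whereas the paper imposes the slightly stronger condition $\eps\le\kappa hK_*/(32R_*)$, so a single threshold $\nm{f-f_M}_1\gtrsim\kappa\Gamma^{1/2}$ covers all degenerate cases.
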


\begin{proof}
Note that, of course, the desired inequality holds trivially if $\nm{\ov{f}-f_M}_2 = 0$. Suppose then that $0 < \nm{\ov{f}-f_M}_2$ and define
\[
\kappa := \nm{\ov{f}-f_M}_2^2 \Gamma\pre = \nm{\ov{f}-f_M}_2^2\nm{f-f_M}_2^{-2}.
\]
Then $0 < \kappa \le 1$ and trivially $\nm{\ov{f}-f_M}_2^2 \ge \kappa \Gamma$. Moreover, for every $\eps > 0$, exactly one of the following holds true: either
\begin{equation}\label{eq:secNash_StratFCapConcentrated}
\nm{\ov{f}-f_M}_{L^2(L_\eps)}^2 \ge \kappa \Gamma/4
\quad \text{or} \quad
\nm{\ov{f}-f_M}_{L^2(U_\eps)}^2 \ge \kappa \Gamma/4,
\end{equation}
or
\begin{equation}\label{eq:secNash_StratFBulkConcentrated}
\nm{\ov{f}-f_M}_{L^2(\Omega_\eps)}^2 \ge \kappa \Gamma/2.
\end{equation}
In the case of (\ref{eq:secNash_StratFCapConcentrated}), we will employ \Cref{prop:secNash_StratCaps} to obtain a Nash inequality. If instead we have (\ref{eq:secNash_StratFBulkConcentrated}), we will employ \Cref{prop:secNash_StratBulk}. In order to obtain the same Nash inequality no matter whether (\ref{eq:secNash_StratFCapConcentrated}) or (\ref{eq:secNash_StratFBulkConcentrated}) holds, we will define
\[
\eps := \kappa^{-1/3}\Gamma^{-2/3}\nm{f-f_M}_1^{4/3} > 0
\]
(note that if $\|f-f_M\|_1=0$, then the inequality hods trivially).
Moreover, we need a smallness condition on $\eps$ in order to invoke \Cref{prop:secNash_StratCaps,prop:secNash_StratBulk}.

To that end, suppose that
\begin{equation}\label{aux129c}
\eps \le \kappa \min\left\{\frac{\eps_*}{32}, \frac{hK_*}{32R_*}\right\}.
\end{equation}
In particular, $\eps \le \kappa \eps_*/32$ is satisfied and implies $\eps \leq \eps_*$. Therefore, if (\ref{eq:secNash_StratFCapConcentrated}) holds, we may invoke \Cref{prop:secNash_StratCaps} to obtain
\[
\kappa^{4/3}\Gamma^{7/6}\nm{f-f_M}_1^{-4/3} = \kappa\Gamma^{1/2} \eps\pre \le C_4\nm{\grad f}_2,
\]
which is exactly \eqref{aux129b}. On the other hand, we also verify that
\[
\kappa\pre \Gamma\pre \eps^{-1/2} \nm{f-f_M}_1^2 = \kappa^{-5/6} \Gamma^{-2/3}\nm{f-f_M}_1^{4/3} = \kappa^{-1/2}\eps \le \frac{hK_*}{32R_*},
\]
where the right-most inequality holds due to \eqref{aux129c} and as $0 < \kappa \le 1$. Thus, if instead (\ref{eq:secNash_StratFBulkConcentrated}) holds, we then invoke \Cref{prop:secNash_StratBulk} to obtain
\[
\kappa^{4/3} \Gamma^{7/6}\nm{f-f_M}_1^{2/3} = \kappa^{3/2}\Gamma^{3/2} \eps^{1/2} \le C_5\nm{f-f_M}_1^2\nm{\grad f}_2,
\]
which is exactly \eqref{aux129b}. Altogether, the proof is complete under the smallness assumption \eqref{aux129c} on $\eps$.

It only remains to consider the case where
\begin{equation}\label{aux129d}
\kappa^{-1/3}\Gamma^{-2/3}\nm{f-f_M}_1^{4/3} =: \eps > \kappa \min\left\{\frac{\eps_*}{32}, \frac{hK_*}{32R_*}\right\}.
\end{equation}
Here, we employ the classical Nash inequality (\ref{eq:secIntro_Nash}) for dimension $d = 2$ to obtain
\[
\Gamma
\le
C_{\mathrm{N},2} \nm{f-f_M}_1 \nm{\grad f}_2.
\]
Multiplying both sides by
\[
\kappa^{1/3}\Gamma^{1/6} < \paren*{\max\left\{\frac{32}{\eps_*}, \frac{32R_*}{hK_*}\right\}}^{1/4} \nm{f-f_M}_1^{1/3},
\]
we get
\[
\kappa^{4/3}\Gamma^{7/6}
\le \kappa^{1/3}\Gamma^{7/6}
\le C_{\mathrm{N},2} \paren*{\max\left\{\frac{32}{\eps_*}, \frac{32R_*}{hK_*}\right\}}^{1/4} \nm{f-f_M}_1^{4/3} \nm{\grad f}_2,
\]
which implies \eqref{aux129b}, thus completing the proof in the case where \eqref{aux129d} holds.

Indeed, collecting our work gives the inequality
\[
\nm{\ov{f}-f_M}_2^{8/3} \le C_6 \nm{f-f_M}_1^{4/3} \nm{\grad f}_2 \nm{f-f_M}_2,
\]
with the following value for the universal constant:
\[
C_6 = C_6(\Omega) := \max\left\{C_4, C_5, C_{\mathrm{N},2} \paren*{\max\left\{\frac{32}{\eps_*}, \frac{32R_*}{hK_*}\right\}}^{1/4} \right\}.
\]
\end{proof}

\begin{remark}
Suppose one does not like the appearance of the $L^2$ norm on the right hand side of the inequality in \Cref{thm:secNash_NashOnStratified}, for whatever reason. One can simply invoke the classical Nash inequality (\ref{eq:secIntro_Nash}) to obtain
\[
\nm{\ov{f}-f_M}_2^{8/3} \le C_6 C_{\mathrm{N},2}^{1/6} \nm{f-f_M}_1^{9/6} \nm{\grad f}_2^{7/6}.
\]
\end{remark}

\begin{remark}\label{rem:secIntro_NashStratInterp1}
\Cref{thm:secNash_NashOnStratified} implies that
\[
\nm{f-f_M}_2^{7/3} \le C_6 \paren*{\frac{\nm{f-f_M}_2^2}{\nm{\ov{f}-f_M}_2^2}}^{4/3}\nm{f-f_M}_1^{4/3} \nm{\grad f}_2.
\]
Thus, by viewing the term $C_6 (\nm{f-f_M}_2^2/\nm{\ov{f}-f_M}_2^2)^{4/3} \ge C_6$ as a function-dependent coefficient, \Cref{thm:secNash_NashOnStratified} provides an improved scaling corresponding to a formal dimension of $d = 3/2$ for ``somewhat stratified '' functions when compared to the classical Nash inequality (\ref{eq:secIntro_Nash}). The inequality gives less control when the function is less stratified, as reflected by the function-dependent coefficient, with it degenerating whenever $\ov{f}$ is constantly equal to $f_M$.
\end{remark}

\begin{remark}\label{rem:secIntro_NashStratInterp2}
Moreover, upon rearranging, we equivalently have that
\[
\paren*{\frac{\nm{\ov{f}-f_M}_2^2}{\nm{f-f_M}_2^2}}^{4/3} \le C_6 \frac{1}{\nm{f-f_M}_2^{7/3}} \nm{f-f_M}_1^{4/3}\nm{\grad f}_2.
\]
Thus, \Cref{thm:secNash_NashOnStratified} can just as well be interpreted as providing a way to control how stratified the function $f$ is in the sense of the proportion $\nm{\ov{f}-f_M}_2^2/\nm{f-f_M}_2^2$. The negative power of the variance of $f$ on the right-hand side is necessary since the stratified proportion $\nm{\ov{f}-f_M}_2^2/\nm{f-f_M}_2^2$ may be close to $1$ even when the variance $\nm{f-f_M}_2^2$ is small in magnitude.
\end{remark}

\subsubsection{Proofs of auxiliary propositions}
\label{subsubsec:NashStratProps}

We now prove \Cref{prop:secNash_StratCaps} and \Cref{prop:secNash_StratBulk}. The strategy for these arguments can be abstracted as follows. An a-priori bound of the form $\nm{\ov{f}-f_M}_{L^2(\Omega')}^2 \gtrsim \Gamma$, for some subset $\Omega' \subseteq \Omega$, will be seen to imply a drop in the cross-sectional $L^1$ or $L^2$ norm of $f-f_M$ over the region $\Omega'$. This drop will depend on $\Gamma$ and $\nm{\ov{f}-f_M}_{L^2(\Omega')}^2\Gamma\pre$. The fundamental theorem of calculus then leads to a lower bound on the $L^2$ norm of $\grad f$ over the region where the drop occurs. This bound will depend on the magnitude of the drop itself and also on the area of the region $\Omega'$. However, the magnitude of $|\Omega'|$ varies significantly depending on whether $\Omega'$ lies in the $\eps$-caps $L_\eps$, $U_\eps$, or in the bulk $\Omega_\eps$. This forces us to consider those two situations separately and explains why \Cref{prop:secNash_StratCaps} and \Cref{prop:secNash_StratBulk} provide inequalities with different scalings. These different scalings were matched by an appropriate choice of $\eps$ in the proof of \Cref{thm:secNash_NashOnStratified}.

First, we present the case of stratification in the $\eps$-caps.

\begin{proof}[Proof of \Cref{prop:secNash_StratCaps}]
We only consider the case of the lower $\eps$-cap $L_\eps$, as the argument below can be repeated analogously with obvious modifications if $L_\eps$ is replaced by $U_\eps$.

We begin with the $L^2$ orthogonal decomposition over $L_\eps$,
\[
\nm{f-f_M}_{L^2(L_\eps)}^2 = \nm{\ov{f}-f_M}_{L^2(L_\eps)}^2+\nm{\wt{f}}_{L^2(L_\eps)}^2 \ge \kappa\Gamma/4.
\]
We must be able to fix $x_2' \in (0, \eps)$ such that
\begin{equation}\label{eq:secNash_lowerboundL2section}
\int_{I_{x_2'}} |f(x_1, x_2')-f_M|^2\, dx_1 \ge \frac{\kappa\Gamma}{4\eps},
\end{equation}
as otherwise we would obtain the contradiction
\[
\nm{f-f_M}_{L^2(L_\eps)}^2 = \int_0^\eps \int_{I_{x_2}} |f(x_1, x_2)-f_M|^2\, dx_1\, dx_2 < \kappa\Gamma/4.
\]
Similarly, we can fix $x_2'' \in (\eps, 16\eps/\kappa)$ such that
\begin{equation}\label{eq:secNash_upperboundL2section}
\int_{I_{x_2''}} |f(x_1, x_2'')-f_M|^2\, dx_1 \le \frac{\kappa\Gamma}{8\eps},
\end{equation}
as otherwise we would again obtain a contradiction:
\[
\Gamma = \nm{f-f_M}_2^2 \ge \int_\eps^{16\eps/\kappa} \int_{I_{x_2}} |f(x_1, x_2)-f_M|^2\, dx_1\, dx_2 > (2-\kappa/8)\Gamma > \Gamma.
\]
Together, (\ref{eq:secNash_lowerboundL2section}) and (\ref{eq:secNash_upperboundL2section}) imply that
\[
\paren*{ \int_{I_{x_2'}} |f(x_1,x_2')-f_M|^2\, dx_1 }^{1/2} - \paren*{ \int_{I_{x_2''}} |f(x_1,x_2'')-f_M|^2\, dx_1 }^{1/2} \ge \frac{2-\sqrt{2}}{4} \paren*{\frac{\kappa \Gamma}{\eps}}^{1/2}.
\]
Since $\eps < \kappa\eps_*/32$, we know from \Cref{as:secSett_3} and \Cref{norm:secSett_eps} that $I_{x_2'} \subseteq I_{x_2} \subseteq I_{x_2''}$ for all $x_2' \le x_2 \le x_2''$. In turn, the above inequality implies that
\begin{align*}
\frac{2-\sqrt{2}}{4} \paren*{\frac{\kappa \Gamma}{\eps}}^{1/2} &\le \paren*{ \int_{I_{x_2'}} |f(x_1,x_2')-f_M|^2\, dx_1 }^{1/2} - \paren*{ \int_{I_{x_2'}} |f(x_1,x_2'')-f_M|^2\, dx_1 }^{1/2} \\
&\le \paren*{ \int_{I_{x_2'}} |f(x_1,x_2'')-f(x_1,x_2')|^2\, dx_1 }^{1/2}.
\end{align*}
Note that the domain of integration is $I_{x_2'}$ instead of $I_{x_2''}$ for the right-most integral on the first line; its value can only decline if we shrink the domain. Invoking the fundamental theorem of calculus (again using that $I_{x_2'} \subseteq I_{x_2} \subseteq I_{x_2''}$ for all $x_2' \le x_2 \le x_2''$) and the Cauchy--Schwarz inequality, we obtain
\begin{align*}
\frac{2-\sqrt{2}}{4} \paren*{\frac{\kappa \Gamma}{\eps}}^{1/2} &\le \paren*{ \int_{I_{x_2'}} |f(x_1,x_2'')-f(x_1,x_2')|^2\, dx_1 }^{1/2} \\
&= \paren*{ \int_{I_{x_2'}} \abs*{\int_{x_2'}^{x_2''} \partial_2 f(x_1,x_2)\, dx_2 }^2\, dx_1 }^{1/2} \\
&\le (x_2''-x_2')^{1/2} \paren*{ \int_{I_{x_2'}} \int_{x_2'}^{x_2''} |\partial_2 f(x_1,x_2)|^2\, dx_2 \, dx_1 }^{1/2} \\
&\le (16\eps/\kappa)^{1/2} \nm{\partial_2 f}_2.
\end{align*}
Upon rearranging the above inequality, the proof is complete with the universal constant $C_4$ given by $C_4 = 16/(2-\sqrt{2})$.
\end{proof}

We next consider the case of stratification in the $\eps$-bulk $\Omega_\eps$. Since $\eps > 0$ is a variable that has to be chosen in the proof of \Cref{thm:secNash_NashOnStratified}, the subset $\Omega_\eps$ may still include horizontal cross-sections with much smaller lengths than those in $\Omega_{\eps_*}$, where $\eps_*$ is the fixed constant from \Cref{norm:secSett_eps}. Thus, we will need to consider two subcases: one focused on the set difference $\Omega_\eps \setminus \Omega_{\eps_*}$, and another focused on $\Omega_{\eps_*}$ where we have a fixed lower bound on the lengths of horizontal cross-sections. The following lemma considers the first subcase (which will be used to prove \Cref{prop:secNash_StratBulk}).

\begin{lemma}\label{lem:secNash_Sit1StratBulk}
Under \Cref{hyp:secNash}, suppose that
\[
\kappa\pre \Gamma\pre\eps^{-1/2}\nm{f-f_M}_1^2 \le \frac{h K_*}{32 R_*},
\quad
\nm{f(\cdot, x_2') - f_M}_{L^1(I_{x_2'})} \ge \frac{\kappa \Gamma |I_{x_2'}|}{2\nm{f-f_M}_1}
\]
for some fixed constants $0 < \kappa \le 1$ and $0 < \eps < \eps_*$ and some fixed height $x_2' \in [\eps, \eps_*]$. Then we have
\[
\kappa^{3/2} \Gamma^{3/2} \eps^{1/2} \le 2^{7/2}K_*\pre \nm{f-f_M}_1^2\nm{\grad f}_2.
\]
\end{lemma}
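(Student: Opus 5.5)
The plan is a ``drop plus fundamental theorem of calculus'' argument, in the same spirit as the proof of \Cref{prop:secNash_StratCaps}, but using cross-sectional $L^1$ norms instead of $L^2$ norms. Write $m := \nm{f-f_M}_1$ and $I' := I_{x_2'}$. Since $x_2' \ge \eps$, \eqref{eq:secSett_DecayOfIx2} gives $|I'| \ge K_*\eps^{1/2}$. The hypothesis says that the average of $|f-f_M|$ over $I'$ is at least $\kappa\Gamma/(2m)$. The strategy is to find a nearby height $x_2''$ where this average has dropped by half, and then pay for the drop with $\nm{\grad f}_2$.

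\emph{Step 1 (a height with small $L^1$ section).} Let $B$ be the set of heights $x_2$ with $\nm{f(\cdot,x_2)-f_M}_{L^1(I_{x_2})} \ge \kappa\Gamma|I'|/(4m)$. Integrating in $x_2$ gives
\[
|B| \le \frac{4m^2}{\kappa\Gamma|I'|} \le \frac{4m^2}{\kappa\Gamma K_*\eps^{1/2}} =: \delta .
\]
The first hypothesis then yields $\delta \le h/(8R_*) \le h/8$. Hence there is an $x_2'' \in (x_2', x_2'+2\delta)$ with $x_2'' \notin B$, and $x_2'' < \eps_* + h/4 < h/2$, so the whole segment of heights between $x_2'$ and $x_2''$ stays in the lower half of $\Omega$, away from the top cap.

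\emph{Step 2 (common horizontal interval).} I want to integrate vertically over $x_1 \in I'$. This requires $I' \subseteq I_{x_2}$ for all $x_2 \in [x_2', x_2'']$, or at least a subinterval $J \subseteq I'$ with $|J| \gtrsim |I'|$ that has this property. On $(0,\eps_*)$ the cross-sections are increasing and nested by \Cref{as:secSett_3} and \Cref{norm:secSett_eps}, exactly as used in \Cref{prop:secNash_StratCaps}. For heights in $[\eps_*, x_2'']$ I would use that $|I_{x_2}| \ge |I_{\eps_*}| \ge |I'|$ (\Cref{norm:secSett_eps}) together with the slope bound $M_*$ and the ratio bound $R_*$ of \Cref{def:secSett_K*R*} to extract such a $J$. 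This geometric step is the one I expect to be the main obstacle: lengths do not decrease, but the sections may shift horizontally, so the containment has to be argued through the charts $F_\ell, F_r$. The presence of $R_*$ in the hypothesis suggests that the authors calibrate exactly this. If a clean containment fails, I would fall back on taking $J = \bigcap_{x_2} I_{x_2}$ and shrinking $\delta$ by a factor depending on $R_*$.

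\emph{Step 3 (FTC and conclusion).} On $J$ (or on $I'$), the triangle inequality gives
\[
\int_{J}|f(x_1,x_2')-f(x_1,x_2'')|\,dx_1 \ge \frac{\kappa\Gamma|I'|}{2m}-\frac{\kappa\Gamma|I'|}{4m} = \frac{\kappa\Gamma|I'|}{4m}.
\]
Writing the difference as $\int_{x_2'}^{x_2''}\partial_2 f\,dx_2$ and applying Cauchy--Schwarz in both variables bounds the left side by $|I'|^{1/2}(2\delta)^{1/2}\nm{\grad f}_2$. Rearranging gives
\[
\kappa\Gamma|I'|^{1/2} \le 4\sqrt2\, m\,\delta^{1/2}\nm{\grad f}_2 .
\]
Substituting $\delta$ and then $|I'| \ge K_*\eps^{1/2}$ gives $\kappa^{3/2}\Gamma^{3/2}K_*\eps^{1/2} \lesssim m^2\nm{\grad f}_2$. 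Tracking constants should reproduce the factor $2^{7/2}K_*^{-1}$.
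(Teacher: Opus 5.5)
Your Steps 1 and 3 are the paper's argument. It also pigeonholes the sectional $L^1$ norms to find $x_2''$ in a window of length $\delta=8\|f-f_M\|_1^2/(\kappa\Gamma|I_{x_2'}|)\le h/(4R_*)$. It gets the drop $\kappa\Gamma|I_{x_2'}|/(4\|f-f_M\|_1)$ from the triangle inequality, and obtains $2^{7/2}K_*^{-1}$ from Cauchy--Schwarz and $|I_{x_2'}|\ge K_*\eps^{1/2}$.

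But Step 2, which you rightly flag, is a genuine gap, and your proposed repairs do not close it.
\begin{itemize}
\item \emph{A subinterval $J\subsetneq I_{x_2'}$ is useless.} The hypothesis bounds $\|f(\cdot,x_2')-f_M\|_{L^1}$ from below only over all of $I_{x_2'}$, and that mass may lie outside $J$. So the whole of $I_{x_2'}$ must be carried up to height $x_2''$.
\item \emph{Vertical segments cannot do this.} The window is only known to have length at most $h/(4R_*)$, so it can reach well above $\eps_*$, where the sections are no longer nested. \Cref{norm:secSett_eps} gives only $|I_{x_2}|\ge|I_{x_2'}|$ there (as $x_2\le 3h/4$), while the sections may drift sideways with slope up to $M_*$. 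For a strongly sheared ellipse, $\bigcap_{x_2'\le x_2\le x_2''}I_{x_2}$ can be empty.
\item \emph{The window cannot be shortened.} The threshold defining $B$ must stay below $\kappa\Gamma|I_{x_2'}|/(2\|f-f_M\|_1)$ to leave a drop. So nothing prevents $|B|$ from being of order $\|f-f_M\|_1^2/(\kappa\Gamma|I_{x_2'}|)$. The hypothesis bounds this only by $h/(8R_*)$, never by anything like $|I_{x_2'}|/M_*$.
\end{itemize}
Also, $R_*$ is not calibrating any containment here. It only keeps $x_2''\le 3h/4$; it matters genuinely in \Cref{lem:secNash_Sit2StratBulk}, where sections may shrink.

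The missing idea is to translate the interval rather than integrate vertically. Pick a shift $c(x_2)$ with $I_{x_2'}+c(x_2)\subseteq I_{x_2}$ for $x_2'\le x_2\le x_2''$; this is possible at each height exactly because $|I_{x_2}|\ge|I_{x_2'}|$. Then apply the fundamental theorem of calculus to $\nabla f$ (not $\partial_2 f$) along the curves $x_2\mapsto(x_1+c(x_2),x_2)$, $x_1\in I_{x_2'}$.
\begin{itemize}
\item The final translate of $I_{x_2'}$ sits inside $I_{x_2''}$, so your triangle-inequality step is unchanged.
\item The map $(x_1,x_2)\mapsto(x_1+c(x_2),x_2)$ has Jacobian $1$, so Cauchy--Schwarz over the swept region goes through as in your Step 3.
\end{itemize}
This is the paper's construction; its $\Phi_t$ places the translate centered in $I_{x_2'+t}$.

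One caveat for your constant tracking: along slanted curves the fundamental theorem of calculus costs the speed $(1+c'^2)^{1/2}$. You therefore get $2^{7/2}K_*^{-1}\sup(1+c'^2)^{1/2}$. Take
\[ c(x_2):=\min\bigl\{\max\{0,\,F_\ell(x_2)-F_\ell(x_2')\},\,F_r(x_2)-F_r(x_2')\bigr\}. \]
This vanishes where the sections are nested and is $M_*$-Lipschitz above $\eps_*$, so the factor is at most $(1+M_*^2)^{1/2}$. The paper's clean constant comes from identifying $\int_{I_{x_2'}}\int_{\phi_{x_1}}1\,dz\,dx_1$, an arc-length integral, with the area of the swept region, which silently drops this factor. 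The loss is harmless for \Cref{prop:secNash_StratBulk}, whose constant depends on $\Omega$ anyway.
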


\begin{proof}
We may define $\delta := 8\nm{f-f_M}_1^2(\kappa\Gamma|I_{x_2'}|)\pre$ which satisfies $\delta \le h/(4R_*) < h/2$ by \Cref{norm:secSett_eps}, inequality (\ref{eq:secSett_DecayOfIx2}), the hypothesis of this lemma, and as $R_* \ge 1$ by (\ref{aux0307a}). In turn, there must exist $x_2''$ such that
\[
x_2' < x_2'' \le x_2' + \delta < \eps_*+h/2 \le 3h/4, \quad \int_{I_{x_2''}} \abs{f(x_1,x_2'')-f_M}\, dx_1 \le \frac{\kappa\Gamma|I_{x_2'}|}{4\nm{f-f_M}_1},
\]
lest we obtain the contradiction
\[
\nm{f-f_M}_1 \ge \int_{x_2'}^{x_2'+\delta} \int_{I_{x_2}} \abs{f(x_1,x_2)-f_M}\, dx_1\, dx_2 > 2\nm{f-f_M}_1.
\]
Importantly, by \Cref{norm:secSett_eps} and by the fact $x_2' < x_2'' \le 3h/4$, it holds that $|I_{x_2'}| \le |I_{x_2}|$ for all $x_2' \le x_2 \le x_2''$. This allows us to explicitly construct the following family of disjoint smooth curves
\[
\Phi_t(x_1) :=
\paren*{
x_1 + \frac{F_r(x_2'+t)-F_\ell(x_2'+t)}{2}-\frac{F_r(x_2')-F_\ell(x_2')}{2},\
x_2' + t
}
\]
where $x_1 \in I_{x_2'}$ and $0 \le t \le x_2'' - x_2'$, meaning that the curves start at the cross-section $I_{x_2'} \times \{x_2'\}$ and end at $I_{x_2''} \times \{x_2''\}$. Note that $\Phi_t$ simply shifts up the interval $I_{x_2'} \times \{x_2'\}$ to be a centered subinterval of $I_{x_2'+t} \times \{x_2' + t\}$, and hence $\Phi_t(x_1) \in \Omega$ for all valid choices of $t$ and $x_1 \in I_{x_2'}$; see \Cref{fig:secNash_NashStratCurves1}.

\tikzset{every picture/.style={line width=0.75pt}}
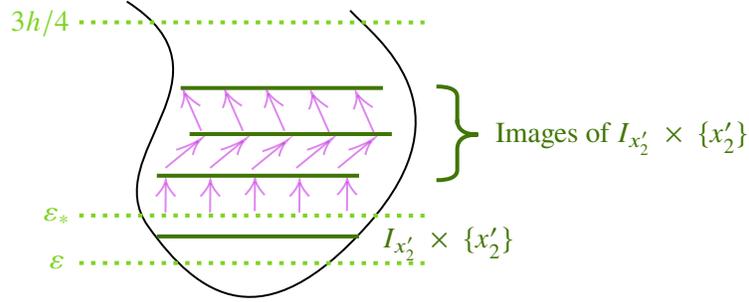
\begin{figure}[!ht]%
\centering
\begin{tikzpicture}[x=0.75pt,y=0.75pt,yscale=-.75,xscale=1.5]
\draw    (77,9) .. controls (116.09,59.09) and (67.09,112.09) .. (84.09,160.09) .. controls (101.09,208.09) and (123.09,238.09) .. (158.09,159.09) .. controls (193.09,80.09) and (161.09,34.09) .. (152.09,15.09) ;
\draw [color={rgb, 255:red, 65; green, 117; blue, 5 }  ,draw opacity=1 ][line width=1.5]    (87.09,167.09) -- (155.09,167.09) ;
\draw [color={rgb, 255:red, 126; green, 211; blue, 33 }  ,draw opacity=1 ][line width=1.5]  [dash pattern={on 1.69pt off 2.76pt}]  (61,185) -- (177.09,185.09) ;
\draw [color={rgb, 255:red, 126; green, 211; blue, 33 }  ,draw opacity=1 ][line width=1.5]  [dash pattern={on 1.69pt off 2.76pt}]  (61,153) -- (177.09,153.09) ;
\draw [color={rgb, 255:red, 189; green, 16; blue, 224 }  ,draw opacity=0.5 ]   (90,151) -- (90.08,131.23) ;
\draw [shift={(90.09,129.23)}, rotate = 90.23] [color={rgb, 255:red, 189; green, 16; blue, 224 }  ,draw opacity=0.5 ][line width=0.75]    (10.93,-3.29) .. controls (6.95,-1.4) and (3.31,-0.3) .. (0,0) .. controls (3.31,0.3) and (6.95,1.4) .. (10.93,3.29)   ;
\draw [color={rgb, 255:red, 189; green, 16; blue, 224 }  ,draw opacity=0.5 ]   (105,151) -- (105.08,131.23) ;
\draw [shift={(105.09,129.23)}, rotate = 90.23] [color={rgb, 255:red, 189; green, 16; blue, 224 }  ,draw opacity=0.5 ][line width=0.75]    (10.93,-3.29) .. controls (6.95,-1.4) and (3.31,-0.3) .. (0,0) .. controls (3.31,0.3) and (6.95,1.4) .. (10.93,3.29)   ;
\draw [color={rgb, 255:red, 189; green, 16; blue, 224 }  ,draw opacity=0.5 ]   (120,150) -- (120.08,130.23) ;
\draw [shift={(120.09,128.23)}, rotate = 90.23] [color={rgb, 255:red, 189; green, 16; blue, 224 }  ,draw opacity=0.5 ][line width=0.75]    (10.93,-3.29) .. controls (6.95,-1.4) and (3.31,-0.3) .. (0,0) .. controls (3.31,0.3) and (6.95,1.4) .. (10.93,3.29)   ;
\draw [color={rgb, 255:red, 189; green, 16; blue, 224 }  ,draw opacity=0.5 ]   (151,150) -- (151.08,130.23) ;
\draw [shift={(151.09,128.23)}, rotate = 90.23] [color={rgb, 255:red, 189; green, 16; blue, 224 }  ,draw opacity=0.5 ][line width=0.75]    (10.93,-3.29) .. controls (6.95,-1.4) and (3.31,-0.3) .. (0,0) .. controls (3.31,0.3) and (6.95,1.4) .. (10.93,3.29)   ;
\draw [color={rgb, 255:red, 189; green, 16; blue, 224 }  ,draw opacity=0.5 ]   (135,151) -- (135.08,131.23) ;
\draw [shift={(135.09,129.23)}, rotate = 90.23] [color={rgb, 255:red, 189; green, 16; blue, 224 }  ,draw opacity=0.5 ][line width=0.75]    (10.93,-3.29) .. controls (6.95,-1.4) and (3.31,-0.3) .. (0,0) .. controls (3.31,0.3) and (6.95,1.4) .. (10.93,3.29)   ;
\draw [color={rgb, 255:red, 65; green, 117; blue, 5 }  ,draw opacity=1 ][line width=1.5]    (87.09,126.23) -- (155.09,126.23) ;
\draw [color={rgb, 255:red, 189; green, 16; blue, 224 }  ,draw opacity=0.5 ]   (90,121) -- (101.04,102.94) ;
\draw [shift={(102.09,101.23)}, rotate = 121.44] [color={rgb, 255:red, 189; green, 16; blue, 224 }  ,draw opacity=0.5 ][line width=0.75]    (10.93,-3.29) .. controls (6.95,-1.4) and (3.31,-0.3) .. (0,0) .. controls (3.31,0.3) and (6.95,1.4) .. (10.93,3.29)   ;
\draw [color={rgb, 255:red, 189; green, 16; blue, 224 }  ,draw opacity=0.5 ]   (105,121) -- (116.04,102.94) ;
\draw [shift={(117.09,101.23)}, rotate = 121.44] [color={rgb, 255:red, 189; green, 16; blue, 224 }  ,draw opacity=0.5 ][line width=0.75]    (10.93,-3.29) .. controls (6.95,-1.4) and (3.31,-0.3) .. (0,0) .. controls (3.31,0.3) and (6.95,1.4) .. (10.93,3.29)   ;
\draw [color={rgb, 255:red, 189; green, 16; blue, 224 }  ,draw opacity=0.5 ]   (119,122) -- (130.04,103.94) ;
\draw [shift={(131.09,102.23)}, rotate = 121.44] [color={rgb, 255:red, 189; green, 16; blue, 224 }  ,draw opacity=0.5 ][line width=0.75]    (10.93,-3.29) .. controls (6.95,-1.4) and (3.31,-0.3) .. (0,0) .. controls (3.31,0.3) and (6.95,1.4) .. (10.93,3.29)   ;
\draw [color={rgb, 255:red, 189; green, 16; blue, 224 }  ,draw opacity=0.5 ]   (134,121) -- (145.04,102.94) ;
\draw [shift={(146.09,101.23)}, rotate = 121.44] [color={rgb, 255:red, 189; green, 16; blue, 224 }  ,draw opacity=0.5 ][line width=0.75]    (10.93,-3.29) .. controls (6.95,-1.4) and (3.31,-0.3) .. (0,0) .. controls (3.31,0.3) and (6.95,1.4) .. (10.93,3.29)   ;
\draw [color={rgb, 255:red, 189; green, 16; blue, 224 }  ,draw opacity=0.5 ]   (148,121) -- (159.04,102.94) ;
\draw [shift={(160.09,101.23)}, rotate = 121.44] [color={rgb, 255:red, 189; green, 16; blue, 224 }  ,draw opacity=0.5 ][line width=0.75]    (10.93,-3.29) .. controls (6.95,-1.4) and (3.31,-0.3) .. (0,0) .. controls (3.31,0.3) and (6.95,1.4) .. (10.93,3.29)   ;
\draw [color={rgb, 255:red, 65; green, 117; blue, 5 }  ,draw opacity=1 ][line width=1.5]    (98.09,98.23) -- (166.09,98.23) ;
\draw [color={rgb, 255:red, 65; green, 117; blue, 5 }  ,draw opacity=1 ][line width=1.5]    (95.09,67.23) -- (163.09,67.23) ;
\draw [color={rgb, 255:red, 189; green, 16; blue, 224 }  ,draw opacity=0.5 ]   (102,96) -- (96.52,71.18) ;
\draw [shift={(96.09,69.23)}, rotate = 77.54] [color={rgb, 255:red, 189; green, 16; blue, 224 }  ,draw opacity=0.5 ][line width=0.75]    (10.93,-3.29) .. controls (6.95,-1.4) and (3.31,-0.3) .. (0,0) .. controls (3.31,0.3) and (6.95,1.4) .. (10.93,3.29)   ;
\draw [color={rgb, 255:red, 189; green, 16; blue, 224 }  ,draw opacity=0.5 ]   (116,96) -- (110.52,71.18) ;
\draw [shift={(110.09,69.23)}, rotate = 77.54] [color={rgb, 255:red, 189; green, 16; blue, 224 }  ,draw opacity=0.5 ][line width=0.75]    (10.93,-3.29) .. controls (6.95,-1.4) and (3.31,-0.3) .. (0,0) .. controls (3.31,0.3) and (6.95,1.4) .. (10.93,3.29)   ;
\draw [color={rgb, 255:red, 189; green, 16; blue, 224 }  ,draw opacity=0.5 ]   (130,96) -- (124.52,71.18) ;
\draw [shift={(124.09,69.23)}, rotate = 77.54] [color={rgb, 255:red, 189; green, 16; blue, 224 }  ,draw opacity=0.5 ][line width=0.75]    (10.93,-3.29) .. controls (6.95,-1.4) and (3.31,-0.3) .. (0,0) .. controls (3.31,0.3) and (6.95,1.4) .. (10.93,3.29)   ;
\draw [color={rgb, 255:red, 189; green, 16; blue, 224 }  ,draw opacity=0.5 ]   (145,97) -- (139.52,72.18) ;
\draw [shift={(139.09,70.23)}, rotate = 77.54] [color={rgb, 255:red, 189; green, 16; blue, 224 }  ,draw opacity=0.5 ][line width=0.75]    (10.93,-3.29) .. controls (6.95,-1.4) and (3.31,-0.3) .. (0,0) .. controls (3.31,0.3) and (6.95,1.4) .. (10.93,3.29)   ;
\draw [color={rgb, 255:red, 189; green, 16; blue, 224 }  ,draw opacity=0.5 ]   (160,97) -- (154.52,72.18) ;
\draw [shift={(154.09,70.23)}, rotate = 77.54] [color={rgb, 255:red, 189; green, 16; blue, 224 }  ,draw opacity=0.5 ][line width=0.75]    (10.93,-3.29) .. controls (6.95,-1.4) and (3.31,-0.3) .. (0,0) .. controls (3.31,0.3) and (6.95,1.4) .. (10.93,3.29)   ;
\draw [color={rgb, 255:red, 126; green, 211; blue, 33 }  ,draw opacity=1 ][line width=1.5]  [dash pattern={on 1.69pt off 2.76pt}]  (61,23) -- (177.09,23.09) ;
\draw  [color={rgb, 255:red, 65; green, 117; blue, 5 }  ,draw opacity=1 ][line width=1.5]  (181.09,129.23) .. controls (185.76,129.23) and (188.09,126.9) .. (188.09,122.23) -- (188.09,108.57) .. controls (188.09,101.9) and (190.42,98.57) .. (195.09,98.57) .. controls (190.42,98.57) and (188.09,95.24) .. (188.09,88.57)(188.09,91.57) -- (188.09,73.23) .. controls (188.09,68.56) and (185.76,66.23) .. (181.09,66.23) ;
\draw (50,178) node [anchor=north west][inner sep=0.75pt]  [color={rgb, 255:red, 126; green, 211; blue, 33 }  ,opacity=1 ]  {$\varepsilon$};
\draw (48,146) node [anchor=north west][inner sep=0.75pt]  [color={rgb, 255:red, 126; green, 211; blue, 33 }  ,opacity=1 ]  {$\varepsilon_{*}$};
\draw (162,159) node [anchor=north west][inner sep=0.75pt]  [color={rgb, 255:red, 65; green, 117; blue, 5 }  ,opacity=1 ]  {$I_{x_{2} '} \ \times \ \{x_{2} '\}$};
\draw (37,13) node [anchor=north west][inner sep=0.75pt]  [color={rgb, 255:red, 126; green, 211; blue, 33 }  ,opacity=1 ]  {$3h/4$};
\draw (200,88) node [anchor=north west][inner sep=0.75pt]  [color={rgb, 255:red, 65; green, 117; blue, 5 }  ,opacity=1 ] [align=left] {Images of $I_{x_{2} '} \ \times \ \{x_{2} '\}$};
\end{tikzpicture}
\caption{Cartoon depiction of images of $I_{x_2'} \times \{x_2'\}$ under $\Phi_t$, with the arrows representing $\partial_t \Phi_t$. This construction of $\Phi_t$ preserves the length of $I_{x_2'} \times \{x_2'\}$.}
\label{fig:secNash_NashStratCurves1}
\end{figure}

We denote the image of the curves $t \mapsto \Phi_t(x_1)$ by $\phi_{x_1} := \{\Phi_t(x_1)\ |\ 0 \le t \le x_2'' - x_2'\}$, which are indexed by $x_1 \in I_{x_2'}$. This allows us to integrate over a portion of the domain $\Omega$ as follows:
\begin{align} \nonumber
\int_{I_{x_2'}} \int_{\phi_{x_1}} |\grad f(z)|\, dz\, dx_1
& \nonumber = \int_{I_{x_2'}} \int_0^{x_2''-x_2'} |\grad f(\Phi_t(x_1))||\partial_t \Phi_t(x_1)|\, dt\, dx_1 \\
& \nonumber \ge \int_{I_{x_2'}} \abs*{\int_0^{x_2''-x_2'} \grad f(\Phi_t(x_1))\cdot\partial_t \Phi_t(x_1)\, dt} dx_1 \\
& \label{aux129f} = \int_{I_{x_2'}} \abs*{f(\Phi_{x_2''-x_2'}(x_1))-f(\Phi_0(x_1)) } dx_1.
\end{align}
Applying the reverse triangle inequality and the fact that the image of $x_1 \mapsto \Phi_{x_2''-x_2'}(x_1)$ is a subinterval of $I_{x_2''} \times \{x_2''\}$ that has the same length as $I_{x_2'} \times \{x_2'\}$, we obtain
\begin{align*}
\int_{I_{x_2'}} \int_{\phi_{x_1}} |\grad f(z)|\, dz\, dx_1
&\ge \int_{I_{x_2'}} \abs*{f(x_1,x_2') - f_M}\, dx_1 - \int_{I_{x_2'}}\abs*{f(\Phi_{x_2''-x_2'}(x_1))-f_M}\, dx_1 \\
&\ge \int_{I_{x_2'}} \abs*{f(x_1,x_2') - f_M}\, dx_1  - \int_{I_{x_2''}} \abs*{f(x_1,x_2'')-f_M}\, dx_1 \\
&\ge \frac{\kappa\Gamma|I_{x_2'}|}{4\nm{f-f_M}_1},
\end{align*}
where the final inequality holds by the hypothesis on $x_2'$ and by the choice of $x_2''$. It follows by the Cauchy--Schwarz inequality that
\begin{align*}
\int_\Omega |\grad f(x)|^2\, dx &\ge \int_{I_{x_2'}} \int_{\phi_{x_1}} |\grad f(z)|^2\, dz\, dx_1 \\
&\ge \frac{1}{\delta|I_{x_2'}|} \paren*{\int_{I_{x_2'}} \int_{\phi(x_1)} |\grad f(z)|\, dz\, dx_1}^2 \\
&\ge \frac{\kappa^3\Gamma^3|I_{x_2'}|^2}{128\nm{f-f_M}_1^4},
\end{align*}
where we have used the smoothness of the boundary charts $F_r, F_\ell$ to compute:
\[
\int_{I_{x_2'}} \int_{\phi_{x_1}} 1\, dz\, dx_1
= \abs*{\{\Phi_t(x_1)\ |\ x_1 \in I_{x_2'},\ 0 \le t \le x_2''-x_2' \}}
\le \delta|I_{x_2'}|.
\]
Then, (\ref{eq:secSett_DecayOfIx2}) completes the proof.
\end{proof}

The following lemma considers the second subcase which will be used to prove \Cref{prop:secNash_StratBulk}. The proof is very similar to that of \Cref{lem:secNash_Sit1StratBulk}; we need only choose a different family of curves to integrate over.

\begin{lemma}\label{lem:secNash_Sit2StratBulk}
Under \Cref{hyp:secNash}, suppose that
\[
\kappa\pre \Gamma\pre \eps^{-1/2} \nm{f-f_M}_1^2 \le \frac{h K_* }{32 R_*},
\quad
\nm{f(\cdot, x_2') - f_M}_{L^1(I_{x_2'})} \ge \frac{\kappa \Gamma |I_{x_2'}|}{2\nm{f-f_M}_1}
\]
for some fixed constants $0 < \kappa \le 1$ and $0 < \eps < \eps_*$ and some fixed height $x_2' \in [\eps_*,h/2]$. Then we have
\[
\kappa^{3/2} \Gamma^{3/2} \eps^{1/2} \le 2^{7/2}R_*K_*\pre \nm{f-f_M}_1^2\nm{\grad f}_2.
\]
\end{lemma}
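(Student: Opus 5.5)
We follow the proof of \Cref{lem:secNash_Sit1StratBulk} closely; only the family of curves changes. Set
\[
\delta := 8\nm{f-f_M}_1^2(\kappa\Gamma|I_{x_2'}|)\pre.
\]
Since $x_2' \in [\eps_*, h/2]$, \Cref{norm:secSett_eps} gives $|I_{x_2'}| \ge |I_{\eps_*}| \ge K_*\eps_*^{1/2} \ge K_*\eps^{1/2}$. Combined with the hypothesis, this yields $\delta \le h/(4R_*) \le h/4$. The same averaging argument as before then produces a height $x_2''$ with $x_2' < x_2'' \le x_2'+\delta \le 3h/4$ and
\[
\int_{I_{x_2''}} |f(x_1,x_2'')-f_M|\, dx_1 \le \frac{\kappa\Gamma|I_{x_2'}|}{4\nm{f-f_M}_1};
\]
otherwise $\nm{f-f_M}_1 > 2\nm{f-f_M}_1$. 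Every height between $x_2'$ and $x_2''$ lies in $[\eps_*, 3h/4] \subseteq [\eps_*/4, h-\eps_*/4]$, so all cross-sections there have lengths within the ratio $R_*$ of each other.

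The difference from \Cref{lem:secNash_Sit1StratBulk} is that $|I_{x_2}|$ need no longer dominate $|I_{x_2'}|$ for $x_2$ between $x_2'$ and $x_2''$, so a rigid vertical shift of $I_{x_2'}$ may leave $\Omega$. We therefore use affine rescalings of the cross-sections:
\[
\Phi_t(x_1) := \paren*{F_\ell(x_2'+t) + \frac{|I_{x_2'+t}|}{|I_{x_2'}|}\paren*{x_1 - F_\ell(x_2')},\ x_2'+t}, \quad x_1 \in I_{x_2'},\ 0\le t\le x_2''-x_2'.
\]
This maps $I_{x_2'}\times\{x_2'\}$ bijectively onto each $I_{x_2'+t}\times\{x_2'+t\}$ and stays in $\Omega$. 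As in \eqref{aux129f}, the fundamental theorem of calculus along $t\mapsto \Phi_t(x_1)$ gives
\[
\int_{I_{x_2'}}\int_0^{x_2''-x_2'} |\grad f(\Phi_t)||\partial_t\Phi_t|\, dt\, dx_1 \ge \int_{I_{x_2'}}|f(x_1,x_2')-f_M|\,dx_1 - \int_{I_{x_2'}}|f(\Phi_{x_2''-x_2'}(x_1))-f_M|\,dx_1.
\]
The endpoint map has Jacobian $|I_{x_2''}|/|I_{x_2'}| \ge R_*\pre$, so the last integral is at most $R_*\int_{I_{x_2''}}|f(x_1,x_2'')-f_M|\,dx_1$. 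This is where the factor $R_*$ enters.

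The difficulty is that a lower bound of order $R_*\pre$ times the deficit does not survive this factor: the second term $R_*\int_{I_{x_2''}}|f-f_M|$ could exceed the first. We fix this by choosing $x_2''$ with the sharper threshold $\kappa\Gamma|I_{x_2'}|/(4R_*\nm{f-f_M}_1)$. This requires enlarging $\delta$ to $8R_*\nm{f-f_M}_1^2(\kappa\Gamma|I_{x_2'}|)\pre$, which is still $\le h/4$ by the hypothesis because of the factor $R_*$ already present in $hK_*/(32R_*)$. The difference above is then at least $\kappa\Gamma|I_{x_2'}|/(4\nm{f-f_M}_1)$.

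It remains to pass to $\nm{\grad f}_2^2$ by Cauchy--Schwarz, which needs two bounds.
\begin{itemize}
\item Speed: $|\partial_t\Phi_t| \le C M_*$, using $R_*$ and the derivative bound $M_*$ from \Cref{def:secSett_K*R*}.
\item Change of variables: $(t,x_1)\mapsto\Phi_t(x_1)$ has Jacobian $|I_{x_2'+t}|/|I_{x_2'}| \ge R_*\pre$, so
\[
\int\!\!\int|\grad f(\Phi_t)|^2\,dt\,dx_1 \le R_*\nm{\grad f}_2^2.
\]
\end{itemize}
Combining these gives
\[
\paren*{\frac{\kappa\Gamma|I_{x_2'}|}{4\nm{f-f_M}_1}}^2 \lesssim \delta\,|I_{x_2'}|\,R_*\nm{\grad f}_2^2.
\]
Substituting $\delta$ and using $|I_{x_2'}|\ge K_*\eps^{1/2}$ produces $\kappa^{3/2}\Gamma^{3/2}\eps^{1/2}\lesssim R_*K_*\pre\nm{f-f_M}_1^2\nm{\grad f}_2$.

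The main obstacle is the constant bookkeeping: the factors $M_*$ and the $R_*$'s must combine into exactly the stated $2^{7/2}R_*K_*\pre$. If they do not, we may have to accept a constant depending on $M_*$, which would still suffice for \Cref{prop:secNash_StratBulk}.
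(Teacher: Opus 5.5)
Your argument is the paper's argument: the same enlarged $\delta=8R_*\nm{f-f_M}_1^2(\kappa\Gamma|I_{x_2'}|)\pre\le h/4$, the same sharper threshold $\kappa\Gamma|I_{x_2'}|/(4R_*\nm{f-f_M}_1)$ for choosing $x_2''$, the same affine foliation $\Phi_t$, and the same fundamental theorem of calculus plus Cauchy--Schwarz finish. You diverge only in the final change of variables, and there your bookkeeping is the correct one.

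The paper reaches $2^{7/2}R_*K_*\pre$ by asserting $\int_{I_{x_2'}}\int_{\phi_{x_1}}1\,dz\,dx_1=|\{\Phi_t(x_1)\}|\le\delta R_*|I_{x_2'}|$ and $\nm{\grad f}_2^2\ge\int_{I_{x_2'}}\int_{\phi_{x_1}}|\grad f|^2\,dz\,dx_1$. Both steps treat the arc-length measure $|\partial_t\Phi_t|\,dt\,dx_1$ as if it were the area measure $(|I_{x_2'+t}|/|I_{x_2'}|)\,dt\,dx_1$. Your Jacobian factor $R_*$ plays the role of the paper's $R_*$ from the area bound. What the paper silently drops is exactly your speed factor.

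You can make that factor explicit. The horizontal component of $\partial_t\Phi_t(x_1)$ is $(1-s)F_\ell'(x_2'+t)+sF_r'(x_2'+t)$ with $s=(x_1-F_\ell(x_2'))/|I_{x_2'}|\in[0,1]$. Hence $|\partial_t\Phi_t|\le\sqrt{1+M_*^2}$, with no $R_*$ needed, and your chain gives
\[
\kappa^{3/2}\Gamma^{3/2}\eps^{1/2}\le 2^{7/2}\sqrt{1+M_*^2}\,R_*K_*\pre\nm{f-f_M}_1^2\nm{\grad f}_2 .
\]

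Do not try to remove $\sqrt{1+M_*^2}$: with the constant $2^{7/2}R_*K_*\pre$ the statement appears to be false in general. Here is a sketch of a counterexample.
\begin{itemize}
\item Let the bulk of $\Omega$ contain a tilted strip $F_\ell(x_2)=Mx_2$, $F_r(x_2)=Mx_2+1$ over a height range inside $[\eps_*,h/2]$. Attach it to fixed caps so that $h,\eps_*,K_*,R_*,|\Omega|$ do not depend on $M$, while $M_*\approx M$.
\item Let $f(x)=\psi(x_1)$, with $\psi$ a fixed bump of width $1/4$ and $\supp\psi\subset I_{x_2'}$.
\item Each vertical line through $\supp\psi$ meets $\Omega$ in a segment of length $1/M$. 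So $\Gamma\sim\nm{f-f_M}_1\sim M\pre$ and $\nm{\grad f}_2\sim M^{-1/2}$, whereas $\nm{f(\cdot,x_2')-f_M}_{L^1(I_{x_2'})}\sim1$.
\item For a small fixed $\kappa$ and $\eps$ near $\eps_*$, both hypotheses hold once $M$ is large; the first reads $O(M\pre)\le hK_*/(32R_*)$.
\item Yet the left side of the conclusion is of order $M^{-3/2}$, while $\nm{f-f_M}_1^2\nm{\grad f}_2$ is of order $M^{-5/2}$.
\end{itemize}
The cross-section window slides sideways at speed $M$, so the cross-sectional $L^1$ mass can disappear using only a small horizontal gradient.

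So your version with $M_*$ is the right statement. It is also all that is used later: \Cref{prop:secNash_StratBulk} and \Cref{thm:secNash_NashOnStratified} only need $C_5$, and hence $C_6$, to be constants depending on $\Omega$.
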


\begin{proof}
We may define $\delta := 8R_*\nm{f-f_M}_1^2(\kappa\Gamma|I_{x_2'}|)\pre$ which satisfies $\delta \le h/4$ by \Cref{norm:secSett_eps}, inequality (\ref{eq:secSett_DecayOfIx2}) and the hypothesis of this lemma. In turn, there must exist $x_2''$ such that
\[
x_2' < x_2'' \le x_2' + \delta \le h/2+h/4 = 3h/4, \quad \int_{I_{x_2''}} \abs{f(x_1,x_2'')-f_M}\, dx_1 \le \frac{\kappa\Gamma|I_{x_2'}|}{4R_*\nm{f-f_M}_1},
\]
as otherwise we obtain the contradiction
\[
\nm{f-f_M}_1 \ge \int_{x_2'}^{x_2'+\delta} \int_{I_{x_2}} \abs{f(x_1,x_2)-f_M}\, dx_1\, dx_2 > 2\nm{f-f_M}_1.
\]
We can again explicitly construct a family of disjoint smooth curves
\begin{align*}
\Phi_t(x_1) &:= \paren*{
F_\ell(x_2'+t) + \paren*{x_1-F_\ell(x_2')}\frac{|I_{x_2'+t}|}{|I_{x_2'}|},\ x_2' + t
}
\end{align*}
where $x_1 \in I_{x_2'}$ and $0 \le t \le x_2'' - x_2'$, meaning that the curves start at the cross-section $I_{x_2'} \times \{x_2'\}$ and end at $I_{x_2''} \times \{x_2''\}$. Note that $\Phi_t$ indeed provides a foliation of the portion of the domain between heights $x_2'$ and $x_2''$; see \Cref{fig:secNash_NashStratCurves2}.

\tikzset{every picture/.style={line width=0.75pt}}
\begin{figure}[!ht]%
\centering
\begin{tikzpicture}[x=0.75pt,y=0.75pt,yscale=-1,xscale=1]
\draw   (156.09,26.71) .. controls (192.09,25.71) and (247.09,53.71) .. (227.09,73.71) .. controls (207.09,93.71) and (234.09,105.71) .. (247.09,142.71) .. controls (260.09,179.71) and (185.09,209.71) .. (165.09,209.71) .. controls (145.09,209.71) and (92.09,198.21) .. (83.09,182.71) .. controls (74.09,167.21) and (80.09,142.71) .. (84.09,125.71) .. controls (88.09,108.71) and (76.09,110.71) .. (72.09,83.71) .. controls (68.09,56.71) and (120.09,27.71) .. (156.09,26.71) -- cycle ;
\draw [color={rgb, 255:red, 126; green, 211; blue, 33 }  ,draw opacity=1 ][line width=1.5]  [dash pattern={on 1.69pt off 2.76pt}]  (65.09,121.17) -- (252.09,121.17) ;
\draw [color={rgb, 255:red, 126; green, 211; blue, 33 }  ,draw opacity=1 ][line width=1.5]  [dash pattern={on 1.69pt off 2.76pt}]  (65.09,69.17) -- (252.09,69.17) ;
\draw [color={rgb, 255:red, 126; green, 211; blue, 33 }  ,draw opacity=1 ][line width=1.5]  [dash pattern={on 1.69pt off 2.76pt}]  (65.09,189.17) -- (252.09,189.17) ;
\draw [color={rgb, 255:red, 65; green, 117; blue, 5 }  ,draw opacity=1 ][line width=1.5]    (80.09,175.94) -- (237.09,175.94) ;
\draw [color={rgb, 255:red, 65; green, 117; blue, 5 }  ,draw opacity=1 ][line width=1.5]    (80.09,145.03) -- (247.09,145.71) ;
\draw [color={rgb, 255:red, 65; green, 117; blue, 5 }  ,draw opacity=1 ][line width=1.5]    (81.09,108.03) -- (228.09,107.94) ;
\draw [color={rgb, 255:red, 65; green, 117; blue, 5 }  ,draw opacity=1 ][line width=1.5]    (72.09,79.03) -- (222.09,78.94) ;
\draw [color={rgb, 255:red, 189; green, 16; blue, 224 }  ,draw opacity=0.5 ]   (91,173) -- (85.59,152.45) ;
\draw [shift={(85.09,150.51)}, rotate = 75.26] [color={rgb, 255:red, 189; green, 16; blue, 224 }  ,draw opacity=0.5 ][line width=0.75]    (10.93,-3.29) .. controls (6.95,-1.4) and (3.31,-0.3) .. (0,0) .. controls (3.31,0.3) and (6.95,1.4) .. (10.93,3.29)   ;
\draw [color={rgb, 255:red, 189; green, 16; blue, 224 }  ,draw opacity=0.5 ]   (225,172) -- (241.76,153.01) ;
\draw [shift={(243.09,151.51)}, rotate = 131.44] [color={rgb, 255:red, 189; green, 16; blue, 224 }  ,draw opacity=0.5 ][line width=0.75]    (10.93,-3.29) .. controls (6.95,-1.4) and (3.31,-0.3) .. (0,0) .. controls (3.31,0.3) and (6.95,1.4) .. (10.93,3.29)   ;
\draw [color={rgb, 255:red, 189; green, 16; blue, 224 }  ,draw opacity=0.5 ]   (127,172) -- (116.96,151.31) ;
\draw [shift={(116.09,149.51)}, rotate = 64.11] [color={rgb, 255:red, 189; green, 16; blue, 224 }  ,draw opacity=0.5 ][line width=0.75]    (10.93,-3.29) .. controls (6.95,-1.4) and (3.31,-0.3) .. (0,0) .. controls (3.31,0.3) and (6.95,1.4) .. (10.93,3.29)   ;
\draw [color={rgb, 255:red, 189; green, 16; blue, 224 }  ,draw opacity=0.5 ]   (159.09,171.8) -- (158.63,150.37) ;
\draw [shift={(158.59,148.37)}, rotate = 88.78] [color={rgb, 255:red, 189; green, 16; blue, 224 }  ,draw opacity=0.5 ][line width=0.75]    (10.93,-3.29) .. controls (6.95,-1.4) and (3.31,-0.3) .. (0,0) .. controls (3.31,0.3) and (6.95,1.4) .. (10.93,3.29)   ;
\draw [color={rgb, 255:red, 189; green, 16; blue, 224 }  ,draw opacity=0.5 ]   (190.09,171.8) -- (202.08,151.24) ;
\draw [shift={(203.09,149.51)}, rotate = 120.26] [color={rgb, 255:red, 189; green, 16; blue, 224 }  ,draw opacity=0.5 ][line width=0.75]    (10.93,-3.29) .. controls (6.95,-1.4) and (3.31,-0.3) .. (0,0) .. controls (3.31,0.3) and (6.95,1.4) .. (10.93,3.29)   ;
\draw [color={rgb, 255:red, 189; green, 16; blue, 224 }  ,draw opacity=0.5 ]   (88.09,138.51) -- (93.6,116.45) ;
\draw [shift={(94.09,114.51)}, rotate = 104.04] [color={rgb, 255:red, 189; green, 16; blue, 224 }  ,draw opacity=0.5 ][line width=0.75]    (10.93,-3.29) .. controls (6.95,-1.4) and (3.31,-0.3) .. (0,0) .. controls (3.31,0.3) and (6.95,1.4) .. (10.93,3.29)   ;
\draw [color={rgb, 255:red, 189; green, 16; blue, 224 }  ,draw opacity=0.5 ]   (115.09,139.8) -- (121.53,117.44) ;
\draw [shift={(122.09,115.51)}, rotate = 106.08] [color={rgb, 255:red, 189; green, 16; blue, 224 }  ,draw opacity=0.5 ][line width=0.75]    (10.93,-3.29) .. controls (6.95,-1.4) and (3.31,-0.3) .. (0,0) .. controls (3.31,0.3) and (6.95,1.4) .. (10.93,3.29)   ;
\draw [color={rgb, 255:red, 189; green, 16; blue, 224 }  ,draw opacity=0.5 ]   (156.09,139.8) -- (150.57,117.46) ;
\draw [shift={(150.09,115.51)}, rotate = 76.12] [color={rgb, 255:red, 189; green, 16; blue, 224 }  ,draw opacity=0.5 ][line width=0.75]    (10.93,-3.29) .. controls (6.95,-1.4) and (3.31,-0.3) .. (0,0) .. controls (3.31,0.3) and (6.95,1.4) .. (10.93,3.29)   ;
\draw [color={rgb, 255:red, 189; green, 16; blue, 224 }  ,draw opacity=0.5 ]   (200.09,140.8) -- (182.32,118.09) ;
\draw [shift={(181.09,116.51)}, rotate = 51.96] [color={rgb, 255:red, 189; green, 16; blue, 224 }  ,draw opacity=0.5 ][line width=0.75]    (10.93,-3.29) .. controls (6.95,-1.4) and (3.31,-0.3) .. (0,0) .. controls (3.31,0.3) and (6.95,1.4) .. (10.93,3.29)   ;
\draw [color={rgb, 255:red, 189; green, 16; blue, 224 }  ,draw opacity=0.5 ]   (236,139) -- (220.25,117.14) ;
\draw [shift={(219.09,115.51)}, rotate = 54.24] [color={rgb, 255:red, 189; green, 16; blue, 224 }  ,draw opacity=0.5 ][line width=0.75]    (10.93,-3.29) .. controls (6.95,-1.4) and (3.31,-0.3) .. (0,0) .. controls (3.31,0.3) and (6.95,1.4) .. (10.93,3.29)   ;
\draw [color={rgb, 255:red, 189; green, 16; blue, 224 }  ,draw opacity=0.5 ]   (92,106) -- (81.96,85.31) ;
\draw [shift={(81.09,83.51)}, rotate = 64.11] [color={rgb, 255:red, 189; green, 16; blue, 224 }  ,draw opacity=0.5 ][line width=0.75]    (10.93,-3.29) .. controls (6.95,-1.4) and (3.31,-0.3) .. (0,0) .. controls (3.31,0.3) and (6.95,1.4) .. (10.93,3.29)   ;
\draw [color={rgb, 255:red, 189; green, 16; blue, 224 }  ,draw opacity=0.5 ]   (119,105) -- (115.43,84.48) ;
\draw [shift={(115.09,82.51)}, rotate = 80.12] [color={rgb, 255:red, 189; green, 16; blue, 224 }  ,draw opacity=0.5 ][line width=0.75]    (10.93,-3.29) .. controls (6.95,-1.4) and (3.31,-0.3) .. (0,0) .. controls (3.31,0.3) and (6.95,1.4) .. (10.93,3.29)   ;
\draw [color={rgb, 255:red, 189; green, 16; blue, 224 }  ,draw opacity=0.5 ]   (147.09,105.8) -- (146.63,84.37) ;
\draw [shift={(146.59,82.37)}, rotate = 88.78] [color={rgb, 255:red, 189; green, 16; blue, 224 }  ,draw opacity=0.5 ][line width=0.75]    (10.93,-3.29) .. controls (6.95,-1.4) and (3.31,-0.3) .. (0,0) .. controls (3.31,0.3) and (6.95,1.4) .. (10.93,3.29)   ;
\draw [color={rgb, 255:red, 189; green, 16; blue, 224 }  ,draw opacity=0.5 ]   (216.09,105.8) -- (211.49,83.47) ;
\draw [shift={(211.09,81.51)}, rotate = 78.37] [color={rgb, 255:red, 189; green, 16; blue, 224 }  ,draw opacity=0.5 ][line width=0.75]    (10.93,-3.29) .. controls (6.95,-1.4) and (3.31,-0.3) .. (0,0) .. controls (3.31,0.3) and (6.95,1.4) .. (10.93,3.29)   ;
\draw [color={rgb, 255:red, 189; green, 16; blue, 224 }  ,draw opacity=0.5 ]   (179,106) -- (174.46,82.48) ;
\draw [shift={(174.09,80.51)}, rotate = 79.09] [color={rgb, 255:red, 189; green, 16; blue, 224 }  ,draw opacity=0.5 ][line width=0.75]    (10.93,-3.29) .. controls (6.95,-1.4) and (3.31,-0.3) .. (0,0) .. controls (3.31,0.3) and (6.95,1.4) .. (10.93,3.29)   ;
\draw  [color={rgb, 255:red, 65; green, 117; blue, 5 }  ,draw opacity=1 ][line width=1.5]  (257.09,147.23) .. controls (261.76,147.19) and (264.07,144.84) .. (264.02,140.17) -- (263.87,123.04) .. controls (263.81,116.37) and (266.11,113.02) .. (270.78,112.97) .. controls (266.11,113.02) and (263.75,109.71) .. (263.69,103.04)(263.72,106.04) -- (263.52,83.79) .. controls (263.48,79.12) and (261.13,76.81) .. (256.46,76.86) ;
\draw (32,113) node [anchor=north west][inner sep=0.75pt]  [color={rgb, 255:red, 126; green, 211; blue, 33 }  ,opacity=1 ]  {$h/2$};
\draw (25,60.5) node [anchor=north west][inner sep=0.75pt]  [color={rgb, 255:red, 126; green, 211; blue, 33 }  ,opacity=1 ]  {$3h/4$};
\draw (45,184) node [anchor=north west][inner sep=0.75pt]  [color={rgb, 255:red, 126; green, 211; blue, 33 }  ,opacity=1 ]  {$\varepsilon _{*}$};
\draw (244,167.4) node [anchor=north west][inner sep=0.75pt]  [color={rgb, 255:red, 65; green, 117; blue, 5 }  ,opacity=1 ]  {$I_{x_{2} '} \ \times \ \{x_{2} '\}$};
\draw (274,105) node [anchor=north west][inner sep=0.75pt]  [color={rgb, 255:red, 65; green, 117; blue, 5 }  ,opacity=1 ] [align=left] {Images of $I_{x_{2} '} \ \times \ \{x_{2} '\}$};
\end{tikzpicture}
\caption{Cartoon depiction of images of $I_{x_2'} \times \{x_2'\}$ under $\Phi_t$, with the arrows representing $\partial_t \Phi_t$. This construction of $\Phi_t$ stretches and shrinks the length of $I_{x_2'} \times \{x_2'\}$.}
\label{fig:secNash_NashStratCurves2}
\end{figure}
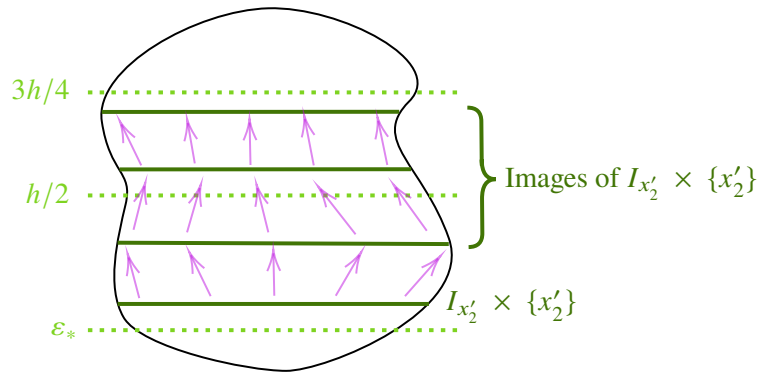

We again denote the image of the curves $t \mapsto \Phi_t(x_1)$ by $\phi_{x_1} := \{\Phi_t(x_1)\ |\ 0 \le t \le x_2'' - x_2'\}$, which are indexed by $x_1 \in I_{x_2'}$. Just as we did in \Cref{lem:secNash_Sit1StratBulk}, we can use these curves $\phi_{x_1}$ to integrate over the domain as follows:
\begin{align*}
&\int_{I_{x_2'}} \int_{\phi_{x_1}} |\grad f(z)|\, dz\, dx_1 \\
&\quad \ge \int_{I_{x_2'}} \abs*{f(\Phi_{x_2''-x_2'}(x_1))-f(\Phi_0(x_1)) } dx_1 \\
&\quad \ge \int_{I_{x_2'}} \paren*{ \abs*{f(x_1,x_2') - f_M} - \abs*{f\paren*{F_\ell(x_2'') + \paren*{x_1-F_\ell(x_2')}\frac{|I_{x_2''}|}{|I_{x_2'}|}, x_2''}-f_M} } dx_1 \\
&\quad = \int_{I_{x_2'}} \abs*{f(x_1,x_2') - f_M} dx_1  - \frac{|I_{x_2'}|}{|I_{x_2''}|}\int_{I_{x_2''}} \abs*{f(x_1,x_2'')-f_M} dx_1.
\end{align*}
By the hypothesis on $x_2'$ and by the choice of $x_2''$, we have
\[
\int_{I_{x_2'}} \int_{\phi_{x_1}} |\grad f(z)|\, dz\, dx_1
\ge
\frac{\kappa\Gamma|I_{x_2'}|}{2\nm{f-f_M}_1}
- \frac{|I_{x_2'}|}{|I_{x_2''}|}\frac{\kappa\Gamma|I_{x_2'}|}{4R_*\nm{f-f_M}_1}
\ge
\frac{\kappa\Gamma|I_{x_2'}|}{4\nm{f-f_M}_1},
\]
where the final inequality relies on the bound $|I_{x_2'}| / |I_{x_2''}| \le R_*$ due to (\ref{aux0307a}) and as $\eps_* \le x_2' < x_2'' \le 3h/4 < h-\eps_*$. It follows by the Cauchy--Schwarz inequality that
\begin{align*}
\int_\Omega |\grad f(x)|^2\, dx &\ge \int_{I_{x_2'}} \int_{\phi(x_1)} |\grad f(z)|^2\, dz\, dx_1 \\
&\ge \frac{1}{\delta R_*|I_{x_2'}|} \paren*{\int_{I_{x_2'}} \int_{\phi(x_1)} |\grad f(z)|\, dz\, dx_1}^2 \\
&\ge \frac{\kappa^3\Gamma^3|I_{x_2'}|^2}{128 R_*^2\nm{f-f_M}_1^4},
\end{align*}
where we use the smoothness of the boundary charts $F_r, F_\ell$ to compute:
\[
\int_{I_{x_2'}} \int_{\phi_{x_1}} 1\, dz\, dx_1 =
\abs*{\{\Phi_t(x_1)\ |\ x_1 \in I_{x_2'},\ 0 \le t \le x_2''-x_2' \}} \le \delta R_*|I_{x_2'}|.
\]
Then, (\ref{eq:secSett_DecayOfIx2}) completes the proof.
\end{proof}

We now combine \Cref{lem:secNash_Sit1StratBulk,lem:secNash_Sit2StratBulk} in order to prove \Cref{prop:secNash_StratBulk}, which we restate below for the reader's convenience.

\StratBulk*

\begin{proof}
We observe by H\"{o}lder's inequality that
\[
\kappa\Gamma/2
\le \nm{\ov{f}-f_M}_{L^2(\Omega_\eps)}^2
\le \nm{\ov{f}-f_M}_{L^1(\Omega_\eps)}\nm{\ov{f}-f_M}_{L^\infty(\Omega_\eps)}
\le \nm{f-f_M}_1\nm{\ov{f}-f_M}_{L^\infty(\Omega_\eps)}.
\]
By the continuity of $\ov{f}-f_M$, we may in turn choose $x_2' \in [\eps, h-\eps]$ such that
\[
\abs{\ov{f}(x_2')-f_M} = \nm{\ov{f}-f_M}_{L^\infty(\Omega_\eps)} \ge \frac{\kappa\Gamma}{2\nm{f-f_M}_1}.
\]
Then, as $\wt{f}$ is mean-zero in $x_1$, we have
\[
\int_{I_{x_2'}} \abs{f(x_1,x_2')-f_M}\, dx_1
\ge \abs*{\int_{I_{x_2'}} \left( f(x_1,x_2')-f_M \right)\, dx_1}
= |I_{x_2'}|\abs{\ov{f}(x_2')-f_M}
\ge \frac{\kappa|I_{x_2'}|\Gamma}{2\nm{f-f_M}_1}.
\]
If $x_2' \in [\eps, \eps_*]$, we may apply \Cref{lem:secNash_Sit1StratBulk}. If $x_2' \in [\eps_*, h/2]$, we may apply \Cref{lem:secNash_Sit2StratBulk}. In the situations $x_2' \in [h-\eps_*, h-\eps]$ or $x_2' \in [h/2, h-\eps_*]$, we can perform completely analogous arguments. Thus, collecting the conclusions of all possible situations and using that $R_* \ge 1$, the proof is complete with the universal constant given by $C_5 = 2^{7/2}R_*/K_*$.
\end{proof}

\subsection{Nash inequality on the unstratified component}
\label{subsec:UnstratNash}

We next establish the following inequality:
\begin{equation}\label{aux129g}
\nm{\wt{f}}_2^{5+\frac{22}{52}}
\lesssim_\Omega
\nm{f-f_M}_2^{3+\frac{21}{52}} \nm{\partial_1 f}_{\dH\pre_0}^{\frac{1}{52}} \nm{f-f_M}_1 \nm{\grad f}_2,
\end{equation}
which is precisely stated below in \Cref{thm:secNash_NashOnUnstratified}.

Recall the definition of the maximal ``sectional $L^2$ norm" $\Lambda$ from \eqref{aux129a}:
\[
\Lambda = \Gamma^{-3/2} \sup_{x_2 \in [0,h]}\nm{\wt{f}(\cdot, x_2)}_{L^2(I_{x_2})}^2,
\]
where $\Gamma = \nm{f -f_M}_2^2$. The specific choice of $\Gamma$-scaling in the definition of $\Lambda$ turns out to be exactly relevant for proving \eqref{aux129g}. It is also matches the scaling of the self-similar blow-up solution of the 2D PKS equation (\ref{eq:secIntro_PKSGeneral},\ref{eq:secIntro_PKSeqC}); see \cites{RS14,CTMN22a}.

\subsubsection{Proof of theorem and remarks}
\label{subsubsec:UnstratNashProof}

We consider three cases regarding the magnitude of $\Lambda$. When $\Lambda$ is small, the following inequality holds true.

\begin{restatable}{proposition}{Belowlambda}
\label{prop:secNash_Belowlambda}
Under \Cref{hyp:secNash}, suppose that
\begin{equation}\label{aux130a}
\nm{\wt{f}}_1 = 1,
\quad
\int_S \nm{\wt{f}(\cdot,x_2)}_{L^2(I_{x_2})}^2\, dx_2 \ge \kappa\Gamma,
\quad
\Gamma^{-3/2} \sup_{x_2 \in S} \nm{\wt{f}(\cdot,x_2)}_{L^2(I_{x_2})}^2 < \lambda,
\end{equation}
for some fixed (measurable) subset of heights $S \subseteq [0,h]$ and for some fixed $0 < \kappa \le 1$ and $0 < \lambda < 1$. Then, we have
\[
\kappa^{5/2}\Gamma\lambda\pre \le C_7 \paren*{\int_S \nm{\partial_1 f(\cdot,x_2)}_{L^2(I_{x_2})}^2\, dx_2}^{1/2},
\]
for some universal constant $C_7 = C_7(\Omega) > 0$.
\end{restatable}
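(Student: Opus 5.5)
The plan is to apply a one-dimensional Nash inequality on each horizontal cross-section, where $\wt{f}(\cdot,x_2)$ has mean zero, and then integrate over $S$ with a single H\"older inequality. The exponents are dictated by the extremal configuration: $\wt{f}$ spread evenly over a set of heights of measure about $\kappa/(\lambda\Gamma^{1/2})$, each with sectional $L^2$ mass about $\lambda\Gamma^{3/2}$. Only the hypotheses in (\ref{aux130a}) and the identity $\partial_1\wt{f} = \partial_1 f$ (since $\ov{f}$ does not depend on $x_1$) are needed.

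\textbf{Step 1 (sectional Nash).} Fix $x_2 \in S$ and set $g := \wt{f}(\cdot,x_2)$ on $I_{x_2}$. Since $g$ has zero mean, it vanishes at some point of $I_{x_2}$. Integrating $(g^2)'$ from that point gives
\[
\nm{g}_{L^\infty(I_{x_2})}^2 \le 2\nm{g}_{L^2(I_{x_2})}\nm{g'}_{L^2(I_{x_2})}.
\]
Combining this with $\nm{g}_{L^2}^2 \le \nm{g}_{L^1}\nm{g}_{L^\infty}$ yields
\[
\nm{g}_{L^2(I_{x_2})}^3 \le 2\nm{g}_{L^1(I_{x_2})}^2\nm{\partial_1 f(\cdot,x_2)}_{L^2(I_{x_2})},
\]
with a constant independent of the length $|I_{x_2}|$. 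Write $a(x_2) := \nm{\wt{f}(\cdot,x_2)}_{L^1(I_{x_2})}$, $b(x_2) := \nm{\wt{f}(\cdot,x_2)}_{L^2(I_{x_2})}^2$ and $d(x_2) := \nm{\partial_1 f(\cdot,x_2)}_{L^2(I_{x_2})}^2$. Step 1 then reads $b^3 \le 16\, a^4 d$ pointwise on $S$. Moreover $a = 0$ forces $b = 0$, so such heights can be discarded.

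\textbf{Step 2 (H\"older over $S$).} Write $b = b^{2/5}\, b^{3/5}$ and bound the first factor by the third hypothesis in (\ref{aux130a}), $b^{2/5} \le (\lambda\Gamma^{3/2})^{2/5}$. For the second factor write $b^{3/5} = (b^3 a^{-4})^{1/5} a^{4/5}$ and apply H\"older with exponents $5$ and $5/4$:
\[
\kappa\Gamma \le \int_S b\, dx_2 \le (\lambda\Gamma^{3/2})^{2/5}\paren*{\int_S \frac{b^3}{a^4}\, dx_2}^{1/5}\paren*{\int_S a\, dx_2}^{4/5}.
\]
By Fubini, $\int_S a\, dx_2 \le \nm{\wt{f}}_1 = 1$. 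By Step 1, $\int_S b^3a^{-4}\, dx_2 \le 16\int_S d\, dx_2$. Raising the resulting inequality to the fifth power gives
\[
\kappa^5\Gamma^5 \le 16\,\lambda^2\Gamma^3\int_S d\, dx_2,
\]
that is, $\kappa^{5/2}\Gamma\lambda\pre \le 4\paren*{\int_S d\, dx_2}^{1/2}$. This is the claim with $C_7 = 4$, which is in fact independent of $\Omega$.

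\textbf{Main obstacle.} The only real difficulty is choosing the interpolation exponents correctly. Naive routes, such as bounding $a \lesssim b^{1/2}$ via Cauchy--Schwarz, or first restricting to heights where $b$ is large, produce the wrong powers of $\Gamma$ and $\lambda$. The extremal configuration above pins down the split $b = b^{2/5}b^{3/5}$. The remaining technical points are routine: the measurability of $a$, $b$, $d$ in $x_2$, which follows from the smoothness of $f$, and the handling of the set where $a = 0$.
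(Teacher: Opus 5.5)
Your proof is correct, and it takes a genuinely more direct route than the paper. Both arguments start from the one-dimensional Nash inequality on each section $I_{x_2}$. The paper then decomposes $S$ dyadically into level sets $S_i=\{2^{-i}\le \Gamma^{-3/2}b\le 2^{-i+1}\}$ and discards the levels $i>i_+$, which keeps only a fixed fraction of $\kappa\Gamma$. It then applies the reverse-type H\"older inequality $\mu(\R)^5\le \nm{g}_{L^1(\mu)}^4\nm{1/g}_{L^4(\mu)}^4$ twice:
\begin{itemize}
\item once on each $S_i$, with Lebesgue measure and $g=a$;
\item once over the finite index set, with weights $a_i2^{-i}\Gamma^{1/2}$.
\end{itemize}
In the paper, the hypothesis $\sup_S b<\lambda\Gamma^{3/2}$ enters only through $2^{2i}\ge\lambda^{-2}$ for $i\ge i_-$.

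Your single weighted H\"older step uses the split $b=b^{2/5}b^{3/5}$ and the pointwise bound $b^{2/5}\le(\lambda\Gamma^{3/2})^{2/5}$. This is essentially the continuous version of that two-level discretization. It needs no level sets and no truncation, loses no constant factors (the paper's truncation and dyadic comparison cost powers of $2$ in front of $\kappa^5$), and gives an explicit $C_7$ independent of $\Omega$.

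Your derivation of the sectional inequality, via a zero of the mean-zero trace followed by $\nm{g}_{L^2}^2\le\nm{g}_{L^1}\nm{g}_{L^\infty}$, also makes explicit something the paper uses silently. Namely, a single constant $C_{\mathrm{N},1}$ works for all sections regardless of their length $|I_{x_2}|$; this holds by one-dimensional scale invariance. The dyadic route buys nothing extra for this proposition beyond a picture of which levels of sectional mass contribute.

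One minor point: squaring $\nm{g}_{L^2}^3\le 2\nm{g}_{L^1}^2\nm{g'}_{L^2}$ gives $b^3\le 4a^4d$, so you could take $C_7=2$. Your constant $16$ is harmless.
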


Note that the case of $\Lambda < \lambda$ is covered by taking $S = [0, h]$ in the above proposition. The reason we generalize to an arbitrary subset $S$ of heights is that we will use \Cref{prop:secNash_Belowlambda} to help prove a more nuanced result later; see \Cref{lem:secNash_BetweenDeltaAndDeltaInvStep1}. The case of $\Lambda$ large leads to a similar inequality holding true.

\begin{restatable}{proposition}{AbovelambdaInv}
\label{prop:secNash_AbovelambdaInv}
Under \Cref{hyp:secNash}, suppose that
\[
\lambda \Gamma^{-1/2} \le \frac{\eps_*}{4\max\{2, R_*\}},
\quad
\Lambda > \lambda\pre,
\]
for some fixed $0 < \lambda < 1$. Then, it holds
\[
\Gamma\lambda\pre \le C_8 \nm{\grad f}_2,
\]
for some universal constant $C_8 > 0$ (independent even from $\Omega$).
\end{restatable}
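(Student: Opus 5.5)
The plan is to follow the same template as \Cref{prop:secNash_StratCaps}: a large sectional $L^2$ norm of $\wt{f}$ at one height, against a small one at a nearby height, forces a drop. The fundamental theorem of calculus along short curves then turns that drop into a lower bound on $\nm{\grad f}_2$. The hypothesis on $\lambda\Gamma^{-1/2}$ is what keeps the curves inside a region where cross-sections are comparable.

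\emph{Step 1 (large height and small height).} Since $\Lambda>\lambda\pre$ and the sectional norm is continuous in $x_2$, I fix $x_2'$ with $\nm{\wt{f}(\cdot,x_2')}_{L^2(I_{x_2'})}^2\ge \Gamma^{3/2}\lambda\pre$. By symmetry I assume $x_2'\le h/2$, and I look upward, into the window $(x_2',x_2'+\delta)$ with
\[
\delta:=4\max\{2,R_*\}\,\lambda\Gamma^{-1/2}\le\eps_* .
\]
Since $\int_0^h\nm{\wt{f}(\cdot,x_2)}_{L^2(I_{x_2})}^2\,dx_2=\nm{\wt{f}}_2^2\le\Gamma$, averaging over the window gives some $x_2''$ in it with
\[
\nm{\wt{f}(\cdot,x_2'')}_{L^2(I_{x_2''})}^2\le \Gamma/\delta\le \frac{\Gamma^{3/2}}{4\max\{2,R_*\}\lambda}.
\]

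\emph{Step 2 (the drop).} I use $\nm{\wt{f}(\cdot,x_2)}_{L^2(I_{x_2})}=\min_{c}\nm{f(\cdot,x_2)-c}_{L^2(I_{x_2})}$ and take $c=\ov{f}(x_2'')$. This gives
\[
(\Gamma^{3/2}\lambda\pre)^{1/2}\le \nm{f(\cdot,x_2')-\ov{f}(x_2'')}_{L^2(I_{x_2'})}\le \nm{f\circ\Phi_T-\ov{f}(x_2'')}_{L^2(I_{x_2'})}+\nm{f\circ\Phi_T-f\circ\Phi_0}_{L^2(I_{x_2'})},
\]
where $\Phi_t$ is a family of curves joining $I_{x_2'}\times\{x_2'\}$ to $I_{x_2''}\times\{x_2''\}$ and $T=x_2''-x_2'$. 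I pick the family according to where $x_2'$ sits.
\begin{itemize}
\item If $x_2'<\eps_*$ (cap region), \Cref{norm:secSett_eps} gives nested cross-sections $I_{x_2'}\subseteq I_{x_2}$ up to $x_2''$ (reducing $\delta$ if needed so the window stays in the monotone range). I take vertical segments, so $\Phi_T(x_1)=(x_1,x_2'')$. The first term is then at most $\nm{\wt{f}(\cdot,x_2'')}_{L^2(I_{x_2''})}\le\tfrac12(\Gamma^{3/2}\lambda\pre)^{1/2}$.
\item In the bulk, I use the affine rescaling curves of \Cref{lem:secNash_Sit2StratBulk}. The change of variables costs a factor $|I_{x_2'}|/|I_{x_2''}|\le R_*$ inside the square, and the $\max\{2,R_*\}$ in $\delta$ absorbs it, so again the first term is at most $\tfrac12(\Gamma^{3/2}\lambda\pre)^{1/2}$.
\end{itemize}

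\emph{Step 3 (FTC).} By the fundamental theorem of calculus and Cauchy--Schwarz along the curves, the second term is at most $\delta^{1/2}\nm{\grad f}_2$, up to the speed and Jacobian of $\Phi$. Combining, $\tfrac12\Gamma^{3/4}\lambda^{-1/2}\lesssim\delta^{1/2}\nm{\grad f}_2\sim(\lambda\Gamma^{-1/2})^{1/2}\nm{\grad f}_2$, which rearranges to $\Gamma\lambda\pre\le C\nm{\grad f}_2$.

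The main obstacle is making $C_8$ independent of $\Omega$ in the bulk step. With vertical segments the constant is clean ($C_8=8$ up to the $\max\{2,R_*\}$ bookkeeping), but the rescaled curves have speed up to $M_*$ and Jacobian up to $R_*$. My first attempt would be to shrink $\delta$ (which the hypothesis permits) so that the cross-sections barely change over the window. On such a window I would use vertical segments over the common part $\bigcap I_{x_2}$ and control the leftover thin boundary strips by $\Gamma/\delta$-type averaging. If that fails, I would accept a constant depending on $M_*$ and $R_*$ as a fallback.
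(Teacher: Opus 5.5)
Your proof is correct in substance and has the same skeleton as the paper's: a height $x_2'$ with large sectional norm, a nearby height $x_2''$ with small sectional norm found by averaging, and the two joined by vertical segments in the caps and by the affine curves in the bulk. The closing estimate is different, though.

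The paper (\Cref{lem:secNash_Sit1AbovelambdaInv,lem:secNash_Sit2AbovelambdaInv}) tracks the drop of the squared quantity $\int_{I_{x_2}}|f-f_M|^2\,dx_1$. It writes that drop as $2\int (f-f_M)\,\partial f$ along the curves and applies Cauchy--Schwarz against $\int|f-f_M|^2\le\Gamma$. The window length then never enters the final bound. It only has to be $\gtrsim\lambda\Gamma^{-1/2}$ for the averaging, and short enough to stay in a good region.

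You instead run the linear argument of \Cref{prop:secNash_StratCaps}. You compare $f(\cdot,x_2')$ with the constant $\ov{f}(x_2'')$ and pay for the drop with $\nm{f\circ\Phi_T-f\circ\Phi_0}_{L^2(I_{x_2'})}\le\delta^{1/2}\nm{\grad f}_2$. So your scaling closes only because $\delta\asymp\lambda\Gamma^{-1/2}$, and the window length feeds into the constant. In the cap case both routes give $\Gamma\lambda\pre\le 4\nm{\partial_2 f}_2$, provided you take $\delta=4\lambda\Gamma^{-1/2}$ there.

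\textbf{The cap window.} Your cap region is $x_2'<\eps_*$ and your window length is $\delta=4\max\{2,R_*\}\lambda\Gamma^{-1/2}$, which may be as large as $\eps_*$. So the window can leave $(0,\eps_*)$, where nesting of cross-sections is no longer guaranteed. You cannot cure this by shrinking $\delta$ below $4\lambda\Gamma^{-1/2}$: then $\Gamma/\delta$ no longer sits below $\frac14\lambda\pre\Gamma^{3/2}$, and the first term is no longer controlled. Split as the paper does:
\begin{itemize}
\item For $x_2'\le\eps_*/4$, use vertical segments with $\delta=4\lambda\Gamma^{-1/2}\le\eps_*/2$. No $R_*$ is needed here, since nesting controls the first term.
\item Send $x_2'\in(\eps_*/4,h/2]$ to the bulk case. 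There $R_*$ and $M_*$ apply, because the whole window lies in $[\eps_*/4,h-\eps_*/4]$.
\end{itemize}

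\textbf{The constant.} You do not need the vertical-segments-plus-strips repair. The paper's bulk lemma claims $C_8=4$, but its Cauchy--Schwarz step, done ``just as'' in the cap lemma, treats the curve measure $dz\,dx_1$ as area measure. Its actual density with respect to area is $|\partial_t\Phi_t|\,|I_{x_2'}|/|I_{x_2'+t}|$, which is bounded only by a multiple of $M_*R_*$. So the paper's argument also yields a constant of order $M_*R_*$ in the bulk, just like your fallback. That is all that is ever used, since $C_8$ only enters $C_{11}(\Omega)$ in \Cref{thm:secNash_NashOnUnstratified}.
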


We reserve the proofs of \Cref{prop:secNash_Belowlambda} and \Cref{prop:secNash_AbovelambdaInv} to \Cref{subsubsec:NashUnstratEasyCases}. Only the intermediate case remains, where $\lambda < \Lambda < \lambda\pre$ for some small parameter $0 < \lambda < 1$. It is in this scenario that we see the appearance of the $\dH_0\pre$ mixing norm.

\begin{restatable}{proposition}{BetweenlambdaAndlambdaInv}
\label{prop:secNash_BetweenlambdaAndlambdaInv}
Under \Cref{hyp:secNash}, suppose that
\[
\lambda^2 < 2^{-9}M_*\pre, \quad \lambda^3 \Gamma^{-1/2} \le \eps_*/4, \quad \nm{\wt{f}}_1 = 1, \quad \nm{\wt{f}}_2^2 \ge \kappa\Gamma, \quad \lambda \le \Lambda \le \lambda\pre,
\]
for some fixed $0 < \kappa \le 1$ and $0 < \lambda < 1$. Then, at least one of the following holds true:
\begin{equation}\label{aux130b}
\kappa^{5/2}\Gamma\lambda\pre  \le C_9\nm{\grad f}_2
\quad \text{or} \quad
\kappa^{21/2}\Gamma^{1/2}\lambda^{52} \le C_{10}\nm{\partial_1 f}_{\dH_0^{-1}},
\end{equation}
for some universal constants $C_9 = C_9(\Omega) > 0$ and $C_{10} > 0$ (independent even from $\Omega$).
\end{restatable}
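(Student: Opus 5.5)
The plan is a dichotomy driven by how the sectional $L^2$ mass of $\wt{f}$ is distributed over heights, using the dual formulation (\ref{eq:secSett_HMixVariational}) of $\nm{\cdot}_{\dH_0\pre}$. Throughout I assume the first alternative in (\ref{aux130b}) fails, i.e.\ $\nm{\grad f}_2 < C_9\pre\kappa^{5/2}\Gamma\lambda\pre$, and aim to derive the second one.

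\emph{Step 1: localize the mass.} Let $\mu := \lambda^{m}$ for a fixed power $m$, to be tuned so that the final exponent is $52$. Define the set of ``small'' heights $S := \{x_2 : \nm{\wt{f}(\cdot,x_2)}_{L^2(I_{x_2})}^2 < \mu\Gamma^{3/2}\}$. If $\int_S \nm{\wt{f}(\cdot,x_2)}^2_{L^2(I_{x_2})}dx_2 \ge \kappa\Gamma/2$, then \Cref{prop:secNash_Belowlambda}, applied with $S$, $\kappa/2$ and $\mu$ in place of $\lambda$, gives $\kappa^{5/2}\Gamma\mu\pre \lesssim \nm{\partial_1 f}_2$. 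Since $\mu \le \lambda$, this already yields the first alternative once $C_9$ is taken large. Otherwise the complement $S^c$ carries sectional mass at least $\kappa\Gamma/2$. Each section in $S^c$ has mass at most $\Lambda\Gamma^{3/2}\le\lambda\pre\Gamma^{3/2}$, so $|S^c| \ge \kappa\lambda\Gamma^{-1/2}/2$.

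\emph{Step 2: pick a good strip.} Using the bound on $\nm{\grad f}_2$, I control the variation in $x_2$ of the sectional quantity $x_2\mapsto \nm{\wt{f}(\cdot,x_2)}_{L^2(I_{x_2})}$ by the fundamental theorem of calculus along the curves $\Phi_t$ of \Cref{lem:secNash_Sit2StratBulk}. Here the $\partial_2$ of $\ov f$ and of the endpoint motion is absorbed using $M_*$ and the hypothesis $\lambda^2<2^{-9}M_*\pre$. From this I find a strip $T=(y_0,y_0+\tau)$ of height $\tau \sim \lambda^{3}\Gamma^{-1/2}$ on which every section satisfies $\nm{\wt{f}(\cdot,x_2)}^2_{L^2(I_{x_2})}\gtrsim \mu\Gamma^{3/2}$. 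The hypothesis $\lambda^3\Gamma^{-1/2}\le\eps_*/4$ lets me place $T$ where cross-sections are controlled, avoiding the caps or using \Cref{norm:secSett_eps} there.

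\emph{Step 3: the test function.} Let $\chi$ be a cutoff adapted to $T$ with $|\chi'|\lesssim\tau\pre$, and set
\[
\psi(x_1,x_2) := \chi(x_2)\int_{F_\ell(x_2)}^{x_1}\wt{f}(s,x_2)\,ds .
\]
Since $\wt{f}$ has mean zero on each section, $\psi$ vanishes on $\partial\Omega$, so $\psi\in\dH_0^1$. Integrating by parts in $x_1$ gives $|\gen{\partial_1 f,\psi}| = \int\chi\nm{\wt{f}(\cdot,x_2)}_{L^2(I_{x_2})}^2dx_2 \gtrsim \tau\mu\Gamma^{3/2}$. For the denominator I bound three contributions:
\begin{itemize}
\item $\nm{\partial_1\psi}_2 \le (\tau\lambda\pre\Gamma^{3/2})^{1/2}$;
\item the $\chi'$ term, bounded via $\sup|I_{x_2}|^{1/2}\cdot\lambda^{-1/2}\Gamma^{3/4}\tau^{-1/2}$;
\item the term $\chi\,\partial_2\!\int\wt{f}$, which involves $\partial_2\wt{f}$ and the boundary term $F_\ell'\,\wt{f}(F_\ell,\cdot)$. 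I bound it by $|I|^{1/2}\nm{\grad f}_{L^2(T)}$ together with $M_*$ times a trace term, the latter controlled by $\nm{\wt f}_{L^2}$ and $\nm{\partial_1 f}_2$.
\end{itemize}
Inserting $\nm{\grad f}_2\lesssim\kappa^{5/2}\Gamma\lambda\pre$ and dividing, (\ref{eq:secSett_HMixVariational}) gives $\nm{\partial_1 f}_{\dH_0\pre}\gtrsim \kappa^{a}\Gamma^{1/2}\lambda^{b}$. Bookkeeping of the powers, with $m$ chosen suitably, should produce $a=21/2$ and $b=52$; in this bookkeeping the normalization $\nm{\wt f}_1=1$ replaces $\nm{f-f_M}_1$ factors.

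The main obstacle is the $\partial_2$ part of $\nm{\psi}_{\dH_0^1}$, specifically the boundary term coming from the moving endpoint $F_\ell(x_2)$. It requires a pointwise trace bound for $\wt f$ at the left boundary. I would first try to avoid this term by replacing the primitive from $F_\ell(x_2)$ with a primitive along the sheared coordinates of \Cref{lem:secNash_Sit2StratBulk}, i.e.\ rescaling each section to a fixed reference interval. The boundary term then becomes a smooth Jacobian factor of size $M_*$, and the smallness $\lambda^2<2^{-9}M_*\pre$ should absorb it.
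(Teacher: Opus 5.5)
There is a genuine gap in Step 3: your test function has the wrong scaling in $\Gamma$. With $\nm{\wt{f}}_1=1$, the target $\kappa^{21/2}\lambda^{52}\Gamma^{1/2}\lesssim\nm{\partial_1 f}_{\dH_0\pre}$ is scale-consistent. Concentration at scale $\ell$ gives $\Gamma\sim\ell^{-2}$ and leaves $\Lambda,\kappa$ essentially unchanged, while the hypotheses bound $\Gamma$ only from below. So no choice of $m$ or $\tau$ can trade powers of $\lambda$ for a missing power of $\Gamma$. You need $\nm{\psi}_{\dH_0^1}\lesssim_{\kappa,\lambda}\Gamma^{1/2}$ against a numerator of size at most $\Gamma$, and your $\psi=\chi(x_2)\int_{F_\ell(x_2)}^{x_1}\wt{f}(s,x_2)\,ds$ does not achieve this.

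The $\chi\,\partial_2(\cdot)$ term contains $\int\partial_2\wt{f}\,ds$ over a whole section. You can only bound it by $|I|^{1/2}\nm{\grad f}_{L^2(T)}\lesssim\kappa^{5/2}\lambda\pre\Gamma$. Against your numerator $\tau\mu\Gamma^{3/2}\sim\lambda^3\mu\Gamma$, this gives only $\nm{\partial_1 f}_{\dH_0\pre}\gtrsim\lambda^4\mu\kappa^{-5/2}\,\Gamma^{0}$. Your own $\chi'$ bound likewise yields $\lambda^5\mu\,\Gamma^0$.

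Even in the model case of a single bump of width $\Gamma^{-1/2}$, the primitive has size $\sim\Gamma^{1/2}$ along the entire section, of length $O(1)$ rather than $O(\Gamma^{-1/2})$. The quotient is then at best $\sim\Gamma^{1/4}$. The boundary trace term you single out, and the sheared-coordinate fix, are therefore beside the point. The real obstructions are the $O(1)$ horizontal extent of $\psi$ and the appearance of $\partial_2 f$ inside $\nm{\psi}_{\dH_0^1}$. The paper remarks that building the test function from all of $\wt{f}(\cdot,x_2')$ fails for exactly this reason.

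Three ideas are missing.

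\emph{Localize horizontally.} Pass through nested height sets $S_1\supseteq\cdots\supseteq S_7$; each failure gives $\kappa^{5/2}\Gamma\lambda\pre\lesssim\nm{\partial_1 f}_2$. Otherwise you find one height $x_2'$ where $\wt{f}(\cdot,x_2')$ has a plateau $J^+$ of length $\beta\Gamma^{-1/2}$ at level $\approx\alpha\Gamma$, and a low interval $J^-$ within distance $\sigma\Gamma^{-1/2}$. Take $\partial_1\Phi=\wt{f}(\cdot,x_2')$ on $J^+$ and its negated translate on $J^-$. Then $\partial_1\Phi$ has mean zero and $|\supp\Phi|\lesssim\sigma\Gamma^{-1/2}$.

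\emph{Freeze in $x_2$.} Set $\Psi=\Phi(x_1)\chi((x_2-x_2'')/(\eta\Gamma^{-1/2}))$. Its $\dH_0^1$ norm, $\lesssim\kappa^{-7/2}\lambda^{-16}\Gamma^{1/2}$, involves no derivative of $f$.

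\emph{Handle the $x_2$-variation by a dichotomy, not in the denominator.} Either $\int\wt{f}(x_1,x_2)\partial_1\Phi\,dx_1\ge\frac{3}{10}\alpha\Gamma\int_{J^+}\partial_1\Phi$ at every height of the strip, which gives the $\dH_0\pre$ alternative. Or it drops at some height, and Cauchy--Schwarz against the weight $\partial_1\Phi$ on the strip of height $2\eta\Gamma^{-1/2}$ gives $\kappa^{5/2}\Gamma\lambda\pre\lesssim\nm{\partial_2 f}_2$. In this scheme the gradient of $f$ never enters the norm of the test function.

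Your Step 1 is fine and matches the paper's first reduction via \Cref{prop:secNash_Belowlambda}.
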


We prove \Cref{prop:secNash_BetweenlambdaAndlambdaInv} across \Cref{subsubsec:NashUnstratHardCasesPart1,subsubsec:NashUnstratHardCasesPart2} as it is much more subtle than the other auxiliary propositions. For now, we leverage these three propositions by choosing the parameter $\lambda > 0$ so that the inequality $\kappa^{21/2}\Gamma^{1/2}\lambda^{52} \lesssim \nm{\partial_1 f}_{\dH_0^{-1}}$ cannot be satisfied. As a consequence, all the inequalities given by \Cref{prop:secNash_Belowlambda}, \Cref{prop:secNash_AbovelambdaInv}, and \Cref{prop:secNash_BetweenlambdaAndlambdaInv} will match to prove the following desired result.

\begin{theorem}
\label{thm:secNash_NashOnUnstratified}
Assume \Cref{hyp:secNash}. Then we have
\[
\nm{\wt{f}}_2^{5+\frac{22}{52}}
\le
C_{11} \nm{f-f_M}_2^{3+\frac{21}{52}} \nm{\partial_1 f}_{\dH_0\pre}^{\frac{1}{52}} \nm{f-f_M}_1 \nm{\grad f}_2,
\]
where $C_{11} = C_{11}(\Omega) > 0$ is a universal constant.
\end{theorem}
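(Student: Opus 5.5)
The inequality is homogeneous of degree one in $f$: the exponents on the right add up to $3+\tfrac{21}{52}+\tfrac1{52}+1+1=5+\tfrac{22}{52}$. It is also invariant under adding constants. The plan is to normalize, then choose the parameter $\lambda$ so that the mixing-norm alternative in \Cref{prop:secNash_BetweenlambdaAndlambdaInv} is ruled out, so that all three propositions yield one common inequality.

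\emph{Normalization.} If $\wt f\equiv 0$ there is nothing to prove, so rescale $f$ so that $\nm{\wt f}_1=1$. Then $\nm{f-f_M}_1\ge \tfrac12$ by the lemma $\nm{\wt f}_1\le 2\nm{f-f_M}_1$. Set $\kappa:=\nm{\wt f}_2^2\Gamma\pre\in(0,1]$, so that $\nm{\wt f}_2^2\ge\kappa\Gamma$. Then define
\[
\lambda:=\paren*{2C_{10}\,\kappa^{-21/2}\,\Gamma^{-1/2}\,\nm{\partial_1 f}_{\dH_0\pre}}^{1/52}.
\]
With this choice the inequality $\kappa^{21/2}\Gamma^{1/2}\lambda^{52}\le C_{10}\nm{\partial_1 f}_{\dH_0\pre}$ is false.

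\emph{Main case.} Assume $\lambda<1$ and that the smallness hypotheses hold: $\lambda^2<2^{-9}M_*\pre$, $\lambda^3\Gamma^{-1/2}\le\eps_*/4$, and $\lambda\Gamma^{-1/2}\le \eps_*/(4\max\{2,R_*\})$. We split according to the size of $\Lambda$.
\begin{itemize}
\item If $\Lambda<\lambda$, apply \Cref{prop:secNash_Belowlambda} with $S=[0,h]$ to get $\kappa^{5/2}\Gamma\lambda\pre\le C_7\nm{\partial_1 f}_2$.
\item If $\Lambda>\lambda\pre$, \Cref{prop:secNash_AbovelambdaInv} gives $\Gamma\lambda\pre\le C_8\nm{\grad f}_2$.
\item If $\lambda\le\Lambda\le\lambda\pre$, \Cref{prop:secNash_BetweenlambdaAndlambdaInv} applies, and its second alternative fails by the choice of $\lambda$. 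Hence $\kappa^{5/2}\Gamma\lambda\pre\le C_9\nm{\grad f}_2$.
\end{itemize}
In every case $\kappa^{5/2}\Gamma\lambda\pre\lesssim_\Omega\nm{\grad f}_2$. Substituting $\lambda\pre\simeq \kappa^{21/104}\Gamma^{1/104}\nm{\partial_1 f}_{\dH_0\pre}^{-1/52}$ gives
\[
\kappa^{281/104}\,\Gamma^{105/104}\lesssim \nm{\partial_1 f}_{\dH_0\pre}^{1/52}\nm{\grad f}_2 .
\]
Since $\kappa\le1$, we may replace $\kappa^{281/104}$ by $\kappa^{282/104}=\nm{\wt f}_2^{282/52}\Gamma^{-282/104}$. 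Multiplying by $\Gamma^{177/104}=\nm{f-f_M}_2^{3+21/52}$ then gives exactly the claim, using $1\le 2\nm{f-f_M}_1$ to restore the $L^1$ factor.

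\emph{Degenerate cases.} If instead $\lambda\ge c_0$ for some small constant $c_0=c_0(\Omega)$ (this covers $\lambda\ge1$ and a failure of $\lambda^2<2^{-9}M_*\pre$), then $\nm{\partial_1 f}_{\dH_0\pre}^{1/52}\gtrsim\kappa^{21/104}\Gamma^{1/104}$. In that case the classical Nash inequality $\Gamma\le C_{\mathrm N,2}\nm{f-f_M}_1\nm{\grad f}_2$, together with $\nm{\wt f}_2^2\le\Gamma$ and $\kappa\le1$, yields the claim after the same exponent bookkeeping. If instead one of the conditions involving $\Gamma^{-1/2}$ fails while $\lambda<c_0$, then $\Gamma\lesssim_\Omega\lambda^{2}$ or $\Gamma\lesssim_\Omega\lambda^{6}$, so $\Gamma\lesssim 1$. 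This again permits trading powers of $\Gamma$ for powers of $\lambda$, and hence for powers of $\kappa^{-21/2}\Gamma^{-1/2}\nm{\partial_1 f}_{\dH_0\pre}$, followed by one more use of classical Nash.

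I expect this last bookkeeping to be the main obstacle. One has to check that in each degenerate regime the available powers of $\kappa\le1$, of $\Gamma$, and of the normalized quantity $\nm{f-f_M}_1\ge\tfrac12$ line up with the exponents $\tfrac1{52}$ and $3+\tfrac{21}{52}$. If the straightforward route does not close, I would first try to shrink $\lambda$ by a constant factor, which only worsens $C_{11}$, and only then look for something else.
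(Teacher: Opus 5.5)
Your main case follows the paper's argument. You use the same normalization $\|\wt f\|_1=1$ and the same $\kappa$. You use the same device of choosing $\lambda$ so that the mixing alternative in \Cref{prop:secNash_BetweenlambdaAndlambdaInv} is impossible, and the same three-way split in $\Lambda$. Your $\lambda$ uses $\Gamma^{-1/2}$ where the paper uses $\|\wt f\|_2^{-1}$; this only costs a factor $\kappa^{1/104}$, which you correctly absorb with $\kappa\le 1$. Your degenerate case $\lambda\gtrsim_\Omega 1$ is also handled correctly by classical Nash.

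The gap is the remaining degenerate case: $\lambda$ is small but $\lambda\Gamma^{-1/2}>\eps_*/(4\max\{2,R_*\})$. Only this condition matters, because $\lambda^3\Gamma^{-1/2}\le\eps_*/4$ follows automatically once $\lambda<1$ and the other condition holds. After your substitution for $\|\partial_1 f\|_{\dH_0\pre}$, the claim is equivalent to
\[ \kappa^{261/104}\,\Gamma \lesssim \lambda\,\|f-f_M\|_1\|\grad f\|_2 . \]
From $\Gamma\lesssim_\Omega\lambda^2$ together with classical Nash you only get $\Gamma\lesssim\lambda\,(\|f-f_M\|_1\|\grad f\|_2)^{1/2}$. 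That closes only if $\|f-f_M\|_1\|\grad f\|_2\gtrsim 1$, so you need a \emph{lower} bound on $\Gamma$. The upper bound $\Gamma\lesssim 1$ you extract points the wrong way, and shrinking $\lambda$ by a constant factor does not change this.

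The missing observation is that your normalization already supplies the lower bound: $1=\|\wt f\|_1\le|\Omega|^{1/2}\|\wt f\|_2\le|\Omega|^{1/2}\Gamma^{1/2}$. Hence failure of the condition forces $\lambda>\frac{\eps_*}{4\max\{2,R_*\}}\Gamma^{1/2}\ge\frac{\eps_*}{4\max\{2,R_*\}}|\Omega|^{-1/2}$. This case therefore collapses into your first degenerate case, with $\lambda$ bounded below by a domain constant, and classical Nash finishes exactly as before; this is what the paper does. Alternatively, your ``trading'' idea can be rescued by using Poincar\'e--Wirtinger instead of Nash, together with $\|f-f_M\|_1\ge\tfrac12$: $\Gamma\lesssim\lambda\Gamma^{1/2}\le C_P\lambda\|\grad f\|_2\le 2C_P\lambda\|f-f_M\|_1\|\grad f\|_2$.
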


\begin{proof}
Let us first consider when $\nm{\wt{f}}_1 = 1$ and in turn define
\begin{equation}\label{aux130c}
\kappa := \nm{\wt{f}}_2^2 \Gamma\pre = \nm{\wt{f}}_2^2\nm{f-f_M}_2^{-2},
\quad
\lambda :=  \paren*{2C_{10}\nm{\partial_1 f}_{\dH_0^{-1}} \nm{\wt{f}}_2\pre \kappa^{-21/2}}^{1/52} > 0,
\end{equation}
where $C_{10}$ is the same as in \eqref{aux130b}. Then $0 < \kappa \le 1$ and trivially $\nm{\wt{f}}_2^2 \ge \kappa \Gamma$. Moreover, at least one of the following holds true: either
\begin{equation}\label{eq:secNash_UnstratNashCase3}
\lambda < 2^{-9/2} M_*^{-1/2} < 1
\quad \text{and} \quad
\lambda\Gamma^{-1/2} < \frac{\eps_*}{4\max\{2,R_*\}},
\end{equation}
or
\begin{equation}\label{eq:secNash_UnstratNashCase1}
\lambda \ge 2^{-9/2} M_*^{-1/2},
\end{equation}
or
\begin{equation}\label{eq:secNash_UnstratNashCase2}
\lambda\Gamma^{-1/2} \ge \frac{\eps_*}{4\max\{2,R_*\}}.
\end{equation}
If (\ref{eq:secNash_UnstratNashCase3}) holds, we will be able to consider how $\Lambda$ compares to $\lambda$ and invoke either \Cref{prop:secNash_Belowlambda}, \Cref{prop:secNash_AbovelambdaInv}, or \Cref{prop:secNash_BetweenlambdaAndlambdaInv}. If instead (\ref{eq:secNash_UnstratNashCase1}) or (\ref{eq:secNash_UnstratNashCase2}) holds, we will simply invoke the classical two-dimensional Nash inequality (\ref{eq:secIntro_Nash}).

Suppose (\ref{eq:secNash_UnstratNashCase3}) is true. In the case of $\Lambda < \lambda$, invoking \Cref{prop:secNash_Belowlambda} with $S := [0, h]$ and substituting the definitions of $\kappa$, $\Gamma$, and $\lambda$ gives
\begin{align*}
\nm{\wt{f}}_2^{5+\frac{22}{52}}
&\le
C_7(2C_{10})^{1/52}\nm{f-f_M}_2^{3+\frac{21}{52}}\nm{\partial_1 f}_{\dH_0\pre}^{1/52}\nm{\partial_1 f}_2 \\
&\le 2C_7(2C_{10})^{1/52}\nm{f-f_M}_2^{3+\frac{21}{52}}\nm{\partial_1 f}_{\dH_0\pre}^{1/52}\nm{f-f_M}_1\nm{\grad f}_2,
\end{align*}
where the second line holds by $1 = \nm{\wt{f}}_1 \le 2\nm{f-f_M}_1$; see \Cref{subsec:FuncDecomp}.

If instead $\Lambda > \lambda\pre$, then condition (\ref{eq:secNash_UnstratNashCase3}) allows us to invoke \Cref{prop:secNash_AbovelambdaInv}, giving us
\[
\nm{\wt{f}}_2^{\frac{22}{52}}
\le
C_8(2C_{10})^{1/52}\nm{f-f_M}_2^{-2+\frac{21}{52}}\nm{\partial_1 f}_{\dH_0\pre}^{1/52}\nm{\partial_1 f}_2,
\]
Multiplying both sides by $\nm{\wt{f}}_2^5 \le \nm{f-f_M}_2^5$ and again using $1 = \nm{\wt{f}}_1 \le 2\nm{f-f_M}_1$, we obtain
\[
\nm{\wt{f}}_2^{5+\frac{22}{52}}
\le 2C_8(2C_{10})^{1/52}\nm{f-f_M}_2^{3+\frac{21}{52}}\nm{\partial_1 f}_{\dH_0\pre}^{1/52}\nm{f-f_M}_1\nm{\grad f}_2.
\]
In the intermediate case of $\lambda \le \Lambda \le \lambda\pre$, note that (\ref{eq:secNash_UnstratNashCase3}) also ensures $\lambda^3\Gamma^{-1/2}\le \eps_*/4$. In turn,  \Cref{prop:secNash_BetweenlambdaAndlambdaInv} implies that at least one of the following holds true:
\[
\kappa^{5/2} \Gamma \lambda\pre \le C_9\nm{\grad f}_2
\quad \text{ or } \quad
\kappa^{21/2}\Gamma^{1/2}\lambda^{52} \le C_{10}\nm{\partial_1 f}_{\dH_0^{-1}}.
\]
The choice of $\lambda$ does not allow the right-hand inequality to be true, meaning the left-hand inequality must hold. We obtain the same desired inequality:
\[
\nm{\wt{f}}_2^{5+\frac{22}{52}}
\le 2C_9(2C_{10})^{1/52}\nm{f-f_M}_2^{3+\frac{21}{52}}\nm{\partial_1 f}_{\dH_0\pre}^{1/52}\nm{f-f_M}_1\nm{\grad f}_2.
\]
The proof is thus complete the in the case that (\ref{eq:secNash_UnstratNashCase3}) is true.

Let us consider the remaining situation where $\nm{\wt{f}}_1 = 1$ holds and (\ref{eq:secNash_UnstratNashCase3}) fails. Here, we recall the classical Nash inequality (\ref{eq:secIntro_Nash}) with dimension $d = 2$, which implies
\begin{equation}\label{eq:secNash_UnstratClassical}
\nm{\wt{f}}_2^5 \le \nm{f-f_M}_2^5 \le C_{\mathrm{N},2}\nm{f-f_M}_2^3\nm{f-f_M}_1\nm{\grad f}_2.
\end{equation}
If (\ref{eq:secNash_UnstratNashCase1}) is true, then we can multiply both sides of (\ref{eq:secNash_UnstratClassical}) by
\[
\nm{\wt{f}}_2^{22/52}
\le
2^{9/2} M_*^{1/2} (2C_{10})^{1/52} \nm{\partial_1 f}_{\dH_0^{-1}}^{1/52}\nm{f-f_M}_2^{21/52},
\]
giving us that
\[
\nm{\wt{f}}_2^{5+\frac{22}{52}}
\le
2^{9/2} M_*^{1/2} (2C_{10})^{1/52} C_{\mathrm{N},2} \nm{f-f_M}_2^{3+\frac{21}{52}} \nm{\partial_1 f}_{\dH_0^{-1}}^{1/52}\nm{f-f_M}_1\nm{\grad f}_2.
\]
The final case to consider is when (\ref{eq:secNash_UnstratNashCase2}) holds, which implies that
\begin{align*}
(2C_{10})^{1/52}\nm{\partial_1 f}_{\dH_0^{-1}}^{1/52}\nm{f-f_M}_2^{21/52}\nm{\wt{f}}_2^{-22/52}
&> \frac{\eps_*}{4\max\{2,R_*\}}\Gamma^{1/2} \\
&\ge \frac{\eps_*}{4\max\{2,R_*\}}\nm{\wt{f}}_2 \\
&\ge |\Omega|^{-1/2}\frac{\eps_*}{4\max\{2,R_*\}},
\end{align*}
where we have used Cauchy--Schwarz and the assumption that $\nm{\wt{f}}_1=1$. It follows from  (\ref{eq:secNash_UnstratClassical}) that
\[
\nm{\wt{f}}_2^{5+\frac{22}{52}}
\le
4\max\{2, R_*\} |\Omega|^{1/2}\eps_*\pre (2C_{10})^{1/52} C_{\mathrm{N},2} \nm{\partial_1 f}_{\dH_0^{-1}}^{1/52}\nm{f-f_M}_2^{3+\frac{21}{52}}\nm{f-f_M}_1\nm{\grad f}_2.
\]

To conclude the proof, we take
\[
C_{11} = C_{11}(\Omega) := 2(2C_{10})^{\frac{1}{52}}
\max\curly*{
C_7,
C_8,
C_9,
2^{7/2}M_*^{1/2} C_{\mathrm{N},2},
2\max\{2, R_*\} |\Omega|^{1/2}\eps_*\pre C_{\mathrm{N},2}
}.
\]
In summary, as long as $\nm{\wt{f}}_1 = 1$, it holds in any case that
\[
\nm{\wt{f}}_2^{5+\frac{22}{52}}
\le
C_{11}\nm{\partial_1 f}_{\dH_0^{-1}}^{1/52}\nm{f-f_M}_2^{3+\frac{21}{52}}\nm{f-f_M}_1\nm{\grad f}_2.
\]
As this inequality is scaling homogeneous, it continues to hold when the assumption $\nm{\wt{f}}_1 = 1$ is dropped.
\end{proof}

\begin{remark}
Suppose one does not like the appearance of the $L^2$ norm on the right hand side of \Cref{thm:secNash_NashOnUnstratified}, for whatever reason. One can simply invoke the classical Nash inequality (\ref{eq:secIntro_Nash}) to obtain
\[
\nm{\wt{f}}_2^{5+\frac{22}{52}}
\le
C_{11} C_{\mathrm{N},2}^{\frac{3}{2}+\frac{21}{104}} \nm{\partial_1 f}_{\dH_0\pre}^{\frac{1}{52}} \nm{f-f_M}_1^{\frac{5}{2}+\frac{21}{104}} \nm{\grad f}_2^{\frac{5}{2}+\frac{21}{104}}.
\]
\end{remark}

\begin{remark}
\Cref{thm:secNash_NashOnUnstratified} implies that
\[
\nm{f-f_M}_2^{2+\frac{1}{52}} \le C_{11} \paren*{\frac{\nm{f-f_M}_2^2}{\nm{\wt{f}}_2^2}}^{\frac{5}{2}+\frac{22}{104}} \nm{\partial_1 f}_{\dH_0\pre}^{\frac{1}{52}} \nm{f-f_M}_1 \nm{\grad f}_2.
\]
This improves upon the classical Nash inequality (\ref{eq:secIntro_Nash}) by offering extra control via the $\dH_0\pre$ norm of $\partial_1 f$, at the cost of a function dependent coefficient: $C_{11}(\nm{f-f_M}_2^2/\nm{\wt{f}}_2^2)^{\frac{5}{2}+\frac{22}{104}} \ge C_{11}$. This is indeed an improvement since it is easily seen by the Calder\'{o}n--Zygmund estimate that
\[
\nm{\partial_1 f}_{\dH_0\pre}^2
= \nm{\partial_1 \wt{f}}_{\dH_0\pre}^2
= -\gen{\wt{f}, \partial_1 (-\lap_D)\pre \partial_1 \wt{f}}
\le \nm{\wt{f}}_2 \nm{\partial_1 (-\lap_D)\pre \partial_1 \wt{f}}_2
\lesssim_\Omega \nm{\wt{f}}_2^2
\le \nm{f-f_M}_2^2.
\]
See \Cref{subsec:Notation} for the precise definition of the negative Sobolev norm $\dH_0\pre$. The improvement worsens for highly stratified functions $f$, as reflected by the function-dependent coefficient, with the inequality degenerating whenever $\wt{f}$ is constantly zero.
\end{remark}

\subsubsection{Proofs of auxiliary propositions for the cases of small and large \texorpdfstring{$\Lambda$}{Lambda}}
\label{subsubsec:NashUnstratEasyCases}

Once again recall that the definition of $\Lambda$ in \eqref{aux129a} reflects exactly the scaling required to prove \Cref{thm:secNash_NashOnUnstratified}. Indeed, if there is some parameter $0 < \lambda < 1$ such that $\Lambda < \lambda$, we prove in \Cref{prop:secNash_Belowlambda} below that an improved Nash inequality holds which incorporates the value of $\lambda$. If instead $\Lambda > \lambda\pre$, the proof of \Cref{prop:secNash_AbovelambdaInv} to follow also reveals an improved Nash inequality depending on $\lambda$. We loosely refer to these cases as $\Lambda$ being ``small'' and ``large'', respectively, and we emphasize that the estimates do not require a specific value of $\lambda$ to hold true.

The case of small $\Lambda$ is handled by careful usage of the classical one-dimensional Nash inequality (\ref{eq:secIntro_Nash}) along with a precise H\"{o}lder inequality. We restate \Cref{prop:secNash_Belowlambda} for the reader's convenience, which does consider a slightly more general case than $\Lambda < \lambda$ that will be useful later for proving \Cref{prop:secNash_BetweenlambdaAndlambdaInv}; see \Cref{lem:secNash_BetweenDeltaAndDeltaInvStep1}.

\Belowlambda*

\begin{proof}
We note here that $\partial_1 \wt{f} = \partial_1 f$ since by definition $\wt{f}(x_1,x_2) = f(x_1,x_2) - \ov{f}(x_2)$. We choose to work with $\wt{f}$ below.

For any integer $i \ge 1$, we define
\[
S_i := \curly*{
x_2 \in S\
\bigg|\
2^{-i} \le \Gamma^{-3/2}\nm{\wt{f}(\cdot, x_2)}_{L^2(I_{x_2})}^2 \le 2^{-i+1}
},
\quad
a_i := |S_i|.
\]
Letting $i_{-} := \ceil{\log(\lambda\pre)/\log(2)}$, we note that $S_i$ must be empty for each $1 \le i < i_{-}$ by the hypothesis of this proposition (if $i_{-} = 1$, then this statement is vacuous). Indeed, if there were some $x_2 \in S_i \subseteq S$ for such a value of $i$, then we would obtain the contradiction
\[
\Gamma^{-3/2}\nm{\wt{f}(\cdot,x_2)}_{L^2(I_{x_2})}^2 \ge 2^{-(i_{-}-1)} \ge \lambda .
\]
We also define
\[
i_+ := \ceil{\log\paren*{2 h \kappa\pre \Gamma^{1/2}}/\log(2)},
\quad
\mathcal{I} := \{i_{-} \le i \le i_+\ |\ a_i \neq 0\},
\quad
S' := \bigcup_{i \in \mathcal{I}} S_i,
\]
so that each $x_2 \in S \setminus S'$ either lies in some $S_i$ that is measure-zero or is such that
\[
\nm{\wt{f}(\cdot,x_2)}_{L^2(I_{x_2})}^2 < 2^{-i_+}\Gamma^{3/2} \le \frac{\kappa}{2h} \Gamma.
\]
It follows that
\[
\int_{S\setminus S'} \nm{\wt{f}(\cdot,x_2)}_{L^2(I_{x_2})}^2\, dx_2 \le \frac{1}{2}\kappa\Gamma.
\]
Note this implies that, given our hypotheses, $i_{+}$ indeed must be greater than or equal to $1$ and it must hold that $i_{+} \ge i_{-}$. Moreover, we obtain
\begin{equation}\label{eq:secNash_SumOverI}
\frac{1}{4}\kappa \Gamma \le \frac{1}{2} \int_{S'} \nm{\wt{f}(\cdot,x_2)}_{L^2(I_{x_2})}^2\, dx_2 \le \sum_{i \in \mathcal{I}} a_i2^{-i}\Gamma^{3/2}.
\end{equation}
We will use (\ref{eq:secNash_SumOverI}) in the final computation of this proof. We will also need the following instance of H\"{o}lder's inequality
\[
\nm{1}_{L^{8/5}(\R,\mu)} = \nm{ |g|^{1/2} |1/g|^{1/2} }_{L^{8/5}(\R,\mu)}
\le \nm{ |g|^{1/2} }_{L^2(\R,\mu)} \nm{ |1/g|^{1/2} }_{L^8(\R,\mu)} = \nm{g}_{L^1(\R,\mu)}^{1/2}\nm{1/g}_{L^4(\R,\mu)}^{1/2},
\]
as it implies
\begin{equation}\label{eq:secNash_InterpolationHolder}
\mu(\R)^5
= \nm{1}_{L^{8/5}(\R,\mu)}^8
\le \nm{g}_{L^1(\R,\mu)}^4\nm{1/g}_{L^4(\R,\mu)}^4
\end{equation}
for any measure $\mu$ on $\R$ and any function $g \in L^1(\R,\mu) \cap L^4(\R,\mu)$ with $g > 0$ satisfied $\mu$-almost everywhere.

To complete the proof, we perform the following sequence of lower bounds. We first observe that
\[
\int_S \nm{\partial_1 \wt{f}(\cdot,x_2)}_{L^2(I_{x_2})}^2\, dx_2
\ge
\int_{S'} \nm{\partial_1 \wt{f}(\cdot,x_2) }_{L^2(I_{x_2})}^2\, dx_2
\ge
C_{\mathrm{N},1}^{-2}\int_{S'} \frac{\nm{\wt{f}(\cdot,x_2) }_{L^2(I_{x_2})}^6}{\nm{\wt{f}(\cdot,x_2)}_{L^1(I_{x_2})}^4}\, dx_2
\]
by the classical Nash inequality (\ref{eq:secIntro_Nash}) with dimension $d = 1$. By the construction of $S'$, it follows that
\[
\int_S \nm{\partial_1 \wt{f}(\cdot,x_2)}_{L^2(I_{x_2})}^2\, dx_2
\ge
C_{\mathrm{N},1}^{-2}\sum_{i \in \mathcal{I}} 2^{-3i}\Gamma^{9/2} \int_{S_i} \frac{1}{\nm{\wt{f}(\cdot,x_2)}_{L^1(I_{x_2})}^4}\, dx_2. \]
We may now apply the H\"{o}lder inequality (\ref{eq:secNash_InterpolationHolder}) with $\mu$ being the Lebesgue measure restricted to $S_i$ and $g(x_2) := \nm{\wt{f}(\cdot,x_2)}_{L^1(I_{x_2})} > 0$, giving us
\[
\int_S \nm{\partial_1 \wt{f}(\cdot,x_2)}_{L^2(I_{x_2})}^2\, dx_2
\ge
C_{\mathrm{N},1}^{-2}\sum_{i \in \mathcal{I}} 2^{-3i}\Gamma^{9/2} a_i^5 \paren*{\int_{S_i}\nm{\wt{f}(\cdot, x_2)}_{L^1(I_{x_2})}\, dx_2 }^{-4}.
\]
Grouping the terms in the sum, we get
\begin{align*}
\int_S \nm{\partial_1 \wt{f}(\cdot,x_2)}_{L^2(I_{x_2})}^2\, dx_2
&\ge
C_{\mathrm{N},1}^{-2}\Gamma^2\sum_{i \in \mathcal{I}} 2^{2i} \paren*{a_i2^{-i}\Gamma^{1/2}} \paren*{\frac{\int_{S_i}\nm{\wt{f}(\cdot, x_2)}_{L^1(I_{x_2})}\, dx_2}{a_i2^{-i}\Gamma^{1/2}} }^{-4} \\
&\ge
C_{\mathrm{N},1}^{-2}\lambda^{-2}\Gamma^2\sum_{i \in \mathcal{I}} \paren*{a_i2^{-i}\Gamma^{1/2}} \paren*{\frac{\int_{S_i}\nm{\wt{f}(\cdot, x_2)}_{L^1(I_{x_2})}\, dx_2}{a_i2^{-i}\Gamma^{1/2}} }^{-4},
\end{align*}
since $2^{2i} \ge \lambda^{-2}$ for any $i \in \mathcal{I}$ by construction. We again employ (\ref{eq:secNash_InterpolationHolder}), where this time we take $\mu$ and $g$ to be the following discrete measure and function on $\mathcal{I} \subseteq \R$, respectively:
\[
\mu = \sum_{i \in \mathcal{I}} a_i2^{-i}\Gamma^{1/2} \delta_i, \quad g(i) = \frac{\int_{S_i}\nm{\wt{f}(\cdot, x_2)}_{L^1(I_{x_2})}\, dx_2}{a_i 2^{-i} \Gamma^{1/2}} > 0,
\]
with $\delta_i$ denoting the Dirac delta measure at $i \in \mathcal{I} \subseteq \R$. Together with (\ref{eq:secNash_SumOverI}), this yields
\begin{align*}
\int_S \nm{\partial_1 \wt{f}(\cdot,x_2)}_{L^2(I_{x_2})}^2\, dx_2
&\ge
C_{\mathrm{N},1}^{-2}\lambda^{-2}\Gamma^2 \paren*{\sum_{i \in \mathcal{I}} a_i2^{-i}\Gamma^{1/2}}^5 \paren*{\sum_{i \in \mathcal{I}}\int_{S_i}\nm{\wt{f}(\cdot, x_2)}_{L^1(I_{x_2})}\, dx_2 }^{-4} \\
&\ge 2^{10}C_{\mathrm{N},1}^{-2}\lambda^{-2}\Gamma^2 \kappa^5 \paren*{\int_{S'} \nm{\wt{f}(\cdot, x_2)}_{L^1(I_{x_2})}\, dx_2 }^{-4}.
\end{align*}
Since $\nm{\wt{f}}_1 = 1$, it follows that
\[
\int_S \nm{\partial_1 \wt{f}(\cdot,x_2)}_{L^2(I_{x_2})}^2\, dx_2
\ge
2^{10}C_{\mathrm{N},1}^{-2}\lambda^{-2}\Gamma^2 \kappa^5.
\]
The proof is complete with the universal constant given by $C_7 = 2^{-5} C_{\mathrm{N},1}$.
\end{proof}

We now work to prove the case of large $\Lambda$, which we will handle very similarly to how we did it in the proofs of \Cref{prop:secNash_StratCaps,prop:secNash_StratBulk}, the auxiliary results needed for the Nash stratification inequality \Cref{thm:secNash_NashOnStratified}. Indeed, we again attempt to lower bound the gradient $\grad f$ in $L^2$ by finding a drop in an appropriate cross-sectional norm of $f$. We also must consider two cases: if the drop occurs in the caps or in the bulk of the domain. The following lemma treats the first case concerning the caps.

\begin{lemma}\label{lem:secNash_Sit1AbovelambdaInv}
Under \Cref{hyp:secNash}, suppose that
\[
\lambda \Gamma^{-1/2} \le \frac{\eps_*}{4\max\{2, R_*\}}, \quad
\nm{\wt{f}(\cdot,x_2')}_{L^2(I_{x_2'})}^2 > \lambda\pre \Gamma^{3/2},
\]
for some fixed constant $0 < \lambda < 1$ and some fixed height $x_2' \in (0, \eps_*/4]$. Then we have
\[
\Gamma\lambda\pre \le 4 \nm{\partial_2 f}_2.
\]
\end{lemma}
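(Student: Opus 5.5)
The plan is to follow the proof of \Cref{prop:secNash_StratCaps} closely: locate a drop in a cross-sectional $L^2$ norm of $f-f_M$ between a low height and a somewhat higher height, and then convert that drop into a lower bound on $\nm{\partial_2 f}_2$ via the fundamental theorem of calculus along vertical segments. The caps are nested near the bottom: the map $x_2\mapsto|I_{x_2}|$ is increasing on $(0,\eps_*)$ by \Cref{norm:secSett_eps}. So vertical segments issuing upward from $I_{x_2'}\times\{x_2'\}$ remain in $\Omega$ as long as the height stays below $\eps_*$.

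First, the hypothesis at $x_2'$ gives a lower bound. By orthogonality on the cross-section,
\[
\nm{f(\cdot,x_2')-f_M}_{L^2(I_{x_2'})}^2 \ge \nm{\wt f(\cdot,x_2')}_{L^2(I_{x_2'})}^2 > \lambda\pre\Gamma^{3/2}.
\]
Next, set $\delta:=\lambda\Gamma^{-1/2}$, so that $\delta\le \eps_*/8$ by hypothesis. Since $\int_{x_2'}^{x_2'+2\delta}\nm{f(\cdot,x_2)-f_M}_{L^2(I_{x_2})}^2\,dx_2\le\Gamma$, there is a height $x_2''\in(x_2',x_2'+2\delta]\subseteq(0,\eps_*/2]$ with
\[
\nm{f(\cdot,x_2'')-f_M}_{L^2(I_{x_2''})}^2\le \Gamma/(2\delta)=\tfrac12\lambda\pre\Gamma^{3/2}.
\]
Then, exactly as in \Cref{prop:secNash_StratCaps}, I would use $I_{x_2'}\subseteq I_{x_2}\subseteq I_{x_2''}$ for $x_2'\le x_2\le x_2''$ and the reverse triangle inequality in $L^2(I_{x_2'})$ to get
\[
(1-2^{-1/2})\lambda^{-1/2}\Gamma^{3/4}\le \Big(\int_{I_{x_2'}}|f(x_1,x_2'')-f(x_1,x_2')|^2dx_1\Big)^{1/2}\le (2\delta)^{1/2}\nm{\partial_2 f}_2.
\]
The second inequality comes from the fundamental theorem of calculus and Cauchy--Schwarz along vertical segments. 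Substituting $\delta=\lambda\Gamma^{-1/2}$ gives
\[
(1-2^{-1/2})\lambda^{-1/2}\Gamma^{3/4}\le \sqrt2\,\lambda^{1/2}\Gamma^{-1/4}\nm{\partial_2 f}_2,
\]
that is, $\Gamma\lambda\pre\le \frac{\sqrt2}{1-2^{-1/2}}\nm{\partial_2 f}_2=\frac{2}{\sqrt2-1}\nm{\partial_2f}_2$.

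The main obstacle is the constant. The ratio $2/(\sqrt2-1)\approx 4.83$ exceeds the stated $4$, so the choice of window must be tuned. I would enlarge the window to length $m\delta$, giving $\nm{f(\cdot,x_2'')-f_M}^2\le \lambda\pre\Gamma^{3/2}/m$ and the constant $\sqrt m/(1-m^{-1/2})$. This quantity is minimized at $m=4$, where it equals $4$ exactly. With $m=4$ the window $4\delta\le\eps_*/2$ still keeps $x_2''\le 3\eps_*/4<\eps_*$, so the nesting is preserved, and this requirement is presumably why the hypothesis carries the factor $\max\{2,R_*\}$. Strictness of the hypothesis at $x_2'$ handles the edge cases, and the role of $R_*$ only enters the companion bulk lemma.
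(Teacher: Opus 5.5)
Your proof is correct, and it follows the paper's outline. It uses the same window of length $4\lambda\Gamma^{-1/2}$; the factor $2$ in $\max\{2,R_*\}$ is what keeps $x_2''\le 3\eps_*/4<\eps_*$. It also uses the same averaging choice of $x_2''$ and the same nesting $I_{x_2'}\subseteq I_{x_2}\subseteq I_{x_2''}$.

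One citation point: the nesting comes from \Cref{as:secSett_3}, i.e.\ $F_\ell$ decreasing and $F_r$ increasing near the bottom, not from monotonicity of $|I_{x_2}|$ alone. You apply it correctly later, so nothing in the argument breaks.

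Where you differ from the paper is in turning the drop into a gradient bound.
\begin{itemize}
\item You follow the template of \Cref{prop:secNash_StratCaps}: a drop in the square roots of the cross-sectional norms, Minkowski in $L^2(I_{x_2'})$, then Cauchy--Schwarz along vertical segments. The strip height therefore enters the final estimate, and the constant $m/(\sqrt m-1)$ trades window length against drop size. Its minimum is exactly $4$, at $m=4$, which is why you had to tune the window.
\item The paper instead uses $\partial_2|f-f_M|^2=2(f-f_M)\partial_2 f$. The drop of the squared cross-sectional norms, at least $\frac12\lambda\pre\Gamma^{3/2}$, is bounded by $2\int_{I_{x_2'}}\int_{x_2'}^{x_2''}|\partial_2 f||f-f_M|\,dx$. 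Cauchy--Schwarz is then taken against $\int|f-f_M|^2\le\Gamma$, so the strip height never appears.
\end{itemize}

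What each version buys:
\begin{itemize}
\item In the paper's version only the fraction of the drop matters, so a longer window can only help. With the bound $\frac14\lambda\pre\Gamma^{3/2}$ that the window $4\lambda\Gamma^{-1/2}$ actually provides, it would give the constant $8/3$ instead of $4$.
\item The paper's window-independent form is also the one it reuses along the curves $\Phi_t$ in \Cref{lem:secNash_Sit2AbovelambdaInv}.
\item Your version proves the two cap estimates, \Cref{prop:secNash_StratCaps} and this lemma, by literally the same argument.
\end{itemize}
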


\begin{proof}
We first note that we can find $x_2''$ such that
\[
x_2' < x_2'' \le x_2' + 4\lambda\Gamma^{-1/2} < \eps_*, \quad \int_{I_{x_2''}} |f(x_1, x_2'')-f_M|^2\, dx_1 \le \frac{1}{2}\lambda\pre \Gamma^{3/2},
\]
as otherwise we obtain the contradiction
\[
\Gamma = \nm{f-f_M}_2^2 \ge \int_{x_2'}^{x_2'+4\lambda\Gamma^{-1/2}} \int_{I_{x_2}} |f(x_1,x_2)-f_M|^2\, dx_1\, dx_2 > 2\Gamma.
\]
It is important to note that $x_2' < x_2'' < \eps_*$ implies $I_{x_2'} \subseteq I_{x_2} \subseteq I_{x_2''}$ for all $x_2' \le x_2 \le x_2''$, by \Cref{as:secSett_3} and \Cref{norm:secSett_eps}. With this choice of $x_2''$, we can now compute
\begin{align*}
2\int_{I_{x_2'}} \int_{x_2'}^{x_2''} |\partial_2 f||f-f_M|\, dx
&\ge \int_{I_{x_2'}}\abs*{\int_{x_2'}^{x_2''} \partial_2 (|f-f_M|^2)\, dx_2} dx_1 \\
&= \int_{I_{x_2'}}\left| |f(x_1,x_2'')-f_M|^2-|f(x_1,x_2')-f_M|^2 \right| dx_1 \\
&\ge \int_{I_{x_2'}} \abs{f(x_1,x_2')-f_M}^2 dx_1 - \int_{I_{x_2'}}\abs{f(x_1,x_2'')-f_M}^2 dx_1,
\end{align*}
from which it follows that
\begin{align*}
2\int_{I_{x_2'}} \int_{x_2'}^{x_2''} |\partial_2 f||f-f_M|\, dx
&\ge
\int_{I_{x_2'}} \abs{f(x_1,x_2')-f_M}^2 dx_1 - \int_{I_{x_2''}}\abs{f(x_1,x_2'')-f_M}^2 dx_1 \\
&\ge
\frac{1}{2}\lambda\pre\Gamma^{3/2},
\end{align*}
where we have again used that $I_{x_2'} \subseteq I_{x_2''}$. The proof is concluded by the Cauchy--Schwarz inequality:
\begin{align*}
\int_\Omega |\partial_2 f|^2\, dx &\ge \int_{I_{x_2'}}\int_{x_2'}^{x_2''} |\partial_2 f|^2\, dx \\
&\ge \paren*{\int_{I_{x_2'}} \int_{x_2'}^{x_2''} |\partial_2 f||f-f_M|\, dx}^2\paren*{\int_{I_{x_2'}} \int_{x_2'}^{x_2''} |f-f_M|^2\, dx}\pre \\
&\ge \paren*{\frac{1}{4}\lambda\pre\Gamma^{3/2}}^2 \Gamma\pre\\
&= \frac{1}{16}\lambda^{-2}\Gamma^2.
\end{align*}
\end{proof}

The second case needed for \Cref{prop:secNash_AbovelambdaInv} concerning the bulk of the domain is handled by the following lemma.

\begin{lemma}\label{lem:secNash_Sit2AbovelambdaInv}
Under \Cref{hyp:secNash}, suppose that
\[
\lambda \Gamma^{-1/2} \le \frac{\eps_*}{4\max\{2, R_*\}}, \quad
\nm{\wt{f}(\cdot,x_2')}_{L^2(I_{x_2'})}^2 > \lambda\pre \Gamma^{3/2},
\]
for some fixed constant $0 < \lambda < 1$ and some fixed height $x_2' \in (\eps_*/4, h/2]$. Then, we have that
\[
\Gamma\lambda\pre \le 4 \nm{\grad f}_2^2.
\]
\end{lemma}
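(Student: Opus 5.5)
The plan is to mirror \Cref{lem:secNash_Sit1AbovelambdaInv}: find a nearby height $x_2''>x_2'$ at which the cross-sectional $L^2$ mass of $f-f_M$ has dropped substantially, then integrate $\partial_t|f-f_M|^2$ along a foliation joining $I_{x_2'}\times\{x_2'\}$ to $I_{x_2''}\times\{x_2''\}$. The one change is that the vertical segments of \Cref{lem:secNash_Sit1AbovelambdaInv} are replaced by the rescaling curves of \Cref{lem:secNash_Sit2StratBulk}. Vertical segments cannot be used here because for $x_2'\in(\eps_*/4,h/2]$ the cross-sections need not be nested. We will also use the orthogonal decomposition $\nm{f(\cdot,x_2')-f_M}_{L^2(I_{x_2'})}^2\ge\nm{\wt f(\cdot,x_2')}_{L^2(I_{x_2'})}^2>\lambda\pre\Gamma^{3/2}$.

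\emph{Step 1 (choice of $x_2''$).} Set $\delta:=4R_*\lambda\Gamma^{-1/2}$. The hypothesis $\lambda\Gamma^{-1/2}\le\eps_*/(4\max\{2,R_*\})$ gives $\delta\le\eps_*<h/4$, so $x_2'+\delta\le 3h/4$ and the whole strip lies in $\Omega_{\eps_*/4}$. By the same averaging contradiction as in \Cref{lem:secNash_Sit1AbovelambdaInv}, there is $x_2''\in(x_2',x_2'+\delta]$ with
\[
\int_{I_{x_2''}}|f(x_1,x_2'')-f_M|^2\,dx_1\le \frac{1}{2R_*}\lambda\pre\Gamma^{3/2}.
\]

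\emph{Step 2 (transport along curves).} Use $\Phi_t(x_1)=\bigl(F_\ell(x_2'+t)+(x_1-F_\ell(x_2'))|I_{x_2'+t}|/|I_{x_2'}|,\ x_2'+t\bigr)$ for $0\le t\le x_2''-x_2'$. Differentiating $|f(\Phi_t)-f_M|^2$ in $t$ and integrating in $x_1\in I_{x_2'}$ gives
\[
2\int_{I_{x_2'}}\int_0^{x_2''-x_2'}|\grad f(\Phi_t)|\,|f(\Phi_t)-f_M|\,|\partial_t\Phi_t|\,dt\,dx_1\ \ge\ \int_{I_{x_2'}}|f(x_1,x_2')-f_M|^2 - \frac{|I_{x_2'}|}{|I_{x_2''}|}\int_{I_{x_2''}}|f(x_1,x_2'')-f_M|^2 .
\]
Since $|I_{x_2'}|/|I_{x_2''}|\le R_*$, the right-hand side is at least $\tfrac12\lambda\pre\Gamma^{3/2}$.

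\emph{Step 3 (Cauchy--Schwarz and change of variables).} The map $(x_1,t)\mapsto\Phi_t(x_1)$ is a diffeomorphism onto the strip, with Jacobian $|I_{x_2'+t}|/|I_{x_2'}|\in[R_*\pre,R_*]$. Its speed $|\partial_t\Phi_t|$ is controlled by $M_*$ through \eqref{aux128a}. Cauchy--Schwarz then bounds the left-hand side by $C(R_*,M_*)\nm{\grad f}_2\,\Gamma^{1/2}$. Combined with Step 2, this gives $\Gamma\lambda\pre\lesssim\nm{\grad f}_2$.

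\emph{Step 4 (reaching the stated squared form).} To land on $\nm{\grad f}_2^2$ with the absolute constant $4$, I would not apply Cauchy--Schwarz directly in Step 3. Instead I would apply the weighted Young inequality $2ab\le\mu a^2+\mu\pre b^2$ pointwise, with $\mu$ tuned to the strip height $\delta\sim\lambda\Gamma^{-1/2}$. The term $\mu\pre\int_{\mathrm{strip}}|f-f_M|^2$ is at most $\mu\pre R_*\Gamma$. Choosing $\mu$ so that this term absorbs half of $\tfrac12\lambda\pre\Gamma^{3/2}$ leaves $\nm{\grad f}_2^2$ bounded below by a multiple of $\lambda\pre\Gamma^{3/2}/\mu$.

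\emph{Main obstacle.} The hardest step is this last bookkeeping. The $R_*$ in the hypothesis, the Jacobian bounds and the speed bound $M_*$ all have to cancel to leave the domain-free constant $4$ in exactly the stated exponents, rather than only the linear bound $\Gamma\lambda\pre\lesssim_\Omega\nm{\grad f}_2$ that Steps 1--3 deliver cleanly. I would first try to choose $\delta$ and the threshold in Step 1 with factors of $R_*$ arranged so that they cancel in Step 4. If they do not cancel, I would check whether the statement is intended with $\nm{\grad f}_2$ rather than $\nm{\grad f}_2^2$. Only the linear form matches \Cref{prop:secNash_AbovelambdaInv}, where $C_8$ multiplies $\nm{\grad f}_2$.
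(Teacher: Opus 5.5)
Your Steps 1--3 are the paper's proof. The ingredients match: $\delta=4R_*\lambda\Gamma^{-1/2}$, the threshold $\frac{1}{2R_*}\lambda\pre\Gamma^{3/2}$ at $x_2''$, the rescaling foliation of \Cref{lem:secNash_Sit2StratBulk}, the lower bound $\frac12\lambda\pre\Gamma^{3/2}$ for the transported drop, and then Cauchy--Schwarz. The paper's proof actually ends with $\int_\Omega|\grad f|^2\,dx\ge\frac{1}{16}\lambda^{-2}\Gamma^2$, that is, $\Gamma\lambda\pre\le 4\nm{\grad f}_2$. So the square on $\nm{\grad f}_2$ in the displayed conclusion is a typo, as you suspected, and the linear form is what enters \Cref{prop:secNash_AbovelambdaInv} with $C_8=4$. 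Also, do not try to make $R_*$ and $M_*$ cancel. The paper obtains $4$ by saying Cauchy--Schwarz works ``just as in'' \Cref{lem:secNash_Sit1AbovelambdaInv}, which tacitly treats $dz\,dx_1$ along the curves as area measure. Once the Jacobian $|I_{x_2'+t}|/|I_{x_2'}|\ge R_*\pre$ and the speed $|\partial_t\Phi_t|\le(1+M_*^2)^{1/2}$ are tracked, your Step 3 gives $\Gamma\lambda\pre\le 4R_*(1+M_*^2)^{1/2}\nm{\grad f}_2$. That domain dependence is genuine and harmless, since $C_{11}$ depends on $\Omega$ anyway.

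Step 4 should be discarded; it cannot work. Weighted Young with the best $\mu$ is just Cauchy--Schwarz. Absorbing the term $\mu\pre R_*\Gamma$ into $\frac14\lambda\pre\Gamma^{3/2}$ forces $\mu\ge 4R_*\lambda\Gamma^{-1/2}$, and then $\nm{\grad f}_2^2\gtrsim\lambda\pre\Gamma^{3/2}/\mu\lesssim\lambda^{-2}\Gamma^2$. That is only the square of the linear bound, and it dominates $\lambda\pre\Gamma$ only when $\Gamma\gtrsim\lambda$, which the hypotheses do not give.

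In fact the squared statement is false, by scaling. Pick $g$ and $x_2'\in(\eps_*/4,h/2]$ such that $N:=\nm{\wt g(\cdot,x_2')}_{L^2(I_{x_2'})}^2$ and $G:=\nm{g-g_M}_2^2$ satisfy $N/G>4\max\{2,R_*\}/\eps_*$. For example, take $g$ concentrated on a thin horizontal strip through $x_2'$ with zero mean on each cross-section. Fix $\kappa_0\in\bigl(G^{3/2}/N,\ G^{1/2}\eps_*/(4\max\{2,R_*\})\bigr]$ and set $f=cg$, $\lambda=c\kappa_0$. Both hypotheses hold for every small $c>0$. Yet $\Gamma\lambda\pre=cG/\kappa_0$ while $4\nm{\grad f}_2^2=4c^2\nm{\grad g}_2^2$, so the squared inequality fails as $c\to0$. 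Your fallback reading is therefore the right one, and Steps 1--3 already prove it completely.
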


\begin{proof}
By the same reasoning as in \Cref{lem:secNash_Sit1AbovelambdaInv}, we can find $x_2''$ such that
\[
x_2' < x_2'' \le x_2' + 4R_*\lambda\Gamma^{-1/2} \le 3h/4, \quad \int_{I_{x_2''}} |f(x_1, x_2'')-f_M|^2\, dx_1 \le \frac{1}{2R_*}\lambda\pre \Gamma^{3/2},
\]
as otherwise we obtain the contradiction
\[
\Gamma
= \nm{f-f_M}_2^2
\ge \int_{x_2'}^{x_2'+4R_*\lambda\Gamma^{-1/2}} \int_{I_{x_2}} |f-f_M|^2\, dx_1\, dx_2
> 2\Gamma.
\]
Note that  \Cref{norm:secSett_eps} ensures $x_2' < x_2'' \le 3h/4 \le h-\eps_*/4$ since $\eps_* \leq h/4$. Next, just as we did in the proof of \Cref{lem:secNash_Sit2StratBulk} for \Cref{prop:secNash_StratBulk}, we can explicitly construct the following foliation of the domain:
\[
\Phi_t(x_1) := \paren*{
F_\ell(x_2'+t) + \paren*{x_1-F_\ell(x_2')}\frac{|I_{x_2'+t}|}{|I_{x_2'}|},\ x_2' + t
}
\]
where $x_1 \in I_{x_2'}$ and $0 \le t \le x_2'' - x_2'$, meaning the curves $t \mapsto \Phi_t(x_1)$ start at $I_{x_2'} \times \{x_2'\}$ and end at $I_{x_2''} \times \{x_2''\}$; see \Cref{fig:secNash_NashStratCurves2}. We may now integrate over the family of curves $\phi_{x_1} := \{\Phi_t(x_1)\ |\ 0 \le t \le x_2'' - x_2'\}$ indexed by $x_1 \in I_{x_2'}$ as follows (compare to \eqref{aux129f}):
\begin{align*}
&2\int_{I_{x_2'}} \int_{\phi_{x_1}} |\grad f(z)||f(z)-f_M|\, dz\, dx_1\\
&\quad\ge \int_{I_{x_2'}} \abs*{ \abs{f\circ\Phi_{x_2''-x_2'}(x_1) - f_M}^2 - \abs{f\circ\Phi_{0}(x_1) - f_M}^2}\, dx_1 \\
&\quad\ge \int_{I_{x_2'}} \abs{f(x_1,x_2')-f_M}^2\, dx_1 - \int_{I_{x_2'}} \abs{f\circ\Phi_{x_2''-x_2'}(x_1)-f_M}^2\, dx_1 \\
&\quad= \int_{I_{x_2'}} \abs{f(x_1,x_2')-f_M}^2\, dx_1  - \frac{|I_{x_2'}|}{|I_{x_2''}|}\int_{I_{x_2''}} \abs*{f(x_1,x_2'')-f_M}^2\, dx_1 \\
&\quad\ge \lambda\pre\Gamma^{3/2} - \frac{|I_{x_2'}|}{2R_*|I_{x_2''}|}\lambda\pre\Gamma^{3/2} \\
&\quad\ge \frac{1}{2}\lambda\pre\Gamma^{3/2},
\end{align*}
where the final inequality holds by (\ref{aux0307a}) from \Cref{def:secSett_K*R*} and the fact $\eps_*/4 < x_2' < x_2'' \le h-\eps_*/4$. The Cauchy--Schwarz inequality completes the proof here, just as was done in \Cref{lem:secNash_Sit1AbovelambdaInv}, giving us
\[
\int_\Omega |\grad f|^2\, dx
\ge
\frac{1}{16}\lambda^{-2}\Gamma^2.
\]
\end{proof}

We combine \Cref{lem:secNash_Sit1AbovelambdaInv,lem:secNash_Sit2AbovelambdaInv}  to prove \Cref{prop:secNash_AbovelambdaInv}, which we restate below.

\AbovelambdaInv*

\begin{proof}
By the definition of $\Lambda$, we may pick $x_2' \in (0, h)$ such that
\[
\nm{\wt{f}(\cdot,x_2')}_{L^2(I_{x_2'})}^2 > \lambda\pre \Gamma^{3/2}.
\]
If $x_2' \in (0, \eps_*/4]$, we may apply \Cref{lem:secNash_Sit1AbovelambdaInv}. If $x_2' \in (\eps_*/4, h/2]$, we may apply \Cref{lem:secNash_Sit2AbovelambdaInv}. In the situations $x_2' \in [h-\eps_*/4, h)$ or $x_2' \in [h/2, h-\eps_*/4)$, we can perform completely analogous arguments. Thus, collecting the conclusions of all possible situations, the proof is complete with the universal constant given by $C_8 = 4$.
\end{proof}

\subsubsection{Test function construction for the case of intermediate \texorpdfstring{$\Lambda$}{Lambda}}
\label{subsubsec:NashUnstratHardCasesPart1}

As we continue to work under \Cref{hyp:secNash}, it remains to prove \Cref{prop:secNash_BetweenlambdaAndlambdaInv}. The proof of this proposition is significantly longer and more subtle that the proofs of \Cref{prop:secNash_Belowlambda,prop:secNash_AbovelambdaInv}, with it spanning \Cref{subsubsec:NashUnstratHardCasesPart1} and \Cref{subsubsec:NashUnstratHardCasesPart2}. Later in \Cref{subsubsec:NashUnstratHardCasesPart2}, we show that either an inequality of the form $\kappa^{5/2} \Gamma \lambda\pre \lesssim \nm{\partial_1 f}_2$ must hold true or the function $f$ satisfies the following further hypotheses.

\begin{hypotheses}\label{hyp:secNash_Lambda}
Under \Cref{hyp:secNash} and for fixed $0 < \kappa \le 1$ and $0 < \lambda < 1$, suppose that
\[
\nm{\wt{f}}_1 = 1, \quad \nm{\wt{f}}_2^2 \ge \kappa \Gamma, \quad \lambda \le \Lambda \le \lambda\pre.
\]
Furthermore, suppose there exists a fixed height $x_2' \in (0, h/2]$ at which $f$ satisfies the following.
\begin{enumerate}[label={(\alph*)},ref={Hypotheses \thehypotheses\alph*}]
\item We have
\[
\nm{\wt{f}(\cdot, x_2')}_{L^1(I_{x_2'})} \le 16\kappa\pre \Lambda\Gamma^{1/2}.
\]
\label{hyp:secNash_Lambda_PartA}
\item Let
\begin{equation}\label{aux131a}
\alpha := \frac{1}{16}\kappa \lambda\Lambda\pre.
\end{equation}
There exist disjoint closed connected subintervals $J^-, J^+ \subseteq I_{x_2'}$ such that
\[
\frac{9}{10}\alpha \Gamma \le |\wt{f}(x_1,x_2')| \le \frac{11}{10}\alpha \Gamma
\]
for all $x_1 \in J^+$ and
\[
|\wt{f}(x_1,x_2')| \le \frac{1}{2}\alpha \Gamma
\]
for all $x_1 \in J^-$.
\label{hyp:secNash_Lambda_PartB}
\item Let
\begin{equation}\label{aux131b}
\beta := \lambda^2 \alpha^2 \Lambda\pre = 2^{-8}\kappa^2 \lambda^4 \Lambda^{-3}.
\end{equation}
Then $J^+$ and $J^-$ each have length exactly $\beta \Gamma^{-1/2}$ and each is at least a distance of $\beta \Gamma^{-1/2}$ away from the boundary $\partial I_{x_2'}$. We denote the center of $J^+$ by $x_1'$ and the center of $J^-$ by $x_1''$.
\label{hyp:secNash_Lambda_PartC}
\item It holds that $|\wt{f}(x_1,x_2')| > \frac{1}{10}\alpha\Gamma$ for all $x_1$ strictly between $x_1'$ and $x_1''$.
\label{hyp:secNash_Lambda_PartD}
\end{enumerate}
\end{hypotheses}

Reducing the proof of \Cref{prop:secNash_BetweenlambdaAndlambdaInv} to the situation given by \Cref{hyp:secNash_Lambda} only requires very classical tools: mainly the Cauchy--Schwarz inequality and the fundamental theorem of calculus. Nonetheless, the argument is lengthy and requires notation, which is why we reserve it for \Cref{subsubsec:NashUnstratHardCasesPart2}. We note here that the specific values of the constants $\alpha$ and $\beta$ naturally arise from the arguments in \Cref{subsubsec:NashUnstratHardCasesPart2}.

In this section, we assume \Cref{hyp:secNash_Lambda} and present the heart of the proof of \Cref{prop:secNash_BetweenlambdaAndlambdaInv}. The idea is motivated by the variational formulation of $\dH_0\pre$ norm. Indeed, a simple integration by parts of (\ref{eq:secSett_HMixVariational}) from \Cref{subsec:Notation} gives us that
\begin{equation}\label{eq:secNash_BoundByPsi}
\nm{\partial_1 f}_{\dH_0^{-1}}
= \nm{\partial_1 \wt{f}}_{\dH_0^{-1}}
= \sup_{\psi \in \dH_0^1\setminus\{0\}} \frac{|\gen{\partial_1 \wt{f},\psi}|}{\nm{\psi}_{\dH_0^1}}
= \sup_{\psi \in \dH_0^1\setminus\{0\}} \frac{|\gen{\wt{f},\partial_1 \psi}|}{\nm{\psi}_{\dH_0^1}}
\ge \frac{|\gen{\wt{f},\partial_1 \Psi}|}{\nm{\Psi}_{\dH_0^1}},
\end{equation}
where $\Psi$ is any particular test function in $\dH_0^1(\Omega)$. Thus, we aim to construct a test function $\Psi$ that satisfies an appropriate lower bound on $|\gen{\wt{f},\partial_1 \Psi}|$ and an appropriate upper bound on $\nm{\Psi}_{\dH_0^1}$ that, when combined together with (\ref{eq:secNash_BoundByPsi}), imply $\kappa^{21/2} \Gamma^{1/2} \lambda^{52} \lesssim \nm{\partial_1 f}_{\dH_0\pre}$ holds true.

We begin with the following first step towards constructing the desired test function $\Psi$. Note that the first idea is to just use $\tilde f(\cdot, x_2')$ as the basis for test function construction. This approach fails as the $\dH_0^1$ norm of the corresponding test function could be too large. Instead, we will use the fact that the $L^2$ mass of $\tilde f(\cdot, x_2')$ has to be concentrated in $x_1$ in a certain way to reduce the $x_1$ region that we will use for the construction. The flavor of this argument may be similar to some ``atom'' or ``molecule'' test function constructions in Fourier analysis; see \cite{S93} for an overview and \cite{KN09} for another application of such techniques.

\tikzset{every picture/.style={line width=0.75pt}}
\begin{figure}[!ht]%
\centering
\begin{tikzpicture}[x=0.75pt,y=0.75pt,yscale=-1.1,xscale=1]
\draw    (20.09,167.55) -- (572.53,167.55) ;
\draw    (323.61,48.79) -- (323.73,285.6) ;
\draw [color={rgb, 255:red, 189; green, 16; blue, 224 }  ,draw opacity=0.5 ][line width=2.25]    (20.09,167.55) -- (166.04,167.8) ;
\draw [color={rgb, 255:red, 189; green, 16; blue, 224 }  ,draw opacity=0.5 ][line width=2.25]    (263.53,167.55) -- (345.28,167.8) ;
\draw [color={rgb, 255:red, 189; green, 16; blue, 224 }  ,draw opacity=0.5 ][line width=2.25]    (439.95,167.31) -- (572.53,167.55) ;
\draw  [draw opacity=0][line width=2.25]  (345.28,90.4) .. controls (345.72,67.53) and (367.46,49.16) .. (394.18,49.23) .. controls (421.16,49.31) and (442.99,68.15) .. (442.95,91.33) .. controls (442.95,91.33) and (442.95,91.34) .. (442.95,91.35) -- (394.11,91.19) -- cycle ; \draw  [color={rgb, 255:red, 189; green, 16; blue, 224 }  ,draw opacity=0.5 ][line width=2.25]  (345.28,90.4) .. controls (345.72,67.53) and (367.46,49.16) .. (394.18,49.23) .. controls (421.16,49.31) and (442.99,68.15) .. (442.95,91.33) .. controls (442.95,91.33) and (442.95,91.34) .. (442.95,91.35) ;
\draw  [draw opacity=0][line width=2.25]  (263.54,243.27) .. controls (262.95,266.14) and (241.1,284.3) .. (214.38,283.96) .. controls (187.41,283.62) and (165.69,264.56) .. (165.88,241.39) .. controls (165.88,241.38) and (165.88,241.37) .. (165.88,241.36) -- (214.71,242) -- cycle ; \draw  [color={rgb, 255:red, 189; green, 16; blue, 224 }  ,draw opacity=0.5 ][line width=2.25]  (263.54,243.27) .. controls (262.95,266.14) and (241.1,284.3) .. (214.38,283.96) .. controls (187.41,283.62) and (165.69,264.56) .. (165.88,241.39) .. controls (165.88,241.38) and (165.88,241.37) .. (165.88,241.36) ;
\draw    (216.6,158.98) -- (216.72,176.28) ;
\draw    (393.17,158.98) -- (393.28,176.28) ;
\draw [color={rgb, 255:red, 189; green, 16; blue, 224 }  ,draw opacity=1 ]   (317.04,88.87) -- (330.41,88.87) ;
\draw [color={rgb, 255:red, 189; green, 16; blue, 224 }  ,draw opacity=1 ]   (317.04,48.4) -- (330.41,48.4) ;
\draw [color={rgb, 255:red, 189; green, 16; blue, 224 }  ,draw opacity=1 ]   (317.04,243.44) -- (330.41,243.44) ;
\draw [color={rgb, 255:red, 189; green, 16; blue, 224 }  ,draw opacity=1 ]   (317.04,285.6) -- (330.41,285.6) ;
\draw   (169,177) .. controls (168.99,181.67) and (171.31,184.01) .. (175.98,184.02) -- (207.03,184.09) .. controls (213.7,184.11) and (217.02,186.45) .. (217.01,191.12) .. controls (217.02,186.45) and (220.36,184.13) .. (227.03,184.14)(224.03,184.13) -- (258.07,184.22) .. controls (262.74,184.23) and (265.08,181.9) .. (265.09,177.23) ;
\draw   (346,179) .. controls (345.99,183.67) and (348.31,186.01) .. (352.98,186.02) -- (384.03,186.09) .. controls (390.7,186.11) and (394.02,188.45) .. (394.01,193.12) .. controls (394.02,188.45) and (397.36,186.13) .. (404.03,186.14)(401.03,186.13) -- (435.07,186.22) .. controls (439.74,186.23) and (442.08,183.9) .. (442.09,179.23) ;
\draw   (571.09,131.86) .. controls (571.1,127.19) and (568.77,124.85) .. (564.1,124.84) -- (525.09,124.75) .. controls (518.42,124.74) and (515.09,122.4) .. (515.1,117.73) .. controls (515.09,122.4) and (511.76,124.72) .. (505.09,124.71)(508.09,124.71) -- (26.1,123.62) .. controls (21.43,123.61) and (19.1,125.93) .. (19.09,130.6) ;
\draw (221.16,136.89) node [anchor=north west][inner sep=0.75pt]    {$x_{1} ''$};
\draw (402.23,138.57) node [anchor=north west][inner sep=0.75pt]    {$x_{1} '$};
\draw (429.54,37.9) node [anchor=north west][inner sep=0.75pt]   [align=left] {\textcolor[rgb]{0.74,0.06,0.88}{Values of $\displaystyle \partial _{1} \Phi $}};
\draw (275.5,37) node [anchor=north west][inner sep=0.75pt]  [color={rgb, 255:red, 189; green, 16; blue, 224 }  ,opacity=1 ]  {$\frac{11}{10} \alpha \Gamma $};
\draw (275.5,77) node [anchor=north west][inner sep=0.75pt]  [color={rgb, 255:red, 189; green, 16; blue, 224 }  ,opacity=1 ]  {$\frac{9}{10} \alpha \Gamma $};
\draw (336.5,272) node [anchor=north west][inner sep=0.75pt]  [color={rgb, 255:red, 189; green, 16; blue, 224 }  ,opacity=1 ]  {$-\frac{11}{10} \alpha \Gamma $};
\draw (336.5,229) node [anchor=north west][inner sep=0.75pt]  [color={rgb, 255:red, 189; green, 16; blue, 224 }  ,opacity=1 ]  {$-\frac{9}{10} \alpha \Gamma $};
\draw (218,189.23) node [anchor=north west][inner sep=0.75pt]    {$J^{-}$};
\draw (396,190.23) node [anchor=north west][inner sep=0.75pt]    {$J^{+}$};
\draw (520,98.23) node [anchor=north west][inner sep=0.75pt]    {$I_{x_{2} '}$};
\end{tikzpicture}
\caption{Cartoon depiction of $\partial_1 \Phi$.}
\label{fig:secNash_PhiConstruction}
\end{figure}
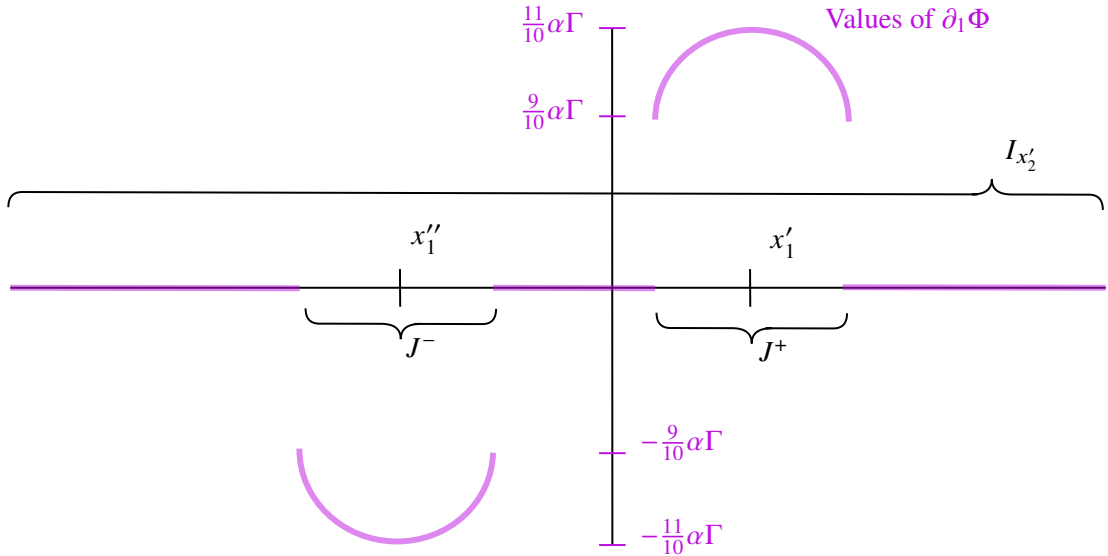

\begin{definition}
Under \Cref{hyp:secNash,hyp:secNash_Lambda}, we use integration
\[
\Phi(x_1) = \int_{F_\ell(x_2')}^{x_1} \partial_1 \Phi(z)\, dz
\]
in in order to construct a function $\Phi \in H^1_0(I_{x_2'})$ with the following derivative:
\[
\partial_1 \Phi(x_1) =
\begin{cases}
\wt{f}(x_1,x_2') & \text{for } x_1 \in J^+ \\
-\wt{f}(x_1-x_1''+x_1',x_2') & \text{for } x_1 \in J^- \\
0 & \text{for } x_1 \in I_{x_2'}\setminus (J^- \cup J^+).
\end{cases}
\]
See \Cref{fig:secNash_PhiConstruction} for an example of how $\partial_1 \Phi$ may look like.
\end{definition}

\begin{lemma}
Under \Cref{hyp:secNash,hyp:secNash_Lambda}, the function $\Phi$ satisfies the following useful properties.
\begin{enumerate}[label={(\alph*)},ref={Lemma \thelemma\alph*}]
\item $\partial_1 \Phi$ is mean zero over $I_{x_2'}$.
\label{lem:secNash_Phi_PartA}
\item The support $\supp \Phi$ is the smallest connected subinterval of $I_{x_2'}$ containing $J^+$ and $J^-$. Hence, $\Phi$ lies in $H_0^1(I_{x_2'})$ and is compactly supported away from the boundary $\partial I_{x_2'}$.
\label{lem:secNash_Phi_PartB}
\item We have $|\supp \Phi| \le 2\sigma\Gamma^{-1/2}$, where
\begin{equation}\label{aux131c} \sigma := 100 \alpha^{-2} \Lambda = 100 \cdot 2^8 \kappa^{-2} \lambda^{-2} \Lambda^3. \end{equation}
\label{lem:secNash_Phi_PartC}
\item Furthermore, $\Phi$ satisfies
\[
\nm{\partial_1 \Phi}_{L^2(I_{x_2'})}^2 \le 2 \Lambda \Gamma^{3/2},
\quad
\nm{\Phi}_{L^\infty(I_{x_2'})} \le 32\kappa\pre\Lambda\Gamma^{1/2}.
\]
\label{lem:secNash_Phi_PartD}
\item Finally,
\[
\int_{I_{x_2'}} \paren*{f(x_1,x_2')-f_M}\partial_1\Phi(x_1)\, dx_1
\ge \frac{9}{25}\alpha^2\Gamma^2 |J^+|.
\]
\label{lem:secNash_Phi_PartE}
\end{enumerate}
\end{lemma}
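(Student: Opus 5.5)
Each of (a)–(e) follows from the definition of $\Phi$ together with \Cref{hyp:secNash_Lambda}, and I would verify them in order.

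\emph{Parts (a) and (b).} Translating $J^-$ onto $J^+$ via $x_1\mapsto x_1-x_1''+x_1'$ (both intervals have length $\beta\Gamma^{-1/2}$ by \ref{hyp:secNash_Lambda_PartC}) gives
\[
\int_{J^-}\partial_1\Phi=-\int_{J^+}\wt f(\cdot,x_2')\,dx_1 .
\]
Hence the total integral of $\partial_1\Phi$ vanishes, which is (a). It follows that $\Phi(F_\ell(x_2'))=\Phi(F_r(x_2'))=0$, so $\Phi\in H^1_0(I_{x_2'})$. Moreover $\Phi$ vanishes to the left of the leftmost of $J^\pm$ and to the right of the rightmost. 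Between the two intervals $\Phi$ is constant, equal to $\pm\int_{J^+}\wt f(\cdot,x_2')\,dx_1$. On $J^+$ the function $\wt f(\cdot,x_2')$ has fixed sign, since it is continuous with $|\wt f|\ge \frac{9}{10}\alpha\Gamma$ on the connected set $J^+$. So this constant is nonzero, and $\supp\Phi$ is exactly the convex hull of $J^+\cup J^-$. This hull lies inside $I_{x_2'}$ at distance at least $\beta\Gamma^{-1/2}$ from $\partial I_{x_2'}$ by \ref{hyp:secNash_Lambda_PartC}, which gives (b).

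\emph{Part (c).} This is the step needing a genuine estimate. By \ref{hyp:secNash_Lambda_PartD}, $|\wt f(\cdot,x_2')|>\frac{1}{10}\alpha\Gamma$ on the open interval between $x_1'$ and $x_1''$. Chebyshev against the sectional $L^2$ bound $\nm{\wt f(\cdot,x_2')}_{L^2(I_{x_2'})}^2\le\Lambda\Gamma^{3/2}$ then gives
\[
|x_1'-x_1''|\le \frac{100\,\Lambda\Gamma^{3/2}}{\alpha^2\Gamma^2}=\sigma\Gamma^{-1/2}.
\]
The hull has length $|x_1'-x_1''|+\beta\Gamma^{-1/2}$. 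Since $\lambda,\kappa\le1$ and $\Lambda\ge\lambda$, we have $\beta=\lambda^2\alpha^2\Lambda^{-1}\le \alpha^2\le 1$. Also $\sigma\ge 100\alpha^{-2}\lambda\ge 100\cdot 2^8\lambda^{-1}\ge1$. So $\beta\le\sigma$ and the length is at most $2\sigma\Gamma^{-1/2}$.

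\emph{Part (d).} The function $\partial_1\Phi$ is $\wt f(\cdot,x_2')$ on $J^+$ and a reflected translate of it on $J^-$. Therefore
\[
\nm{\partial_1\Phi}_{L^2(I_{x_2'})}^2=2\nm{\wt f(\cdot,x_2')}_{L^2(J^+)}^2\le 2\Lambda\Gamma^{3/2}.
\]
For the sup norm, $|\Phi|\le\int|\partial_1\Phi|\le 2\nm{\wt f(\cdot,x_2')}_{L^1(I_{x_2'})}\le 32\kappa^{-1}\Lambda\Gamma^{1/2}$ by \ref{hyp:secNash_Lambda_PartA}.

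\emph{Part (e).} First write $f(x_1,x_2')-f_M=\wt f(x_1,x_2')+(\ov f(x_2')-f_M)$. The constant part pairs to zero with $\partial_1\Phi$ by (a), so only $\wt f$ contributes. On $J^+$ the integrand is $\wt f^2\ge\frac{81}{100}\alpha^2\Gamma^2$. On $J^-$ the integrand is the negative of $\wt f(x_1,x_2')$ times the translated value, which is at least $-\frac12\alpha\Gamma\cdot\frac{11}{10}\alpha\Gamma=-\frac{55}{100}\alpha^2\Gamma^2$. Integrating over equal lengths gives at least $\frac{26}{100}\alpha^2\Gamma^2|J^+|$. This falls short of $\frac{9}{25}$, so the bounds need refining. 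Since $\wt f$ has fixed sign on $J^+$, the correct estimate for the $J^-$ integral is
\[
\Bigl|\int_{J^-}\wt f(x_1,x_2')\,\wt f(x_1-x_1''+x_1',x_2')\,dx_1\Bigr|\le \tfrac12\alpha\Gamma\int_{J^+}|\wt f(\cdot,x_2')|\,dx_1 ,
\]
whereas $\int_{J^+}\wt f^2\ge\frac{9}{10}\alpha\Gamma\int_{J^+}|\wt f(\cdot,x_2')|\,dx_1$. The total is therefore at least $\frac{4}{10}\alpha\Gamma\int_{J^+}|\wt f(\cdot,x_2')|\,dx_1\ge\frac{4}{10}\cdot\frac{9}{10}\alpha^2\Gamma^2|J^+|=\frac{9}{25}\alpha^2\Gamma^2|J^+|$, exactly the claimed constant. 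The main obstacle is this constant bookkeeping in (e), along with ensuring $\beta\le\sigma$ in (c).
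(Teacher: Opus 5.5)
Your proof follows the paper's proof part by part. In (a) and (b) you use the translation identity and the fixed sign of $\wt f(\cdot,x_2')$ on $J^+$. In (c) you apply Chebyshev on the interval between $x_1'$ and $x_1''$ against the definition of $\Lambda$. Part (d) uses the same two estimates. Your refined (e) bounds the $J^-$ term by $\frac12\alpha\Gamma\int_{J^+}|\wt f(\cdot,x_2')|$ and the $J^+$ term below by $\frac{9}{10}\alpha\Gamma\int_{J^+}|\wt f(\cdot,x_2')|$. This is exactly the paper's computation giving $\frac{9}{25}$; just discard the first attempt that produced $\frac{26}{100}$.

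One step does not hold as written: the inequality $100\alpha^{-2}\lambda\ge 100\cdot 2^{8}\lambda^{-1}$ in (c). Since $\alpha=\frac{1}{16}\kappa\lambda\Lambda^{-1}$, it is equivalent to $\Lambda\ge\kappa$. That is not among the hypotheses, which give only $\lambda\le\Lambda\le\lambda^{-1}$ and $0<\kappa\le 1$. For example, $\kappa=1$ and $\Lambda=\lambda$ give $\sigma=100\cdot 2^{8}\lambda$, which is below $1$ for small $\lambda$. So the route $\beta\le 1\le\sigma$ is not available.

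The inequality you actually need, $\beta\le\sigma$, is still true if you keep the factor of $\lambda$ you discarded. Using $\alpha\le\frac{1}{16}$ and $\lambda\le\Lambda$,
$\beta=\lambda^2\Lambda^{-1}\alpha^2\le\lambda\alpha^2\le 100\lambda\alpha^{-2}\le 100\Lambda\alpha^{-2}=\sigma$.
Equivalently, $\sigma/\beta=100\cdot 2^{16}\kappa^{-4}(\Lambda/\lambda)^6\ge 1$, which is what the paper's chain of inequalities checks. With this correction, (c) and the rest of your argument go through.
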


\begin{proof}
The construction of $\Phi$ ensures (a) and (b) since $|J^+| = |J^-| = \beta \Gamma^{-1/2}$ by \ref{hyp:secNash_Lambda_PartC}. For the part (c), we have the estimate
\[
|\supp \Phi| \le \frac{1}{2}|J^-| + |x_1''-x_1'| + \frac{1}{2}|J^+| = \beta\Gamma^{-1/2} + |x_1''-x_1'|
\]
by \ref{hyp:secNash_Lambda_PartC}. Now, it must be true that $|x_1''-x_1'| \le \sigma\Gamma^{-1/2}$ as otherwise we obtain the following contradiction
\[
\nm{\wt{f}(\cdot,x_2')}_{L^2(I_{x_2'})}^2 \ge \frac{1}{100}\alpha^2\Gamma^2|x_1''-x_1'| > \frac{1}{100}\alpha^2\sigma\Gamma^{3/2} = \Lambda\Gamma^{3/2}
\]
against the definition of $\Lambda$, where we have used \ref{hyp:secNash_Lambda_PartD} for the first inequality. Since $\lambda \le \Lambda \le \lambda\pre$ and $0 < \kappa \le 1$ easily imply the bound
\[
\sigma
> \frac{\sigma}{100 \cdot 2^{16}}
= 2^{-8}\kappa^{-2} \lambda^{-2} \Lambda^3
\ge 2^{-8} \kappa^{-2} \lambda
\ge 2^{-8} \kappa^2 \lambda
\ge 2^{-8} \kappa^2 \lambda^4\Lambda^{-3}
= \beta,
\]
we altogether have that
\[
|\supp \Phi| \le (\beta+\sigma)\Gamma^{-1/2} \le 2\sigma \Gamma^{-1/2},
\]
completing the proof of (c).

To prove (d), we observe that
\[
\nm{\partial_1 \Phi}_{L^2(I_{x_2'})}^2 = 2 \int_{J^+} |\wt{f}(x_1,x_2')|^2\, dx_1 \le 2\Lambda \Gamma^{3/2},
\]
by the definition of $\Lambda$, and also that
\[
\nm{\Phi}_{L^\infty(I_{x_2'})} \le 2\nm{\wt{f}(\cdot,x_2')}_{L^1(I_{x_2'})} \le 32\kappa\pre\Lambda\Gamma^{1/2},
\]
by \ref{hyp:secNash_Lambda_PartA}.

Lastly, to prove part (e), we first have by \ref{hyp:secNash_Lambda_PartB} that
\begin{align*}
\int_{I_{x_2'}} \wt{f}(x_1,x_2') \partial_1\Phi(x_1)\, dx_1
&= \brac*{\int_{J^+} + \int_{J^-}} \paren*{\wt{f}(x_1,x_2') \partial_1\Phi(x_1)}\, dx_1 \\
&\ge \brac*{\frac{9}{10}\alpha\Gamma-\frac{1}{2}\alpha \Gamma}\int_{J^+} \partial_1 \Phi(x_1)\, dx_1 \\
&\ge
\brac*{\frac{9}{10}\alpha\Gamma-\frac{1}{2}\alpha \Gamma}\frac{9}{10}\alpha \Gamma |J^+| \\
&=
\frac{9}{25}\alpha^2\Gamma^2 |J^+|.
\end{align*}
The desired claim follows since
\[
\int_{I_{x_2'}} \paren*{f(x_1,x_2')-f_M}\partial_1\Phi(x_1)\, dx_1
=
\int_{I_{x_2'}} \wt{f}(x_1,x_2') \partial_1\Phi(x_1)\, dx_1,
\]
because $\partial_1 \Phi$ is mean-zero over $I_{x_2'}$ and $f(x_1,x_2')-f_M = \ov{f}(x_2')-f_M + \wt{f}(x_1,x_2')$.
\end{proof}

So far, $\Phi$ is defined only on the horizontal cross-section $I_{x_2'}$, where $x_2' \in (0, h/2]$ by \Cref{hyp:secNash_Lambda}. To construct $\Psi$, we simply translate $\Phi$ to other cross sections $I_{x_2}$ for an appropriate range of heights $x_2 \ge x_2'$.

\begin{definition}
Under \Cref{hyp:secNash,hyp:secNash_Lambda}, we further fix a smooth cut-off function $\chi \in C_c^\infty(\R; [0,1])$ with $\chi(x_2)$ equal to $1$ for $x_2 \in [-1/2, 1/2]$ and equal to $0$ outside of $(-1,1)$. Also choose $\chi(x_2)$ so that $|\partial_2\chi| \le 4$ on all of $\R$. Let
\begin{equation}\label{aux131d}
\eta := \kappa^3 \lambda^{14}\Lambda^{-11}
\end{equation}
and $x_2'' := x_2' + \eta \Gamma^{-1/2}.$ Define
\[
\Psi(x_1,x_2) := \Phi(x_1)\chi\paren*{\frac{x_2-x_2''}{\eta \Gamma^{-1/2}}}.
\]
\end{definition}

\begin{lemma}\label{lem:secNash_eta}
Under \Cref{hyp:secNash,hyp:secNash_Lambda}, suppose further that
\begin{equation}\label{aux131e}
\lambda^2 < 2^{-9}M_*\pre, \quad \lambda^3 \Gamma^{-1/2} \le \eps_*/4.
\end{equation}
Then the following hold true.
\begin{enumerate}[label={(\alph*)},ref={Lemma \thelemma\alph*}]
\item The interval of heights
\[
[x_2', x_2' + 2\eta\Gamma^{-1/2}] = [x_2'' - \eta\Gamma^{-1/2}, x_2'' + \eta\Gamma^{-1/2}]
\]
is contained in $(0, h)$.
\label{lem:secNash_eta_PartA}
\item The support $\supp \Phi$, defined as a subset of $I_{x_2'}$, is also a subset of $I_{x_2}$ for any $x_2' \le x_2 \le x_2' + 2\eta\Gamma^{-1/2}$.
\label{lem:secNash_eta_PartB}
\item The support $\supp \Psi$ is contained in $\Omega$ and $\Psi \in H_0^1(\Omega)$.
\label{lem:secNash_eta_PartC}
\item We have $|\supp \Psi| \le 4\sigma\eta\Gamma\pre$.
\label{lem:secNash_eta_PartD}
\end{enumerate}
\end{lemma}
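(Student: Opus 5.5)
The plan is to first collect the size bounds on the parameters. From \eqref{aux131d} and $\lambda\le\Lambda\le\lambda\pre$, $0<\kappa\le1$, $0<\lambda<1$, I get
\[
\eta=\kappa^3\lambda^{14}\Lambda^{-11}\le \lambda^{14}\lambda^{-11}=\lambda^{3},
\]
so $2\eta\Gamma^{-1/2}\le 2\lambda^3\Gamma^{-1/2}\le \eps_*/2$ by \eqref{aux131e}. For (a), since $x_2'\in(0,h/2]$ and $\eps_*<h/4$, we have $x_2'+2\eta\Gamma^{-1/2}\le h/2+\eps_*/2<h$, and the lower endpoint $x_2'>0$. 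This gives (a) immediately.

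For (b), the approach is to compare the lateral drift of the boundary charts over the height window $[x_2',x_2'+2\eta\Gamma^{-1/2}]$ with the buffer of width $\beta\Gamma^{-1/2}$ that \ref{hyp:secNash_Lambda_PartC} guarantees between $J^\pm$ (hence $\supp\Phi$, by \ref{lem:secNash_Phi_PartB}) and $\partial I_{x_2'}$. I split into two cases. If $x_2'<\eps_*$ and the window stays below $\eps_*$, the cross-sections are nested increasing by \Cref{norm:secSett_eps} and \Cref{as:secSett_3}, so $I_{x_2'}\subseteq I_{x_2}$ and there is nothing to prove. Otherwise the window lies in $[\eps_*/4\text{-ish}, 3h/4]\subseteq$ the bulk $\Omega_{\eps_*/4}$ (using $2\eta\Gamma^{-1/2}\le\eps_*/2$). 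On the portion of the window below $\eps_*$ nestedness still holds, and on the rest the mean value theorem with \eqref{aux128a} gives $|F_\ell(x_2)-F_\ell(x_2')|,|F_r(x_2)-F_r(x_2')|\le 2M_*\eta\Gamma^{-1/2}$. Thus it suffices to show $2M_*\eta<\beta$. Using $\beta=2^{-8}\kappa^2\lambda^4\Lambda^{-3}$, we have
\[
\eta/\beta=2^8\kappa\lambda^{10}\Lambda^{-8}\le 2^8\lambda^{2},
\]
since $\Lambda^{-8}\le\lambda^{-8}$. Then $2M_*\eta/\beta\le 2^9M_*\lambda^2<1$ by \eqref{aux131e}, which is exactly where the hypothesis $\lambda^2<2^{-9}M_*\pre$ is used. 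The main (mild) obstacle is the bookkeeping in the mixed case where the window straddles $\eps_*$. There I handle the part below $\eps_*$ by monotonicity of the cross-sections and the part above by the derivative bound, noting that both $F_\ell$ decreasing and $F_r$ increasing below $\eps_*$ only help.

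For (c), $\Psi$ vanishes unless $|x_2-x_2''|<\eta\Gamma^{-1/2}$ and $x_1\in\supp\Phi$. By (a) and (b), this set is a compact subset of $\Omega$, since $\supp\Phi$ stays a positive distance from $\partial I_{x_2}$ for each $x_2$ in the window. Moreover, $\Psi$ is a product of an $H^1_0$ (indeed Lipschitz) function in $x_1$ with a smooth cutoff in $x_2$, so $\Psi\in H^1_0(\Omega)$. For (d), by Fubini,
\[
|\supp\Psi|\le |\supp\Phi|\cdot 2\eta\Gamma^{-1/2}\le 2\sigma\Gamma^{-1/2}\cdot 2\eta\Gamma^{-1/2}=4\sigma\eta\Gamma\pre,
\]
using \ref{lem:secNash_Phi_PartC}.
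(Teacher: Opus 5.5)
Your proposal is correct and follows essentially the paper's proof. The bound $\eta\le\lambda^3$ gives (a), and nestedness of cross-sections handles windows near the lower cap. In the bulk, the boundary drift $2M_*\eta\Gamma^{-1/2}$ is smaller than the buffer $\beta\Gamma^{-1/2}$ around $J^\pm$, exactly via $\lambda^2<2^{-9}M_*\pre$, and (c) and (d) then follow immediately.

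The only difference is cosmetic. The paper splits at $x_2'\le\eps_*/4$ versus $x_2'>\eps_*/4$, which avoids your straddling case altogether. Moreover, in your straddling case you already have $x_2'>\eps_*/2$, so the derivative bound applies on the whole window without any appeal to monotonicity.
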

Note that the assumptions \eqref{aux131e} are exactly the ones appearing in the Proposition \ref{prop:secNash_BetweenlambdaAndlambdaInv}.

\begin{proof}
We note that $\eta := \kappa^3\lambda^{14}\Lambda^{-11} \le \lambda^3$ since $0 < \kappa \le 1$ and $\lambda \le \Lambda$. We also recall that $x_2' \in (0, h/2]$ by \Cref{hyp:secNash_Lambda}.

Consider first the case where $x_2' \in (0, \eps_*/4]$. Then, the hypotheses $\lambda^3 \Gamma^{-1/2} \le \eps_*/4$ implies
\[
x_2' + 2\eta\Gamma^{-1/2} \le x_2' + \eps_*/2 < \eps_*,
\]
proving part (a). Moreover, by \Cref{as:secSett_3}, it follows that $I_{x_2'} \subseteq I_{x_2}$ for all $x_2' \le x_2 \le x_2'+2\eta\Gamma^{-1/2}$. Thus, parts (b) and (c) both follow immediately from \ref{lem:secNash_Phi_PartB} and part (a) of this lemma, in the case where $x_2' \in (0, \eps_*/4]$.

We next consider the case where $x_2' \in (\eps_*/4, h/2]$. We similarly have that
\[
x_2' + 2\eta\Gamma^{-1/2} \le x_2' + \eps_*/2 \le h/2+\eps_*/2 \le h-\frac{1}{4}\eps_*,
\]
by \Cref{norm:secSett_eps}, which proves part (a) in this case. Moreover, fixing $x_2$ such that $x_2' \le x_2 \le x_2'+2\eta\Gamma^{-1/2}$, we have by the definition \eqref{aux128a} of $M_*$ that
\[
|F_\ell(x_2')-F_\ell(x_2)|\Gamma^{1/2} \le M_*|x_2'-x_2|\Gamma^{1/2} \le 2M_*\eta = 2M_*\kappa^3\lambda^{14}\Lambda^{-11}.
\]
Since $0 < \kappa \le 1$ and $\lambda^2 < 2^{-9}M_*\pre$, it follows that
\[
|F_\ell(x_2')-F_\ell(x_2)|\Gamma^{1/2} \le 2M_*\kappa^2\lambda^{14}\Lambda^{-11} < 2^{-8}\kappa^2\lambda^{12}\Lambda^{-11}.
\]
Finally, as $\lambda \le \Lambda$, we obtain
\[
|F_\ell(x_2')-F_\ell(x_2)|\Gamma^{1/2} < 2^{-8}\kappa^2\lambda^{4}\Lambda^{-3} = \beta.
\]
Completely analogous reasoning gives us that
\[
|F_r(x_2')-F_r(x_2)|\Gamma^{1/2} < \beta,
\]
for all $x_2' \le x_2 \le x_2'+2\eta\Gamma^{-1/2}$. By \ref{hyp:secNash_Lambda_PartC}, part (a) of this lemma, and \ref{lem:secNash_Phi_PartB} all together complete the proof of parts (b) and (c).

Lastly,part (d) holds by \ref{lem:secNash_Phi_PartC} and the construction of $\Psi$ since
\[
|\supp \Psi| \le |\supp \Phi|(2\eta\Gamma^{-1/2}) =  4\sigma\eta\Gamma\pre.
\]
\end{proof}

We note here that the exact choice for the constant $\eta$ is not only motivated by the previous lemma, but also by the proof of \Cref{prop:secNash_BetweenlambdaAndlambdaInvStep8} which concludes \Cref{subsubsec:NashUnstratHardCasesPart1} below. Now that we have constructed the test function $\Psi$, we next establish the desired upper bound on $\nm{\Psi}_{\dH_0^1}$.

\begin{lemma}\label{lem:secNash_BetweenDeltaAndDeltaInvStep7}
Under \Cref{hyp:secNash,hyp:secNash_Lambda}, suppose further that
\[
\lambda^2 < 2^{-9}M_*\pre, \quad \lambda^3 \Gamma^{-1/2} \le \eps_*/4.
\]
Then we have
\[
\nm{\Psi}_{\dH_0^1} \le 10 \cdot 2^{13} \kappa^{-7/2}\lambda^{-16}\Gamma^{1/2}.
\]
\end{lemma}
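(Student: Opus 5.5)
We bound $\nm{\Psi}_{\dH_0^1}^2 = \nm{\grad \Psi}_2^2 = \nm{\partial_1\Psi}_2^2 + \nm{\partial_2\Psi}_2^2$. This identity holds because \ref{lem:secNash_eta_PartC} gives $\Psi \in H_0^1(\Omega)$, so integrating $\Psi(-\lap)\Psi$ by parts produces no boundary terms. Since $\Psi(x_1,x_2) = \Phi(x_1)\chi((x_2-x_2'')/(\eta\Gamma^{-1/2}))$ is a tensor product, each piece factorizes into a horizontal integral and a vertical integral. The vertical integrals run over a height interval of length at most $2\eta\Gamma^{-1/2}$.

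\emph{Horizontal derivative.} We have $|\chi|\le 1$, and the $x_2$-support has length $2\eta\Gamma^{-1/2}$. By \ref{lem:secNash_eta_PartB}, $\supp\Phi$ sits inside every $I_{x_2}$ in that height range, so for each such $x_2$ the horizontal integral equals $\nm{\partial_1\Phi}_{L^2(I_{x_2'})}^2$. Then \ref{lem:secNash_Phi_PartD} gives
\[
\nm{\partial_1\Psi}_2^2 \le 2\eta\Gamma^{-1/2}\cdot 2\Lambda\Gamma^{3/2} = 4\eta\Lambda\Gamma .
\]

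\emph{Vertical derivative.} Since $|\partial_2\chi|\le 4$, the chain rule gives $|\partial_2\Psi| \le 4\eta^{-1}\Gamma^{1/2}|\Phi(x_1)|$. We estimate this crudely with the $L^\infty$ bound and the support size:
\[
\nm{\partial_2\Psi}_2^2 \le 16\eta^{-2}\Gamma\,\nm{\Phi}_\infty^2\,|\supp\Psi| \le 16\eta^{-2}\Gamma\cdot (32\kappa^{-1}\Lambda\Gamma^{1/2})^2\cdot 4\sigma\eta\Gamma^{-1}.
\]
Here the $L^\infty$ bound is \ref{lem:secNash_Phi_PartD} and the support bound is \ref{lem:secNash_eta_PartD}. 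This simplifies to $2^{16}\kappa^{-2}\Lambda^2\sigma\eta^{-1}\Gamma$.

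\emph{Substituting the parameters.} We insert $\sigma = 100\cdot 2^8\kappa^{-2}\lambda^{-2}\Lambda^3$ and $\eta = \kappa^3\lambda^{14}\Lambda^{-11}$, which give $\Lambda^2\sigma\eta^{-1} = 100\cdot2^8\kappa^{-5}\lambda^{-16}\Lambda^{16}$. Using $\Lambda \le \lambda^{-1}$, the vertical term is at most $100\cdot 2^{24}\kappa^{-7}\lambda^{-32}\Gamma$. For the horizontal term, $\kappa\le1$, $\lambda\le\Lambda\le\lambda^{-1}$ and $\lambda<1$ give $\eta\Lambda = \kappa^3\lambda^{14}\Lambda^{-10} \le \lambda^{4}\le 1$. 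Thus $4\eta\Lambda\Gamma \le 4\Gamma$, which is negligible next to the vertical term. Adding the two and taking square roots gives
\[
\nm{\Psi}_{\dH_0^1} \le 10\cdot 2^{12}\sqrt{1+o(1)}\,\kappa^{-7/2}\lambda^{-16}\Gamma^{1/2} \le 10\cdot2^{13}\kappa^{-7/2}\lambda^{-16}\Gamma^{1/2}.
\]

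I expect no real obstacle, only bookkeeping of the exponents. The one point needing care is the vertical term: it must be bounded via $\nm{\Phi}_\infty$ and $|\supp\Psi|$ rather than via any $L^2$ information on $\Phi$. That choice is what produces the $\eta^{-1}$ factor, and hence the $\lambda^{-16}$ after substituting $\eta$, $\sigma$ and $\Lambda\le\lambda^{-1}$.
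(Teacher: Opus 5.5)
Your proposal is correct and follows the paper's proof essentially line by line: the same tensor-product split, the same horizontal bound $\nm{\partial_1\Psi}_2^2 \le 4\eta\Lambda\Gamma$, the same crude $L^\infty$-times-support estimate giving $2^{16}\kappa^{-2}\Lambda^2\sigma\eta^{-1}\Gamma \le 100\cdot 2^{24}\kappa^{-7}\lambda^{-32}\Gamma$, and the same final square root. The only thing to tidy up is your ``$\sqrt{1+o(1)}$'', which should be made explicit. Since $\kappa^{-7}\lambda^{-32} \ge 1$, you have $4\Gamma \le 4\kappa^{-7}\lambda^{-32}\Gamma$, so the sum of the two terms is at most $4\cdot 100\cdot 2^{24}\kappa^{-7}\lambda^{-32}\Gamma$; this is exactly the step the paper takes implicitly.
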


\begin{proof}
First, we observe that by construction $\partial_1 \Psi(x_1,x_2) = \partial_1 \Phi(x_1) \chi\paren*{\frac{x_2-x_2''}{\eta \Gamma^{-1/2}}}$. It immediately follows by \Cref{lem:secNash_eta} that
\begin{align*}
\nm{\partial_1 \Psi}_2^2
&=
\nm{\partial_1 \Phi}_{L^2(I_{x_2'})}^2 \int_{x_2'' - \eta\Gamma^{-1/2}}^{x_2'' + \eta\Gamma^{-1/2}} \abs*{\chi\paren*{\frac{x_2-x_2''}{\eta \Gamma^{-1/2}}}}^2\, dx_2 \\
&=
\eta \Gamma^{-1/2} \nm{\partial_1 \Phi}_{L^2(I_{x_2'})}^2 \int_{-1}^1 \abs{\chi(x_2)}^2\, dx_2.
\end{align*}
Using that $\chi$ is cut-off function bounded above by $1$ along with \ref{lem:secNash_Phi_PartD} and \eqref{aux131d}, we have that
\[
\nm{\partial_1 \Psi}_2^2 \le 2\eta \Gamma^{-1/2} \nm{\partial_1 \Phi}_{L^2(I_{x_2'})}^2 \le 4\eta \Lambda \Gamma = 4\kappa^3\lambda^{14}\Lambda^{-10}\Gamma \le 4\kappa^3\lambda^4\Gamma,
\]
where we've also used $\lambda \le \Lambda$.

For the vertical partial derivative, we have $\partial_2 \Psi(x_1,x_2) = \eta\pre \Gamma^{1/2}\Phi(x_1) \partial_2\chi\paren*{\frac{x_2-x_2''}{\eta \Gamma^{-1/2}}}$. Using the bound $|\partial_2\chi(x_2)| \le 4$ gives us that
\begin{align*}
\nm{\partial_2 \Psi}_2^2
&=
\eta^{-2}\Gamma \int_\Omega \abs*{\Phi(x_1)\partial_2\chi\paren*{\frac{x_2-x_2''}{\eta \Gamma^{-1/2}}}}^2\, dx \\
&\le
\eta^{-2}\Gamma \nm{\Phi}_{L^\infty(I_{x_2'})}^2 \int_{\supp \Psi}\abs*{\partial_2\chi\paren*{\frac{x_2-x_2''}{\eta \Gamma^{-1/2}}}}^2\, dx \\
&\le
2^4 \eta^{-2}\Gamma \nm{\Phi}_{L^\infty(I_{x_2'})}^2|\supp\Psi|.
\end{align*}
Applying \ref{lem:secNash_Phi_PartD} and \ref{lem:secNash_eta_PartD} and recalling definitions \eqref{aux131d} and \eqref{aux131c} of $\eta$ and $\sigma$ in turn gives
\[
\nm{\partial_2 \Psi}_2^2
\le
2^{16} \sigma \eta^{-1} \kappa^{-2}\Lambda^2\Gamma
=
100\cdot 2^{24} \kappa^{-7} \lambda^{-16}\Lambda^{16} \Gamma
\le
100 \cdot 2^{24} \kappa^{-7} \lambda^{-32}\Gamma,
\]
where we've also used $\lambda \le \Lambda^{-1}$.

Altogether, we estimate
\[
\nm{\Psi}_{\dH_0^1} = \paren{\nm{\partial_1 \Psi}_2^2+\nm{\partial_2 \Psi}_2^2}^{1/2} \le \paren{4\kappa^3\lambda^4 \Gamma+100 \cdot 2^{24} \kappa^{-7} \lambda^{-32}\Gamma}^{1/2} \le 10 \cdot 2^{13} \kappa^{-7/2}\lambda^{-16}\Gamma^{1/2},
\]
where we've used $0 < \lambda, \kappa \le 1$.
\end{proof}

To conclude the proof of Proposition \ref{prop:secNash_BetweenlambdaAndlambdaInv} under \Cref{hyp:secNash_Lambda}, we show that either a useful lower bound on $|\gen{\wt{f},\partial_1 \Psi}|$ holds or an inequality of the form $\kappa^{5/2} \lambda^{-1} \Gamma \lesssim \nm{\partial_2 f}_2$ must hold true.

\begin{proposition}\label{prop:secNash_BetweenlambdaAndlambdaInvStep8}
Under \Cref{hyp:secNash,hyp:secNash_Lambda}, suppose further that
\[
\lambda^2 < 2^{-9}M_*\pre, \quad \lambda^3 \Gamma^{-1/2} \le \eps_*/4.
\]
Then, at least one of the following holds true:
\[
\kappa^{21/2}\lambda^{52}\Gamma^{1/2} \le C_{10}\nm{\partial_1 f}_{\dH_0^{-1}}
\quad \text{or} \quad
\kappa^{5/2} \Gamma\lambda\pre \le c_0\nm{\partial_2 f}_2,
\]
where $C_{10}, c_0 > 0$ are universal constants (both independent even from $\Omega$).
\end{proposition}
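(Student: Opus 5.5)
We use the test function $\Psi$ in the variational bound (\ref{eq:secNash_BoundByPsi}). Combined with \Cref{lem:secNash_BetweenDeltaAndDeltaInvStep7}, this gives
\[
\nm{\partial_1 f}_{\dH_0^{-1}} \ge \frac{|\gen{\wt{f},\partial_1\Psi}|}{10\cdot 2^{13}\kappa^{-7/2}\lambda^{-16}\Gamma^{1/2}}.
\]
So it suffices to show that either $|\gen{\wt f,\partial_1\Psi}| \gtrsim \kappa^{7}\lambda^{36}\Gamma$ (the first alternative, with exponents matching $\kappa^{21/2}\lambda^{52}\Gamma^{1/2}$), or the $\partial_2 f$ alternative holds.

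Since $\partial_1\Psi(\cdot,x_2)=\chi(\cdot)\partial_1\Phi$ has zero mean on each cross-section (its support lies in $I_{x_2}$ by \ref{lem:secNash_eta_PartB}), we can write
\[
\gen{\wt f,\partial_1\Psi}=\int \chi\paren*{\tfrac{x_2-x_2''}{\eta\Gamma^{-1/2}}}\int_{I_{x_2'}}\paren*{f(x_1,x_2)-f_M}\partial_1\Phi(x_1)\,dx_1\,dx_2 .
\]
Splitting $f(x_1,x_2)=f(x_1,x_2')+\paren*{f(x_1,x_2)-f(x_1,x_2')}$, the first piece is at least
\[
\tfrac{9}{25}\alpha^2\Gamma^2|J^+|\int\chi \ \ge\ \tfrac{9}{25}\alpha^2\beta\eta\,\Gamma
\]
by \ref{lem:secNash_Phi_PartE}, using $\int\chi\ge \eta\Gamma^{-1/2}$ and $|J^+|=\beta\Gamma^{-1/2}$. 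Substituting $\alpha,\beta,\eta$ gives order $\kappa^{7}\lambda^{20}\Lambda^{-16}\Gamma\ge \kappa^7\lambda^{36}\Gamma$, using $\Lambda\le\lambda^{-1}$. This is exactly the needed size, which explains the choice of $\eta$.

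The error piece is handled by the fundamental theorem of calculus in $x_2$ along the vertical segments $\{x_1\}\times[x_2',x_2]$, which lie in $\Omega$ by \ref{lem:secNash_eta_PartB}. Using $\partial_1\Phi$ supported on $J^\pm$ with $\nm{\partial_1\Phi}_\infty\le\frac{11}{10}\alpha\Gamma$, and Cauchy--Schwarz, the error is bounded by
\[
\tfrac{11}{10}\alpha\Gamma\cdot 2\eta\Gamma^{-1/2}\cdot(2\eta\Gamma^{-1/2})^{1/2}\,(2\beta\Gamma^{-1/2})^{1/2}\,\nm{\partial_2 f}_2 .
\]
If this error is at most half the main term, the first alternative holds with an explicit $C_{10}$. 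Otherwise we get
\[
\alpha^2\beta\eta\Gamma\lesssim \alpha\,\eta^{3/2}\beta^{1/2}\Gamma^{1/2}\nm{\partial_2 f}_2,
\]
that is, $\nm{\partial_2 f}_2\gtrsim \alpha\beta^{1/2}\eta^{-1/2}\Gamma^{1/2}$.

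The main obstacle is this last bookkeeping: one must check that the case $\nm{\partial_2 f}_2\gtrsim \alpha\beta^{1/2}\eta^{-1/2}\Gamma^{1/2}$ implies $\kappa^{5/2}\Gamma\lambda^{-1}\lesssim\nm{\partial_2 f}_2$. The powers of $\Gamma$ do not obviously match ($\Gamma^{1/2}$ versus $\Gamma$), so a cruder Cauchy--Schwarz may be needed. My first try is to keep $|\wt f|\le\frac{11}{10}\alpha\Gamma$ on $J^+$ and pair it with the sectional-$L^2$ drop estimate, as in \Cref{lem:secNash_Sit1AbovelambdaInv}: bound the error by $\int|\partial_2 f||\partial_1\Phi|$ over a region of height $2\eta\Gamma^{-1/2}$, and use Cauchy--Schwarz against $\nm{\partial_1\Psi}_2^2\le4\eta\Lambda\Gamma$ instead of $L^\infty$. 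That gives error $\lesssim(\eta\Gamma^{-1/2})^{1/2}(\eta\Lambda\Gamma)^{1/2}\nm{\partial_2 f}_2$, which yields
\[
\nm{\partial_2 f}_2\gtrsim\alpha^2\beta\,\Lambda^{-1/2}\Gamma^{5/4}.
\]
I would then compare the powers of $\kappa,\lambda,\Lambda,\Gamma$ and adjust the exponent in $\eta$ until the conclusion reads $\kappa^{5/2}\Gamma\lambda^{-1}\le c_0\nm{\partial_2 f}_2$, tracking the dimensional (scaling) homogeneity throughout.
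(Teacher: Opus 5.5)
Your set-up is the paper's: the variational bound with $\Psi$, \Cref{lem:secNash_BetweenDeltaAndDeltaInvStep7}, the main term from \ref{lem:secNash_Phi_PartE}, and the error controlled by the fundamental theorem of calculus in $x_2$ plus Cauchy--Schwarz. The paper splits height by height: either the sectional pairing stays above $\frac{3}{10}\alpha\Gamma\int_{J^+}\partial_1\Phi$ at every height of the strip, or it drops at some height $z$. You split after integrating against $\chi$ instead; the factors $\eta\Gamma^{-1/2}$ from $\int\chi$ then appear on both sides and cancel, so this difference is harmless.

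The gap is that the proposal stops before proving the second alternative. The step you call the main obstacle is exactly where that alternative comes from, and both of your computations of it contain slips. Your proposed remedy, re-tuning the exponents in $\eta$, is not available. The choice of $\eta$ already determines the bound in \Cref{lem:secNash_BetweenDeltaAndDeltaInvStep7} and the size $\alpha^2\beta\eta\Gamma$ of the main term. Those in turn fix the exponents $\kappa^{21/2}\lambda^{52}$ in the first alternative.

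In fact there is no mismatch of powers of $\Gamma$. In your first bound the powers are $\Gamma\cdot\Gamma^{-1/2}\cdot\Gamma^{-1/4}\cdot\Gamma^{-1/4}=\Gamma^{0}$. So the error is at most $\frac{22}{5}\alpha\beta^{1/2}\eta^{3/2}\nm{\partial_2 f}_2$. If it exceeds half of the main term $\frac{9}{25}\alpha^2\beta\eta\Gamma$, then
\[
\nm{\partial_2 f}_2 \ge \tfrac{9}{220}\,\alpha\beta^{1/2}\eta^{-1/2}\,\Gamma = \tfrac{9}{220}\,2^{-8}\kappa^{1/2}\lambda^{-4}\Lambda^{3}\,\Gamma \ge \tfrac{9}{220}\,2^{-8}\kappa^{5/2}\lambda\pre\Gamma,
\]
using $\Lambda\ge\lambda$ and $\kappa\le 1$. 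This already finishes the proof with the given $\eta$.

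In your second attempt you lost a factor. Over a strip of height $w=2\eta\Gamma^{-1/2}$, the $L^2$ norm of $f(x_1,x_2)-f(x_1,x_2')$ is at most $w\nm{\partial_2 f}_2$, not $w^{1/2}\nm{\partial_2 f}_2$. One factor $w^{1/2}$ comes from Cauchy--Schwarz in the vertical integral and another from integrating over $x_2$. Hence the error is at most $\nm{\partial_1\Psi}_2\, w\,\nm{\partial_2 f}_2\le 4\eta^{3/2}\Lambda^{1/2}\nm{\partial_2 f}_2$. Failure then gives
\[
\nm{\partial_2 f}_2\ge\tfrac{9}{200}\,\alpha^2\beta\eta^{-1/2}\Lambda^{-1/2}\Gamma=\tfrac{9}{200}\,2^{-16}\kappa^{5/2}\lambda\pre\Gamma
\]
exactly. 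This is the paper's computation $\nm{\partial_2 f}_2^2\ge(\frac{3}{100}\alpha^2\beta\Gamma^{3/2})^2(4\eta\Lambda\Gamma)\pre$, and it is precisely what the exponents in $\eta$ were chosen for.

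The units check you mention would have caught both slips. With $\nm{\wt{f}}_1=1$, $f$ has units length$^{-2}$, so $\Gamma$ and $\nm{\partial_2 f}_2$ both have units length$^{-2}$, while $\alpha,\beta,\eta,\Lambda$ are dimensionless. Any correct bound must therefore read $\nm{\partial_2 f}_2\gtrsim(\cdots)\,\Gamma$, never $\Gamma^{1/2}$ or $\Gamma^{5/4}$.
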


\begin{proof}
As explained at the beginning of \Cref{subsubsec:NashUnstratHardCasesPart1}, our goal is to show $\kappa^{21/2} \Gamma^{1/2} \lambda^{52} \lesssim \nm{\partial_1 f}_{\dH_0\pre}$ by leveraging the inequality (\ref{eq:secNash_BoundByPsi}) that comes from the variational formulation of the $\dH_0\pre$ norm. To do so, we need to provide an upper bound for $\nm{\Psi}_{\dH_0^1}$ and a lower bound for $|\gen{\wt{f},\partial_1 \Psi}|$. We have already bounded $\nm{\Psi}_{\dH_0^1}$ in \Cref{lem:secNash_BetweenDeltaAndDeltaInvStep7}. So, using \Cref{lem:secNash_eta}, let us consider
\begin{align*}
|\gen{\wt{f},\partial_1 \Psi}|
&\ge
\int_\Omega \wt{f}(x_1,x_2) \partial_1\Psi(x_1,x_2)\, dx \\
&=
\int_{x_2''-\eta\Gamma^{-1/2}}^{x_2''+\eta\Gamma^{-1/2}} \chi\paren*{\frac{x_2-x_2''}{\eta \Gamma^{-1/2}}}\, \paren*{ \int_{\supp \Phi} \wt{f}(x_1,x_2)\partial_1 \Phi(x_1)\, dx_1 }\, dx_2.
\end{align*}
To continue the lower bound, we make the further assumption that
\begin{equation}\label{eq:secNash_Step8}
\int_{\supp \Phi} \wt{f}(x_1,x_2)\partial_1 \Phi(x_1)\, dx_1 \ge \frac{3}{10}\alpha \Gamma \int_{J^+} \partial_1 \Phi(x_1)\, dx_1
\end{equation}
for all $x_2''-\eta\Gamma^{-1/2} \le x_2 \le x_2''+\eta\Gamma^{-1/2}$. We will later show that if the assumption (\ref{eq:secNash_Step8}) does not hold for all $x_2''-\eta\Gamma^{-1/2} \le x_2 \le x_2''+\eta\Gamma^{-1/2}$, then we must instead have the bound $\kappa^{5/2}\Gamma\lambda\pre \lesssim \nm{\partial_2 f}_2$. For now, we assume (\ref{eq:secNash_Step8}) and obtain
\begin{align*}
|\gen{\wt{f},\partial_1 \Psi}|
&\ge
\frac{3}{10}\alpha \Gamma \int_{J^+} \partial_1\Phi(x_1)\, dx_1 \int_{x_2''-\eta\Gamma^{-1/2}}^{x_2''+\eta\Gamma^{-1/2}} \chi\paren*{\frac{x_2-x_2''}{\eta \Gamma^{-1/2}}}\, dx_2 \\
&=
\frac{3}{10}\alpha \eta \Gamma^{1/2} \int_{J^+} \partial_1\Phi(x_1)\, dx_1 \int_{-1}^{1} \chi(x_2)\, dx_2.
\end{align*}
As $\chi$ is a nonnegative function with $\chi(x_2) = 1$ for $-1/2 \le x_2 \le 1/2$, we obtain
\[
|\gen{\wt{f},\partial_1 \Psi}|
\ge
\frac{3}{10}\alpha \eta \Gamma^{1/2} \int_{J^+} \partial_1\Phi(x_1)\, dx_1.
\]
Next, using the construction of $\Phi$ along with \ref{hyp:secNash_Lambda_PartB} and \ref{hyp:secNash_Lambda_PartC}, as well as definitions \eqref{aux131a}, \eqref{aux131b}, \eqref{aux131d} of $\alpha,$ $\beta$, and $\eta$, it follows that
\[
|\gen{\wt{f},\partial_1 \Psi}|
\ge
\frac{3}{10}\alpha \eta \Gamma^{1/2} \paren*{\frac{9}{10}\alpha\Gamma}|J^+|
=
\frac{3^3}{10^2}\alpha^2 \beta \eta \Gamma
=
\frac{3^3 \kappa^7 \lambda^{20}\Gamma}{10^2\cdot 2^{16}  \Lambda^{16}}.
\]
This is a viable lower bound on $|\gen{\wt{f},\partial_1 \Psi}|$ which we can combine with (\ref{eq:secNash_BoundByPsi}) and \Cref{lem:secNash_BetweenDeltaAndDeltaInvStep7} to obtain
\[
\nm{\partial_1 \wt{f}}_{\dH_0^{-1}}
\ge
\frac{3^3 \kappa^{21/2} \lambda^{36}\Gamma^{1/2}}{10^3\cdot 2^{29}  \Lambda^{16}}
\ge
\frac{3^3}{10^3\cdot 2^{29}} \kappa^{21/2} \lambda^{52}\Gamma^{1/2},
\]
where we have also used that $\Lambda \le \lambda\pre$. Thus, we may take $C_{10} := 2^{29} 3^{-3} 10^3$, completing the proof in the case that (\ref{eq:secNash_Step8}) holds for all $x_2''-\eta\Gamma^{-1/2} \le x_2 \le x_2''+\eta\Gamma^{-1/2}$.

Now, instead suppose there exists some $z \in [0,h]$ such that $x_2' \le z \le x_2'+2\eta\Gamma^{-1/2}$ and
\[
\int_{\supp \Phi} \paren*{f(x_1,z)-f_M}\partial_1 \Phi(x_1)\, dx_1 = \int_{\supp \Phi} \wt{f}(x_1,z)\partial_1 \Phi(x_1)\, dx_1 < \frac{3}{10}\alpha \Gamma \int_{J^+} \partial_1 \Phi(x_1)\, dx_1,
\]
where the first equality holds since $f(x_1,z) - f_M = \ov{f}(z)-f_M+\wt{f}(x_1,z)$ and as $\partial_1 \Phi(x_1)$ is mean-zero over $\supp \Phi$, see  \ref{lem:secNash_Phi_PartA}. Then, \Cref{lem:secNash_eta} allows us to integrate over $\Omega$ as follows:
\begin{align*}
&\int_{\supp \Phi} \int_{x_2'}^{z} \abs*{\partial_2 [f(x_1, x_2)-f_M] \partial_1 \Phi(x_1) }\, dx_2\, dx_1 \\
&\quad\ge
\int_{\supp \Phi} \paren{f(x_1, x_2')-f_M}\partial_1 \Phi(x_1)\, dx_1 -  \int_{\supp \Phi} \paren{f(x_1, z)-f_M}\partial_1 \Phi(x_1)\, dx_1 \\
&\quad>
\int_{\supp \Phi} \paren{f(x_1, x_2')-f_M}\partial_1 \Phi(x_1)\, dx_1 -  \frac{3}{10}\alpha \Gamma \int_{J^+} \partial_1 \Phi(x_1)\, dx_1 \\
&\quad\ge \int_{\supp \Phi} \paren{f(x_1, x_2')-f_M}\partial_1 \Phi(x_1)\, dx_1 - \frac{33}{100}\alpha^2 \Gamma^2|J^+|,
\end{align*}
where the final line holds by \ref{hyp:secNash_Lambda_PartB}. It follows by \ref{lem:secNash_Phi_PartE} and \ref{hyp:secNash_Lambda_PartC} that
\[
\int_{\supp \Phi} \int_{x_2'}^{z} \abs*{\partial_2[f(x_1, x_2)-f_M] \partial_1 \Phi(x_1) }\, dx_2\, dx_1
>
\frac{3}{100}\alpha^2\Gamma^2 |J^+|
=
\frac{3}{100}\alpha^2 \beta\Gamma^{3/2}.
\]
Since $z-x_2' \le 2\eta\Gamma^{-1/2}$, we also compute
\[
\int_{\supp \Phi} \int_{x_2'}^{z} |\partial_1 \Phi(x_1)|^2\, dx_2\, dx_1
\le
2\eta \Gamma^{-1/2} \int_{I_{x_2'}} |\partial_1 \Phi(x_1)|^2\, dx_1
\le
4\eta \Lambda \Gamma
\]
by \ref{lem:secNash_Phi_PartD}. Collecting our work, we obtain
\begin{align*}
\nm{\partial_2 f}_2^2 &\ge \int_{\supp \Phi} \int_{x_2'}^{z} |\partial_2 f(x)|^2\, dx_2\, dx_1 \\
&\ge \paren*{\int_{\supp \Phi} \int_{x_2'}^{z} \abs*{\partial_2 [f(x)-f_M] \partial_1 \Phi(x_1) }\, dx_2\, dx_1}^2 \paren*{\int_{\supp \Phi} \int_{x_2'}^{z} |\partial_1 \Phi(x_1)|^2\, dx_2\, dx_1}\pre \\
&\ge \paren*{\frac{3}{100}\alpha^2\beta\Gamma^{3/2}}^2 \paren*{4\eta \Lambda \Gamma}\pre \\
&= \frac{3^2}{2^{34} \cdot 10^4} \kappa^5 \lambda^{-2}\Gamma^2.
\end{align*}
Setting $c_0 := 2^{17}\cdot 10^2 \cdot 3\pre$ gives the desired inequality $\kappa^{5/2}\Gamma\lambda\pre \le c_0\nm{\partial_2 f}_2$.
\end{proof}

We note that $\eta$ was specifically chosen to ensure that the inequality $\kappa^{5/2}\Gamma\lambda\pre \lesssim \nm{\partial_2 f}_2$ is true whenever (\ref{eq:secNash_Step8}) does not hold for all $x_2''-\eta\Gamma^{-1/2} \le x_2 \le x_2''+\eta\Gamma^{-1/2}$.

\subsubsection{Completing the proof for the case of intermediate \texorpdfstring{$\Lambda$}{Lambda}}
\label{subsubsec:NashUnstratHardCasesPart2}

Continuing to work under \Cref{hyp:secNash}, it remains to show that one can reduce the proof of \Cref{prop:secNash_BetweenlambdaAndlambdaInv} to the case of \Cref{hyp:secNash_Lambda} holding true. We do so in this section by only assuming \Cref{hyp:secNash} and proceeding in several steps. In each step, we consider a particular assumption of \Cref{hyp:secNash_Lambda} and show that if there is not a positive measure subset of heights satisfying that assumption, then a Nash-type inequality of the form $\kappa^{5/2} \Gamma \lambda\pre \lesssim \nm{\partial_1 f}_2$ must hold true. In the end, we collect these lemmas together along with the previously proven \Cref{prop:secNash_BetweenlambdaAndlambdaInvStep8} in order to present the full proof of \Cref{prop:secNash_BetweenlambdaAndlambdaInv}. We note that, between the lemmas to follow below, we discuss definitions/constructions of many constants and sets that will be useful across \Cref{subsubsec:NashUnstratHardCasesPart2}.

Our first goal is to show a quantitative lower bound on the measure of heights $x_2$ satisfying \ref{hyp:secNash_Lambda_PartA}, lest the inequality $\kappa^{5/2} \Gamma \lambda\pre \lesssim \nm{\partial_1 f}_2$ hold true. To do so, we define two subsets of heights, $S_1$ and $S_2$, which give us control on the cross-sectional $L^1$ and $L^2$ norms of the unstratified component $\wt{f}$.

\begin{definition}
Under \Cref{hyp:secNash} and for fixed $0 < \kappa \le 1$ and $0 < \lambda < 1$, we define the corresponding sets
\begin{align*}
&S_1 := \curly*{
x_2 \in [0,h]\ \big|\
\nm{\wt{f}(\cdot,x_2)}_{L^2(I_{x_2})}^2 \ge \lambda \Gamma^{3/2}
}, \\
&S_2 := \curly*{
x_2 \in S_1\ \big|\
\nm{\wt{f}(\cdot, x_2)}_{L^1(I_{x_2})} \le 16 \kappa\pre \Lambda \Gamma^{1/2}
}.
\end{align*}
We note here that, for any $x_2 \in S_2 \subseteq S_1$, we have
\begin{equation}\label{eq:secNash_fwtLinftyLowerBound}
\nm{\wt{f}(\cdot,x_2)}_{L^\infty(I_{x_2})}
\ge
\nm{\wt{f}(\cdot,x_2)}_{L^2(I_{x_2})}^2\nm{\wt{f}(\cdot,x_2)}_{L^1(I_{x_2})}\pre
\ge
\alpha \Gamma,
\end{equation}
where we recall from \eqref{aux131a} that $\alpha := \frac{1}{16}\kappa\lambda\Lambda\pre > 0$.
\end{definition}

\begin{remark}
In the special case of
\[
\kappa = \nm{\wt{f}}_2^2\Gamma\pre = \nm{\wt{f}}_2^2\nm{f-f_M}_2^{-2},
\]
we then have
\[
\kappa\pre \Lambda \Gamma^{1/2} = \nm{\wt{f}}_2^{-2}\sup_{x_2 \in [0, h]} \nm{\wt{f}(\cdot,x_2)}_{L^2(I_{x_2})}^2.
\]
This choice of $\kappa$ was made in the proof of \Cref{thm:secNash_NashOnUnstratified}. Nonetheless, in what follows, we work with the more general condition that $\nm{\wt{f}}_2^2 \ge \kappa \Gamma$ as that will be handy for the argument.
\end{remark}

In the following lemma, we will first show that the measure $|S_1|$ being small implies the Nash-type inequality that we desire. This leads us then to focus on the alternate case where $|S_1|$ is large enough, which we show implies that $|S_2|$ must also satisfy a quantitative lower bound.

\begin{lemma}\label{lem:secNash_BetweenDeltaAndDeltaInvStep1}
Under \Cref{hyp:secNash} and for fixed $0 < \kappa \le 1$ and $0 < \lambda < 1$, suppose that
\[
\nm{\wt{f}}_1 = 1, \quad \nm{\wt{f}}_2^2 \ge \kappa \Gamma.
\]
If $|S_1| \le \frac{1}{2}\kappa\Lambda\pre\Gamma^{-1/2}$, then it must hold that
\[
\kappa^{5/2} \Gamma \lambda\pre \le c_1\nm{\partial_1 \wt{f}}_2 = c_1\nm{\partial_1 f}_2
\]
for some universal constant $c_1 := c_1(\Omega) > 0$. If instead $|S_1| > \frac{1}{2}\kappa\Lambda\pre\Gamma^{-1/2}$, then also $|S_2| > \frac{1}{4}\kappa\Lambda\pre\Gamma^{-1/2}$.
\end{lemma}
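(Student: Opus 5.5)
Both parts come from splitting heights according to the size of the cross-sectional $L^2$ norm of $\wt{f}$ and using Proposition \ref{prop:secNash_Belowlambda} on the complement of $S_1$.

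\textbf{Case $|S_1|$ small.} Suppose $|S_1| \le \frac{1}{2}\kappa\Lambda\pre\Gamma^{-1/2}$. By the definition of $\Lambda$, every height satisfies $\nm{\wt{f}(\cdot,x_2)}_{L^2(I_{x_2})}^2 \le \Lambda\Gamma^{3/2}$. Hence
\[
\int_{S_1} \nm{\wt{f}(\cdot,x_2)}_{L^2(I_{x_2})}^2\, dx_2 \le |S_1|\Lambda\Gamma^{3/2} \le \tfrac{1}{2}\kappa\Gamma .
\]
Set $S := [0,h]\setminus S_1$. Since $\nm{\wt{f}}_2^2 \ge \kappa\Gamma$, we get $\int_S \nm{\wt{f}(\cdot,x_2)}_{L^2(I_{x_2})}^2\,dx_2 \ge \frac{1}{2}\kappa\Gamma$. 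By the definition of $S_1$, every $x_2 \in S$ has $\Gamma^{-3/2}\nm{\wt{f}(\cdot,x_2)}_{L^2(I_{x_2})}^2 < \lambda$. So the hypotheses \eqref{aux130a} hold with $\kappa$ replaced by $\kappa/2$; the normalization $\nm{\wt{f}}_1=1$ is assumed. Proposition \ref{prop:secNash_Belowlambda} then gives
\[
(\kappa/2)^{5/2}\Gamma\lambda\pre \le C_7\Big(\int_S \nm{\partial_1 f(\cdot,x_2)}_{L^2(I_{x_2})}^2\,dx_2\Big)^{1/2} \le C_7\nm{\partial_1 f}_2 ,
\]
which is the claim with $c_1 = 2^{5/2}C_7$. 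The identity $\partial_1\wt{f} = \partial_1 f$ holds because $\ov{f}$ depends only on $x_2$.

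\textbf{Case $|S_1|$ large.} Suppose $|S_1| > \frac{1}{2}\kappa\Lambda\pre\Gamma^{-1/2}$. I would use a Chebyshev argument on the cross-sectional $L^1$ norms. Let $B := S_1 \setminus S_2$; on $B$ we have $\nm{\wt{f}(\cdot,x_2)}_{L^1(I_{x_2})} > 16\kappa\pre\Lambda\Gamma^{1/2}$. Integrating and using $\nm{\wt{f}}_1 = 1$,
\[
1 \ge \int_B \nm{\wt{f}(\cdot,x_2)}_{L^1(I_{x_2})}\,dx_2 > 16\kappa\pre\Lambda\Gamma^{1/2}|B|,
\]
so $|B| < \frac{1}{16}\kappa\Lambda\pre\Gamma^{-1/2}$. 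Therefore
\[
|S_2| = |S_1| - |B| > \big(\tfrac12 - \tfrac1{16}\big)\kappa\Lambda\pre\Gamma^{-1/2} > \tfrac14\kappa\Lambda\pre\Gamma^{-1/2}.
\]

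\textbf{Where care is needed.} The only delicate point is checking that the appeal to Proposition \ref{prop:secNash_Belowlambda} is legitimate. That proposition uses a strict supremum bound by $\lambda$ on $S$, which the definition of $S_1$ (non-strict $\ge$) supplies exactly. It also requires $\kappa/2 \in (0,1]$, which holds. Measurability of $S$ follows from continuity of $x_2 \mapsto \nm{\wt{f}(\cdot,x_2)}_{L^2(I_{x_2})}$ on $(0,h)$. Otherwise both parts are short computations.
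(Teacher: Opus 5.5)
Your proposal is correct and follows the paper's proof. The first case is the same application of \Cref{prop:secNash_Belowlambda} on $S=[0,h]\setminus S_1$ with $\kappa/2$, giving $c_1=2^{5/2}C_7$. The second case is the same Chebyshev bound on $S_1\setminus S_2$ using $\nm{\wt{f}}_1=1$, which you write directly where the paper argues by contradiction.

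One small correction to your ``delicate point'': the pointwise bound $\Gamma^{-3/2}\nm{\wt{f}(\cdot,x_2)}_{L^2(I_{x_2})}^2<\lambda$ on $S$ only gives $\sup_S \le \lambda$, not a strict supremum bound. This is harmless, and the paper glosses it too, because the proof of \Cref{prop:secNash_Belowlambda} only uses the pointwise bound (to show $S_i=\emptyset$ for $i<i_-$).
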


\begin{proof}
Suppose that $|S_1| \le \frac{1}{2}\kappa\Lambda\pre\Gamma^{-1/2}$. Then, by the definition of $\Lambda$ in (\ref{aux129a}), we have
\[
\int_{S_1} \nm{\wt{f}(\cdot,x_2)}_{L^2(I_{x_2})}^2\, dx_2 \le \Lambda\Gamma^{3/2}|S_1| \le \frac{1}{2}\kappa\Gamma.
\]
By the definition of $S_1$, we may in turn apply \Cref{prop:secNash_Belowlambda} by taking $S := [0,h] \setminus S_1$, giving us that
\[
\kappa^{5/2} \Gamma \lambda\pre \le 2^{5/2} C_7 \nm{\partial_1 \wt{f}}_2.
\]
This establishes the desired claim with $c_1 = c_1(\Omega) := 2^{5/2} C_7 > 0$.

If instead $|S_1| > \frac{1}{2}\kappa\Lambda\pre\Gamma^{-1/2}$ while $|S_2| \le \frac{1}{4}\kappa\Lambda\pre\Gamma^{-1/2}$, then we would obtain the following contradiction:
\[
1 = \nm{\wt{f}}_1 \ge \int_{S_1\setminus S_2} \int_{I_{x_2}} |\wt{f}(x_1,x_2)|\, dx_1\, dx_2 > 16\kappa\pre \Lambda \Gamma^{1/2}|S_1\setminus S_2| \ge 4.
\]
This completes the proof.
\end{proof}

Thus, \Cref{lem:secNash_BetweenDeltaAndDeltaInvStep1} shows us that \ref{hyp:secNash_Lambda_PartA} holds for a positive measure subset of heights $x_2$, namely the set $S_2$, lest the inequality $\kappa^{5/2} \Gamma \lambda\pre \lesssim \nm{\partial_1 f}_2$ hold true. So, we now on focus on the case where $|S_2|$ is large. The next few lemmas show that there must exist a positive subset of heights $x_2$ with at least one pair of corresponding subintervals $J^+,J^- \subseteq I_{x_2}$ satisfying the assumptions in \ref{hyp:secNash_Lambda_PartB} and \ref{hyp:secNash_Lambda_PartC}, lest the Nash-type inequality also hold true. To that end, we make the following definition.

\begin{definition}\label{def:secNash_J}
Under \Cref{hyp:secNash} and for fixed $0 < \kappa \le 1$ and $0 < \lambda < 1$, for each $x_2 \in S_2$ we fix a choice of $x_1' \in I_{x_2}$ which satisfies
\[
|\wt{f}(x_1',x_2)| = \alpha \Gamma.
\]
This is possible by (\ref{eq:secNash_fwtLinftyLowerBound}). In turn, let $J \subseteq I_{x_2}$ denote the largest (necessarily closed) connected subinterval of $I_{x_2}$ that contains (the associated fixed choice of) $x_1'$ and such that
\[
|\wt{f}(x_1,x_2)| \ge \frac{9}{10}\alpha \Gamma
\]
for all $x_1 \in J$. This guarantees $\wt{f}(\cdot,x_2)$ is signed, as a function, on $J$. Moreover,
\[
\abs{\wt{f}(x_1^\ell, x_2)} = \abs{\wt{f}(x_1^r, x_2)} = \frac{9}{10}\alpha \Gamma
\]
where $x_1^\ell$ and $x_1^r$ denote the left-hand and right-hand endpoints of $J$, respectively. We note that, for each height $x_2 \in S_2$, the choice of $x_1'$ need not be unique, but this will not affect the arguments to follow. Also, we do not explicitly express the dependence of $x_1'$ and $J$ on $x_2$ in order to keep notation simple.
\end{definition}

We wish to work with a height $x_2 \in S_2$ for which the corresponding interval $J$ is well behaved, meaning that the interval $J$ has the properties required by \ref{hyp:secNash_Lambda_PartB} and \ref{hyp:secNash_Lambda_PartC}. The first step in this direction is the construction of the following subset $S_3$ of $S_2$.

\begin{definition}
Under \Cref{hyp:secNash} and for fixed $0 < \kappa \le 1$ and $0 < \lambda < 1$, we define
\[
S_3 := \curly*{
x_2 \in S_2\ \bigg|\ |x_1'-x_1^r| > \frac{3}{2}\beta \Gamma^{-1/2} \text{ and } |x_1'-x_1^\ell| > \frac{3}{2}\beta \Gamma^{-1/2}
},
\]
where as before (see \eqref{aux131b})
\[
\beta = \lambda^2\alpha^2 \Lambda\pre = 2^{-8}\kappa^2\lambda^4\Lambda^{-3} > 0.
\]
For any choice of $x_2 \in S_3$ with corresponding $x_1'$ and $J$, we can define the closed subinterval
\begin{equation}\label{aux131f}
J^+ := \brac*{x_1'-\frac{1}{2}\beta\Gamma^{-1/2},\ x_1'+\frac{1}{2}\beta\Gamma^{-1/2}} \subseteq J
\end{equation}
centered at $x_1'$ with length exactly $\beta\Gamma^{-1/2}$. Given the definition of $S_3$, we see that $J^+$ must be at least a distance of $\beta\Gamma^{-1/2}$ away from $\partial I_{x_2}$.
\end{definition}

We now show that if the set $S_3$ does not have large enough positive measure, then the desired Nash-type inequality must hold. The proof to follow also reveals the reasoning behind the definition of the constant $\beta$.

\begin{lemma}\label{lem:secNash_BetweenDeltaAndDeltaInvStep2}
Under \Cref{hyp:secNash} and for fixed $0 < \kappa \le 1$ and $0 < \lambda < 1$, suppose that
\[
\nm{\wt{f}}_1 = 1,
\quad
\nm{\wt{f}}_2^2 \ge \kappa \Gamma,
\quad
|S_2| > \frac{1}{4}\kappa\Lambda\pre\Gamma^{-1/2}.
\]
If $|S_3| \le \frac{1}{8}\kappa\Lambda\pre\Gamma^{-1/2}$, then it must be true that
\[
\kappa^{5/2} \Gamma \lambda\pre \le \kappa^{1/2} \Gamma \lambda\pre \le c_2\nm{\partial_1 \wt{f}}_2 = c_2\nm{\partial_1 f}_2
\]
for some universal constant $c_2 > 0$ (independent even from $\Omega$).
\end{lemma}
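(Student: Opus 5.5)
The plan is a direct fundamental-theorem-of-calculus argument on each horizontal cross-section belonging to $S_2\setminus S_3$, followed by integration in $x_2$. Since $S_3\subseteq S_2$, the hypotheses $|S_2|>\frac14\kappa\Lambda\pre\Gamma^{-1/2}$ and $|S_3|\le\frac18\kappa\Lambda\pre\Gamma^{-1/2}$ give
\[
|S_2\setminus S_3| > \tfrac18\kappa\Lambda\pre\Gamma^{-1/2}.
\]
So a positive-measure set of heights is available on which we get a lower bound for $\nm{\partial_1\wt{f}(\cdot,x_2)}_{L^2(I_{x_2})}^2$.

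Fix $x_2\in S_2\setminus S_3$, with the associated $x_1'$ and $J=[x_1^\ell,x_1^r]$ from \Cref{def:secNash_J}. By the definition of $S_3$, one endpoint of $J$, say $x_1^r$, satisfies $|x_1'-x_1^r|\le\frac32\beta\Gamma^{-1/2}$. I use the normalization recorded in \Cref{def:secNash_J}, namely $|\wt{f}(x_1',x_2)|=\alpha\Gamma$ and $|\wt{f}(x_1^r,x_2)|=\frac{9}{10}\alpha\Gamma$. The step to be careful about is precisely this use of the endpoint value: if an endpoint of $J$ happened to lie on $\partial I_{x_2}$, the exact value $\frac{9}{10}\alpha\Gamma$ there is what the definition provides, and I rely on it as stated.

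Since $\wt{f}(\cdot,x_2)$ is signed on $J$, the drop is $\frac1{10}\alpha\Gamma$. The fundamental theorem of calculus and Cauchy--Schwarz then give
\[
\frac{1}{100}\alpha^2\Gamma^2 \le |x_1'-x_1^r|\int_{I_{x_2}}|\partial_1\wt{f}(x_1,x_2)|^2\,dx_1 \le \frac32\beta\Gamma^{-1/2}\nm{\partial_1\wt{f}(\cdot,x_2)}_{L^2(I_{x_2})}^2 .
\]
Substituting $\beta=\lambda^2\alpha^2\Lambda\pre$ cancels $\alpha^2$ completely; this is the reason for the choice of $\beta$. It yields
\[
\nm{\partial_1\wt{f}(\cdot,x_2)}_{L^2(I_{x_2})}^2\ge \frac{1}{150}\lambda^{-2}\Lambda\Gamma^{5/2}\qquad\text{for all } x_2\in S_2\setminus S_3 .
\]

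Integrating this bound over $S_2\setminus S_3$ and using the measure bound, the factor $\Lambda$ cancels:
\[
\nm{\partial_1\wt{f}}_2^2\ge \frac{1}{150}\lambda^{-2}\Lambda\Gamma^{5/2}\cdot\frac18\kappa\Lambda\pre\Gamma^{-1/2}=\frac{1}{1200}\kappa\lambda^{-2}\Gamma^2 .
\]
Taking square roots gives $\kappa^{1/2}\Gamma\lambda\pre\le c_2\nm{\partial_1\wt{f}}_2$ with $c_2=\sqrt{1200}$, which is independent of $\Omega$. The first inequality of the claim, $\kappa^{5/2}\Gamma\lambda\pre\le\kappa^{1/2}\Gamma\lambda\pre$, holds because $0<\kappa\le1$. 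Finally, $\partial_1\wt{f}=\partial_1 f$ because $\ov{f}$ depends only on $x_2$.
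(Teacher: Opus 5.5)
Your proposal is correct and is essentially the paper's proof. It uses the same drop of $\frac{1}{10}\alpha\Gamma$ across the short side of $J$ and the same Cauchy--Schwarz bound $\frac{\Lambda\Gamma^{5/2}}{150\lambda^2}$ per height via $\beta=\lambda^2\alpha^2\Lambda\pre$, then integrates over $S_2\setminus S_3$, arriving at the same constant $c_2=\sqrt{1200}=20\sqrt{3}$. The endpoint caveat you flag is not a deviation: the paper's proof relies on the normalization $|\wt{f}(x_1^\ell,x_2)|=|\wt{f}(x_1^r,x_2)|=\frac{9}{10}\alpha\Gamma$ asserted in \Cref{def:secNash_J} in exactly the same way.
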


\begin{proof}
For now, fix $x_2 \in S_2$ and the corresponding $x_1'$ with interval $J$ that has endpoints $x_1^\ell$, $x_1^r$. Then
\[
\int_{x_1^\ell}^{x_1'} \abs{\partial_1 \wt{f}(x_1,x_2)}\, dx_1
\ge
\abs*{\int_{x_1^\ell}^{x_1'} \partial_1 \wt{f}(x_1,x_2)\, dx_1 }
=
\abs*{\wt{f}(x_1', x_2) - \wt{f}(x_1^\ell, x_2)}
=
\frac{1}{10}\alpha \Gamma.
\]
In turn, we observe that
\begin{align*}
\int_{I_{x_2}} \abs{\partial_1 \wt{f}(x_1,x_2)}^2\, dx_1
&\ge \int_{x_1^\ell}^{x_1'} \abs{\partial_1 \wt{f}(x_1,x_2)}^2\, dx_1 \\
&\ge \frac{1}{x_1' - x_1^\ell} \paren*{\int_{x_1^\ell}^{x_1'} \abs{\partial_1 \wt{f}(x_1,x_2)} dx_1}^2 \\
&\ge \frac{\alpha^2\Gamma^2}{100(x_1' - x_1^\ell)}.
\end{align*}
It analogously also holds that
\[
\int_{I_{x_2}} \abs{\partial_1 \wt{f}(x_1,x_2)}^2\, dx_1 \ge \frac{\alpha^2\Gamma^2}{100(x_1^r - x_1')}.
\]
The above discussion applies to any $x_2 \in S_2$. If we further suppose $x_2 \in S_2$ is such that $x_2 \notin S_3$, then the corresponding interval $J \subseteq I_{x_2}$ is such that $x_1' - x_1^\ell \le \frac{3}{2}\beta \Gamma^{-1/2}$ or $x_1^r - x_1' \le \frac{3}{2}\beta \Gamma^{-1/2}$. Supposing that $x_1' - x_1^\ell \le \frac{3}{2}\beta \Gamma^{-1/2}$, it would hold
\[
\int_{I_{x_2}} \abs{\partial_1 \wt{f}(x_1,x_2)}^2\, dx_1
\ge
\frac{\alpha^2\Gamma^2}{100(x_1' - x_1^\ell)}
\ge
\frac{\Lambda \Gamma^{5/2}}{150 \lambda^2}.
\]
The same inequality holds if instead $x_1^r -x_1' \le \frac{3}{2}\beta \Gamma^{-1/2}$. This lower bound is independent of the choice of $x_2 \in S_2 \setminus S_3$. Thus, supposing $|S_3| \le \frac{1}{8}\kappa\Lambda\pre\Gamma^{-1/2}$, we compute
\[
\int_\Omega |\partial_1 \wt{f}(x)|^2\, dx
\ge
\int_{S_2\setminus S_3} \int_{I_{x_2}} |\partial_1 \wt{f}(x)|^2\, dx_1\, dx_2
\ge
\frac{\Lambda \Gamma^{5/2}}{150 \lambda^2}|S_2\setminus S_3|
\ge
\frac{\kappa}{8 \cdot 150 \lambda^2} \Gamma^2.
\]
This establishes the claim with $c_2 = 20\sqrt{3}$.
\end{proof}

Therefore, in the case of a set $S_2$ of large measure, \Cref{lem:secNash_BetweenDeltaAndDeltaInvStep2} implies that the desired Nash-type inequality holds or there is a significant measure of heights $x_2$ (namely those lying in $S_3$) with the associated interval $J$ having a well-behaved subinterval $J^+$. We continue to refine the set of heights in our construction by working only with those where $\wt{f}$ is bounded by $\frac{11}{10}\alpha\Gamma$ on each $J^+$, as this is needed for \ref{hyp:secNash_Lambda_PartB}.

\begin{definition}
Under \Cref{hyp:secNash} and for fixed $0 < \kappa \le 1$ and $0 < \lambda < 1$, we define the set
\[
S_4 := \curly*{
x_2 \in S_3 \ \bigg |\
|\wt{f}(x_1,x_2)| \le \frac{11}{10}\alpha \Gamma \text{ for all } x_1 \in J^+
}.
\]
\end{definition}

\begin{lemma}\label{lem:secNash_BetweenDeltaAndDeltaInvStep3}
Under \Cref{hyp:secNash} and for fixed $0 < \kappa \le 1$ and $0 < \lambda < 1$,  suppose that
\[
\nm{\wt{f}}_1 = 1,
\quad
\nm{\wt{f}}_2^2 \ge \kappa \Gamma,
\quad
|S_3| > \frac{1}{8}\kappa\Lambda\pre\Gamma^{-1/2}.
\]
If $|S_4| \le \frac{1}{16}\kappa\Lambda\pre\Gamma^{-1/2}$, then we have
\[
\kappa^{5/2} \Gamma \lambda\pre \le \kappa^{1/2} \Gamma \lambda\pre \le c_3\nm{\partial_1 \wt{f}}_2 = c_3\nm{\partial_1 f}_2
\]
for some universal constant $c_3 > 0$ (independent even from $\Omega$).
\end{lemma}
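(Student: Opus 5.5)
Following the template of \Cref{lem:secNash_BetweenDeltaAndDeltaInvStep2}, I will show that every height $x_2 \in S_3 \setminus S_4$ contributes a large cross-sectional amount of $\nm{\partial_1 \wt{f}(\cdot,x_2)}_{L^2(I_{x_2})}^2$, and then integrate over $S_3\setminus S_4$. The smallness of $|S_4|$ gives $|S_3 \setminus S_4| > \frac{1}{16}\kappa\Lambda\pre\Gamma^{-1/2}$, which will provide the needed measure.

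Fix $x_2 \in S_3\setminus S_4$, with the associated $x_1'$, $J$ and $J^+$ from \Cref{def:secNash_J} and \eqref{aux131f}. By definition of $S_4$ there is a point $x_1^* \in J^+$ with $|\wt{f}(x_1^*,x_2)| > \frac{11}{10}\alpha\Gamma$. Recall $|\wt{f}(x_1',x_2)| = \alpha\Gamma$ and $|x_1^*-x_1'| \le \frac{1}{2}\beta\Gamma^{-1/2}$ since $J^+$ is centered at $x_1'$. (Also $J^+\subseteq J$, so $\wt f(\cdot,x_2)$ has a fixed sign there, though only the absolute-value difference is needed.) By the fundamental theorem of calculus,
\[
\int_{[x_1',x_1^*]} |\partial_1 \wt{f}(x_1,x_2)|\,dx_1 \ge \big||\wt{f}(x_1^*,x_2)| - |\wt{f}(x_1',x_2)|\big| > \tfrac{1}{10}\alpha\Gamma .
\]
Cauchy--Schwarz on an interval of length at most $\frac12\beta\Gamma^{-1/2}$ then gives
\[
\int_{I_{x_2}} |\partial_1\wt{f}(x_1,x_2)|^2\,dx_1 \ge \frac{\alpha^2\Gamma^2}{100\cdot \tfrac12 \beta\Gamma^{-1/2}} = \frac{\Lambda\Gamma^{5/2}}{50\lambda^2},
\]
where I used $\beta = \lambda^2\alpha^2\Lambda\pre$, exactly as in the previous lemma.

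Integrating over $S_3\setminus S_4$ then yields
\[
\nm{\partial_1 \wt{f}}_2^2 \ge \frac{\Lambda\Gamma^{5/2}}{50\lambda^2}\cdot\frac{\kappa}{16}\Lambda\pre\Gamma^{-1/2} = \frac{\kappa\Gamma^2}{800\lambda^2}.
\]
This gives $\kappa^{1/2}\Gamma\lambda\pre \le 20\sqrt{2}\,\nm{\partial_1\wt f}_2$, so $c_3 = 20\sqrt 2$ works. The first inequality in the claim, $\kappa^{5/2}\Gamma\lambda\pre \le \kappa^{1/2}\Gamma\lambda\pre$, is trivial since $\kappa\le1$, and $\partial_1\wt f=\partial_1 f$.

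The only delicate point is measurability of $S_3$, $S_4$ and the selection of $x_1'$, which the paper has implicitly accepted already. Otherwise there is no real obstacle: the lemma is a direct one-dimensional gradient lower bound.
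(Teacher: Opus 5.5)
Your proof is correct and follows the paper's argument essentially verbatim: you take a point of $J^+$ where $|\wt{f}(\cdot,x_2)|$ exceeds $\frac{11}{10}\alpha\Gamma$, apply the fundamental theorem of calculus from $x_1'$ and then Cauchy--Schwarz, and integrate over $S_3\setminus S_4$, whose measure exceeds $\frac{1}{16}\kappa\Lambda^{-1}\Gamma^{-1/2}$. The only difference is that you apply Cauchy--Schwarz on the subinterval between $x_1'$ and your chosen point (length at most $\frac{1}{2}\beta\Gamma^{-1/2}$) rather than on all of $J^+$, which gives the slightly better constant $c_3 = 20\sqrt{2}$ instead of the paper's $c_3 = 40$.
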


\begin{proof}
For each $x_2 \in S_3 \setminus S_4$, there is some $z \in J^+$ such that $|\wt{f}(z,x_2)| > \frac{11}{10}\alpha \Gamma$. Supposing for now that $z > x_1'$, we in turn have the lower bound
\[
\int_{x_1'}^z |\partial_1 \wt{f}(x_1,x_2)|\, dx_1
\ge
\abs*{\int_{x_1'}^z \partial_1 \wt{f}(x_1,x_2)\, dx_1}
=
\abs*{\wt{f}(z, x_2) - \wt{f}(x_1', x_2)}
>
\frac{1}{10}\alpha\Gamma,
\]
from which it follows
\[
\int_{I_{x_2}} \abs{\partial_1 \wt{f}(x_1,x_2)}^2\, dx_1
\ge
\int_{J^+} \abs{\partial_1 \wt{f}(x_1,x_2)}^2\, dx_1 \\
\ge
\frac{1}{|J^+|} \paren*{\int_{x_1'}^z \abs{\partial_1 \wt{f}(x_1,x_2)} dx_1}^2 \\
\ge
\frac{\Lambda\Gamma^{5/2}}{100\lambda^2},
\]
where we have used $|J^+| = \beta\Gamma^{-1/2}$. The same inequality holds if instead $z < x_1'$. Also, this inequality is independent of the choice $x_2 \in S_3 \setminus S_4$. Thus, if we suppose that $|S_4| \le \frac{1}{16}\kappa\Lambda\pre\Gamma^{-1/2}$, we compute
\[
\int_\Omega |\partial_1 \wt{f}|^2\, dx
\ge
\int_{|S_3\setminus S_4|} \int_{I_{x_2}} |\partial_1 \wt{f}|^2\, dx_1\, dx_2
\ge
\frac{\Lambda\Gamma^{5/2}}{100\lambda^2} |S_3\setminus S_4|
\ge
\frac{\kappa}{1600\lambda^2} \Gamma^2.
\]
This establishes the claim with $c_3 = 40$.
\end{proof}

With \Cref{lem:secNash_BetweenDeltaAndDeltaInvStep3}, we now know that a significant positive measure of heights $x_2$ have an associated subinterval $J^+$ satisfying the assumptions of \ref{hyp:secNash_Lambda_PartB} and \ref{hyp:secNash_Lambda_PartC}, lest the inequality $\kappa^5 \Gamma \lambda\pre \lesssim \nm{\partial_1 f}_2$ hold true. We next work to establish the analogous lemmas for subintervals of the form $J^-$ as described in \ref{hyp:secNash_Lambda_PartB}, \ref{hyp:secNash_Lambda_PartC}, and \ref{hyp:secNash_Lambda_PartD}. To do so, we consider how the value of $\wt{f}(\cdot,x_2)$ drops along the horizontal cross section $I_{x_2}$ when $x_2 \in S_4$.

\begin{definition}\label{def:secNash_B}
Under \Cref{hyp:secNash} and for fixed $0 < \kappa \le 1$ and $0 < \lambda < 1$, let us consider a fixed $x_2 \in S_4$ and the corresponding interval $J^+$ with center $x_1'$. Recall that $|\wt{f}(x_1, x_2)| \ge \frac{9}{10}\alpha \Gamma$ for $x_1 \in J^+$ and, in particular, $\wt{f}(\cdot,x_2)$ is a signed function on $J^+$. It follows that
\[
\abs*{\int_{J^+} \wt{f}(x_1,x_2)\, dx_1} \ge \frac{9}{10}\alpha\Gamma|J^+| = \frac{9}{10}\alpha\beta \Gamma^{1/2}.
\]
Since $\wt{f}$ is mean-zero over $I_{x_2}$, the pigeon-hole principle gives us a subinterval $B \subseteq I_{x_2}$ such that
\begin{equation}\label{eq:secNash_ToHelpChoosex1''}
\sgn\paren*{\wt{f}(x_1',x_2)} \int_B \wt{f}(x_1,x_2)\, dx_1 \le -\frac{9}{20}\alpha\beta\Gamma^{1/2},
\end{equation}
where either
\[
B = \brac*{F_\ell(x_2),\ x_1'-\frac{1}{2}\beta\Gamma^{-1/2}}
\quad \text{or} \quad
B = \brac*{x_1'+\frac{1}{2}\beta\Gamma^{-1/2},\ F_r(x_2)}.
\]
Thus, there must exist some $x_1'' \in B$, of which we fix a choice for each height $x_2$, such that $|\wt{f}(x_1'', x_2)| = \frac{1}{10}\alpha \Gamma$ and $|\wt{f}(x_1, x_2)| > \frac{1}{10}\alpha \Gamma$ for each $x_1$ between $x_1'$ and $x_1''$. This construction ensures that \ref{hyp:secNash_Lambda_PartD} is satisfied and, in particular, $\wt{f}(\cdot,x_2)$ is signed on the interval between $x_1'$ and $x_1''$. Also note that $x_1'' \notin J$ by construction with either $x_1'' < x_1^\ell$ or $x_1'' > x_1^r$ holding true. See \Cref{fig:secNash_x1''construction} for a cartoon example. We do not express the dependence of $x_1''$ and $B$ on $x_2$ in order to keep notation simple.
\end{definition}

It will be nice to work with heights $x_2$ such that the corresponding intervals $J$ are sufficiently separated from the points $x_1''$, since we need to ensure that $J^\pm$ are disjoint and satisfy $|J^\pm| = \beta \Gamma^{-1/2}$. More specifically, we work with the following subset $S_5 \subseteq S_4$.

\tikzset{every picture/.style={line width=0.75pt}}
\begin{figure}[!ht]%
\centering
\begin{tikzpicture}[x=0.75pt,y=0.75pt,yscale=-1,xscale=1]
\draw    (20.11,169.13) -- (621.09,168.92) ;
\draw    (352.48,56.33) -- (352.48,281.51) ;
\draw    (228.61,163.97) -- (228.61,176.34) ;
\draw    (378.24,163.05) -- (378.24,175.42) ;
\draw    (430.98,163.05) -- (430.98,175.42) ;
\draw    (489.85,164.29) -- (489.85,176.66) ;
\draw    (20.11,162.95) -- (20.11,175.32) ;
\draw   (20,181.04) .. controls (19.99,185.71) and (22.31,188.05) .. (26.98,188.07) -- (259.59,188.88) .. controls (266.26,188.91) and (269.58,191.25) .. (269.57,195.92) .. controls (269.58,191.25) and (272.92,188.93) .. (279.59,188.95)(276.59,188.94) -- (371.22,189.27) .. controls (375.89,189.28) and (378.23,186.96) .. (378.24,182.29) ;
\draw   (380.59,182.28) .. controls (380.59,186.95) and (382.92,189.28) .. (387.59,189.28) -- (425.27,189.28) .. controls (431.94,189.28) and (435.27,191.61) .. (435.27,196.28) .. controls (435.27,191.61) and (438.6,189.28) .. (445.27,189.28)(442.27,189.28) -- (482.96,189.28) .. controls (487.63,189.28) and (489.96,186.95) .. (489.96,182.28) ;
\draw [color={rgb, 255:red, 189; green, 16; blue, 224 }  ,draw opacity=0.5 ][line width=2.25]    (20.11,296.57) .. controls (27.08,341.49) and (70.01,355.1) .. (94.54,353.87) .. controls (119.07,352.63) and (168.13,310.56) .. (186.52,266.02) .. controls (204.92,221.48) and (206.15,176.94) .. (228.22,154.67) .. controls (250.3,132.4) and (354.09,135.31) .. (376.09,96.31) .. controls (398.09,57.31) and (446.54,32.19) .. (496.82,91.58) .. controls (547.11,150.96) and (567.96,143.54) .. (612.11,146.01) ;
\draw [color={rgb, 255:red, 189; green, 16; blue, 224 }  ,draw opacity=1 ]   (344.61,97.63) -- (360.89,97.63) ;
\draw [color={rgb, 255:red, 189; green, 16; blue, 224 }  ,draw opacity=1 ]   (344.61,55.71) -- (360.89,55.71) ;
\draw [color={rgb, 255:red, 189; green, 16; blue, 224 }  ,draw opacity=1 ]   (344.61,146.63) -- (360.89,146.63) ;
\draw   (20,241.04) .. controls (20.01,245.71) and (22.34,248.04) .. (27.01,248.04) -- (433.84,247.68) .. controls (440.51,247.67) and (443.84,250) .. (443.85,254.67) .. controls (443.84,250) and (447.17,247.67) .. (453.84,247.67)(450.84,247.67) -- (614.09,247.53) .. controls (618.76,247.52) and (621.09,245.19) .. (621.09,240.52) ;
\draw (232.89,148.83) node [anchor=north west][inner sep=0.75pt]    {$x_{1} ''$};
\draw (381.28,146.91) node [anchor=north west][inner sep=0.75pt]    {$x_{1}^{\ell }$};
\draw (433.47,149.91) node [anchor=north west][inner sep=0.75pt]    {$x_{1} '$};
\draw (492.11,146.91) node [anchor=north west][inner sep=0.75pt]    {$x_{1}^{r}$};
\draw (23.31,146.59) node [anchor=north west][inner sep=0.75pt]    {$F_{\ell }( x_{2})$};
\draw (271.34,193.21) node [anchor=north west][inner sep=0.75pt]    {$B$};
\draw (437.71,192.28) node [anchor=north west][inner sep=0.75pt]   [align=left] {$\displaystyle J^{+}$ of length $\displaystyle \beta \Gamma ^{-1/2}$};
\draw (305,45) node [anchor=north west][inner sep=0.75pt]  [color={rgb, 255:red, 189; green, 16; blue, 224 }  ,opacity=1 ]  {$\frac{11}{10} \alpha \Gamma $};
\draw (305,87) node [anchor=north west][inner sep=0.75pt]  [color={rgb, 255:red, 189; green, 16; blue, 224 }  ,opacity=1 ]  {$\frac{9}{10} \alpha \Gamma $};
\draw (305,137) node [anchor=north west][inner sep=0.75pt]  [color={rgb, 255:red, 189; green, 16; blue, 224 }  ,opacity=1 ]  {$\frac{1}{10} \alpha \Gamma $};
\draw (446,252.94) node [anchor=north west][inner sep=0.75pt]    {$I_{x_{2}}$};
\draw (431.54,27.9) node [anchor=north west][inner sep=0.75pt]   [align=left] {\textcolor[rgb]{0.74,0.06,0.88}{Values of }\textcolor[rgb]{0.74,0.06,0.88}{$\displaystyle \widetilde{f}( \cdot ,x_{2})$ where $\displaystyle x_{2} \in S_{4}$}};
\end{tikzpicture}
\caption{An example graph of $\wt{f}(\cdot, x_2)$ over $I_{x_2}$ for a fixed choice of $x_2 \in S_4$, highlighting the construction of $x_1''$ and the interval $B$.}
\label{fig:secNash_x1''construction}
\end{figure}
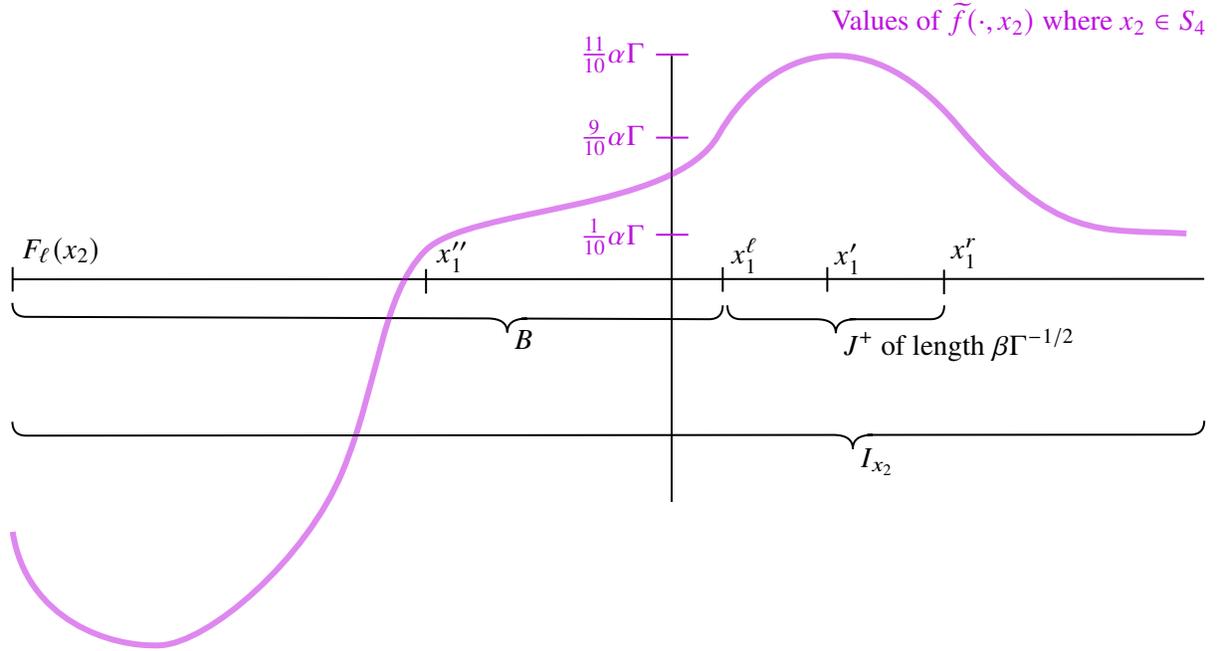

\begin{definition}
Under \Cref{hyp:secNash} and for fixed $0 < \kappa \le 1$ and $0 < \lambda < 1$, we define
\[
S_5 := \curly*{
x_2 \in S_4\ \bigg|\ |x_1'' - x_1^r| > \frac{1}{2}\beta\Gamma^{-1/2} \text{ and } |x_1'' - x_1^\ell| > \frac{1}{2}\beta\Gamma^{-1/2}
}.
\]
\end{definition}

Continuing the theme of this section, we show that either the measure $|S_5|$ is significant or we have a Nash-type inequality.

\begin{lemma}\label{lem:secNash_BetweenDeltaAndDeltaInvStep4}
Under \Cref{hyp:secNash} and for fixed $0 < \kappa \le 1$ and $0 < \lambda < 1$, suppose that
\[
\nm{\wt{f}}_1 = 1,
\quad
\nm{\wt{f}}_2^2 \ge \kappa \Gamma,
\quad
|S_4| > \frac{1}{16}\kappa\Lambda\pre\Gamma^{-1/2}.
\]
If $|S_5| \le \frac{1}{32}\kappa\Lambda\pre\Gamma^{-1/2}$, then it must hold that
\[
\kappa^{5/2} \Gamma \lambda\pre \le \kappa^{1/2} \Gamma \lambda\pre \le c_4\nm{\partial_1 \wt{f}}_2 = c_4\nm{\partial_1 f}_2
\]
for some universal constant $c_4 > 0$ (independent even from $\Omega$).
\end{lemma}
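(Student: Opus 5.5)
The plan follows the template of \Cref{lem:secNash_BetweenDeltaAndDeltaInvStep2,lem:secNash_BetweenDeltaAndDeltaInvStep3}. We exhibit a uniform lower bound on the cross-sectional quantity $\int_{I_{x_2}}|\partial_1\wt{f}(x_1,x_2)|^2\,dx_1$ for every height $x_2\in S_4\setminus S_5$. We then integrate this bound over $S_4\setminus S_5$, whose measure exceeds $\frac{1}{32}\kappa\Lambda\pre\Gamma^{-1/2}$ under the hypotheses $|S_4|>\frac1{16}\kappa\Lambda\pre\Gamma^{-1/2}$ and $|S_5|\le\frac1{32}\kappa\Lambda\pre\Gamma^{-1/2}$.

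Fix $x_2\in S_4\setminus S_5$, together with its associated $x_1'$, the interval $J=[x_1^\ell,x_1^r]$ and the point $x_1''$ from \Cref{def:secNash_B}. By construction $x_1''\notin J$, so either $x_1''<x_1^\ell$ or $x_1''>x_1^r$. Since $x_2\notin S_5$, the point $x_1''$ lies within distance $\frac12\beta\Gamma^{-1/2}$ of $x_1^\ell$ or of $x_1^r$.

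The key observation is that the relevant endpoint must be the one adjacent to $x_1''$. Suppose, say, $x_1''<x_1^\ell$ but $|x_1''-x_1^r|\le\frac12\beta\Gamma^{-1/2}$. Then $x_1^r-x_1''\le\frac12\beta\Gamma^{-1/2}$, which forces $x_1^r-x_1'<\frac12\beta\Gamma^{-1/2}$. This contradicts $x_2\in S_3$, which requires $|x_1'-x_1^r|>\frac32\beta\Gamma^{-1/2}$. Hence, in the case $x_1''<x_1^\ell$, we get $0<x_1^\ell-x_1''\le\frac12\beta\Gamma^{-1/2}$, and symmetrically in the other case.

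Now $|\wt{f}(x_1^\ell,x_2)|=\frac9{10}\alpha\Gamma$ while $|\wt{f}(x_1'',x_2)|=\frac1{10}\alpha\Gamma$. The fundamental theorem of calculus gives
\[\int_{x_1''}^{x_1^\ell}|\partial_1\wt{f}|\,dx_1\ge\tfrac{8}{10}\alpha\Gamma.\]
Cauchy--Schwarz over an interval of length at most $\frac12\beta\Gamma^{-1/2}$ then yields
\[\int_{I_{x_2}}|\partial_1\wt{f}(x_1,x_2)|^2\,dx_1\ge \frac{2\cdot 64}{100}\,\frac{\alpha^2\Gamma^{5/2}}{\beta}=\frac{128}{100}\,\frac{\Lambda\Gamma^{5/2}}{\lambda^2},\]
using $\beta=\lambda^2\alpha^2\Lambda\pre$.

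Integrating over $S_4\setminus S_5$ gives $\|\partial_1\wt{f}\|_2^2\gtrsim \kappa\lambda^{-2}\Gamma^2$. This is $\kappa^{1/2}\Gamma\lambda\pre\le c_4\|\partial_1 f\|_2$ for an absolute constant $c_4$ (roughly $5$). The weaker $\kappa^{5/2}$ form then follows from $\kappa\le1$, and $\partial_1\wt{f}=\partial_1 f$.

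The only genuinely delicate step is the case analysis excluding the far endpoint, that is, ensuring the short gap is between $x_1''$ and the adjacent endpoint of $J$ rather than across $J$. This is exactly where the $\frac32\beta$ margin in the definition of $S_3$ is used. Measurability of the selections is handled as in the earlier lemmas.
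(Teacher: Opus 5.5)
Your proposal is correct and follows the paper's proof essentially step for step: the fundamental theorem of calculus from $x_1''$ to the adjacent endpoint of $J$, then Cauchy--Schwarz over a gap of length at most $\frac12\beta\Gamma^{-1/2}$, then integration over $S_4\setminus S_5$, giving $c_4=5$. Your explicit argument that the short gap cannot be measured to the far endpoint of $J$, which uses the $\frac32\beta\Gamma^{-1/2}$ margin from $S_3$, fills in a step the paper passes over with the word ``Regardless''.
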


\begin{proof}
Fix any $x_2 \in S_4$ and suppose without loss of generality that $x_1'' < x_1^\ell$. We compute
\[
\int_{x_1''}^{x_1^\ell} |\partial_1 \wt{f}(x_1,x_2)|\, dx_1 \ge \abs*{\int_{x_1''}^{x_1^\ell} \partial_1 \wt{f}(x_1,x_2)\, dx_1} = \abs*{\wt{f}(x_1^\ell, x_2) - \wt{f}(x_1'', x_2)} = \frac{4}{5}\alpha \Gamma,
\]
and hence
\begin{align*}
\int_{I_{x_2}} \abs{\partial_1 \wt{f}(x_1,x_2)}^2\, dx_1
&\ge \int_{x_1''}^{x_1^\ell} \abs{\partial_1 \wt{f}(x_1,x_2)}^2\, dx_1 \\
&\ge \frac{1}{x_1^\ell - x_1''} \paren*{\int_{x_1''}^{x_1^\ell} \abs{\partial_1 \wt{f}(x_1,x_2)} dx_1}^2 \ge \frac{16\alpha^2\Gamma^2}{25(x_1^\ell - x_1'')}.
\end{align*}
If instead it holds $x_1'' > x_1^r$, then we analogously have
\[
\int_{I_{x_2}} \abs{\partial_1 \wt{f}(x_1,x_2)}^2\, dx_1 \ge \frac{16\alpha^2\Gamma^2}{25(x_1'' - x_1^r)}.
\]
We note that, if $x_1'' > x_1^\ell$ holds, then $x_1'' > x_1^r$ since $x_1'' \notin J$. Similarly, if $x_1'' < x_1^r$ holds, then $x_1'' < x_1^\ell$.

The above discussion holds for any $x_2 \in S_4$. If we further suppose $x_2 \in S_4$ is such that $x_2 \notin S_5$, then either $|x_1'' - x_1^r| \le \frac{1}{2}\beta\Gamma^{-1/2}$ or $|x_1'' - x_1^\ell| \le \frac{1}{2}\beta\Gamma^{-1/2}$. Regardless, using the definitions \eqref{aux131a} and \eqref{aux131b} of $\alpha$ and $\beta$, it would imply that
\[
\int_{I_{x_2}} \abs{\partial_1 \wt{f}(x_1,x_2)}^2\, dx_1
\ge
\frac{32 \Lambda \Gamma^{5/2}}{25 \lambda^2}.
\]
Importantly, this lower bound is independent of the choice $x_2 \in S_4 \setminus S_5$. Thus, supposing $|S_5| \le \frac{1}{32}\kappa\Lambda\pre\Gamma^{-1/2}$, we compute
\[
\int_\Omega |\partial_1 \wt{f}|^2\, dx
\ge
\int_{|S_4\setminus S_5|} \int_{I_{x_2}} |\partial_1 \wt{f}|^2\, dx_1\, dx_2
\ge
\frac{32 \Lambda \Gamma^{5/2}}{25 \lambda^2}|S_4\setminus S_5|
\ge
\frac{\kappa}{25 \lambda^2} \Gamma^2.
\]
This establishes the claim with $c_4 = 5$.
\end{proof}

Now that we are working with heights $x_2$ such that each $x_1''$ is at least a distance of $\frac{1}{2}\beta\Gamma^{-1/2}$ away from $J$, we also would like to ensure that each $x_1''$ is at least a distance of $\frac{3}{2}\beta\Gamma^{-1/2}$ away from $\partial I_{x_2}$,
since we would like to have space to fit in $J^-.$

\begin{definition}
Under \Cref{hyp:secNash} and for fixed $0 < \kappa \le 1$ and $0 < \lambda < 1$, we define the set
\[
S_6 := \curly*{
x_2 \in S_5\ \bigg|\ |x_1'' - F_\ell(x_2)| > \frac{3}{2}\beta\Gamma^{-1/2} \text{ and } |x_1'' - F_r(x_2)| > \frac{3}{2}\beta\Gamma^{-1/2}
}.
\]
Fixing any $x_2 \in S_6$, we observe by the constructions of $S_5$ and $S_6$ that we can find a closed connected subinterval $J^-$ centered at $x_1''$ with length exactly $\beta\Gamma^{-1/2}$. Then, $J$ and $J^-$ will be disjoint and, in particular, $J^+$ and $J^-$ will be disjoint (recall the definition \eqref{aux131f} of $J^+$). Moreover, $J^-$ will be at least a distance of $\beta\Gamma^{-1/2}$ away from $\partial I_{x_2}$.
\end{definition}

\begin{lemma}\label{lem:secNash_BetweenDeltaAndDeltaInvStep5}
Under \Cref{hyp:secNash} and for fixed $0 < \kappa \le 1$ and $0 < \lambda < 1$, suppose that
\[
\nm{\wt{f}}_1 = 1,
\quad
\nm{\wt{f}}_2^2 \ge \kappa \Gamma,
\quad
|S_5| > \frac{1}{32}\kappa\Lambda\pre\Gamma^{-1/2}.
\]
If $|S_6| \le \frac{1}{64}\kappa\Lambda\pre\Gamma^{-1/2}$, then it must be true that
\[
\kappa^{5/2} \Gamma \lambda\pre \le \kappa^{1/2} \Gamma \lambda\pre \le c_5\nm{\partial_1 \wt{f}}_2 = c_5\nm{\partial_1 f}_2
\]
for some universal constant $c_5 > 0$ (independent even from $\Omega$).
\end{lemma}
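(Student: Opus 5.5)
The argument follows the same template as \Cref{lem:secNash_BetweenDeltaAndDeltaInvStep2,lem:secNash_BetweenDeltaAndDeltaInvStep3,lem:secNash_BetweenDeltaAndDeltaInvStep4}. We find a horizontal drop of size comparable to $\alpha\Gamma$ for $\wt{f}(\cdot,x_2)$ over a horizontal distance of order $\beta\Gamma^{-1/2}$, uniformly over $x_2\in S_5\setminus S_6$. We then integrate the resulting cross-sectional lower bound on $\|\partial_1\wt{f}(\cdot,x_2)\|_{L^2(I_{x_2})}^2$ over a set of heights of measure at least $\frac{1}{64}\kappa\Lambda\pre\Gamma^{-1/2}$.

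Fix $x_2\in S_5\setminus S_6$. Without loss of generality, $|x_1''-F_\ell(x_2)|\le\frac32\beta\Gamma^{-1/2}$; the case of $F_r$ is symmetric. Then $x_1''$ lies at the left end of $I_{x_2}$, so $x_1''<x_1^\ell$ and the relevant interval $B$ is the left one, $B=[F_\ell(x_2),\,x_1'-\frac12\beta\Gamma^{-1/2}]$. The drop we need is the one that forced the existence of $x_1''$. By \eqref{eq:secNash_ToHelpChoosex1''}, $\wt{f}$ must take values of sign opposite to $\wt{f}(x_1',x_2)$ somewhere in $B$. Since $|\wt{f}|>\frac1{10}\alpha\Gamma$ with the sign of $\wt{f}(x_1',x_2)$ strictly between $x_1''$ and $x_1'$, the portion of $B$ to the right of $x_1''$ contributes with the wrong sign. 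All of the $-\frac{9}{20}\alpha\beta\Gamma^{1/2}$ must therefore be produced on $[F_\ell(x_2),x_1'']$ up to that positive contribution.

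The precise route is as follows. On $[F_\ell(x_2),x_1'']$, an interval of length at most $\frac32\beta\Gamma^{-1/2}$, the integral of $\sgn(\wt{f}(x_1',x_2))\wt{f}$ is at most $-\frac{9}{20}\alpha\beta\Gamma^{1/2}$. Hence there is a point $z\in[F_\ell(x_2),x_1'']$ with $\sgn(\wt{f}(x_1',x_2))\wt{f}(z,x_2)\le-\frac{3}{10}\alpha\Gamma$. Comparing with $|\wt{f}(x_1'',x_2)|=\frac1{10}\alpha\Gamma$ gives a drop of size $\ge\frac15\alpha\Gamma$ over a length $\le\frac32\beta\Gamma^{-1/2}$. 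If the sign of $\wt{f}(x_1'',x_2)$ is opposite, the drop is smaller, but comparing $z$ instead with $x_1^\ell$, where $|\wt{f}|=\frac9{10}\alpha\Gamma$ with the good sign, gives a drop of $\frac{6}{5}\alpha\Gamma$. The length is then larger, however, so the cleanest fix is to use $x_1''$, whose sign agrees with $\wt{f}(x_1',x_2)$ by continuity and the choice in \Cref{def:secNash_B}. By Cauchy--Schwarz as in \Cref{lem:secNash_BetweenDeltaAndDeltaInvStep4},
\[
\int_{I_{x_2}}|\partial_1\wt{f}(x_1,x_2)|^2\,dx_1\ \ge\ \frac{(\alpha\Gamma/5)^2}{\frac32\beta\Gamma^{-1/2}}\ =\ \frac{2\alpha^2\Gamma^{5/2}}{75\,\beta}\ =\ \frac{2\Lambda\Gamma^{5/2}}{75\lambda^2},
\]
using $\beta=\lambda^2\alpha^2\Lambda\pre$.

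Integrating over $S_5\setminus S_6$, whose measure exceeds $\frac1{64}\kappa\Lambda\pre\Gamma^{-1/2}$ by hypothesis, yields $\|\partial_1\wt{f}\|_2^2\ge \frac{\kappa}{2400\lambda^2}\Gamma^2$. This gives $\kappa^{1/2}\Gamma\lambda\pre\le c_5\|\partial_1 f\|_2$ with $c_5=20\sqrt6$, and the weaker $\kappa^{5/2}$ form follows from $\kappa\le1$. The main obstacle is the first step: extracting a pointwise value of size $\gtrsim\alpha\Gamma$ with the opposite sign inside the short boundary interval. This requires correctly using that the averaged negativity of \eqref{eq:secNash_ToHelpChoosex1''} cannot be absorbed on $(x_1'',x_1')\cap B$, where $\wt{f}$ has the good sign.
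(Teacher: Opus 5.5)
Your proof is correct and follows the paper's argument: averaging over $[F_\ell(x_2),x_1'']$ produces $z$ with $\sgn(\wt{f}(x_1',x_2))\wt{f}(z,x_2)\le-\frac{3}{10}\alpha\Gamma$, followed by Cauchy--Schwarz over a length at most $\frac32\beta\Gamma^{-1/2}$ and integration over $S_5\setminus S_6$. The only difference is that you use the sign-free drop $\frac15\alpha\Gamma$ from the reverse triangle inequality, which makes your digression about the sign of $\wt{f}(x_1'',x_2)$ unnecessary; the paper instead uses the opposite signs of $\wt{f}(x_1'',x_2)$ and $\wt{f}(z,x_2)$ to get a drop of $\frac25\alpha\Gamma$, hence $c_5=10\sqrt{6}$ rather than your $20\sqrt{6}$, which is immaterial for the statement.
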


\begin{proof}
Fix $x_2 \in S_5\setminus S_6$ and suppose without loss of generality that $x_1'' \in B = [F_\ell(x_2), x_1'-\frac{1}{2}\beta\Gamma^{-1/2}]$; see \Cref{def:secNash_B}. Since $x_1'' \in B$, $x_1'' \notin J$, and $x_2 \in S_5 \subseteq S_3$, it follows that
\[
F_r(x_2) - x_1'' > F_r(x_2) - x_1^\ell > \frac{3}{2}\beta\Gamma^{-1/2}.
\]
As $x_2 \notin S_6$, we must have $x_1'' - F_\ell(x_2) \le \frac{3}{2}\beta\Gamma^{-1/2}$. In turn, we claim that there must exist some $z$ satisfying $F_\ell(x_2) < z < x_1''$ such that $\sgn\paren{\wt{f}(x_1',x_2)} \wt{f}(z,x_2) \le -3\alpha \Gamma/10$. Indeed, if no such point $z$ exists, we obtain that
\[
\sgn\paren*{\wt{f}(x_1',x_2)} \int_{F_{\ell}(x_2)}^{x_1''} \wt{f}(x_1,x_2)\, dx_1
>
-(x_1'' - F_\ell(x_2))\frac{3}{10}\alpha \Gamma
\ge
-\frac{9}{20}\alpha\beta\Gamma^{1/2},
\]
which contradicts (\ref{eq:secNash_ToHelpChoosex1''}) as we recall that $\wt{f}(x_1,x_2)$ is signed for $x_1'' \le x_1 \le x_1'$ (again see \Cref{def:secNash_B}). We compute
\[
\int_z^{x_1''} |\partial_1 \wt{f}(x_1,x_2)|\, dx_1 \ge \abs*{\int_z^{x_1''} \partial_1 \wt{f}(x_1,x_2)\, dx_1} = \abs*{\wt{f}(x_1'', x_2) - \wt{f}(z, x_2)} \ge \frac{2\alpha \Gamma}{5},
\]
since we recall that $\wt{f}(x_1'', x_2)$ and $\wt{f}(z, x_2)$ must be oppositely signed, with $|\wt{f}(x_1'', x_2)| = \alpha\Gamma/10$ and $\sgn\paren{\wt{f}(x_1',x_2)} \wt{f}(z,x_2) \le -3\alpha \Gamma/10$. It follows that
\begin{align*}
\int_{I_{x_2}} \abs{\partial_1 \wt{f}(x_1,x_2)}^2\, dx_1
&\ge \int_{F_\ell(x_2)}^{x_1''} \abs{\partial_1 \wt{f}(x_1,x_2)}^2\, dx_1 \\
&\ge \frac{1}{x_1'' - F_\ell(x_2)} \paren*{\int_{z}^{x_1''} \abs{\partial_1 \wt{f}(x_1,x_2)} dx_1}^2 \\
&\ge \frac{8\Lambda \Gamma^{5/2}}{75\lambda^2}.
\end{align*}
If we instead had assumed that $B = [x_1'+\frac{1}{2}\beta\Gamma^{-1/2}, F_r(x_2)]$, a completely analogous argument gives the same inequality.

Altogether, the above discussion holds for any $x_2 \in S_5\setminus S_6$. Thus, supposing $|S_6| \le \frac{1}{64}\kappa\Lambda\pre\Gamma^{-1/2}$, we compute
\[
\int_\Omega |\partial_1 \wt{f}|^2\, dx
\ge
\int_{|S_5\setminus S_6|} \int_{I_{x_2}} |\partial_1 \wt{f}|^2\, dx_1\, dx_2 \\
\ge
\frac{8\Lambda \Gamma^{5/2}}{75\lambda^2}|S_5\setminus S_6|
\ge
\frac{\kappa}{8\cdot 75 \lambda^2} \Gamma^2.
\]
This establishes the claim with $c_5 = 10 \sqrt{6}$.
\end{proof}

The final refinement needed for the subset of heights is that we work only with those heights where $\wt{f}$ is bounded from above by $\frac{1}{2}\alpha\Gamma$ on each $J^-$. This is so that we may satisfy \ref{hyp:secNash_Lambda_PartB}.

\begin{definition}
Under \Cref{hyp:secNash} and for fixed $0 < \kappa \le 1$ and $0 < \lambda < 1$, we define the set
\[
S_7 := \curly*{
x_2 \in S_6 \ \bigg |\
|\wt{f}(x_1,x_2)| \le \frac{1}{2}\alpha \Gamma \text{ for all } x_1 \in J^-
}.
\]
\end{definition}

\begin{lemma}\label{lem:secNash_BetweenDeltaAndDeltaInvStep6}
Under \Cref{hyp:secNash} and for fixed $0 < \kappa \le 1$ and $0 < \lambda < 1$, suppose that
\[
\nm{\wt{f}}_1 = 1,
\quad
\nm{\wt{f}}_2^2 \ge \kappa \Gamma,
\quad
|S_6| > \frac{1}{64}\kappa\Lambda\pre\Gamma^{-1/2}.
\]
If $|S_7| \le \frac{1}{128}\kappa\Lambda\pre\Gamma^{-1/2}$, then it must hold that
\[
\kappa^{5/2} \Gamma \lambda\pre \le \kappa^{1/2} \Gamma \lambda\pre \le c_6\nm{\partial_1 \wt{f}}_2 = c_6\nm{\partial_1 f}_2
\]
for some universal constant $c_6 > 0$ (independent even from $\Omega$).
\end{lemma}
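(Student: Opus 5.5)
Following the pattern of \Cref{lem:secNash_BetweenDeltaAndDeltaInvStep3,lem:secNash_BetweenDeltaAndDeltaInvStep4,lem:secNash_BetweenDeltaAndDeltaInvStep5}, we first show that every height $x_2 \in S_6 \setminus S_7$ carries a uniform lower bound on the cross-sectional $\dH^1$ seminorm, and then integrate over heights. So fix $x_2 \in S_6\setminus S_7$, with associated points $x_1'$, $x_1''$ and interval $J^-$ centered at $x_1''$ of length $\beta\Gamma^{-1/2}$. By definition of $S_7$, there is some $z \in J^-$ with $|\wt{f}(z,x_2)| > \frac{1}{2}\alpha\Gamma$. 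By \Cref{def:secNash_B}, we also have $|\wt{f}(x_1'',x_2)| = \frac{1}{10}\alpha\Gamma$. Hence
\[
\int_{J^-} |\partial_1 \wt{f}(x_1,x_2)|\, dx_1 \ge \abs*{\wt{f}(z,x_2)-\wt{f}(x_1'',x_2)} \ge \frac{2}{5}\alpha\Gamma,
\]
where we use only the triangle inequality for absolute values, so no sign information is needed. Both $z$ and $x_1''$ lie in $J^-$ (with $z$ on either side of $x_1''$), and $J^- \subseteq I_{x_2}$ by the construction of $S_6$, so the fundamental theorem of calculus applies along the segment joining them.

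Cauchy--Schwarz over $J^-$, which has length exactly $\beta\Gamma^{-1/2}$, then gives
\[
\int_{I_{x_2}} |\partial_1 \wt{f}(x_1,x_2)|^2\, dx_1 \ge \frac{1}{\beta\Gamma^{-1/2}}\paren*{\frac{2}{5}\alpha\Gamma}^2 = \frac{4\alpha^2\Gamma^{5/2}}{25\beta} = \frac{4\Lambda\Gamma^{5/2}}{25\lambda^2},
\]
using $\beta = \lambda^2\alpha^2\Lambda\pre$ from \eqref{aux131b}. This bound is independent of the choice $x_2 \in S_6\setminus S_7$. Measurability of the height sets is handled as in the previous lemmas.

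Now suppose $|S_7| \le \frac{1}{128}\kappa\Lambda\pre\Gamma^{-1/2}$. Then $|S_6\setminus S_7| > \frac{1}{128}\kappa\Lambda\pre\Gamma^{-1/2}$, and integrating over these heights gives
\[
\nm{\partial_1\wt{f}}_2^2 \ge \frac{4\Lambda\Gamma^{5/2}}{25\lambda^2}\cdot\frac{\kappa}{128\Lambda\Gamma^{1/2}} = \frac{\kappa}{800\lambda^2}\Gamma^2 .
\]
Taking square roots yields $\kappa^{1/2}\Gamma\lambda\pre \le c_6 \nm{\partial_1 \wt{f}}_2$ with $c_6 = 20\sqrt{2}$. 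Since $0<\kappa\le1$, the first inequality $\kappa^{5/2}\le\kappa^{1/2}$ is immediate, and $\partial_1\wt{f} = \partial_1 f$ because $\ov{f}$ depends only on $x_2$.

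There is no real obstacle here. The only point needing care is that the relevant oscillation, $\frac12 - \frac1{10} = \frac25$ of $\alpha\Gamma$, occurs inside an interval of length comparable to $\beta\Gamma^{-1/2}$. That is exactly why $J^-$ was centered at $x_1''$, and the definition of $\beta$ converts the resulting bound into the $\lambda^{-2}\Lambda$ scaling.
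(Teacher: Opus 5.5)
Your proposal is correct and matches the paper's proof essentially step for step. You use the same oscillation $\frac{2}{5}\alpha\Gamma$ between $x_1''$ and a point $z\in J^-$, the same Cauchy--Schwarz over $J^-$ of length $\beta\Gamma^{-1/2}$ giving the uniform bound $\frac{4\Lambda\Gamma^{5/2}}{25\lambda^2}$, and the same integration over $S_6\setminus S_7$, arriving at the same constant $c_6=20\sqrt{2}$.
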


\begin{proof}
For each $x_2 \in S_6 \setminus S_7$, we know there is some $z \in J^-$ such that $|\wt{f}(z,x_2)| > \frac{1}{2}\alpha \Gamma$. Supposing for now that $z > x_1''$, we have the lower bound
\[
\int_{x_1''}^z |\partial_1 \wt{f}(x_1,x_2)|\, dx_1
\ge
\abs*{\int_{x_1''}^z \partial_1 \wt{f}(x_1,x_2)\, dx_1}
=
\abs*{\wt{f}(z, x_2) - \wt{f}(x_1'', x_2)}
>
\frac{2}{5}\alpha \Gamma,
\]
and hence
\[
\int_{I_{x_2}} \abs{\partial_1 \wt{f}(x_1,x_2)}^2\, dx_1
\ge
\int_{J^-} \abs{\partial_1 \wt{f}(x_1,x_2)}^2\, dx_1
\ge
\frac{1}{|J^-|} \paren*{\int_{x_1''}^z \abs{\partial_1 \wt{f}(x_1,x_2)} dx_1}^2
\ge
\frac{4\Lambda\Gamma^{5/2}}{25\lambda^2}.
\]
The same inequality holds if instead $z < x_1''$. Moreover, this bound is independent of the choice of $x_2 \in S_6 \setminus S_7$. Thus, if we suppose that $|S_7| \le \frac{1}{128}\kappa\Lambda\pre\Gamma^{-1/2}$, we compute
\[
\int_\Omega |\partial_1 \wt{f}|^2\, dx
\ge
\int_{|S_6\setminus S_7|} \int_{I_{x_2}} |\partial_1 \wt{f}|^2\, dx_1\, dx_2
\ge
\frac{4\Lambda\Gamma^{5/2}}{25\lambda^2} |S_6\setminus S_7|
\ge
\frac{\kappa}{32 \cdot 25 \lambda^2} \Gamma^2.
\]
This establishes the claim with $c_6 = 20\sqrt{2}$.
\end{proof}

Collecting all of the above lemmas in \Cref{subsubsec:NashUnstratHardCasesPart2} along with \Cref{prop:secNash_BetweenlambdaAndlambdaInvStep8}, we are now ready to finally complete the proof of \Cref{prop:secNash_BetweenlambdaAndlambdaInv}. We repeat the statement here.

\BetweenlambdaAndlambdaInv*

\begin{proof}
Consider first the case where $|S_k| \le \frac{1}{2^k}\kappa\Lambda\pre \Gamma^{-1/2}$ for some $k = 1, \ldots, 7$. Then, it follows
\[
\kappa^{5/2} \Gamma \lambda\pre \le \max\{c_1, c_2, c_3, c_4, c_5, c_6\} \nm{\partial_1 f}_2^2,
\]
by the results \Cref{lem:secNash_BetweenDeltaAndDeltaInvStep1} through \Cref{lem:secNash_BetweenDeltaAndDeltaInvStep6}. So, instead consider the case where $|S_7| > \frac{1}{128}\kappa\Lambda\pre \Gamma^{-1/2}$ and fix any choice of $x_2' \in S_7$ with corresponding intervals $J^+$ and $J^-$ with centers $x_1'$ and $x_1''$. Assume for now that $x_2' \in (0, h/2]$. Recall, by \Cref{def:secNash_J} and \Cref{def:secNash_B}, that $\wt{f}(\cdot, x_2')$ is signed as a function on the interval $J$ and has the same sign for $x_1$ between $x_1'$ and $x_1''$. If the sign is positive, $f$ satisfies the additional \Cref{hyp:secNash_Lambda} and we may invoke \Cref{prop:secNash_BetweenlambdaAndlambdaInvStep8} to conclude that at least one of the following holds true:
\[
\kappa^{21/2}\Gamma^{1/2}\lambda^{52} \le C_{10}\nm{\partial_1 f}_{\dH_0^{-1}}
\quad \text{or} \quad
\kappa^{5/2} \Gamma \lambda\pre \le c_0\nm{\partial_2 f}_2.
\]
If instead the sign of $\wt{f}(\cdot, x_2')$ is negative, we may simply apply \Cref{prop:secNash_BetweenlambdaAndlambdaInvStep8} to $-f$ to obtain the same conclusion. The only remaining case is when $x_2' \in [h/2, h)$, in which case we can apply the above reasoning to the function $f(x_1, h-x_2)$ defined on the domain $\{(x_1,x_2) \in \R^2\ |\ (x_1, h-x_2) \in \Omega\}$.

Collecting all of these cases, setting
\[
C_9 = C_9(\Omega) := \max\{c_0, c_1, c_2, c_3, c_4, c_5, c_6\} >0
\]
completes the proof.
\end{proof}

\subsection{The full Nash stratification inequality}
\label{subsec:TotalNash}

Given \Cref{thm:secNash_NashOnStratified,thm:secNash_NashOnUnstratified}, we can now shortly prove \Cref{thm:secIntro_NashTotal} which will be used when studying the PKS-IPM system; see \Cref{subsec:GWPProof} and  \Cref{prop:secGWP_GeneralArgument}. Afterward, we conclude with a discussion on the generality and sharpness of the results in this section.

\subsubsection{Proof of \texorpdfstring{\Cref{thm:secIntro_NashTotal}}{Theorem 1.1}}
\label{subsubsec:TotalNashProof}

We restate our main theorem below for the reader's convenience.

\NashTotal*

\begin{proof}
Without loss of generality, we further impose \Cref{norm:secSett_h,norm:secSett_eps} on the domain. \Cref{thm:secNash_NashOnStratified} gives us that
\[
\nm{\ov{f}-f_M}_2^2 \le C_6^{\frac{3}{4}} \nm{f-f_M}_2^{\frac{1}{4}} \nm{f-f_M}_1 \nm{\grad f}_2^{\frac{3}{4}},
\]
while \Cref{thm:secNash_NashOnUnstratified} implies that
\[
\nm{\wt{f}}_2^2
\le
C_{11}^{\frac{52}{141}} \nm{\partial_1 f}_{\dH_0\pre}^{\frac{1}{141}} \nm{f-f_M}_1^{\frac{52}{141}} \nm{f-f_M}_2^{\frac{59}{47}} \nm{\grad f}_2^{\frac{52}{141}}.
\]
Upon adding these two inequalities and dividing by $\nm{f-f_M}_2^{\frac{1}{4}}$, we obtain
\[
\nm{f-f_M}_2^{\frac{7}{4}} \le C_6^{\frac{3}{4}} \nm{f-f_M}_1 \nm{\grad f}_2^{\frac{3}{4}} + C_{11}^{\frac{52}{141}} \nm{\partial_1 f}_{\dH_0\pre}^{\frac{1}{141}} \nm{f-f_M}_1^{\frac{52}{141}} \nm{f-f_M}_2^{\frac{189}{188}} \nm{\grad f}_2^{\frac{52}{141}}.
\]
Raising both sides to the power of $8/7$, we get
\[
\nm{f-f_M}_2^2 \le 2^{\frac{8}{7}}C_6^{\frac{6}{7}} \nm{f-f_M}_1^{\frac{8}{7}} \nm{\grad f}_2^{\frac{6}{7}} + 2^{\frac{8}{7}}C_{11}^{\frac{416}{987}}  \nm{\partial_1 f}_{\dH_0\pre}^{\frac{8}{987}} \nm{f-f_M}_1^{\frac{416}{987}} \nm{f-f_M}_2^{\frac{54}{47}} \nm{\grad f}_2^{\frac{416}{987}}.
\]
Letting
\[
C_0 := C_0(\Omega) := \max\curly*{2^{\frac{8}{7}}C_6^{\frac{6}{7}}, 2^{\frac{8}{7}}C_{11}^{\frac{416}{987}}}, \qquad C_1 := C_1(\Omega) := C_0C_{\mathrm{N},2}^{\frac{27}{47}},
\]
completes the proof.
\end{proof}

\subsubsection{Discussion on domain assumptions and inequality sharpness}
\label{subsubsec:NashDiscussion}

To conclude \Cref{sec:Nash}, we now discuss examples of domains that do not satisfy all of the assumptions in \Cref{subsec:Domain}. Our purpose is to highlight some technical assumptions that we require, to explain the reason why we require them, and to justify why it is not unreasonable to impose these restrictions.

Besides \Cref{as:secSett_1}, which simply ensures that $\Omega$ be a nonempty, open, bounded, and connected planar domain, the most important requirement that we further impose is that each horizontal cross-section $I_{x_2}$ is a connected interval. An example of a domain that does not satisfy this condition is given in \Cref{fig:secNash_AssumpBreak1}. We impose connectedness of the cross-sections for the following reasons. First, this condition ensures that $\ov{f}$ can be represented easily as an integral functional of $f \in C^\infty(\ov{\Omega})$. Indeed, on more general domains such as the one given in \Cref{fig:secNash_AssumpBreak1}, the natural analog of $\ov{f}$ will be a multi-valued function of $x_2,$ leading to various technicalities. Second, the condition of connected cross-sections makes clear how to integrate over subsets of the domain $\Omega$. Examples of proofs where we rely on this are: the proof of \Cref{prop:secNash_StratCaps} when we integrate near the lower $\eps$-cap $L_\eps$; the vector field constructions in \Cref{lem:secNash_Sit1StratBulk,lem:secNash_Sit2StratBulk} towards the proof of \Cref{prop:secNash_StratBulk}; similarly in the vector field constructions in \Cref{lem:secNash_Sit1AbovelambdaInv,lem:secNash_Sit2AbovelambdaInv} towards the proof of \Cref{prop:secNash_AbovelambdaInv}; and multiple times throughout the construction of the test function $\Psi$ in \Cref{subsubsec:NashUnstratHardCasesPart1} toward the proof of \Cref{prop:secNash_BetweenlambdaAndlambdaInv}.

Another technicality that is forbidden specifically by \Cref{as:secSett_2} is when the left are right sides of the boundary $\partial \Omega$ are ever flat (even for just one point) away from the caps. More rigorously, the boundary charts $F_r, F_\ell \in C^\infty((h_L, h_U)) \cap C([h_L,h_U])$ are not allowed to obtain an infinite derivative at any height $h_L < x_2 < h_U$. An example of a domain that violates this is provided by \Cref{fig:secNash_AssumpBreak2}. We impose this condition as otherwise the explicit vector field constructions in \Cref{lem:secNash_Sit1StratBulk,lem:secNash_Sit2StratBulk} along with \Cref{lem:secNash_Sit1AbovelambdaInv,lem:secNash_Sit2AbovelambdaInv} become much more technical to write down.

We believe that \Cref{thm:secNash_NashOnStratified} and \Cref{thm:secNash_NashOnUnstratified} still hold true for domains such as those exemplified by \Cref{fig:secNash_AssumpBreak1} and \Cref{fig:secNash_AssumpBreak2}. Moreover, the restriction that the domain boundary be smooth is also not strictly required by our arguments above, and can likely be lowered to $C^2$ regularity if not further. However, we do not pursue these results as they would complicate the arguments presented above and would distract from the main purpose of this article.

\tikzset{every picture/.style={line width=0.75pt}}
\begin{figure}[!ht]%
\centering
\begin{subfigure}{.32\textwidth}%
\captionsetup{width=0.9\textwidth}
\centering
\begin{tikzpicture}[x=1pt,y=1pt,yscale=-0.5,xscale=0.5]
\draw   (186,53) .. controls (221.76,54.56) and (231.76,5.56) .. (279.76,5.56) .. controls (327.76,5.56) and (336.76,43.56) .. (332.76,83.56) .. controls (328.76,123.56) and (288.76,184.56) .. (264.76,197.8) .. controls (240.76,211.04) and (152.76,214.8) .. (142.76,178.8) .. controls (132.76,142.8) and (160.76,88.8) .. (135.76,83.8) .. controls (110.76,78.8) and (112.76,100.56) .. (77.76,104.56) .. controls (42.76,108.56) and (46.76,64.56) .. (55.76,43.56) .. controls (64.76,22.56) and (83.76,10.56) .. (106.76,13.56) .. controls (129.76,16.56) and (150.24,51.44) .. (186,53) -- cycle ;
\draw   (193.76,136.28) .. controls (193.76,121.77) and (210.1,110) .. (230.26,110) .. controls (250.42,110) and (266.76,121.77) .. (266.76,136.28) .. controls (266.76,150.79) and (250.42,162.56) .. (230.26,162.56) .. controls (210.1,162.56) and (193.76,150.79) .. (193.76,136.28) -- cycle ;
\draw  [color={rgb, 255:red, 208; green, 2; blue, 27 }  ,draw opacity=0.6 ][line width=4.5]  (193.76,136.28) .. controls (193.76,121.77) and (210.1,110) .. (230.26,110) .. controls (250.42,110) and (266.76,121.77) .. (266.76,136.28) .. controls (266.76,150.79) and (250.42,162.56) .. (230.26,162.56) .. controls (210.1,162.56) and (193.76,150.79) .. (193.76,136.28) -- cycle ;
\draw [color={rgb, 255:red, 208; green, 2; blue, 27 }  ,draw opacity=0.6 ][line width=4.5]    (99.76,12.12) .. controls (120.26,14.12) and (121.76,22.12) .. (149.76,40.12) .. controls (177.76,58.12) and (199.76,56.16) .. (220.76,37.16) .. controls (241.76,18.16) and (256.76,6.16) .. (279.76,5.56) ;
\draw [color={rgb, 255:red, 208; green, 2; blue, 27 }  ,draw opacity=0.6 ][line width=4.5]    (77.76,104.56) .. controls (90.76,102.86) and (98.76,100.6) .. (109.76,92.16) .. controls (120.76,83.72) and (124.76,81.72) .. (135.76,83.8) .. controls (146.76,85.88) and (147.51,103.91) .. (146.76,110.16) ;
\end{tikzpicture}
\caption{Horizontal cross-sections are not connected (in three ways).}
\label{fig:secNash_AssumpBreak1}
\end{subfigure}%
\begin{subfigure}{.32\textwidth}%
\captionsetup{width=0.9\textwidth}
\centering
\begin{tikzpicture}[x=0.75pt,y=0.75pt,yscale=-0.85,xscale=0.85]
\draw   (139.09,23.99) .. controls (158.09,23.99) and (202.09,32.9) .. (221.09,40.9) .. controls (240.09,48.9) and (248.09,51.9) .. (248.09,71.9) .. controls (248.09,91.9) and (232.09,90.99) .. (220.09,90.99) .. controls (208.09,90.99) and (188.09,89.99) .. (180.09,89.99) .. controls (172.09,89.99) and (158.09,100.99) .. (164.09,112.99) .. controls (170.09,124.99) and (204.09,128.99) .. (204.09,157.99) .. controls (204.09,186.99) and (179.71,206.61) .. (161.09,206.99) .. controls (142.46,207.36) and (126.09,177.99) .. (125.09,164.99) .. controls (124.09,151.99) and (133.09,119.99) .. (100.09,119.99) .. controls (67.09,119.99) and (50.09,79.99) .. (63.09,56.99) .. controls (76.09,33.99) and (120.09,23.99) .. (139.09,23.99) -- cycle ;
\draw [color={rgb, 255:red, 208; green, 2; blue, 27 }  ,draw opacity=0.6 ][line width=4.5]    (180.09,89.99) -- (230.09,90.99) ;
\draw  [draw opacity=0][fill={rgb, 255:red, 208; green, 2; blue, 27 }  ,fill opacity=0.6 ] (95.7,119.99) .. controls (95.7,117.56) and (97.66,115.6) .. (100.09,115.6) .. controls (102.51,115.6) and (104.47,117.56) .. (104.47,119.99) .. controls (104.47,122.41) and (102.51,124.37) .. (100.09,124.37) .. controls (97.66,124.37) and (95.7,122.41) .. (95.7,119.99) -- cycle ;
\end{tikzpicture}
\caption{Sides of boundary have flat portions (at point and a portion).}
\label{fig:secNash_AssumpBreak2}
\end{subfigure}%
\begin{subfigure}{.32\textwidth}%
\captionsetup{width=0.9\textwidth}
\centering
\begin{tikzpicture}[x=0.75pt,y=0.75pt,yscale=-0.8,xscale=0.8]
\draw   (80.09,30.33) .. controls (117.59,30.33) and (111.09,30.33) .. (189.09,30.33) .. controls (267.09,30.33) and (185.09,164.9) .. (178.09,171.19) .. controls (171.09,177.47) and (153.09,210.67) .. (128.09,210.67) .. controls (103.09,210.67) and (76.09,198.67) .. (68.09,177.67) .. controls (60.09,156.67) and (75.09,131.19) .. (71.09,113.19) .. controls (67.09,95.19) and (31.09,89.19) .. (29.09,72.5) .. controls (27.09,55.81) and (42.59,30.33) .. (80.09,30.33) -- cycle ;
\draw [color={rgb, 255:red, 208; green, 2; blue, 27 }  ,draw opacity=0.6 ][line width=4.5]    (73.09,29.9) -- (189.09,30.33) ;
\end{tikzpicture}
\caption{Flat top (and/or bottom) of the boundary.}
\label{fig:secNash_AssumpBreak3}
\end{subfigure}%
\caption{Three examples of domains which do not satisfy all of the requirements of \Cref{as:secSett_1,as:secSett_2,as:secSett_3}.}
\label{fig:secNash_AssumpBreak}
\end{figure}
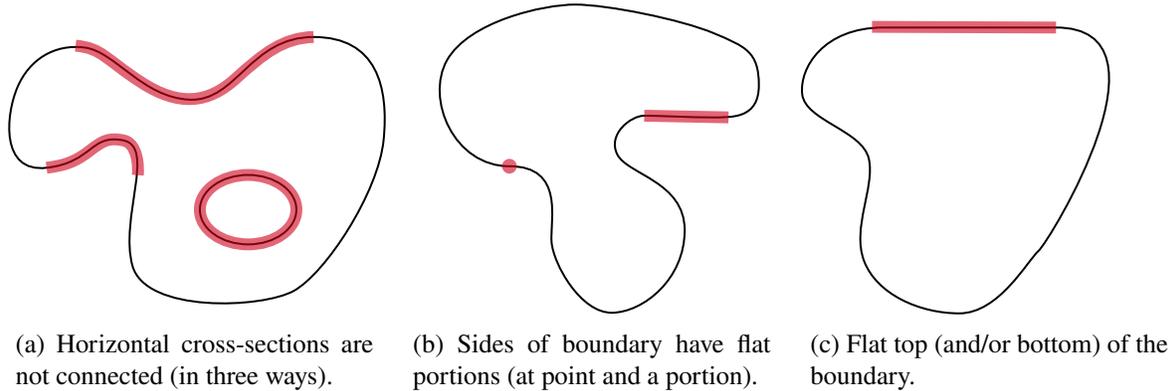

We also note that \Cref{as:secSett_3} disallows either of the caps of the domain $\Omega$ being flat, as exemplified in \Cref{fig:secNash_AssumpBreak3}. If exactly one of the caps is flat while the other still has cross-sections with length going to zero (in the fashion of \Cref{as:secSett_3}), then the arguments presented above can be repeated with very minor modifications to establish \Cref{thm:secNash_NashOnStratified} and \Cref{thm:secNash_NashOnUnstratified} for such domains. However, if both the bottom and top caps are flat, then the proof of \Cref{thm:secNash_NashOnStratified} can be significantly simplified, yielding an even better scaling! Indeed, for such domains, one no longer needs \Cref{prop:secNash_StratCaps} nor \Cref{lem:secNash_Sit1StratBulk}. Instead, one can very easily modify the argument of \Cref{lem:secNash_Sit2StratBulk} to obtain that
\[
\nm{\ov{f}-f_M}_2^3 \lesssim_\Omega \nm{f-f_M}_1^2 \nm{\grad f}_2
\]
for all smooth functions $f$ on a domain $\Omega$ satisfying \Cref{as:secSett_1,as:secSett_2} with the modification that the top and bottom of the domain be flat (meaning $F_\ell(h_L) < F_r(h_L)$, $F_\ell(h_U) < F_r(h_U)$ while still $F_\ell'(h_L)$, $F_\ell'(h_U)$, $F_r'(h_L)$, $F_r'(h_U)$ are all infinite). This is a refined two-dimensional Nash inequality corresponding to the $d = 1$ dimensional scaling of the classical inequality (\ref{eq:secIntro_Nash}). Over such domains with both top and bottom flat caps, the proof of \Cref{thm:secNash_NashOnUnstratified} can also be easily modified; one need only make a minor modification to the proof of \Cref{lem:secNash_eta}. This however does not lead to any improvement of \Cref{thm:secNash_NashOnUnstratified}.

Altogether, we have justified the choice of assumptions in \Cref{subsec:Domain} and have explained how to generalize the Nash stratification inequalities to domains with top and bottom flat boundary portions. The improvement in \Cref{thm:secNash_NashOnUnstratified} over such domains leads us to our final point of discussion. We make no claims regarding the optimality of the constants in \Cref{thm:secNash_NashOnStratified,thm:secNash_NashOnUnstratified,thm:secIntro_NashTotal}. However, we do keep careful track of the constants we obtain throughout \Cref{subsec:StratNash,subsec:UnstratNash} as it may be interesting to the reader how the constants depend on more information about the domain $\Omega$ besides its area $|\Omega|$. Specifically, the constants $\eps_*$, $K_*$, $R_*$, and $M_*$ --- which capture information about boundary curvature --- all appear in the derivations of the universal constants for \Cref{subsec:StratNash,subsec:UnstratNash}. Although they are the best that we could obtain, we also make no claims regarding the optimality of exponents in \Cref{thm:secNash_NashOnStratified,thm:secNash_NashOnUnstratified,thm:secIntro_NashTotal}. We do note though that, importantly, since the PKS-IPM system (\ref{eq:secIntro_PKSIPM}) is $L^\infty_t L^1_x$-critical, any improved scaling such as that provided by \Cref{thm:secNash_NashOnStratified} and any inclusion of the $\dH_0\pre$ seminorm of $\partial_1 f$, as is done by \Cref{thm:secNash_NashOnUnstratified}, would lead to a proof of global regularity for the PKS-IPM system; see \Cref{prop:secGWP_GeneralArgument}.

It is also worth pointing out that the question of optimality of constants and exponents may heavily depend on class of domains being considered. Indeed, we see from the discussion above that allowing domains with flat top and bottom boundaries leads to a Nash inequality for $\ov{f}$ with a formal one-dimensional scaling. Yet, if we do not allow for flat top and bottom boundaries, then \Cref{thm:secNash_NashOnStratified} only provides a $3/2$-dimensional formal scaling; see \Cref{rem:secIntro_NashStratInterp1}.

\section{Global well-posedness of PKS-IPM on general 2D domains}
\label{sec:GWP}

Now that we have established the anisotropic Nash stratification inequality \Cref{thm:secIntro_NashTotal}, the goal for \Cref{sec:GWP} is to provide an example of its application to the study of a critical nonlinear partial differential equation: the two-dimensional PKS-IPM system (\ref{eq:secIntro_PKSIPM}). We begin in \Cref{subsec:PKSIPMPreliminaries} by reviewing well-known properties of solutions to (\ref{eq:secIntro_PKSIPM}), many of which also hold for a much broader family of chemotaxis equations (as summarized in \Cref{subsec:BackChemo,subsec:BackChemoFluids}). Next, in \Cref{subsec:Variance}, we establish the important a-priori variance inequality and that the solution's variance is a blow-up control quantity. We then introduce the potential energy of a solution, in \Cref{subsec:Potential}, which relates the $\dH_0\pre$ norm of $\partial_1 \rho$ to the solution's $\dH^1$ norm and variance. We note that the results of \Cref{subsec:PKSIPMPreliminaries,subsec:Variance,subsec:Potential} only require the domain $\Omega$ to satisfy \Cref{as:secSett_1} and so we frequently work under the following hypotheses throughout \Cref{sec:GWP}.

\begin{hypotheses}\label{hyp:secGWP}
We fix a domain $\Omega \subseteq \R^2$ satisfying \Cref{as:secSett_1} and \Cref{norm:secSett_h} (but not necessarily \Cref{as:secSett_2,as:secSett_3} nor \Cref{norm:secSett_eps}). We also use the notation defined in \Cref{sec:Setting}. Lastly, suppose $\rho \in C^\infty( \ov{\Omega} \times [0, T_*))$ is a smooth solution up to some time $0 < T_* \le \infty$ for the two-dimensional PKS-IPM system (\ref{eq:secIntro_PKSIPM}) with respect to a fixed gravitational constant $g \neq 0$ and a nonnegative initial data $\rho_0(x) = \rho(x,0) \ge 0$ satisfying the Neumann boundary condition $\partial_n \rho|_{\partial \Omega} = 0$.
\end{hypotheses}

Finally, in \Cref{subsec:GWPProof}, we will further suppose that the domain satisfy all the assumptions of \Cref{subsec:Domain}. Then we may combine the a-priori variance inequality, the potential energy estimates, and the Nash stratification inequality to obtain a integro-differential system of inequalities. An ODE argument then gives the desired global regularity result of \Cref{thm:secIntro_GWP}.

\subsection{Review of basic solution properties}
\label{subsec:PKSIPMPreliminaries}

For the sake of completeness, we review here well-known properties that hold when a very general fluid advection term is added to the general PKS equation (\ref{eq:secIntro_PKSGeneral}). They have all been observed by previous works.

We begin by stating the preservation of the solution's nonnegativity and total mass. These observations do not make use of the specific equations for the chemical concentration $c(x,t)$ nor the fluid velocity $u(x,t)$ in (\ref{eq:secIntro_PKSIPM}). Rather, we will only need the Neumann boundary condition on $c(x,t)$ and that $u(x,t)$ be an incompressible vector field obeying the no-flux boundary condition.

\begin{lemma}
Given \Cref{hyp:secGWP}, we have that $\rho(x,t) \ge 0$ for all $0 \le t < T_*$.
\end{lemma}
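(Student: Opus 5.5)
The plan is a standard minimum-principle (maximum principle) argument for the advection--diffusion equation satisfied by $\rho$, treating the drift as given. Expanding the chemotactic term with $-\lap c = \rho - \rho_M$ and $\div u = 0$, the equation in (\ref{eq:secIntro_PKSIPM}) becomes the linear (in $\rho$, with coefficients frozen along the solution) parabolic equation
\[
\partial_t \rho + (u + \grad c)\cdot\grad\rho - \lap\rho - (\rho - \rho_M)\rho = 0,
\]
with smooth coefficients on $\ov{\Omega}\times[0,T]$ for every $T<T_*$ and Neumann data $\partial_n\rho|_{\partial\Omega}=0$. Fix $T<T_*$ and let $K := \sup_{\ov{\Omega}\times[0,T]} |\rho-\rho_M| < \infty$.

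Set $w(x,t) := e^{-(K+1)t}\rho(x,t)$. Then $w$ satisfies
\[
\partial_t w + (u+\grad c)\cdot \grad w - \lap w + \paren*{K+1 - (\rho-\rho_M)} w = 0,
\]
where the zeroth-order coefficient is at least $1$. Suppose $w$ attains a negative minimum over $\ov{\Omega}\times[0,T]$ at some $(x_0,t_0)$. Then $t_0>0$, since $\rho_0\ge0$. If $x_0\in\Omega$, then $\partial_t w\le 0$, $\grad w=0$ and $\lap w\ge0$ there, so the equation gives $(K+1-(\rho-\rho_M))w\ge0$, which contradicts $w<0$. If $x_0\in\partial\Omega$, the Neumann condition kills the normal derivative and $w$ restricted to the boundary is minimal, so the tangential gradient also vanishes and $\grad w(x_0,t_0)=0$. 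The sign of $\lap w$ still needs care at such a point.

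The main obstacle is exactly this boundary case. The cleanest fix is to avoid it with an $L^2$ energy argument on the negative part. Multiply the equation by $\rho_- := \min\{\rho,0\}$ and integrate over $\Omega$. The boundary terms vanish because $\partial_n\rho=0$ and $u\cdot n=0$, and $\int u\cdot\grad\rho\,\rho_- = \tfrac12\int u\cdot\grad(\rho_-^2)=0$ by incompressibility. For the chemotactic term, write $\int \div(\rho\grad c)\rho_- = -\int \rho\,\grad c\cdot\grad\rho_- = -\tfrac12\int\grad c\cdot\grad(\rho_-^2) = -\tfrac12\int(\rho-\rho_M)\rho_-^2$, again using $\partial_n c=0$. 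This gives
\[
\tfrac12\tfrac{d}{dt}\nm{\rho_-}_2^2 + \nm{\grad\rho_-}_2^2 = \tfrac12\int_\Omega(\rho-\rho_M)\rho_-^2 \le \tfrac{K}{2}\nm{\rho_-}_2^2 .
\]
Since $\rho_-(\cdot,0)=0$, Gr\"onwall's inequality yields $\rho_-\equiv0$ on $[0,T]$. Because $T<T_*$ was arbitrary, $\rho\ge0$ on $[0,T_*)$. The only technical point is justifying differentiation of $\nm{\rho_-}_2^2$, which holds because $s\mapsto s_-^2$ is $C^1$ with derivative $2s_-$ and $\rho$ is smooth.
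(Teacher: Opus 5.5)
Your argument is correct, but it takes a different route from the paper. The paper proves this lemma in one line by appeal to the parabolic comparison principle: it views $\rho$ as a solution of the linear equation $\partial_t\rho + (u+\grad c)\cdot\grad\rho - \lap\rho + (\lap c)\rho = 0$ with frozen smooth coefficients and compares it with the zero solution.

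That pointwise route is essentially your first, abandoned attempt. Completing it under Neumann data needs an extra boundary ingredient, such as the parabolic Hopf lemma or a barrier with positive outward normal derivative, to rule out a negative minimum on $\partial\Omega$. This step is implicit in the paper's citation of the comparison principle.

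Your Stampacchia-type truncation on $\rho_-$ avoids the boundary case altogether. The conditions $\partial_n\rho=\partial_n c=0$, $u\cdot n=0$ and $\div u=0$ enter only as vanishing boundary terms in integrations by parts. The argument is self-contained and would also apply to weak solutions. Like the paper's remark, it needs only smoothness and the Neumann condition of $c$, since you could bound $\tfrac12\int_\Omega(-\lap c)\rho_-^2$ by $\tfrac12\nm{\lap c}_\infty\nm{\rho_-}_2^2$. In exchange, the comparison-principle route is pointwise and, through the strong maximum principle, also gives access to strict positivity.

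A small simplification: mass conservation does not use the sign of $\rho$, so $\rho_M\ge 0$. Then $\rho-\rho_M<0$ on $\{\rho<0\}$, the right-hand side of your identity is nonpositive, and $\tfrac{d}{dt}\nm{\rho_-}_2^2\le 0$ follows directly without Gr\"onwall.
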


This is a standard consequence of the parabolic comparison principle. 

\begin{lemma}
Given \Cref{hyp:secGWP}, the spatial average
\[
\rho_M(t) := \frac{1}{|\Omega|}\int_\Omega \rho(x,t)\, dx
\]
must be constant in time and is equal to
\[
\rho_M := \rho_M(0) = \frac{1}{|\Omega|}\int_\Omega \rho_0(x)\, dx.
\]
\end{lemma}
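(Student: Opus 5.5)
The plan is to differentiate the total mass in time and show that the derivative vanishes, using only the divergence structure of the equation and the boundary conditions. Since $\rho \in C^\infty(\ov{\Omega}\times[0,T_*))$, we may differentiate under the integral sign. Using the first line of (\ref{eq:secIntro_PKSIPM}) and $\div u = 0$, we rewrite $u\cdot\grad\rho = \div(\rho u)$. The equation then becomes a pure conservation law,
\[
\partial_t \rho = \div\paren*{\grad \rho - \rho \grad c - \rho u}.
\]
Integrating over $\Omega$ and applying the divergence theorem, which is legitimate because $\partial\Omega$ is smooth by \Cref{as:secSett_1}, gives
\[
\frac{d}{dt}\int_\Omega \rho\, dx = \int_{\partial\Omega} \paren*{\partial_n \rho - \rho\,\partial_n c - \rho\, u\cdot n}\, dS.
\]

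Each boundary term vanishes by the boundary conditions in (\ref{eq:secIntro_PKSIPM}). Specifically, $\partial_n\rho|_{\partial\Omega}=0$, $\partial_n c|_{\partial\Omega}=0$, and $u\cdot n|_{\partial\Omega}=0$. Hence the total mass is constant on $[0,T_*)$, and dividing by $|\Omega|$ gives $\rho_M(t)=\rho_M(0)$.

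The only point requiring care is regularity: $c$ and $u$ must be smooth up to the boundary so that the traces make sense. This follows from elliptic regularity for $-\lap c = \rho-\rho_M$ with Neumann data, and for the Biot--Savart law (\ref{eq:secIntro_BiotSavart}), given smooth $\rho$. There is a mild circularity, since the equation for $c$ involves $\rho_M(t)$ itself. This causes no difficulty, because the argument above never uses the value of $\rho_M$ — only $\partial_n c = 0$. I do not expect any genuine obstacle here; nonnegativity is not even needed for this step.
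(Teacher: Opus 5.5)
Your proposal is correct and follows the paper's approach: the paper simply asserts the lemma is a direct consequence of the incompressibility of $u$ and the boundary conditions, and your divergence-form rewriting, $\partial_t\rho = \div(\grad\rho - \rho\grad c - \rho u)$, followed by the divergence theorem with $\partial_n\rho = \partial_n c = u\cdot n = 0$ on $\partial\Omega$, is exactly that argument written out. Your side remarks on boundary regularity of $c$ and $u$, and on only needing $\partial_n c = 0$ rather than the value of $\rho_M$, are accurate and harmless.
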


This is a direct consequence of incompressibility of $u$ and boundary conditions. 

We conclude with a crucial observation that Darcy's law for the fluid velocity $u(x,t)$ in the third line of (\ref{eq:secIntro_PKSIPM}) can be replaced by the following Biot--Savart law.

\begin{lemma}
Given \Cref{hyp:secGWP}, the velocity field $u(x,t)$ associated to $\rho(x,t)$ satisfies
\begin{equation}\label{eq:secGWP_BiotSavart}
u = g\grad^\perp(-\lap_D)\pre \partial_1 \rho,
\end{equation}
where $\grad^\perp := (-\partial_2, \partial_1)$ and $(-\lap_D)\pre$ denotes the inverse of the Dirichlet Laplacian on $\Omega$.
\end{lemma}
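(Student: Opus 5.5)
The plan is to show that Darcy's law together with incompressibility and the no-flux condition determine $u$ uniquely, and that this $u$ is the Biot--Savart expression. Since $\div u = 0$ in the simply connected domain $\Omega$ (assumed throughout, see \Cref{subsec:Notation}) and $u\cdot n = 0$ on $\partial\Omega$, I would first write $u = \grad^\perp \psi$ for a stream function $\psi$. The no-flux condition says $\partial_\tau \psi = 0$ along $\partial\Omega$. Since $\partial\Omega$ is connected, $\psi$ is constant on the boundary, and after subtracting that constant we may take $\psi|_{\partial\Omega} = 0$.

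Next I would take the scalar curl $\partial_1 u_2 - \partial_2 u_1$ of Darcy's law $u = -\grad p - (0,g\rho)$. The gradient term drops out, which leaves
\[
\lap \psi = \partial_1 u_2 - \partial_2 u_1 = -g\,\partial_1 \rho .
\]
So $\psi$ solves $-\lap \psi = g\,\partial_1\rho$ with homogeneous Dirichlet data. Because $\rho$ is smooth up to the boundary, the data $\partial_1\rho$ is smooth, and elliptic regularity gives $\psi = g(-\lap_D)\pre \partial_1 \rho$ as a smooth function. This yields $u = g\grad^\perp(-\lap_D)\pre\partial_1\rho$.

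For completeness I would also check the converse, so that the Biot--Savart law can genuinely replace Darcy's law. Define $u$ by the Biot--Savart formula. Then $\div u = 0$, and $u\cdot n = \partial_\tau \psi = 0$ on the boundary. Moreover, the curl of $u + (0,g\rho)$ equals $\lap\psi + g\,\partial_1\rho = 0$. A curl-free field on a simply connected domain is a gradient, so $u + (0,g\rho) = -\grad p$ for some pressure $p$. The Helmholtz decomposition then gives uniqueness, since a field that is both divergence-free with no flux and a gradient must vanish.

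The only delicate point is justifying $\psi|_{\partial\Omega}$ constant, which requires connectedness of the boundary and hence simple connectivity of $\Omega$. This is why the simple-connectivity convention is invoked, even though \Cref{as:secSett_1} alone only states connectedness of $\Omega$; under \Cref{as:secSett_2} it is automatic.
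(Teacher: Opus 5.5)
Your proof is correct and follows the paper's argument: introduce a stream function $\psi$ with $u=\grad^\perp\psi$, use the no-flux condition to normalize $\psi|_{\partial\Omega}=0$, and take the scalar curl of Darcy's law to obtain $-\lap\psi = g\,\partial_1\rho$. Your converse check, and your remark that connectedness of $\partial\Omega$ (from simple connectivity) is what permits the Dirichlet normalization, are sound additions that the paper leaves implicit.
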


\begin{proof}
The incompressibility of $u$ ensures the existence of a scalar stream function $\psi$, meaning $u = \grad^\perp \psi$. This stream function is unique up to an additive constant. The no-flux boundary condition on $u$ in (\ref{eq:secIntro_PKSIPM}) allows us to fix $\psi$ satisfying Dirichlet boundary conditions on $\Omega$. Thus, the claim follows from taking the scalar curl of both sides of the third line of (\ref{eq:secIntro_PKSIPM}).
\end{proof}

As these lemmas are used so frequently throughout this work, we may not explicitly cite them.

\subsection{Variance as a blow-up control quantity}
\label{subsec:Variance}

We now consider the $L^2$-variance of the solution $\rho$ to (\ref{eq:secIntro_PKSIPM}), proving a key energy inequality and a conditional regularity statement. We first establish an important a-priori estimate. The techniques used are standard and this estimate or similar variations of it have been used in many other works on the PKS equation (\ref{eq:secIntro_PKSGeneral}).

\begin{proposition}\label{prop:secGWP_AprioriVarianceIneq}
Given \Cref{hyp:secGWP}, we have that
\[
\frac{d}{dt}\nm{\rho(\cdot,t) - \rho_M}_2^2 + \nm{\grad \rho(\cdot,t)}_2^2 \le C_{12} \nm{\rho(\cdot,t)-\rho_M}_2^4 + 2\rho_M\nm{\rho(\cdot,t) - \rho_M}_2^2,
\]
where $C_{12} = C_{12}(\Omega) > 0$ is a universal constant.
\end{proposition}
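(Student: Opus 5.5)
We test the $\rho$-equation of (\ref{eq:secIntro_PKSIPM}) against $\rho-\rho_M$ and integrate over $\Omega$. Since $\rho_M$ is constant in time, $\partial_t(\rho-\rho_M)=\partial_t\rho$, so
\[
\frac12\frac{d}{dt}\nm{\rho-\rho_M}_2^2 = -\int_\Omega (\rho-\rho_M)\, u\cdot\grad\rho\,dx + \int_\Omega(\rho-\rho_M)\lap\rho\,dx - \int_\Omega (\rho-\rho_M)\div(\rho\grad c)\,dx.
\]

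\textbf{Advection and diffusion.} The advection term equals $-\frac12\int_\Omega u\cdot\grad(\rho-\rho_M)^2\,dx$. This vanishes after integrating by parts, because $\div u=0$ and $u\cdot n=0$ on $\partial\Omega$. The diffusion term equals $-\nm{\grad\rho}_2^2$ by the Neumann condition $\partial_n\rho=0$.

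\textbf{Chemotaxis.} Integrating by parts, with no boundary term because $\partial_n c=0$, gives
\[
\int_\Omega \rho\grad\rho\cdot\grad c\,dx = \frac12\int_\Omega \grad\rho^2\cdot\grad c\,dx = \frac12\int_\Omega\rho^2(-\lap c)\,dx = \frac12\int_\Omega \rho^2(\rho-\rho_M)\,dx .
\]
Write $\rho=\rho_M+w$ with $w:=\rho-\rho_M$, so that $\int_\Omega w\,dx=0$. Then
\[
\frac12\int_\Omega(\rho_M+w)^2w\,dx=\frac12\int_\Omega w^3\,dx+\rho_M\nm{w}_2^2 .
\]
Multiplying the whole identity by two, we obtain
\[
\frac{d}{dt}\nm{w}_2^2+2\nm{\grad\rho}_2^2 = \int_\Omega w^3\,dx + 2\rho_M\nm{w}_2^2 .
\]

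\textbf{Main obstacle: the cubic term.} This is the only nontrivial step. We bound $\int_\Omega w^3\,dx\le\nm{w}_3^3$ using the two-dimensional Gagliardo--Nirenberg inequality for mean-zero functions on the bounded Lipschitz domain $\Omega$:
\[
\nm{w}_3^3\le C\nm{w}_2^2\nm{\grad w}_2 .
\]
This follows from the Sobolev--Poincaré inequality $\nm{w}_4\lesssim \nm{w}_{H^1}\lesssim\nm{\grad w}_2$, applicable since $w$ has zero mean. Combined with $\nm{w}_3^3\le\nm{w}_2\nm{w}_4^2$, this gives $\nm{w}_3^3\le C\nm{w}_2\nm{w}_2\nm{\grad w}_2$. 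More precisely, one uses Ladyzhenskaya's inequality $\nm{w}_4^2\lesssim\nm{w}_2\nm{w}_{H^1}$ on $\Omega$, via an extension operator, together with Poincaré.

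By Young's inequality,
\[
C\nm{w}_2^2\nm{\grad\rho}_2\le \nm{\grad\rho}_2^2+\frac{C^2}{4}\nm{w}_2^4 .
\]
Absorbing $\nm{\grad\rho}_2^2$ into the left-hand side yields the claim with $C_{12}=C^2/4$, which depends only on $\Omega$.
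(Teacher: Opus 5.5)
Your proof is correct and is essentially the paper's argument: test with $\rho-\rho_M$, remove the advection term via incompressibility and $u\cdot n|_{\partial\Omega}=0$, reduce the chemotaxis term to $\frac12\int_\Omega w^3\,dx+\rho_M\|w\|_2^2$, and close with the two-dimensional Gagliardo--Nirenberg inequality $\|w\|_3^3\lesssim\|w\|_2^2\|\nabla w\|_2$ and Young's inequality (your constant $C^2/4$ versus the paper's $C_{\mathrm{GNS}}^2$ is only bookkeeping). One small slip in your justification of that inequality: Sobolev--Poincar\'e $\|w\|_4\lesssim\|\nabla w\|_2$ combined with $\|w\|_3^3\le\|w\|_2\|w\|_4^2$ gives $\|w\|_2\|\nabla w\|_2^2$, which has the wrong scaling, so only the Ladyzhenskaya-plus-Poincar\'e route you mention afterwards actually yields the needed bound.
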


\begin{proof}
We test the first equation of (\ref{eq:secIntro_PKSIPM}) on both sides by $\rho - \rho_M$ to obtain
\[
\frac{1}{2}\frac{d}{dt} \nm{\rho(\cdot, t)-\rho_M}_2^2 + \nm{\grad \rho(\cdot, t)}_2^2 = - \int_\Omega (\rho-\rho_M) \div(\rho \grad(-\lap_N)\pre[\rho-\rho_M])\, dx.
\]
The contribution from the advection term in (\ref{eq:secIntro_PKSIPM}) is zero due to the incompressibility and boundary conditions on $u(x,t)$, while the diffusion term is integrated by parts to produce the $\dH^1(\Omega)$ norm on the left-hand side. The right-hand side arises from the chemotaxis term in (\ref{eq:secIntro_PKSIPM}), which, using the boundary conditions for $\rho,$ we can simplify as follows:
\begin{align*}
-\int_\Omega (\rho-\rho_M)\div(\rho \grad(-\lap_N)\pre[\rho-\rho_M])\, dx
&= \frac{1}{2}\int_\Omega \grad \rho^2 \cdot \grad(-\lap_N)\pre[\rho-\rho_M]\, dx \\
&= \frac{1}{2}\int_\Omega \rho^2 (\rho-\rho_M)\, dx\\
&= \frac{1}{2}\int_\Omega (\rho-\rho_M)^3\, dx + \rho_M \nm{\rho-\rho_M}_2^2.
\end{align*}
Applying the two-dimensional Gagliardo--Nirenberg--Sobolev inequality
\[
\nm{f-f_M}_3^3 \le C_{\mathrm{GNS}} \nm{f-f_M}_2^2 \nm{\grad f}_2,
\]
which holds for any function in $f \in H^1(\Omega)$ with $C_{\mathrm{GNS}} = C_{\mathrm{GNS}}(\Omega) > 0$ denoting the associated universal constant, we obtain
\begin{align*}
\frac{1}{2}\frac{d}{dt} \nm{\rho-\rho_M}_2^2 + \nm{\grad \rho}_2^2 &\le C_{\mathrm{GNS}}\nm{\rho-\rho_M}_2^2\nm{\grad \rho}_2 + \rho_M \nm{\rho-\rho_M}_2^2 \\
&\le \frac{1}{2}C_{\mathrm{GNS}}^2\nm{\rho-\rho_M}_2^4 + \frac{1}{2}\nm{\grad \rho}_2^2 + \rho_M \nm{\rho-\rho_M}_2^2.
\end{align*}
Rearranging concludes the proof with $C_{12} = C_{12}(\Omega) := C_{\mathrm{GNS}}^2$.
\end{proof}

\begin{corollary}\label{cor:secGWP_AprioriVarianceIneq}
Given \Cref{hyp:secGWP} and if $\nm{\rho(\cdot,t) - \rho_M}_2^2 \ge 1$ at some time $0 \le t < T_*$, then
\[
\frac{d}{dt}\nm{\rho(\cdot,t) - \rho_M}_2^2 + \nm{\grad \rho(\cdot,t)}_2^2 \le (C_{12} + 2\rho_M)\nm{\rho(\cdot,t)-\rho_M}_2^4
\]
at that time $t$.
\end{corollary}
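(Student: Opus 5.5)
This corollary follows directly from \Cref{prop:secGWP_AprioriVarianceIneq} by absorbing the linear term into the quartic one. The only input beyond the proposition is the hypothesis $\nm{\rho(\cdot,t)-\rho_M}_2^2 \ge 1$ at the given time $t$.

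Write $V := \nm{\rho(\cdot,t)-\rho_M}_2^2$. Evaluating \Cref{prop:secGWP_AprioriVarianceIneq} at time $t$ gives
\[
\frac{d}{dt}V + \nm{\grad \rho(\cdot,t)}_2^2 \le C_{12} V^2 + 2\rho_M V.
\]
Since $\rho \ge 0$ by nonnegativity preservation, we have $\rho_M \ge 0$. Since $V \ge 1$, we have $V \le V^2$. Together these give $2\rho_M V \le 2\rho_M V^2$.

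Substituting this bound into the right-hand side yields $(C_{12}+2\rho_M)V^2$. Because $V^2 = \nm{\rho(\cdot,t)-\rho_M}_2^4$, this is exactly the claimed inequality.

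None of the steps is a real obstacle. The one point to check is that $\rho_M \ge 0$, so that multiplying $V \le V^2$ by $2\rho_M$ preserves the direction of the inequality; this holds since the initial data, and hence $\rho$, are nonnegative.
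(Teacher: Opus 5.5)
Your proof is correct and is the same argument the paper relies on. The paper gives no separate proof of the corollary; it follows from the preceding proposition by using $\nm{\rho-\rho_M}_2^2 \le \nm{\rho-\rho_M}_2^4$ when the variance is at least $1$, together with $\rho_M \ge 0$.
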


\begin{remark}
\Cref{cor:secGWP_AprioriVarianceIneq} also holds for the two-dimensional parabolic-elliptic PKS equation (\ref{eq:secIntro_PKSGeneral}, \ref{eq:secIntro_PKSeqC}) without fluid advection. Indeed, the proof of \Cref{prop:secGWP_AprioriVarianceIneq} only leverages the incompressibility and boundary conditions on the fluid velocity $u(x,t)$, with no need for the Biot--Savart law (\ref{eq:secGWP_BiotSavart}).
\end{remark}

This corollary reveals the criticality of the general PKS equation (\ref{eq:secIntro_PKSGeneral}, \ref{eq:secIntro_PKSeqC}) and the PKS-IPM system (\ref{eq:secIntro_PKSIPM}) with respect to the scaling of the classical two-dimensional Nash inequality (\ref{eq:secIntro_Nash}), as we discussed in \Cref{subsec:BackChemo} and \Cref{subsec:BackChemoIPM}. This motivates the need for the anisotropic refinement given by  \Cref{thm:secIntro_NashTotal}, which we eventually employ in \Cref{prop:secGWP_GeneralArgument}.

We note here that the above a-priori variance estimate, when combined with the properties established in \Cref{subsec:PKSIPMPreliminaries}, leads to a proof of the local well-posedness of smooth nonnegative solutions to (\ref{eq:secIntro_PKSIPM}). Importantly, the variance of a solution serves as a control quantity for finite-time blow-up.

\begin{theorem}\label{thm:secGWP_LWP}
Fix a domain $\Omega \subseteq \R^2$ satisfying \Cref{as:secSett_1}, fix a choice of gravitational constant $g \neq 0$, and fix any nonnegative initial data $\rho_0 \in C^\infty(\ov{\Omega})$ satisfying the Neumann boundary condition $\partial_n \rho|_{\partial \Omega} = 0$. Then, there exists a unique corresponding solution $\rho(x,t)$ to the system (\ref{eq:secIntro_PKSIPM}) in the class $C^\infty(\ov{\Omega} \times [0,T_*))$, where $0 < T_* \le \infty$ denotes the maximal lifespan of $\rho(x,t)$. If $T_* < \infty$, then we further have that
\[
\lim_{t \to T_*} \nm{\rho(\cdot, t) - \rho_M}_2^2 = \infty.
\]
Note that we always work with the chemical concentration $c$ that satisfies $c = (-\lap_N)\pre[\rho-\rho_M]$; harmonic perturbations are not allowed.
\end{theorem}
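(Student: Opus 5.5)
The proof splits into local existence/uniqueness, smoothness, and the continuation criterion. For local existence, I would use the Biot--Savart law (\ref{eq:secGWP_BiotSavart}) to view (\ref{eq:secIntro_PKSIPM}) as a single nonlocal parabolic equation for $\rho$:
\[
\partial_t\rho - \lap\rho = -u\cdot\grad\rho - \div(\rho\grad c), \qquad c = (-\lap_N)\pre[\rho-\rho_M], \qquad u = g\grad^\perp(-\lap_D)\pre\partial_1\rho,
\]
with Neumann data. By elliptic regularity, both $\rho\mapsto\grad c$ and $\rho\mapsto u$ are bounded from $H^s$ to $H^{s+1}$ and $H^{s}$ respectively, for every $s\ge0$. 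The maps $\rho \mapsto u\cdot\grad\rho$ and $\rho \mapsto \div(\rho\grad c)$ are quadratic and lose one derivative, so they are locally Lipschitz from $H^{s+1}$ into $H^{s}$ for $s$ large.

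A standard Duhamel fixed point in $C([0,T];H^s)\cap L^2([0,T];H^{s+1})$ then gives a unique local solution, using the Neumann heat semigroup and its smoothing. Alternatively, Galerkin approximations with energy estimates do the same job. Parabolic bootstrapping, together with compatibility of the smooth Neumann data, upgrades this to $C^\infty(\ov\Omega\times[0,T])$. Uniqueness follows from an $L^2$ energy estimate on the difference of two solutions. Nonnegativity and mass conservation then come from the lemmas of \Cref{subsec:PKSIPMPreliminaries}.

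For the blow-up criterion, I would argue by contraposition. Suppose $T_*<\infty$ and $\liminf_{t\to T_*}\nm{\rho-\rho_M}_2<\infty$. \Cref{prop:secGWP_AprioriVarianceIneq} is an ODE inequality $y'\le C y^2 + 2\rho_M y$ for $y=\nm{\rho-\rho_M}_2^2$. If $y$ is bounded along a sequence $t_n\to T_*$, then restarting at such a time shows that $y$ stays bounded on $[0,T_*)$. Integrating the same inequality then gives $\grad\rho\in L^2_tL^2_x$ on $[0,T_*)$. So the $\liminf$ hypothesis upgrades to a genuine $L^\infty_tL^2_x$ bound, and I therefore assume $\sup_{t<T_*}\nm{\rho-\rho_M}_2<\infty$.

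The main step is to climb regularity from this $L^\infty_tL^2_x$ bound, since the system is $L^2$-critical for the $u$-nonlinearity.
\begin{itemize}
\item[(1)] Upgrade to an $L^p$ bound, $p>2$. Test the equation with $\rho^{p-1}$. The advection term vanishes by incompressibility. The chemotaxis term gives $\int\rho^p(\rho-\rho_M)$, which Gagliardo--Nirenberg controls via $\nm{\grad\rho^{p/2}}_2$ and the $L^2$ bound. Because $\rho\ge0$, this estimate is subcritical, so $\sup_t\nm{\rho}_p<\infty$ for every $p<\infty$.
\item[(2)] Get a uniform $H^1$ bound. Now $\grad c\in L^\infty$, and $u\in L^p$ for all $p$ by Calder\'on--Zygmund applied to $\grad\grad(-\lap_D)\pre$. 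An energy estimate for $\nm{\grad\rho}_2^2$ then closes. The term $\int \partial_i u\cdot\grad\rho\,\partial_i\rho$ is handled by $\nm{\grad u}_2\nm{\grad\rho}_4^2\lesssim\nm{\grad\rho}_2^2\nm{\lap\rho}_2$ via Ladyzhenskaya, followed by Gr\"onwall, using $\nm{\grad u}_2\lesssim\nm{\rho}_{H^1}$.
\item[(3)] Bootstrap to all higher $H^s$ norms in the same way, since the problem is now subcritical.
\end{itemize}
I expect step (2) to be the main obstacle. It requires checking that the boundary terms arising from the Neumann condition and from $u\cdot n=0$ are harmless. I would handle the top-order estimates through $\nm{\lap\rho}_2$, which is compatible with Neumann data, rather than through $\nm{\grad^2\rho}_2$ directly. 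With uniform $H^s$ bounds on $[0,T_*)$, the local theory lets the solution continue past $T_*$. This contradicts maximality, so the variance must diverge as $t\to T_*$.
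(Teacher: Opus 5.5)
Your proof is correct, but after the local theory it reaches the $L^2$ continuation criterion by a different route than the paper.

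\textbf{The paper's route.} The paper runs the Duhamel fixed point in $L^\infty_t C^0_x$. The local existence time then depends only on $\|\rho\|_\infty$, so an $L^\infty$ blow-up criterion comes essentially for free, following Winkler and Hu--Kiselev--Yao. It then imports the Kiselev--Xu iteration,
\[
\|\rho-\rho_M\|_\infty\lesssim\max\{1,\rho_M,\tfrac14\sup_t\|\rho-\rho_M\|_2^2\},
\]
which uses only $\div u=0$ and $u\cdot n=0$. It finishes with the differential inequality of \Cref{prop:secGWP_AprioriVarianceIneq}, which plays the same role as your restart argument passing from $\liminf$ to $\sup$. On that route the Darcy velocity is never estimated in a derivative norm: $L^\infty$ sits far above the critical $L^2$ level, and the $L^2\to L^\infty$ upgrade is blind to the IPM coupling.

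\textbf{Your route.} Your step (1) is the first half of that iteration. Instead of pushing to $L^\infty$, you climb in derivatives, so you must confront $u$ at the $H^1$ level. There the boundary terms vanish exactly: $\int_{\partial\Omega}(u\cdot\grad\rho)\,\partial_n\rho=0$ and $\tfrac12\int_{\partial\Omega}(u\cdot n)|\grad\rho|^2=0$. So the step you flagged as the main obstacle is not one at this level. What makes it close is that you obtain
\[
\frac{d}{dt}\|\grad\rho\|_2^2+\|\lap\rho\|_2^2\lesssim(\|\grad\rho\|_2^2+1)\|\grad\rho\|_2^2+1.
\]
Gr\"onwall then applies because $\int_0^{T_*}\|\grad\rho\|_2^2\,dt<\infty$ by the variance inequality, which is the same mechanism as for 2D Navier--Stokes.

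\textbf{Comparison.} Your approach gives a self-contained argument in which the $L^2$-criticality of the system is visible directly. The cost is that you must carry out the higher-order Neumann bootstrap yourself, details the paper delegates to the cited works. This means estimating $\partial_t\rho$, which still satisfies $\partial_n\partial_t\rho=0$, and then using elliptic regularity. It also means running the $H^s$ local theory in spaces the Neumann semigroup actually preserves, such as $1<s<3/2$ or domains of powers of $-\lap_N$, rather than $H^s$ for large $s$.
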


\begin{proof}
We only sketch the argument as the details are completely standard and have been carried out in other works \cites{W12,KX16,HKY25}. First, we may rewrite the PKS-IPM system (\ref{eq:secIntro_PKSIPM}) as a perturbation of the heat equation:
\[
\begin{cases}
\partial_t \rho - \lap \rho = - \div\paren*{g \rho \grad^\perp(-\lap_D)\pre\partial_1 \rho + \rho \grad (-\lap_N)\pre[\rho-\rho_M]}, \\
\partial_n \rho|_{\partial \Omega} = 0, \\
\rho(\cdot, 0) = \rho_0 \ge 0,
\end{cases}
\]
where we have used the Biot--Savart law (\ref{eq:secGWP_BiotSavart}) for the fluid velocity. Duhamel's principle motivates defining the integral functional
\begin{align*}
\Phi[f](x,t)
&:= e^{t\lap_N}\brac*{\rho_0}(x,t) \\
&\quad -\int_0^t e^{(t-s)\lap_N} \brac*{\div\paren*{g f(\cdot,s) \grad^\perp(-\lap_D)\pre\partial_1 f(\cdot,s)}}(x,s)\, ds\\
&\quad -\int_0^t e^{(t-s)\lap_N} \brac*{\div\paren*{f(\cdot,s) \grad (-\lap_N)\pre[f(\cdot,s)-\rho_M]}}(x,s)\, ds,
\end{align*}
for $f \in L^\infty( [0, T); C^0(\Omega) )$. Here $C^0(\Omega)$ is the space of continuous functions on $\Omega$ while $e^{t\lap_N}$ denotes the standard Neumann heat semigroup. Note we consider $\rho_0$ to be a fixed function and $\rho_M > 0$ to be a fixed constant. Moreover, $T > 0$ will be a time determined by the size of the initial data $\rho_0$.

The next step is to establish $\Phi$ as a contraction on $L^\infty( [0, T); C^0(\Omega) )$, which can be done via typical Neumann heat semigroup estimates, elliptic estimates, and various appropriate Sobolev inequalities. Then, a classical fixed point argument (after some boot-strapping to improve regularity) gives local well-posedness of (\ref{eq:secIntro_PKSIPM}) in the class of functions $C^\infty(\ov{\Omega} \times [0, T_*))$ with non-negative initial data, with $T_*$ denoting the maximal life span of the solution. Moreover, a consequence of this argument is that
\[
\lim_{t \to T_*} \nm{\rho(\cdot, t) - \rho_M}_\infty^2 = \infty
\]
whenever $0< T_* < \infty$ is finite. This estimate was first carried out in \cite{W12}*{Lemma 2.1} for a different chemotaxis-fluid system than the PKS-IPM system. The bounds for (\ref{eq:secIntro_PKSIPM}) on the periodic strip $\T \times (0, \pi)$ have been derived in \cite{HKY25}*{Theorem 3.1}. The more general domain setting that we consider here requires no changes to the proof of \cite{HKY25}*{Theorem 3.1} due to the Neumann boundary conditions in (\ref{eq:secIntro_PKSIPM}).

The final step is to improve the control quantity from $L^\infty$ to $L^2$. This was first done for a passive PKS-fluid system in \cite{KX16}*{Proposition 9.1}. Their argument can be directly adapted to the general bounded domain case (similarly to \cite{HKY25}, where periodic strip was considered), again due to the Neumann boundary conditions in (\ref{eq:secIntro_PKSIPM}). Upon doing so, one obtains that
\[
\nm{\rho(\cdot, t) - \rho_M}_\infty \lesssim_\Omega \max\left\{1,\ \rho_M,\ \frac{1}{4}\sup_{t \in [0,T]} \nm{\rho(\cdot, t) - \rho_M}_2^2\right\}
\]
for every solution $\rho \in C^\infty(\ov{\Omega} \times [0,T])$ with nonnegative initial data $\rho_0 \ge 0$. This bound combined with the differential inequality in \Cref{prop:secGWP_AprioriVarianceIneq} completes the proof; see \cite{HKY25}*{Corollary 3.3}.
\end{proof}

\subsection{Potential energy and a stratification norm bound}
\label{subsec:Potential}

The gravitational coupling force given in the third line of  (\ref{eq:secIntro_PKSIPM}) plays a significant role in the dynamics of the PKS-IPM system, motivating us to define an associated potential energy. By the nonnegativity of the solution $\rho$, one immediately observes that the potential energy is bounded.

\begin{definition}
Given \Cref{hyp:secGWP} with $g > 0$, we define the potential energy of $\rho$ to be
\[
E(t) := \int_\Omega x_2\rho(x,t)\, dx
\]
for any $0 \le t < T_*$.
\end{definition}

\begin{lemma}\label{lem:secGWP_BoundPotentialEnergy}
Given \Cref{hyp:secGWP} with $g > 0$, we have that
\[
0 \le E(t) \le h\nm{\rho_0}_1,
\]
for any $0 \le t < T_*$. Recall $h$ is the height of $\Omega$; see \Cref{norm:secSett_h}.
\end{lemma}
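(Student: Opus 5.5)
The bound follows directly from nonnegativity of $\rho$, conservation of mass, and the normalization of the domain. I will verify each inequality pointwise in $x$ and then integrate.

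For the lower bound, \Cref{norm:secSett_h} places $\Omega$ inside the horizontal strip $\{0 \le x_2 \le h\}$, so $x_2 \ge 0$ for every $x \in \Omega$. Nonnegativity of the solution, stated in \Cref{subsec:PKSIPMPreliminaries}, gives $\rho(x,t) \ge 0$ for all $0 \le t < T_*$. The integrand $x_2\rho(x,t)$ is therefore nonnegative, and hence $E(t) \ge 0$.

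For the upper bound, the same normalization gives $x_2 \le h$ on $\Omega$. Using $\rho \ge 0$ once more,
\[
E(t) = \int_\Omega x_2 \rho(x,t)\, dx \le h \int_\Omega \rho(x,t)\, dx = h\nm{\rho(\cdot,t)}_1 .
\]
The integral $\int_\Omega \rho(x,t)\,dx$ equals $|\Omega|\rho_M(t)$. By the mass-conservation lemma, $\rho_M(t) = \rho_M(0)$, and since $\rho_0 \ge 0$ this gives $\nm{\rho(\cdot,t)}_1 = |\Omega|\rho_M = \nm{\rho_0}_1$. Substituting yields $E(t) \le h\nm{\rho_0}_1$.

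Smoothness of $\rho$ on $\ov{\Omega} \times [0,T_*)$ ensures that all integrals are finite, so no step here poses a real obstacle. The only point to check is that $\nm{\rho(\cdot,t)}_1$ is in fact the conserved quantity, and this holds because $\rho(\cdot,t)$ is nonnegative, so its $L^1$ norm coincides with its integral.
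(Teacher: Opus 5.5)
Your proof is correct and is the argument the paper intends. The paper omits a written proof, remarking only that the bound follows immediately from nonnegativity of $\rho$; your proof makes explicit the two other ingredients it uses implicitly, namely $0 \le x_2 \le h$ on $\Omega$ and conservation of mass with $\rho_0 \ge 0$.
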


Although \Cref{lem:secGWP_BoundPotentialEnergy} is elementary, the boundedness of the potential energy is a crucial observation. It implies that the time derivative $E'(t)$ cannot be too large in magnitude (negative or positive) for too long of time intervals. Thus, we derive an expression for $E'(t)$, known as the gravitational power in physics terminology, which crucially involves the $\dH_0\pre$ norm of the solution $\rho$.

\begin{lemma}\label{lem:secGWP_Power}
Given \Cref{hyp:secGWP} with $g > 0$, the gravitational power satisfies
\[
E'(t) = -g\nm{\partial_1 \rho(\cdot,t)}_{\dH_0^{-1}}^2 - \int_\Omega \partial_2 \rho\, dx + \int_\Omega \rho \partial_2 (-\lap_N)\pre [\rho-\rho_M]\, dx.
\]
The first term comes from Darcian fluid advection; the second, from diffusion; and the third, from chemotaxis.
\end{lemma}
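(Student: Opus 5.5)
The identity follows by differentiating $E(t)=\int_\Omega x_2\rho\,dx$ under the integral sign, which is justified by the smoothness in \Cref{hyp:secGWP}. We substitute the first equation of (\ref{eq:secIntro_PKSIPM}):
\[
E'(t)=-\int_\Omega x_2\, u\cdot\grad\rho\,dx+\int_\Omega x_2\lap\rho\,dx-\int_\Omega x_2\div(\rho\grad c)\,dx .
\]
We then integrate each term by parts once, using only the boundary conditions in (\ref{eq:secIntro_PKSIPM}) and the fact that $\grad x_2=(0,1)$.

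For the diffusion term, Green's formula gives $\int_\Omega x_2\lap\rho = \int_{\partial\Omega}x_2\partial_n\rho - \int_\Omega\partial_2\rho$. The boundary integral vanishes by $\partial_n\rho|_{\partial\Omega}=0$, which produces $-\int_\Omega\partial_2\rho\,dx$. For the chemotaxis term, $-\int_\Omega x_2\div(\rho\grad c) = -\int_{\partial\Omega}x_2\rho\,\partial_n c + \int_\Omega \rho\,\partial_2 c$. The boundary term vanishes since $\partial_n c|_{\partial\Omega}=0$. Recalling from \Cref{thm:secGWP_LWP} that $c=(-\lap_N)\pre[\rho-\rho_M]$, this gives the third term of the claimed identity.

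The advection term carries the substance of the lemma. Since $\div u=0$ and $u\cdot n|_{\partial\Omega}=0$, we have
\[
-\int_\Omega x_2\,u\cdot\grad\rho\,dx=\int_\Omega\rho\,u\cdot\grad x_2\,dx=\int_\Omega\rho\,u_2\,dx .
\]
Next we insert the Biot--Savart law (\ref{eq:secGWP_BiotSavart}). Write $\psi:=(-\lap_D)\pre\partial_1\rho$, so that $u=g\grad^\perp\psi$ and $u_2=g\,\partial_1\psi$. Then
\[
\int_\Omega\rho\,u_2\,dx = g\int_\Omega\rho\,\partial_1\psi\,dx = -g\int_\Omega\partial_1\rho\,\psi\,dx .
\]
The boundary term $\int_{\partial\Omega}\rho\psi n_1$ drops because $\psi$ vanishes on $\partial\Omega$. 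By the definition of the $\dH_0\pre$ seminorm in \Cref{subsec:Notation}, $\int_\Omega\partial_1\rho\,(-\lap_D)\pre\partial_1\rho\,dx=\nm{\partial_1\rho}_{\dH_0\pre}^2$. This yields the first term $-g\nm{\partial_1\rho}_{\dH_0\pre}^2$.

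Summing the three contributions gives the formula. The only delicate point, and hence the expected obstacle, is the advection step. One must keep careful track of signs in $\grad^\perp=(-\partial_2,\partial_1)$ and check that both the no-flux condition on $u$ and the Dirichlet condition on $\psi$ kill the respective boundary terms. Since $\psi\in C^\infty(\ov\Omega)$ by elliptic regularity, all integrations by parts are legitimate.
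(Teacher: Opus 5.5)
Your computation is correct and is the argument the paper relies on. The paper gives no proof of its own; it defers to the periodic-strip computation of Hu, Kiselev, and Yao, noting that it carries over unchanged. That computation is what you wrote: differentiate $E$, integrate each term by parts against $x_2$ using $\partial_n\rho=\partial_n c=0$, $\div u=0$, $u\cdot n=0$, and the Dirichlet condition on $\psi=(-\lap_D)\pre\partial_1\rho$, so that the advection term becomes $-g\nm{\partial_1\rho}_{\dH_0\pre}^2$. Your signs with $\grad^\perp=(-\partial_2,\partial_1)$ and $u_2=g\,\partial_1\psi$ are consistent with the paper's Biot--Savart law.
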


This result was first observed in \cite{HKY25}*{Proposition 4.6} and the proof is exactly the same in our context.

Our goal is is to derive a system of inequalities for three quantities: the $L^2$ norm of $\rho$, the $\dH^1$ norm of $\rho$, and the $\dH_0\pre$ norm of $\partial_1 \rho$. To obtain it, we will use the gravitational power in \Cref{lem:secGWP_Power} above, the Nash stratification inequality in \Cref{thm:secIntro_NashTotal}, and the variance estimate in \Cref{cor:secGWP_AprioriVarianceIneq}. To complete this task, we need to provide estimates that control the diffusion and chemotaxis terms of the gravitational power in \Cref{lem:secGWP_Power}.

We begin with a lemma controlling the contribution from diffusion which does not require the function to solve the PKS-IPM system (\ref{eq:secIntro_PKSIPM}).

\begin{lemma}\label{lem:secGWP_DiffusionEst}
Fix a domain $\Omega \subseteq \R^2$ satisfying \Cref{as:secSett_1} and let $f \in C^\infty(\ov{\Omega})$. Then, we have that
\[
\abs*{\int_\Omega \partial_2 f\, dx} \le C_{13} f_M^{1/4}\nm{\grad f}_2^{3/4},
\]
where $C_{13} = C_{13}(\Omega) > 0$ is a universal constant.
\end{lemma}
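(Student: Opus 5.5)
The plan is to avoid boundary traces altogether. I would split $\int_\Omega \partial_2 f\,dx$ with an interior cutoff, integrate by parts on the interior piece, and close the estimate with the classical Nash inequality (\ref{eq:secIntro_Nash}) for $d=2$. Since $f_M^{1/4}$ must make sense, I read the statement for $f\ge 0$, which is the case needed for $\rho$. For $f\ge0$ we have $\nm{f-f_M}_1\le 2\nm{f}_1 = 2|\Omega| f_M$. Without a sign, the same proof gives the bound with $f_M$ replaced by $\nm{f}_1/|\Omega|$.

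\textbf{Cutoff and splitting.} Since $\partial\Omega$ is smooth, there is $\delta_0=\delta_0(\Omega)>0$ such that for every $0<\delta\le\delta_0$ we can find $\varphi_\delta\in C_c^\infty(\Omega;[0,1])$ with three properties: $\varphi_\delta\equiv1$ outside the boundary layer $N_\delta:=\{x\in\Omega : \operatorname{dist}(x,\partial\Omega)<\delta\}$; $|\nabla\varphi_\delta|\le C/\delta$; and $|N_\delta|\le C\delta$. Write
\[
\int_\Omega \partial_2 f\,dx=\int_\Omega \varphi_\delta\,\partial_2 f\,dx+\int_{N_\delta}(1-\varphi_\delta)\,\partial_2 f\,dx .
\]
The second term is at most $|N_\delta|^{1/2}\nm{\grad f}_2\le C\delta^{1/2}\nm{\grad f}_2$ by Cauchy--Schwarz.

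\textbf{The interior term.} Since $\varphi_\delta$ is compactly supported, $\int_\Omega\partial_2\varphi_\delta\,dx=0$. Integrating by parts and subtracting the mean therefore gives
\[
\int_\Omega \varphi_\delta\,\partial_2 f\,dx=-\int_\Omega (f-f_M)\,\partial_2\varphi_\delta\,dx .
\]
The gradient $\partial_2\varphi_\delta$ is supported in $N_\delta$ and has size at most $C/\delta$, so its $L^2$ norm is at most $C\delta^{-1/2}$. Hence this term is at most $C\delta^{-1/2}\nm{f-f_M}_2$. Combining the two pieces,
\[
\abs*{\int_\Omega\partial_2 f\,dx}\le C\bigl(\delta^{-1/2}\nm{f-f_M}_2+\delta^{1/2}\nm{\grad f}_2\bigr),\qquad 0<\delta\le\delta_0 .
\]

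\textbf{Choice of $\delta$.} The balancing choice is $\delta_{\rm opt}=\nm{f-f_M}_2/\nm{\grad f}_2$; the case $\nm{\grad f}_2=0$ is trivial because then $f$ is constant. By the Poincaré inequality, $\delta_{\rm opt}\le C_P(\Omega)$. If $\delta_{\rm opt}\le\delta_0$, take $\delta=\delta_{\rm opt}$. Otherwise take $\delta=\delta_0$; in that case $\nm{f-f_M}_2$ and $\nm{\grad f}_2$ are comparable, with constants depending only on $\Omega$. Either way we obtain
\[
\abs*{\int_\Omega\partial_2 f\,dx}\lesssim_\Omega \nm{f-f_M}_2^{1/2}\nm{\grad f}_2^{1/2}.
\]
Now insert the classical Nash inequality $\nm{f-f_M}_2\le C_{\mathrm N,2}^{1/2}\nm{f-f_M}_1^{1/2}\nm{\grad f}_2^{1/2}$. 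This gives the bound $\nm{f-f_M}_1^{1/4}\nm{\grad f}_2^{3/4}$ up to a constant. Finally, $\nm{f-f_M}_1\le2|\Omega|f_M$ yields $C_{13}f_M^{1/4}\nm{\grad f}_2^{3/4}$.

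\textbf{Main obstacle.} The step needing the most care is the key observation that the interior term can be measured against $f-f_M$ rather than $f$. A direct $L^1$ bound on that term would only give the weaker exponents $1/3,2/3$. The only other bookkeeping point is keeping $\delta$ within the range where the cutoff exists, which the Poincaré argument above handles.
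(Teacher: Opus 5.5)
Your proof is correct, and it takes a genuinely different route from the paper. The paper integrates by parts onto $\partial\Omega$ and bounds $\int_{\partial\Omega}(f-f_M)n_2$ by $\|f-f_M\|_{L^r(\partial\Omega)}$ for some $r>1$. It then chains the Besov trace embedding $W^{s,p}(\Omega)\to L^r(\partial\Omega)$, the Brezis--Mironescu fractional Gagliardo--Nirenberg inequality and Poincar\'e to obtain $\|f-f_M\|_1^{1/3-\delta}\|\nabla f\|_2^{2/3+\delta}$ for $0<\delta<1/3$, and finally sets $\delta=1/12$. Your boundary-layer cutoff never touches traces. The $L^2$ split is in effect a hands-on proof of the multiplicative trace-type bound $\lesssim_\Omega\|f-f_M\|_2^{1/2}\|\nabla f\|_2^{1/2}$, and the classical Nash inequality, which the paper already uses, then lands exactly on the exponents $1/4,3/4$. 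This is far more elementary. The paper's route controls the full trace norm $\|f-f_M\|_{L^r(\partial\Omega)}$, which is more than the lemma needs. You were also right to read the statement for $f\ge 0$. The paper's last step $\|f-f_M\|_1\lesssim f_M$ silently uses nonnegativity, and for signed $f$ the inequality is false: take $f=x_2-c$ with $c$ the mean of $x_2$ over $\Omega$, so that $f_M=0$ while $\int_\Omega\partial_2 f\,dx=|\Omega|$.

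Your closing remark, however, is backwards. Pair $f-f_M$ with $\partial_2\varphi_\delta$ in $L^1$--$L^\infty$ instead. This gives $C(\delta^{-1}\|f-f_M\|_1+\delta^{1/2}\|\nabla f\|_2)$ for $0<\delta\le\delta_0$. The same optimization and case split, now using $\|f-f_M\|_1\lesssim_\Omega\|\nabla f\|_2$ from H\"older and Poincar\'e, yields $\|f-f_M\|_1^{1/3}\|\nabla f\|_2^{2/3}$. Since $\|f-f_M\|_1^{1/12}\lesssim_\Omega\|\nabla f\|_2^{1/12}$, this implies the stated bound without invoking Nash at all. It is also exactly the endpoint $\delta=0$ that the paper's subsequent remark says its trace-embedding method cannot reach. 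So the $L^2$ pairing followed by Nash is a valid detour, not the crux, and the $L^1$ version is stronger rather than weaker.
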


\begin{proof}
Integrating by parts, we obtain
\[
\abs*{\int_\Omega \partial_2 f\, dx}
= \abs*{\int_\Omega \partial_2(f-f_M)\, dx}
= \abs*{\int_{\partial \Omega} (f-f_M) n_2\, dx},
\]
which we may naively estimate as
\[
\abs*{\int_\Omega \partial_2 f\, dx}
\le \nm{f-f_M}_{L^1(\partial\Omega)}
\lesssim_{r, \Omega} \nm{f-f_M}_{L^r(\partial\Omega)}
\]
for any $1 < r < \infty$. Invoking the standard Besov trace embedding (for example, see \cite{AF03}*{Theorem 7.43 and Remark 7.45}), it follows
\[
\abs*{\int_\Omega \partial_2 f\, dx}
\lesssim_{r, \Omega} \nm{f-f_M}_{L^r(\partial\Omega)}
\lesssim_{r, s, p, \Omega} \nm{f-f_M}_{W^{s,p}}
\]
for any $0 < s < 1$ and $1 \le p < \infty$ such that $1 < sp < 2$ and $r = p/(2-sp) > p$. We recall here that the (real-interpolation based) fractional Sobolev space $W^{s,p}(\Omega)$ norm is given by
\[
\nm{f}_{W^{s,p}(\Omega)} := \nm{f}_p + \paren*{\int_\Omega\int_\Omega \frac{|f(x)-f(y)|^p}{|x-y|^{2+sp}}\, dx\, dy}^{1/p},
\]
with this being equivalent to the standard Besov space $B^s_{p,p}(\Omega)$ norm. We next invoke the fractional Gagliardo--Nirenberg--Sobolev interpolation inequality proven by Brezis and Mironsecu \cite{BM18}*{Theorem 1}, giving us
\[
\abs*{\int_\Omega \partial_2 f\, dx}
\lesssim_{r, s, p, \Omega} \nm{f-f_M}_{W^{s,p}(\Omega)}
\lesssim_{r, s, p, \theta, \Omega} \nm{f-f_M}_1^\theta \nm{f-f_M}_{H^1}^{1-\theta},
\]
for $\theta \in (0,1)$ such that $1 - s = \theta = (2/p) - 1$. We may simplify the right-hand side via the Poincar\'{e}--Wirtinger inequality:
\[
\abs*{\int_\Omega \partial_2 f\, dx}
\lesssim_{r, s, p, \theta, \Omega} \nm{f-f_M}_1^\theta \nm{f-f_M}_{H^1}^{1-\theta}
\lesssim_{r, s, p, \theta, \Omega} \nm{f-f_M}_1^\theta \nm{\grad f}_2^{1-\theta}.
\]
Working through the numerology, we can obtain the bound
\[
\abs*{\int_\Omega \partial_2 f\, dx} \lesssim_{\delta, \Omega} \nm{f-f_M}_1^{1/3 - \delta}\nm{\grad f}_2^{2/3 + \delta} \lesssim_{\delta, \Omega} f_M^{1/3 - \delta}\nm{\grad f}_2^{2/3 + \delta}
\]
for any fixed $0 < \delta < 1/3$ upon setting
\[
0 < \eps := \frac{9\delta}{8-6\delta} < 1/2,
\quad
p := \frac{3}{2} + \eps,
\quad
r := \frac{p}{4-2p},
\quad
s := 2-\frac{2}{p},
\quad
\theta := 1-s.
\]
The specific choice of $\delta := 1/12$ returns the statement of \Cref{lem:secGWP_DiffusionEst}.
\end{proof}

\begin{remark}
From the proof, we see that the statement of \Cref{lem:secGWP_DiffusionEst} can be improved to
\[
\abs*{\int_\Omega \partial_2 f\, dx} \lesssim_{\delta, \Omega} f_M^{1/3 - \delta}\nm{\grad f}_2^{2/3 + \delta}
\]
for any fixed $0 < \delta < 1/3$. We cannot obtain the edge case corresponding to $\delta = 0$ due to the failure of the Besov trace embedding for $L^1(\partial \Omega)$. Regardless, we need only any statement that is stronger than
\[
\abs*{\int_\Omega \partial_2 f\, dx} \le \nm{\grad f}_2
\]
for the arguments to follow; see \Cref{prop:secGWP_GeneralArgument}. This explains why we take $\delta = 1/12$ in the proof of \Cref{lem:secGWP_DiffusionEst}, as it suffices for our purposes and leads to nice numerology.
\end{remark}

We next provide a lemma controlling the contribution from chemotaxis which also does not require the function to solve the PKS-IPM system (\ref{eq:secIntro_PKSIPM}).

\begin{lemma}\label{lem:secGWP_ChemoEst}
Fix a domain $\Omega \subseteq \R^2$ satisfying \Cref{as:secSett_1} and let $f \in C^\infty(\ov{\Omega})$ be a nonnegative function. Then, we have that
\[
\abs*{\int_\Omega f \partial_2 (-\lap_N)\pre [f-f_M]\, dx} \le C_{14} \paren*{f_M^{2/3} \nm{f-f_M}_2^{4/3} +f_M^{5/3}\nm{f-f_M}_2^{1/3}},
\]
where $C_{14} = C_{14}(\Omega) > 0$ is a universal constant.
\end{lemma}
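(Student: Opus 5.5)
The plan is to write $c := (-\lap_N)\pre[f-f_M]$ and split $f = (f - f_M) + f_M$:
\[
\int_\Omega f\,\partial_2 c\, dx = \int_\Omega (f-f_M)\,\partial_2 c\, dx + f_M \int_\Omega \partial_2 c\, dx .
\]
The two pieces are handled separately. The first should give the $f_M^{2/3}\nm{f-f_M}_2^{4/3}$ term, and the second the $f_M^{5/3}\nm{f-f_M}_2^{1/3}$ term. Nonnegativity of $f$ enters only through the identity $\nm{f-f_M}_1 \le 2\nm{f}_1 = 2|\Omega| f_M$. This lets us convert $L^1$ norms of $f-f_M$ into powers of $f_M$.

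For the first piece, apply H\"older:
\[
\abs*{\int_\Omega (f-f_M)\,\partial_2 c\, dx} \le \nm{f-f_M}_{p}\,\nm{\grad c}_{p'} .
\]
By Neumann elliptic regularity, $\nm{\grad c}_{W^{1,q}} \lesssim \nm{f-f_M}_q$ for $1<q<\infty$. In two dimensions, Sobolev embedding then gives $\nm{\grad c}_{p'} \lesssim \nm{f-f_M}_q$ with $1/p' = 1/q - 1/2$. Both $\nm{f-f_M}_p$ and $\nm{f-f_M}_q$ are interpolated between $L^1$ and $L^2$, and $\nm{f-f_M}_1$ is replaced by $f_M$. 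The exponents are chosen so that the total powers are $2/3$ on $f_M$ and $4/3$ on the $L^2$ norm. Scaling is consistent, since the total homogeneity is $2$.

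A cleaner route is available. Write $\int (f-f_M)\partial_2 c = \int (-\lap c)\,\partial_2 c$ and integrate by parts: the interior part vanishes and only a boundary integral $\tfrac12\int_{\partial\Omega}|\grad c|^2 n_2$ plus a curvature term remains. I would nevertheless use the H\"older approach, since it avoids boundary traces. For instance, $p=q=4/3$ gives $p'=4$, and the bound becomes $\nm{f-f_M}_{4/3}^2 \le \nm{f-f_M}_1\nm{f-f_M}_2$, i.e. $f_M\nm{f-f_M}_2$. This has the right homogeneity but different exponents from the target. To reach the stated exponents, take $p=3/2$ and $q=6/5$ (so $1/p' = 5/6 - 1/2 = 1/3$). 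Interpolation gives $\nm{f-f_M}_{3/2} \le \nm{f-f_M}_1^{1/3}\nm{f-f_M}_2^{2/3}$ and $\nm{f-f_M}_{6/5} \le \nm{f-f_M}_1^{2/3}\nm{f-f_M}_2^{1/3}$. The product is $f_M\nm{f-f_M}_2$, which again is not the target. The target $f_M^{2/3}\nm{f-f_M}_2^{4/3}$ should instead come from the trivial $L^2$ bound $\nm{\grad c}_2 \lesssim \nm{f-f_M}_2$ interpolated against the preceding one. Concretely, I would write the piece as $A^{1/3}B^{2/3}$ with $A \lesssim f_M\nm{f-f_M}_2$ and $B \lesssim \nm{f-f_M}_2^2$. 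Here $B$ comes from Cauchy--Schwarz together with Poincar\'e and elliptic bounds, giving $\abs{\int (f-f_M)\partial_2 c} \lesssim \nm{f-f_M}_2^2$. Since the quantity itself is bounded by both $A$ and $B$, it is bounded by $\min(A,B) \le A^{1/3}B^{2/3}$, which is exactly the first target term.

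The second piece is $f_M\int_\Omega \partial_2 c\,dx$. By the divergence theorem,
\[
\int_\Omega \partial_2 c\,dx = \int_{\partial\Omega} c\, n_2 ,
\]
which is bounded by a trace norm of $c$. Since $c$ solves a Neumann problem with $L^2$ data, it lies in $H^2$, and the trace term is controlled by $\nm{c}_{L^\infty} \lesssim \nm{f-f_M}_q$ for any $q>1$. Alternatively, test against $x_2$: $\int \partial_2 c = -\int c\,\partial_2(\text{const})$ is not helpful, but $\int_\Omega \partial_2 c = \int_\Omega (f-f_M)\,\phi$, where $\phi$ solves $-\lap\phi = 0$ with $\partial_n\phi = n_2$ (i.e. $\phi = x_2$ up to harmonic adjustment; in fact $\phi = x_2$ works, since $\partial_n x_2 = n_2$). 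This gives the exact identity
\[
\int_\Omega \partial_2 c\,dx = \int_\Omega (f-f_M)\,x_2\,dx ,
\]
which is bounded by $h\nm{f-f_M}_1 \lesssim f_M$. It is also bounded by $\lesssim \nm{f-f_M}_2$. Interpolating as $\min(f_M, \nm{f-f_M}_2) \le f_M^{2/3}\nm{f-f_M}_2^{1/3}$ and multiplying by $f_M$ yields the second target term. The main obstacle is confirming the Green identity: it requires $\int_\Omega x_2\,\lap c = \int_{\partial\Omega}(x_2\partial_n c - c\,n_2)$ with $\partial_n c = 0$, which is straightforward.
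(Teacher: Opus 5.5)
Your proof is correct once one exponent is fixed (second paragraph), but it is organized differently from the paper's.

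\textbf{Comparison with the paper.} The paper never splits the integral. It uses a single Cauchy--Schwarz, $\abs{\int_\Omega f\,\partial_2 c\,dx} \le \|f\|_2\|\partial_2 c\|_2$. It then bounds $\|\partial_2 c\|_2 \lesssim \|c\|_{W^{2,6/5}} \lesssim \|f-f_M\|_{6/5} \le \|f-f_M\|_1^{2/3}\|f-f_M\|_2^{1/3}$, via the embedding $W^{1,6/5}\hookrightarrow L^2$ and Neumann $L^p$ elliptic regularity. The two terms of the lemma come from $\|f\|_2 \le \|f-f_M\|_2 + |\Omega|^{1/2}f_M$. So the paper also separates off $f_M$, but only at the level of norms, and it treats both parts with the same $L^2$ bound on $\partial_2 c$.

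You split the integral itself. The fluctuation part gets the unbalanced H\"older pairing $L^{3/2}\times L^3$, with the same $W^{2,6/5}$ estimate. The constant part is computed exactly via $\int_\Omega \partial_2 c\,dx = \int_\Omega x_2(f-f_M)\,dx$, which uses only $-\lap c = f-f_M$ and $\partial_n c = 0$ and needs no elliptic regularity. Both steps check out.

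\textbf{The exponent fix.} Write $X := \|f-f_M\|_2$, $A = f_M X$, $B = X^2$. Then $A^{1/3}B^{2/3} = f_M^{1/3}X^{5/3}$. This is not the target, and it is not controlled by the target as $X\to\infty$. The correct interpolation is $\min(A,B) \le A^{2/3}B^{1/3} = f_M^{2/3}X^{4/3}$.

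In fact $B$ and the interpolation are unnecessary. Your two pieces give
\[
\abs{\textstyle\int_\Omega f\,\partial_2 c\,dx} \lesssim f_M X + f_M\min(f_M,X) \lesssim f_M X .
\]
By weighted AM--GM,
\[
f_M X = (f_M^{2/3}X^{4/3})^{2/3}(f_M^{5/3}X^{1/3})^{1/3} \le \tfrac23 f_M^{2/3}X^{4/3} + \tfrac13 f_M^{5/3}X^{1/3}.
\]
So your route proves the sharper one-term bound $\lesssim f_M\|f-f_M\|_2$. In the large-variance regime this beats the lemma by the factor $(f_M/X)^{1/3}$.

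The paper's argument is a few lines shorter but loses this factor. It places all of $f$ in $L^2$, and must then pay for $\|\partial_2 c\|_2$ with an $L^q$ norm of $f-f_M$ for some $q>1$, since the elliptic estimate fails at $q=1$. That forces a total power of $X$ strictly above $1$. For the global well-posedness argument, either version suffices.
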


\begin{proof}
This result was first observed in \cite{HKY25}*{Proposition 4.6} and the proof is exactly the same in our context. First, we have by H\"{o}lder's inequality that
\[
\abs*{\int_\Omega f \partial_2 (-\lap_N)\pre [f-f_M]\, dx} \le \nm{f}_2 \nm{\partial_2 (-\lap_N)\pre [f-f_M]}_2,
\]
which leads to
\begin{align*}
\abs*{\int_\Omega f \partial_2 (-\lap_N)\pre [f-f_M]\, dx}
&\lesssim_\Omega \nm{f}_2 \nm{\partial_2 (-\lap_N)\pre [f-f_M]}_{W^{1,6/5}} \\
&\le \nm{f}_2 \nm{(-\lap_N)\pre [f-f_M]}_{W^{2,6/5}},
\end{align*}
via the standard two-dimensional Sobolev embedding of $W^{1,6/5}(\Omega)$ into $L^2(\Omega)$. Then, the classical $L^p$ elliptic estimate combined with the log-convexity of the $L^p$ norm give
\begin{align*}
\abs*{\int_\Omega f \partial_2 (-\lap_N)\pre [f-f_M]\, dx}
&\lesssim_\Omega \nm{f}_2 \nm{f-f_M}_{6/5} \\
&\le \nm{f}_2 \nm{f-f_M}_1^{2/3}\nm{f-f_M}_2^{1/3},
\end{align*}
which shows why the exponent $6/5$ was chosen for the Sobolev embedding. To conclude the proof, one uses the trivial bounds
\[
\nm{f-f_M}_1 \le 2|\Omega|f_M, \quad \nm{f}_2 \le \nm{f-f_M}_2 + |\Omega|^{1/2}f_M,
\]
which hold by the nonnegativity of $f$.
\end{proof}

Collecting the lemmas of this section, we are able to shortly prove the following key bound on the $\dH_0\pre$ norm of $\partial_1 \rho$.

\begin{proposition}\label{prop:secGWP_StratBound}
Given \Cref{hyp:secGWP} with $g > 0$, we have
\begin{align*}
g \int_{T_1}^{T_2} \nm{\partial_1 \rho(\cdot,t)}_{\dH_0^{-1}}^2\, dt
&\le h|\Omega|\rho_M + C_{13} \rho_M^{1/4} \int_s^r \nm{\grad \rho(\cdot, t)}_2^{3/4}\, dt \\
&\quad + C_{14} \rho_M^{2/3} \int_{T_1}^{T_2} \nm{\rho(\cdot, t)-\rho_M}_2^{4/3}\, dt + C_{14}\rho_M^{5/3} \int_{T_1}^{T_2} \nm{\rho(\cdot, t)-\rho_M}_2^{1/3}\, dt
\end{align*}
for any time interval $[T_1, T_2] \subseteq [0, T_*)$.
\end{proposition}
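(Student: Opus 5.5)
The plan is to integrate the gravitational power identity of \Cref{lem:secGWP_Power} in time over $[T_1,T_2]$ and move the Darcian term to the left-hand side. Rearranging the identity gives
\[
g\nm{\partial_1 \rho(\cdot,t)}_{\dH_0^{-1}}^2 = -E'(t) - \int_\Omega \partial_2 \rho\, dx + \int_\Omega \rho\, \partial_2 (-\lap_N)\pre[\rho-\rho_M]\, dx,
\]
valid pointwise in $t\in[0,T_*)$. Since $\rho$ is smooth on $\ov{\Omega}\times[T_1,T_2]$, integrating over $[T_1,T_2]$ is justified. The integrated $-E'$ term becomes $E(T_1)-E(T_2)$.

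The first step is to bound this endpoint difference using \Cref{lem:secGWP_BoundPotentialEnergy}. Since $E(T_2)\ge 0$ and $E(T_1)\le h\nm{\rho_0}_1$, we get $E(T_1)-E(T_2)\le h\nm{\rho_0}_1$. Nonnegativity and mass conservation give $\nm{\rho_0}_1 = |\Omega|\rho_M$, so this term is at most $h|\Omega|\rho_M$.

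The second step is to bound the diffusion term at each fixed time by \Cref{lem:secGWP_DiffusionEst} with $f=\rho(\cdot,t)$ and $f_M=\rho_M$. This gives $C_{13}\rho_M^{1/4}\nm{\grad\rho(\cdot,t)}_2^{3/4}$, which we then integrate in time. (The limits $s,r$ in the statement should read $T_1,T_2$.)

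The third step is to bound the chemotaxis term at each time by \Cref{lem:secGWP_ChemoEst}. This applies because $\rho(\cdot,t)\ge0$ and $c=(-\lap_N)\pre[\rho-\rho_M]$. It yields $C_{14}(\rho_M^{2/3}\nm{\rho-\rho_M}_2^{4/3}+\rho_M^{5/3}\nm{\rho-\rho_M}_2^{1/3})$, which we integrate in time.

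Summing the three bounds gives the claim. There is no real obstacle, since all the estimates were prepared in the preceding lemmas. The only point requiring care is the sign bookkeeping: we bound the absolute values of the diffusion and chemotaxis terms, and we use only the one-sided bounds $E\ge0$ and $E\le h\nm{\rho_0}_1$ on the potential energy.
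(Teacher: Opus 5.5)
Your proposal is correct and follows the paper's proof essentially verbatim: integrate the power identity of \Cref{lem:secGWP_Power}, bound $E(T_1)-E(T_2)$ by $h|\Omega|\rho_M$ via \Cref{lem:secGWP_BoundPotentialEnergy}, and control the diffusion and chemotaxis terms pointwise in time by \Cref{lem:secGWP_DiffusionEst} and \Cref{lem:secGWP_ChemoEst}. You are also right that the limits $s,r$ in the statement are a typo for $T_1,T_2$.
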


\begin{proof}
Integrating the expression for gravitational power given by \Cref{lem:secGWP_Power}, we obtain
\begin{align*}\label{eq:secGWP_StratBound1}
g \int_{T_1}^{T_2} \nm{\partial_1 \rho(\cdot,t)}_{\dH_0^{-1}}^2\, dt
&= E(T_1)-E(T_2)
- \int_{T_1}^{T_2} \int_\Omega \partial_2 \rho\, dx\, dt \\
&\quad + \int_{T_1}^{T_2} \int_\Omega \rho \partial_2 (-\lap_N)\pre [\rho-\rho_M]\, dx\, dt. \numberthis
\end{align*}
We may bound the first term on the right-hand side of (\ref{eq:secGWP_StratBound1}) by \Cref{lem:secGWP_BoundPotentialEnergy}:
\[
|E(T_1)-E(T_2)| \le h\nm{\rho_0}_1 = h|\Omega|\rho_M.
\]
The second term on the right-hand side of (\ref{eq:secGWP_StratBound1}) is handled by \Cref{lem:secGWP_DiffusionEst}:
\[
\int_{T_1}^{T_2} \abs*{\int_\Omega \partial_2 \rho\, dx}\, dt
\le C_{13} \rho_M^{1/4} \int_{T_1}^{T_2} \nm{\grad \rho(\cdot, t)}_2^{3/4}\, dt.
\]
Lastly, the third term on the right-hand side of (\ref{eq:secGWP_StratBound1}) is handled by \Cref{lem:secGWP_ChemoEst}:
\begin{align*}
\int_{T_1}^{T_2} \abs*{\int_\Omega \rho \partial_2 (-\lap_N)\pre [\rho-\rho_M]\, dx}\, dt
&\le
C_{14} \rho_M^{2/3} \int_{T_1}^{T_2} \nm{\rho(\cdot, t)-\rho_M}_2^{4/3}\, dt \\
&\quad + C_{14}\rho_M^{5/3} \int_{T_1}^{T_2} \nm{\rho(\cdot, t)-\rho_M}_2^{1/3}\, dt.
\end{align*}
Collecting all of the estimates completes the proof.
\end{proof}

\begin{corollary}\label{cor:secGWP_StratBound}
Given \Cref{hyp:secGWP} with $g > 0$, we have
\begin{align*}
&\int_{T_1}^{T_2} \nm{\partial_1 \rho(\cdot,t)}_{\dH_0^{-1}}^2\, dt\\
&\quad\le C_{15} g^{-1} \paren*{\rho_M^{1/4} + \rho_M^{5/3}} \paren*{1 + \int_{T_1}^{T_2} \nm{\grad \rho(\cdot, t)}_2^{3/4}\, dt + \int_{T_1}^{T_2} \nm{\rho(\cdot, t)-\rho_M}_2^{4/3}\, dt}
\end{align*}
for any time interval $[T_1, T_2] \subseteq [0, T_*)$ such that $\nm{\rho(\cdot, t) - \rho_M}_2 \ge 1$ for all $T_1 \le t \le T_2$. Here, $C_{15} = C_{15}(\Omega) > 0$ is a universal constant.
\end{corollary}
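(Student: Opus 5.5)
This is a direct consequence of \Cref{prop:secGWP_StratBound}. The plan is to absorb every term on its right-hand side into the three quantities $1$, $\int_{T_1}^{T_2}\nm{\grad\rho}_2^{3/4}\,dt$ and $\int_{T_1}^{T_2}\nm{\rho-\rho_M}_2^{4/3}\,dt$, using the standing hypothesis $\nm{\rho(\cdot,t)-\rho_M}_2\ge 1$ on $[T_1,T_2]$. After that we divide through by $g>0$. (The limits $\int_s^r$ in the diffusion term of \Cref{prop:secGWP_StratBound} are clearly a typo for $\int_{T_1}^{T_2}$, and we read them that way.)

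First, the constant term satisfies $h|\Omega|\rho_M \le h|\Omega|(\rho_M^{1/4}+\rho_M^{5/3})$, since $\rho_M\le\max\{\rho_M^{1/4},\rho_M^{5/3}\}$: if $\rho_M\le1$ then $\rho_M\le\rho_M^{1/4}$, and if $\rho_M\ge1$ then $\rho_M\le\rho_M^{5/3}$. This term is therefore bounded by a constant times $(\rho_M^{1/4}+\rho_M^{5/3})\cdot 1$.

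Second, the diffusion term already has the required form, because $C_{13}\rho_M^{1/4}\le C_{13}(\rho_M^{1/4}+\rho_M^{5/3})$.

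Third, for the two chemotaxis terms we use $\nm{\rho-\rho_M}_2\ge1$, which gives $\nm{\rho-\rho_M}_2^{1/3}\le\nm{\rho-\rho_M}_2^{4/3}$ pointwise in time. Both chemotaxis integrals are then controlled by $C_{14}(\rho_M^{2/3}+\rho_M^{5/3})\int_{T_1}^{T_2}\nm{\rho-\rho_M}_2^{4/3}\,dt$. The same case split as before gives $\rho_M^{2/3}\le\rho_M^{1/4}+\rho_M^{5/3}$: if $\rho_M\le 1$ then $\rho_M^{2/3}\le\rho_M^{1/4}$, and otherwise $\rho_M^{2/3}\le\rho_M^{5/3}$.

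Collecting these bounds with $C_{15}:=\max\{h|\Omega|,\,C_{13},\,2C_{14}\}$ and dividing by $g$ gives the statement. Every step is elementary; the only point that needs any care is the interpolation $\rho_M^{a}\le\rho_M^{1/4}+\rho_M^{5/3}$ for $a\in[1/4,5/3]$, and the hypothesis $\nm{\rho-\rho_M}_2\ge1$ is used exactly once, to raise the exponent from $1/3$ to $4/3$.
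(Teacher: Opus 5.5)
Your proposal is correct and is exactly the derivation the paper intends. The paper states this corollary without a separate proof, as an immediate consequence of \Cref{prop:secGWP_StratBound}, obtained by absorbing the powers of $\rho_M$ into $\rho_M^{1/4}+\rho_M^{5/3}$ and using $\nm{\rho-\rho_M}_2\ge 1$ to raise the exponent $1/3$ to $4/3$; your reading of $\int_s^r$ as $\int_{T_1}^{T_2}$ and your choice $C_{15}=\max\{h|\Omega|,\,C_{13},\,2C_{14}\}$ are both fine.
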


\begin{remark}\label{rem:secGWP_gNegative}
Under \Cref{hyp:secGWP}, suppose that $g < 0$. Then we instead define the potential energy of $\rho$ to be
\[
E(t) := \int_\Omega (h - x_2)\rho(x,t)\, dx
\]
for any $0 \le t < T_*$. The analogues of \Cref{lem:secGWP_BoundPotentialEnergy} and \Cref{lem:secGWP_Power} still hold:
\[
0 \le E(t) \le h\nm{\rho_0}_1
\]
and
\[
E'(t) = g\nm{\partial_1 \rho(\cdot,t)}_{\dH_0^{-1}}^2 + \int_\Omega \partial_2 \rho\, dx - \int_\Omega \rho \partial_2 (-\lap_N)\pre [\rho-\rho_M]\, dx,
\]
respectively. It follows that \Cref{prop:secGWP_StratBound} and \Cref{cor:secGWP_StratBound} still hold in the case of $g < 0$, with the same universal constants and with $g$ in the original statements being replaced by $|g|$.
\end{remark}

\subsection{Proof of \texorpdfstring{\Cref{thm:secIntro_GWP}}{Theorem 1.2} and remarks}
\label{subsec:GWPProof}

We are now ready to prove the global well-posedness of the PKS-IPM system (\ref{eq:secIntro_PKSIPM}) on general domains. By \Cref{thm:secGWP_LWP}, it suffices to establish a uniform in time bound on the variance of the solution. We do so by observing a system of four integro-differenetial inequalities all involving the $L^2$ norm of the solution $\rho$; the $\dH^1$ norm of $\rho$; and the $\dH_0\pre$ norm of $\partial_1 \rho$. First, recall the classical two-dimensional Nash inequality (\ref{eq:secIntro_Nash}),
\begin{equation}\label{eq:secGWP_ineq1}
\nm{\rho-\rho_M}_2^2 \le 2C_{\mathrm{N},2}|\Omega| \rho_M\nm{\grad \rho}_2,
\end{equation}
and the Nash stratification inequality \Cref{thm:secIntro_NashTotal}, which implies
\begin{equation}\label{eq:secGWP_ineq2}
\nm{\rho-\rho_M}_2^2
\le
C_0(2|\Omega| \rho_M)^{\frac{8}{7}}\nm{\grad \rho}_2^{\frac{6}{7}}
+ C_1 (2|\Omega|\rho_M)^{\frac{983}{987}} \nm{\partial_1 \rho}_{\dH_0\pre}^{\frac{8}{987}}\nm{\grad \rho}_2^{\frac{983}{987}},
\end{equation}
under further supposing \Cref{as:secSett_2,as:secSett_3}. Moreover, \Cref{cor:secGWP_AprioriVarianceIneq} and \Cref{cor:secGWP_StratBound} (along with \Cref{rem:secGWP_gNegative}) establish the bounds
\begin{equation}\label{eq:secGWP_ineq3}
\frac{d}{dt}\nm{\rho(\cdot,t) - \rho_M}_2^2 + \nm{\grad \rho(\cdot,t)}_2^2 \le (C_{12} + 2\rho_M)\nm{\rho(\cdot,t)-\rho_M}_2^4
\end{equation}
and
\begin{align*}\label{eq:secGWP_ineq4}
&\int_{T_1}^{T_2} \nm{\partial_1 \rho(\cdot,t)}_{\dH_0^{-1}}^2\, dt \\
&\quad\le C_{15} |g|^{-1}\paren*{\rho_M^{1/4} + \rho_M^{5/3}} \paren*{1 + \int_{T_1}^{T_2} \nm{\grad \rho(\cdot, t)}_2^{3/4}\, dt + \int_{T_1}^{T_2} \nm{\rho(\cdot, t)-\rho_M}_2^{4/3}\, dt} \numberthis
\end{align*}
for time intervals $[T_1, T_2] \subseteq [0, T_*)$ over which $\nm{\rho(\cdot,t) - \rho_M}_2 \ge 1$. We study the system of inequalities (\ref{eq:secGWP_ineq1}, \ref{eq:secGWP_ineq2}, \ref{eq:secGWP_ineq3}, \ref{eq:secGWP_ineq4}) abstractly below, showing that it ensures boundedness of the variance.

\begin{proposition}\label{prop:secGWP_GeneralArgument}
Suppose $X(t)$, $Y(t)$, and $Z(t)$ are nonnegative real-valued smooth functions defined on some interval $[0, T_*)$ such that, for some constants $a_1, a_2, a_3 > 0$ and exponents $0 < p < 1/2$, $0 < q < 1$, it holds that
\begin{equation}\label{eq:secGWP_general1}
X \le a_1 Y^{1/2}
\end{equation}
and
\begin{equation}\label{eq:secGWP_general2}
X \le a_2 Y^p + a_3 Z^q Y^{(1-q)/2}
\end{equation}
for all $0 \le t < T_*$. Suppose further that there exist constants $a_4, a_5 > 0$ and exponents $0 < r  < 1,$ $0< 2s <1$ such that
\begin{equation}\label{eq:secGWP_general3}
\frac{d}{dt}X \le a_4 X^2 - Y \qquad \text{ for all } T_1 \le t \le T_2
\end{equation}
and
\begin{equation}\label{eq:secGWP_general4}
\int_{T_1}^{T_2} Z(t)\, dt \le a_5\paren*{1 + \int_{T_1}^{T_2} Y(t)^s\, dt + \int_{T_1}^{T_2} X(t)^r\, dt}
\end{equation}
for any time interval $[T_1,T_2] \subseteq [0, T_*)$ over which $X(t) \ge 1$. Then, we have that
\[
\sup_{0 \le t < T_*} X(t)
\le C_2\max\curly*{X(0),\, C_3},
\]
where $C_2, C_3 > 0$ are finite constants depending only on $a_1, a_2, a_3, a_4, a_5$ and $p, q, r, s$.
\end{proposition}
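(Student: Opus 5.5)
The plan is to show that $\log X$ can grow on any interval where $X$ is large only through $Z$, and that the budget \eqref{eq:secGWP_general4} on $\int Z$ is then absorbed by the dissipation coming from $Y$.

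Fix a threshold $M \ge 1$, depending only on $a_1,\dots,a_5,p,q,r,s$, to be chosen large. Consider a maximal interval $[T_1,T_2]\subseteq[0,T_*)$ on which $X \ge M$. Either $T_1 = 0$, or $X(T_1) = M$ by continuity. Since $X\ge 1$ on this interval, \eqref{eq:secGWP_general3} and \eqref{eq:secGWP_general4} hold there. It suffices to prove $X(t) \le e^{K} X(T_1)$ for $t \in [T_1,T_2]$, with $K$ depending only on the constants. This gives $C_2 = e^K$ and $C_3 = M$.

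\emph{Key pointwise step: a differential inequality for $\log X$.} On $[T_1,T_2]$, \eqref{eq:secGWP_general3} gives
\[
\frac{d}{dt}\log X \le a_4 X - \frac{Y}{X}.
\]
I split times into two kinds.

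\emph{Good times}, where $Y \ge 2a_4 X^2$. There $a_4X - Y/X \le -Y/(2X)$.

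\emph{Bad times}, where $Y < 2a_4X^2$. There \eqref{eq:secGWP_general2} gives
\[
X \le a_2(2a_4)^p X^{2p} + a_3(2a_4)^{(1-q)/2} Z^q X^{1-q}.
\]
Since $2p<1$, choosing $M$ large makes the first term at most $X/2$. Hence $X^q \lesssim Z^q$, that is, $X \le c^{-1} Z$ for a constant $c>0$. Consequently $a_4 X \le (a_4/c) Z$, and also $-Y/X \le -Y/(2X)$.

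Combining both cases, at all times in $[T_1,T_2]$,
\[
\frac{d}{dt}\log X \le C Z - \frac{Y}{2X}.
\]

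\emph{Integration and absorption.} Integrate this from $T_1$ to $t$ and apply \eqref{eq:secGWP_general4} on $[T_1,t]$:
\[
\log\frac{X(t)}{X(T_1)} \le Ca_5 + Ca_5\int_{T_1}^t (Y^s + X^r)\,d\tau - \frac12\int_{T_1}^t \frac{Y}{X}\,d\tau .
\]
Now use \eqref{eq:secGWP_general1}, which gives two things: $Y/X \ge a_1^{-1}Y^{1/2}$, and $Y \ge X^2/a_1^2 \ge M^2/a_1^2$.

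The two positive integrands are then controlled by $Y^{1/2}$:
\begin{itemize}
\item Since $s<1/2$ and $Y$ is large, $Y^s \le \delta Y^{1/2}$.
\item Since $r<1$, $X^r \le a_1^r Y^{r/2} \le \delta Y^{1/2}$.
\end{itemize}
Both hold with $\delta$ as small as desired once $M$ is large. Choose $M$ so that $2Ca_5\delta \le (2a_1)^{-1}$. Then the integral terms together are nonpositive, and $\log(X(t)/X(T_1)) \le Ca_5 =: K$. This proves the claim.

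The main obstacle is the bad-time step: extracting $Z \gtrsim X$ from \eqref{eq:secGWP_general2}. The strict inequality $p<1/2$ is exactly what lets the first term be absorbed. One also has to ensure that every constant, including $M$, depends only on the listed parameters. Smoothness of $X$ and $X \ge M > 0$ justify differentiating $\log X$.
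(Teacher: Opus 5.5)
Your proof is correct, and it takes a somewhat different route from the paper's.

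\emph{The paper's route.} It argues by contradiction on an excursion interval where $X$ climbs from $e^A$ to $e^{A+B}$. It substitutes \eqref{eq:secGWP_general2} directly into $\frac{d}{dt}\log X \le a_4X - a_1^{-1}Y^{1/2}$. It absorbs $a_2a_4Y^p$ into half the dissipation, since $Y$ is large, and keeps the mixed term $a_3a_4Z^qY^{(1-q)/2}$. Then $B>0$ and H\"older's inequality in time give $\int_{T_1}^{T_2} Y^{1/2} < (2a_1a_3a_4)^{1/q}\int_{T_1}^{T_2} Z$. Inserting \eqref{eq:secGWP_general4} and absorbing bounds $\int Y^{1/2}$ by a constant. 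A separate Gr\"onwall step, $B \le a_4\int X \le a_1a_4\int Y^{1/2}$, then caps $B$ and yields the contradiction.

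\emph{Your route.} The good/bad dichotomy replaces both the H\"older step and the Gr\"onwall step. At bad times, $Y<2a_4X^2$ turns $Y^{(1-q)/2}$ into $X^{1-q}$, so \eqref{eq:secGWP_general2} can be solved pointwise for $X \lesssim Z$. This gives the linear inequality $\frac{d}{dt}\log X \le CZ - \frac{Y}{2X}$ at all times, and \eqref{eq:secGWP_general4} plugs into it directly.

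\emph{What each buys.} Your version is a direct, non-contradiction argument with an explicit excursion bound $X(t)\le e^{Ca_5}X(T_1)$. It also makes clear that $p<1/2$ is needed only where $Y\lesssim X^2$. The paper's version instead isolates the mechanism that the total dissipation $\int Y^{1/2}$ both controls the growth of $\log X$ and is itself controlled by $\int Z$.

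Your handling of the superlevel set $\{X\ge M\}$ is sound: each component's left endpoint is either $0$ or satisfies $X(T_1)=M$, and half-open components are handled by integrating up to arbitrary $t$. This gives $C_2=e^{Ca_5}$ and $C_3=M$, with $M$ chosen after $C$ and so depending only on the listed parameters.
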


\begin{proof}
Assume there is some fixed time interval $[T_1, T_2] \subseteq [0, T_*)$ such that
\begin{equation}\label{eq:secGWP_general5}
X(T_1) = e^A, \qquad X(T_2) = e^{A+B}, \qquad e^A < X(t) < e^{A+B}
\end{equation}
for all $T_1 \le t \le T_2$, where $A, B \ge 1$. Throughout the proof, the positive constants $A$ and $B$ will be chosen sufficiently large until we eventually obtain a contradiction. All computations that follow hold over times $T_1 \le t \le T_2$.

We begin by estimating
\[
\frac{d}{dt} \log(X)
\le a_4 X - \frac{Y}{X}
\le a_4 X - a_1^{-1} Y^{1/2}.
\]
by (\ref{eq:secGWP_general3}) and (\ref{eq:secGWP_general1}). Then, (\ref{eq:secGWP_general1}) and (\ref{eq:secGWP_general5}) ensure that we may sufficiently increase $A$ in order to guarantee that $Y \ge (2a_1a_2a_4)^{2/(1 - 2p)}$, and hence by (\ref{eq:secGWP_general2})
\[
\frac{d}{dt} \log(X)
\le  a_2a_4 Y^p + a_3a_4 Z^q Y^{(1-q)/2} - a_1^{-1} Y^{1/2}
\le a_3a_4 Z^q Y^{(1-q)/2} - \frac12 a_1^{-1} Y^{1/2}.
\]
We next integrate in time to obtain
\[
0 < B = \int_{T_1}^{T_2} \frac{d}{dt} \log(X(t))\, dt \le a_3a_4 \int_{T_1}^{T_2} Z(t)^q Y(t)^{(1-q)/2}\, dt - \frac12 a_1^{-1} \int_{T_1}^{T_2} Y(t)^{1/2}\, dt,
\]
where the first equality holds by (\ref{eq:secGWP_general5}). Rearranging and applying H\"{o}lder's inequality then gives
\[
\int_{T_1}^{T_2} Y(t)^{1/2}\, dt
< 2a_1a_3a_4 \int_{T_1}^{T_2} Z(t)^q Y(t)^{(1-q)/2}\, dt
\le 2a_1a_3a_4 \paren*{\int_{T_1}^{T_2} Z(t)\, dt}^q \paren*{\int_{T_1}^{T_2}  Y(t)^{1/2}\, dt}^{1-q}.
\]
We simplify to get:
\[
\int_{T_1}^{T_2} Y(t)^{1/2}\, dt
< (2a_1a_3a_4)^{1/q} \int_{T_1}^{T_2} Z(t)\, dt.
\]
Hence we have by (\ref{eq:secGWP_general4}) and (\ref{eq:secGWP_general1}) that
\[
\int_{T_1}^{T_2} Y(t)^{1/2}\, dt
< (2a_1a_3a_4)^{1/q} a_5\paren*{1 + \int_{T_1}^{T_2} Y(t)^s\, dt + a_1^r \int_{T_1}^{T_2} Y(t)^{r/2}\, dt}.
\]
Let $\theta := \max\curly*{s,\frac{r}{2}}$ and note $0 < \theta < \frac12$ by assumption. For $A$ sufficiently large so that $Y \ge 1$ (as possible by (\ref{eq:secGWP_general1}) and (\ref{eq:secGWP_general5})), it follows
\[
\int_{T_1}^{T_2} Y(t)^{1/2}\, dt
< (2a_1a_3a_4)^{1/q}a_5 \paren*{1+ (1+a_1^r)\int_{T_1}^{T_2} Y(t)^\theta\, dt}.
\]
We may continue the upper bound by using (\ref{eq:secGWP_general1}) with $0 < \theta < 1/2$ as well as (\ref{eq:secGWP_general5}), to obtain
\begin{align*}
\int_{T_1}^{T_2} Y(t)^{1/2}\, dt
&< (2a_1a_3a_4)^{1/q} a_5 \paren*{1+ (1+a_1^r)\int_{T_1}^{T_2} Y(t)^{1/2} [X(t)/a_1]^{2\theta-1}\, dt} \\
&\le (2a_1a_3a_4)^{1/q} a_5 \paren*{1+ (1+a_1^r)a_1^{1-2\theta}e^{-(1-2\theta)A}\int_{T_1}^{T_2} Y(t)^{1/2}\, dt}.
\end{align*}
Again as $0 < \theta < 1/2$, we may increase $A$ a final time so that $(2a_1a_3a_4)^{1/q} a_5(1+a_1^r)a_1^{1-2\theta}e^{-(1-2\theta)A} \le 1/4$, giving us
\begin{equation}\label{eq:secGWP_general7}
\int_{T_1}^{T_2} Y(t)^{1/2}\, dt < (2a_1a_3a_4)^{1/q} a_5 + \frac{1}{4}\int_{T_1}^{T_2} Y(t)^{1/2}\, dt.
\end{equation}

To complete the proof, we note that Gr\"{o}nwall's inequality applied to (\ref{eq:secGWP_general3}) with $Y \ge 0$ and (\ref{eq:secGWP_general5}) imply that
\[
e^{A+B} = X(T_2) \le X(T_1) e^{a_4\int_{T_1}^{T_2} X(t)\, dt} = e^{A}e^{a_4\int_{T_1}^{T_2} X(t)\, dt}.
\]
By (\ref{eq:secGWP_general1}), it follows
\begin{equation}\label{eq:secGWP_general8}
B \le a_4\int_{T_1}^{T_2} X(t)\, dt \le a_1a_4\int_{T_1}^{T_2} Y(t)^{1/2}\, dt.
\end{equation}
Choosing $B$ so that $B/4 := (2a_1a_3a_4)^{1/q} a_1a_4a_5$, we obtain the contradiction
\[
0 \le \int_{T_1}^{T_2} Y(t)^{1/2}\, dt < \frac{1}{2}\int_{T_1}^{T_2} Y(t)^{1/2}\, dt
\]
by (\ref{eq:secGWP_general7}) and (\ref{eq:secGWP_general8}). In summary, let $A_*$ be the minimal positive constant that is sufficiently large according to the above requirements throughout the proof and let $B_* := 4(2a_1a_3a_4)^{1/q}a_1a_4a_5$. Set $C_2 := e^{B_*}$ and $C_3 := e^{A_*}$. If $X(0) \le C_3$, we can run the above argument with $A = A_*$ and $B = B_*$. If $X(0) > C_3$, then we instead run the above argument with $A = \log(X(0))$ and $B = B_*$.
\end{proof}

From the above general setting, we quickly recover the second main result of this article.

\GWP*

\begin{proof}
Without loss of generality, we further impose \Cref{norm:secSett_h,norm:secSett_eps} on the domain. The classical two-dimensional Nash inequality (\ref{eq:secIntro_Nash}) implies (\ref{eq:secGWP_ineq1}). The Nash stratification inequality \Cref{thm:secIntro_NashTotal} implies (\ref{eq:secGWP_ineq2}). Lastly, for any time interval time $[T_1, T_2] \subseteq [0, T_*)$ over which $\nm{\rho(\cdot,t) - \rho_M}_2 \ge 1$, \Cref{cor:secGWP_AprioriVarianceIneq} and \Cref{cor:secGWP_StratBound} (along with \Cref{rem:secGWP_gNegative}) establish the bounds (\ref{eq:secGWP_ineq3}) and (\ref{eq:secGWP_ineq4}). Altogether, we may apply \Cref{prop:secGWP_GeneralArgument} with
\[
X(t) := \nm{\rho(\cdot, t) - \rho_M}_2^2, \qquad
Y(t) := \nm{\grad\rho(\cdot, t)}_2^2, \qquad
Z(t) := \nm{\partial_1\rho(\cdot, t)}_{\dH_0\pre}^2,
\]
and
\[
p = 3/7, \qquad q = 4/987, \qquad r = 2/3, \qquad s = 3/8,
\]
to conclude that the solution's variance obeys the desired bound on $[0, T_*)$. Then, the blow-up criterion \Cref{thm:secGWP_LWP} completes the proof. We note that, in the application of \Cref{prop:secGWP_GeneralArgument}, the choices of $a_1$, $a_2$, $a_3$, $a_4$, and $a_5$ all depend on $\Omega$ and $\rho_M$ while $a_5$ also depends on $1/|g|$.
\end{proof}

\begin{remark}
In the proof of \Cref{thm:secIntro_GWP}, \Cref{as:secSett_2,as:secSett_3} on the domain are only required in order to invoke the Nash stratification inequality. The classical Nash inequality, \Cref{cor:secGWP_AprioriVarianceIneq}, and \Cref{cor:secGWP_StratBound} only require \Cref{as:secSett_1} on the domain. Thus, any relaxations of the assumptions required for \Cref{thm:secIntro_NashTotal} would automatically guarantee global well-posedness of the PKS-IPM system on such domains by the exact same argument as above.
\end{remark}

\begin{remark}
From the proofs of \Cref{thm:secIntro_GWP} and \Cref{prop:secGWP_GeneralArgument}, one can see that the constants $C_2$ and $C_3$ both depend only on $\Omega$, $|g|$, and $\rho_M$. Indeed, $C_2$ and $C_3$ increase when $\rho_M$ increases and increase when $|g|$ decreases. This matches the natural intuition. However, the manner in which $C_2$ and $C_3$ depend on the domain $\Omega$ cannot be easily seen from our proof, as the universal constant coefficients that we obtained for \Cref{thm:secIntro_NashTotal} do not have simple dependencies on the domain --- boundary geometry, and not just the domain's area, plays a role.
\end{remark}

\section*{Acknowledgements}
\addcontentsline{toc}{section}{\numberline{}Acknowledgements}

AK has been partially supported by the NSF-DMS Grant 2306726. NAS has been partially supported by the NSF-DMS Grant 2038056 and by a Grant in Aid of Research from Sigma Xi, The Scientific Research Honors Society.


\end{document}